\newcommand{\mm}{\mathrm}
\newcommand{\ml}{\mathcal}
\newcommand{\be}{\begin{equation}}
\newcommand{\bea}{\begin{equation}\begin{aligned}}
\newcommand{\beas}{\begin{equation*}\begin{aligned}}
\newcommand{\eeas}{\end{aligned}\end{equation*}}
\newcommand{\eea}{\end{aligned}\end{equation}}
\newcommand{\ee}{\end{equation}}
\renewcommand{\div}{{\rm div }}
\begin{document}
\begin{frontmatter}
\title{On Inhibition of Rayleigh--Taylor Instability by a Horizontal
Magnetic Field in Non-resistive MHD Fluids: the Viscous Case}

\author[FJ,1FJ,2FJ]{Fei Jiang}
\ead{jiangfei0951@163.com}
\author[SJ]{Song Jiang}
 \ead{jiang@iapcm.ac.cn}
\author[SJ]{Youyi Zhao}\ead{zhaoyouyi0591@163.com}
%\cortext[cor1]{Corresponding author. }
\address[FJ]{School of Mathematics and Statistics, Fuzhou University, Fuzhou, 350108, China.}
\address[SJ]{Institute of Applied Physics and Computational Mathematics,
 Beijing, 100088, China.}
 \address[1FJ]{Center for Applied Mathematics of Fujian Province, Fuzhou 350108, China.}
\address[2FJ]{Key Laboratory of Operations Research and Control of Universities in Fujian, Fuzhou 350108, China.}
\begin{abstract}
It is still open whether the phenomenon of inhibition of Rayleigh--Taylor (RT) instability by a horizontal magnetic field can
be mathematically verified for a non-resistive \emph{viscous} magnetohydrodynamic (MHD) fluid in a two-dimensional (2D) horizontal slab domain,
since it was roughly proved in the linearized case by Wang in \cite{WYC}. In this paper, we prove  such inhibition phenomenon by the (nonlinear) inhomogeneous, incompressible, \emph{viscous case} with \emph{Navier (slip) boundary condition}. More precisely, we show that
there is a critical number of field strength $m_{\mm{C}}$, such that if the strength $|m|$ of a horizontal magnetic field is bigger than $m_{\mm{C}}$,
then the small perturbation solution around the magnetic RT equilibrium state is {algebraically}  stable in time.
In addition, we also provide a nonlinear instability result for the case $|m|\in[0, m_{\mm{C}})$. The instability result presents that
a horizontal magnetic field can not inhibit the RT instability, if it's strength is too small.
\end{abstract}
\begin{keyword}
 Non-resistive viscous MHD fluids;  Rayleigh--Taylor instability; algebraic decay-in-time; stability.
%\MSC[2000] 35Q35\sep  76D03.%(2000 is the default)
\end{keyword}
\end{frontmatter}
%% Start line numbering here if you want% \linenumbers
\newtheorem{thm}{Theorem}[section]
\newtheorem{lem}{Lemma}[section]
\newtheorem{pro}{Proposition}[section]
\newtheorem{concl}{Conclusion}[section]
\newtheorem{cor}{Corollary}[section]
\newproof{pf}{Proof}
\newdefinition{rem}{Remark}[section]
\newtheorem{definition}{Definition}[section]

\section{Introduction}\label{introud}

The equilibrium of a heavier fluid on top of a lighter one, subject to gravity, is unstable. In fact, small disturbances acting on the equilibrium will grow and lead to the release of potential energy,
 as the heavier fluid moves down under the gravity, and the lighter one is displaced upwards. This phenomenon was first studied
 by Rayleigh \cite{RLIS} and then Taylor \cite{TGTP}, is called therefore the Rayleigh--Taylor (RT) instability.
In the last decades, RT instability had been extensively investigated from mathematical,
physical and numerical aspects, see \cite{CSHHSCPO,WJH,desjardins2006nonlinear,hateau2005numerical} for examples. It has been also widely analyzed how physical factors,
such as elasticity \cite{JFJWGCOSdd,FJWGCZXOE}, rotation \cite{CSHHSCPO,BKASMMHRJA},
internal surface tension \cite{GYTI2,WYJTIKC2014ARMA,JJTIWYJ2016CMP},
magnetic field \cite{JFJSWWWOA,JFJSJMFMOSERT,JFJSWWWN} and so on, influence the dynamics of the RT instability.

In this paper we are interested in the phenomenon of inhibition of RT instability by magnetic fields. This topic goes back to
the theoretical work of Kruskal and Schwarzchild \cite{KMSMSP}. They analyzed the effect of  the (impressed) horizontal magnetic
field upon the growth of the RT instability in a horizontally periodic motion of a completely ionized plasma  with zero resistance
in three dimensions in 1954, and pointed out that the curvature of the magnetic lines can influence the development of instability,
but can not inhibit the growth of the RT instability. The inhibition of RT instability by the vertical magnetic field was first
verified for the inhomogeneous, incompressible, non-resistive magnetohydrodynamic (MHD) fluids in three dimensions by Hide \cite{CSHHSCPO,HRWP}.
In 2012, Wang also noticed that the horizontal magnetic field can inhibit the RT instability in a non-resistive MHD fluid in two dimensions \cite{WYC}. Later, Jiang--Jiang further found that impressed magnetic fields always inhibit the RT instability,
if a non-slip velocity boundary-value condition is imposed in the direction of magnetic fields \cite{JFJSJMFM}.
Such boundary condition is called the ``\emph{fixed condition}'' for the sake of simplicity.

All results mentioned  above are based on the {\it linearized} non-resistive MHD equations.
Thanks to the multi-layers method developed in the well-posedness theory of surface wave problems \cite{GYTIAED}, recently the phenomenon of inhibition of RT instability by magnetic fields has been rigorously proved based on the (nonlinear) non-resistive viscous MHD equations under the fixed condition, for example, Wang verified the inhibition phenomenon by the non-horizontal magnetic field in a stratified incompressible viscous MHD fluid in a 2D/3D slab domain \cite{WYJ2019ARMA}. Moreover, he also proved that the  horizontal magnetic field can not inhibit the RT instability for the horizontally periodic motion for the 3D case \cite{WYJ2019ARMA}, but can  \emph{inhibit the RT instability  based on a 2D linearized motion equations} in \cite{WYC}. Similar results can be also found in other magnetic inhibition phenomena, see \cite{JFJSSETEFP} for the Parker instability and \cite{JFJSOUI} for the thermal instability.  The previous nonlinear stability/instability results in the non-resistive \emph{viscous} magnetic RT problem can be summarized as in the following table.
\begin{center}
Can an impressed horizontal/vertical magnetic field inhibit the RT instability \\ in a non-resistive \emph{viscous} MHD fluid  in a slab domain?\\[0.4em]
\begin{tabular}{|c|c|c|}
\cline{2-2}  \hline    & \vspace{0.1em}{horizontal} & vertical  \\
   \hline
  2D &   No clear & Yes \\
   \hline
  3D & No & Yes \\
  \hline
\end{tabular}
\end{center}

In \cite{JFJSARMA2019} Jiang--Jiang further established a so-called magnetic inhibition theory in viscous non-resistive MHD fluids, which reveals the physical effect of the \emph{fixed condition} in magnetic inhibition phenomena. Roughly specking, let us consider
an element line along an impressed magnetic field in the rest state of a non-resistive MHD fluid, then the element line of fluids can be regarded as an elastic string. The two endpoints of the element line are fixed due to the fixed condition.
Thus, the bent element line will restore to its initial location under the magnetic tension   as well as viscosity.

By the magnetic inhibition mechanism in non-resistive MHD fluids, the positive assertions in the table above seem to be obvious; moreover, we easily predict the phenomenon of  inhibition of RT instability by a horizontal magnetic field  in a non-resistive viscous MHD fluid in a 2D slab domain, see \cite{WYC} for the linear case. However, it is still an open problem to rigorously prove this prediction based on the nonlinear motion equations.  Recently the authors noted that this prediction  can be mathematically verified by the \emph{inviscid case with velocity damping}, i.e. the viscous term is replaced by the velocity damping term. Under such case, some difficulties arising from the viscous term can be avoided, when we exploit cur estimates.  In this paper, we further find that such inhibition phenomenon can be also proved in the inhomogeneous, incompressible, \emph{viscous} and non-resistive MHD fluids with \emph{Navier (slip) boundary condition} in two-dimensions, and thus move a first step to this open problem with the viscous case. More precisely, there exists a critical number $m_{\mm{C}}$, such that if the strength $|m|$ of a horizontal magnetic field is bigger than $m_{\mm{C}}$, then the small perturbation solution around the magnetic RT equilibrium state is {algebraically} stable in time, i.e. the RT instability can be inhibited by a horizontal magnetic field in a 2D slab domain.
Finally we further mention our stability result.
\begin{enumerate}[(1)]
\item Ren--Xiang--Zhang proved the existence of  the global(-in-time) small
perturbation solutions  of a non-resistive viscous MHD fluid with a horizontal magnetic field and with a Navier boundary condition  in a 2D slab domain \cite{ren2016global}. However, they cannot obtained the decay-in-time for the global solutions. As a by-product of our stability result, we further provide the algebraical decay-in-time behavior for the solutions.
\item
Wang mathematically verified the the inhibition phenomenon by the non-horizontal magnetic field in a stratified incompressible viscous MHD fluid in a 2D/3D slab domain \cite{WYJ2019ARMA} and also obtained unique global solutions with decay-in-time. It should be noted the decay-in-time plays an important role to derive the existence of global solution in Wang's result. However, our proof for the existence of global-in-time solutions is independent of the decay-in-time.  \emph{We provide the additional derivation of decay-in-time in our result, since it may be useful in the further investigation of the case of the non-slip boundary condition in future}.
\item Our stability result  can be viewed as \emph{a continuation of the previous
work} of the inviscid case with velocity damping in \cite{ZHAOYUI}. However, we develop a new idea to capture the high-order normal (spacial) derivatives of the deviation function of fluid particles  from the viscosity term under the Navier boundary condition, and the details will be further discussed after introducing our stability result in Theorem \ref{thm2}.
\item By the magnetic inhibition mechanism in non-resistive MHD fluids,
the horizontal magnetic field plays a role of tension in the horizontal direction, and thus can inhibit the RT instability as well as the surface tension \cite{WYJTIKC2014ARMA,JJTIWYJ2016CMP}. Our result mathematically verifies such physical phenomenon. It seems that our proof idea can be  extended to verify that the horizontal magnetic field can also inhibit other flow instabilities, such us thermal instability in \cite{JFJSOUI}. In addition, we will further use the basic idea in this paper to prove that the RT instability can be also inhibited by other stabilizing forces  in the horizontal direction, such as capillary action on RT instability in capillary
fluids in a forthcoming paper.
\end{enumerate}

\subsection{Mathematical formulation for the magnetic RT problem}\label{subsec:01}
\numberwithin{equation}{section}
Before stating our results in details, we shall mathematically formulate the physical
problem of  inhibition of RT instability by a horizontal magnetic field.
The governing equations of an inhomogeneous, incompressible, viscous, non-resistive MHD fluid in the presence of a uniform gravitational field in a 2D slab domain $\Omega$ read as follows.
\begin{equation}
\label{0101}
\begin{cases}
\rho_t+ v\cdot \nabla \rho=0,\\
\rho v_{t}+ \rho v\cdot\nabla v
+\nabla \left(P+\lambda|M|^2/2\right)-\mu \Delta v =\lambda M\cdot\nabla M-\rho g \mathbf{e}_2, \\
 {M}_{t}+ v\cdot\nabla {M}=M\cdot\nabla v, \\
\div  v =\mm{div}M=0.
\end{cases}
\end{equation}
Below, we explain the mathematical notations in the system \eqref{0101}.

The unknowns $\rho:=\rho(x,t)$, $v:={v}(x,t)$, $M:= {M}(x,t)$ and $P:=P(x,t)$ denote the density, velocity,
magnetic field and kinetic pressure of a MHD fluid, resp.. $x\in\Omega\subset\mathbb{R}^2$ and $t>0$ are the spatial
and temporal variables resp.. The constants $\lambda$, $g> 0$ and $\mu >0$ stand for the permeability of vacuum, the gravitational
constant and the viscosity coefficient, resp.. $\mathbf{e}_2=(0,1)^{\mm{T}}$ represents the normal (or vertical) unit vector,
and $-\rho g \mathbf{e}_2$ the gravity, where the superscript ${\mm{T}}$ denotes the transposition.

Since we consider horizontally periodic motion solutions of \eqref{0101}, we define a horizontally periodic domain:
\begin{align}\label{0101a}
\Omega:= 2\pi L\mathbb{T} \times(0,h),
\end{align}
where $\mathbb{T}=\mathbb{R}/\mathbb{Z} $ and $L>0$. For the horizontally periodic domain $\Omega$, the 1D periodic domain
$2\pi L\mathbb{T}\times \{0,h\}$, denoted by $\partial\Omega$, which customarily is regarded as a boundary of $\Omega$.
For the well-posedness of the system \eqref{0101}, we impose the following initial and boundary conditions:
\begin{align}
& (\rho,v,M)|_{t=0}=(\rho^0,v^0,M^0),\label{20210031303} \\
& \label{0101b}
v|_{\partial\Omega}\cdot\vec{\mathbf{n}}=0,  \ 2(\mathbb{D}v|_{\partial\Omega})\vec{\mathbf{n}})_{\mm{tan}}=0,
\end{align}
where $\vec{\mathbf{n}}=(\vec{{n}}_1,\vec{{n}}_2)^{\mm{T}}$ denotes the outward normal unit  vector on  $\partial\Omega$, $\mathbb{D}v=(\nabla v+\nabla v^{\mm{T}})/2$ the strain tensor, and the subscript ``$\mm{tan}$" the tangential component of a vector (for example $v_\mm{tan}=v-(v\cdot \vec{\mathbf{n}})\vec{\mathbf{n}}$) \cite{tapia2021stokes,ding2020rayleigh,ding2018stability,li2019global}.
  Here and in what follows, we always use the superscript $0$ to emphasize the initial data.

We call the boundary conditions in \eqref{0101b} the Navier (slip) boundary condition.
 Since $\Omega$ is a slab domain, the  Navier boundary  condition
 is equivalent to
\begin{align}
 (v_2,\partial_2 v_1)|_{\partial\Omega}=0 .
 \label{20220202081737}
 \end{align}

Now, we choose a RT density profile $\bar{\rho}:=\bar{\rho}(x_2)$, which is independent of $x_1$ and satisfies
\begin{align}
&\label{0102}
\bar{\rho}\in {C^2}(\overline{\Omega}),\ \inf_{ x\in {\Omega}}\bar{\rho}>0,\\[1mm]
&\label{0102n}\bar{\rho}'|_{x_2=y_{2}}>0 \;\;
\mbox{ for some } y_{2}\in \{x_2~|~(x_1,x_2)^{\mm{T}}\in \Omega\},
\end{align}
where $\bar{\rho}':=\mm{d}\bar{\rho}/\mm{d}x_2$ and $\overline{\Omega}:=\mathbb{R}\times [0,h]$.
We remark that the second condition in \eqref{0102}
prevents us from treating vacuum, while  the condition in \eqref{0102n} is called the RT condition,
which assures that there is at least a region in which the density is larger with increasing height $x_2$, thus
leading to the classical RT instability, see \cite[Theorem 1.2]{JFJSO2014}.

With the RT density profile in hand, we further define a magnetic RT equilibria $r_M:=(\bar{\rho}, 0, \bar{M})$,
where $\bar{M} =(m,0)^{\mm{T}}$ with $m$ being a constant. Usually, $\bar{M}$ is called an impressed horizontal magnetic field (or \emph{horizontal magnetic field} for the sake of simplicity).
The pressure profile $\bar{P}$ under the equilibrium state is determined by the relation
\begin{equation}
\nabla \bar{P}=-\bar{\rho}g  \mathbf{e}_2\mbox{ in } {\Omega}.  \label{equcomre}
\end{equation}

Denoting the perturbation around the magnetic RT equilibrium by
$$\varrho=\rho -\bar{\rho},\ v= v- {0},\ N=M-\bar{M} $$
 and then using the relation \eqref{equcomre}, we obtain the system of perturbation equations from \eqref{0101}:
\begin{equation}\label{0103} \begin{cases}
\varrho_t+{  v}\cdot\nabla (\varrho+\bar{\rho})=0, \\[1mm]
(\varrho+\bar{\rho}){  v}_t+(\varrho+\bar{\rho}){  v}\cdot\nabla
{ v}+\nabla  \beta-\mu \Delta v
=  \lambda (N+\bar{M})\cdot \nabla  N - \varrho  g \mathbf{e}_2,\\[1mm]
N_t+ v\cdot\nabla  N =(N+\bar{M})\cdot \nabla v,\\[1mm]
 \mm{div}v= \mathrm{div} N=0,\end{cases}\end{equation}
where $\beta:= P-\bar{P}+\lambda  (| M |^2-|\bar{M}|^2)/2$ is called the total perturbation pressure.
The corresponding initial and boundary conditions read as follows.
\begin{align} \label{c0104}
&(\varrho,v, {N} )|_{t=0}=(\varrho^0,v^0,N^0) ,  \\
& \label{0105}
 (v_2,\partial_2 v_1)|_{\partial\Omega}=0 \mbox{ on }{\partial\Omega}.
\end{align}
We call the initial-boundary value problem \eqref{0103}--\eqref{0105} the magnetic RT problem for the sake of simplicity.
Obviously, to mathematically prove the inhibition of RT instability by a horizontal magnetic field in a 2D slab domain,
it suffices to verify the stability in time for the solutions of the magnetic RT problem with \emph{some non-trivial initial data}.

\subsection{Reformulation in Lagrangian coordinates}\label{subsec:02}

  To proceed, as in \cite{WYJ2019ARMA,JFJSSETEFP,JFJSOUI}, we shall first reformulate the magnetic RT problem
in Lagrangian coordinates.
Let the flow map $\zeta$  be the solution to the initial value problem:
\begin{equation}
\label{201806122101}
            \begin{cases}
\partial_t \zeta(y,t)=v(\zeta(y,t),t),
\\[1mm]
\zeta(y,0)=\zeta^0(y),
                  \end{cases}
\end{equation}
where the invertible mapping $\zeta^0:=\zeta^0(y)$ maps $\Omega$ to $\Omega$, and satisfies
\begin{align}
&\label{zeta0inta}
J^0:=\det \nabla \zeta^0=1 ,\\[1mm]
&\label{zeta0inta0}
\zeta^0_2 =y_2\mbox{ on } \partial\Omega.
\end{align}
Here and in what follows, ``$\det$'' denotes the determinant of a matrix.
In our   results, we will see  that the flow map $\zeta$ satisfies, for each fixed $t>0$,
\begin{align}
& \zeta|_{y_2=r}   : \mathbb{R}\to \mathbb{R}\mbox{ is a } C^1(\mathbb{R})\mbox{-diffeomorphism mapping for }r=0,\ h,\label{20210301715x}\\
&\zeta   :  \overline{\Omega}\to \overline{\Omega} \mbox{ is a } C^1(\overline{\Omega})\mbox{-diffeomorphism mapping}.\label{20210301715}
\end{align}

Since $v$ satisfies the divergence-free condition and non-slip boundary condition
$v_2|_{\partial\Omega}=0$, we can deduce from \eqref{201806122101}--\eqref{zeta0inta0} that
\begin{align}
&\nonumber
J:=\det\nabla \zeta=1 ,\\
&\nonumber
\zeta_2=y_2 \mbox{ on } \partial\Omega.
\end{align}

We define the matrix $\mathcal{A}:=(\mathcal{A}_{ij})_{2\times 2}$ via
\begin{align}\nonumber
\mathcal{A}^{\mm{T}}=(\nabla\zeta)^{-1}:=
(\partial_j \zeta_i)^{-1}_{2\times 2}.
\end{align}
Then we further define
the differential operators $\nabla_{\mathcal{A}}$, $\mm{div}_{\mathcal{A}}$ and $\mm{curl}_{\mathcal{A}}$ as follows: for a scalar function $f$ and a  vector function $X:=(X_1,X_2)^{\mm{T}}$,
\begin{align}
&\nabla_{\mathcal{A}}f:=(\mathcal{A}_{1k}\partial_kf,
\mathcal{A}_{2k}\partial_kf)^{\mm{T}},\ \mm{div}_{\mathcal{A}}(X_1,X_2)^{\mm{T}}:=\mathcal{A}_{lk}\partial_k X_l\nonumber
\end{align}
and
\begin{align}
\mm{curl}_{\mathcal{A}}X:=\mathcal{A}_{1k}\partial_{k}X_2-\mathcal{A}_{2k}\partial_{k}X_1,
 \nonumber \end{align}
where we have used the Einstein convention of summation over repeated indices, and $\partial_k:=\partial_{y_k}$. In particular, $\mm{curl}X:=\mm{curl}_IX$, where $I$ represents an identity matrix.

Defining the Lagrangian unknowns:
\begin{equation*}
(  \vartheta, u ,Q, B)(y,t)=(\rho,v,P+\lambda|M|^2/2,M)(\zeta(y,t),t) \mbox{ for } (y,t)\in \Omega \times\mathbb{R}^+_0,
\end{equation*}
then in Lagrangian coordinates, the initial-boundary value problem \eqref{0101}, \eqref{20210031303}  and  \eqref{20220202081737} can be
rewritten as follows:
\begin{equation}\label{01dsaf16asdfasf00}
\begin{cases}
\zeta_t=u ,  \ \vartheta_t=0 ,\
\div_\mathcal{A}u=0 ,    \\[1mm]
\vartheta u_t+\nabla_{\mathcal{A}}Q-\mu \Delta_{\mathcal{A}}  u=\lambda B\cdot\nabla_{\mathcal{A}}B-\vartheta g \mathbf{e}_2 , \\[1mm]
B_t=B\cdot \nabla_{\mathcal{A}}u   ,  \
\div_\mathcal{A}B=0 ,   \\[1mm]
(\zeta,\vartheta,u, B)|_{t=0}=(\zeta^0,\vartheta^0,u^0, B^0) , \\[1mm]
(\zeta_2-y_2,u_2, \mathcal{A}_{2i}\partial_iu_1)|_{ \partial\Omega} =0  ,
\end{cases}
\end{equation}
where $(\vartheta^0,  u^0,B^0 ):=
 (\rho^0(\zeta^0),v^0(\zeta^0), M^0(\zeta^0)) $. In addition, the relation \eqref{equcomre} in Lagrangian coordinates reads as follows.
 \begin{equation}
\label{dstist01}
\nabla_{\mathcal{A}}\bar{P}(\zeta_2) = -\bar{\rho}(\zeta_2)g\mathbf{e}_2 .
\end{equation}

Let $\eta=\zeta-y$, $\eta^0=\zeta^0-y$, $q=Q-\bar{P}(\zeta_2)-\lambda|\bar{M}|^2/2$, $\mathcal{A}=(I+\nabla \eta)^{-\mm{T}}$ and
\begin{align} \nonumber
{G}_{\eta}:=g(\bar{\rho}(\eta_2(y,t)+y_2)-\bar{\rho}(y_2)).
\end{align}
In particular, we can   calculate that
\begin{align*}
\mathcal{A}=
\begin{pmatrix}
1+\partial_2\eta_2 &-\partial_1\eta_2\\[1mm]
-\partial_2\eta_1 &1+\partial_1\eta_1
        \end{pmatrix}.
\end{align*}

If $\eta^0$, $\vartheta^0$ and $B^0$ additionally satisfy
 \begin{align}
\nonumber
\vartheta^0  =\bar{\rho}(y_2)\mbox{ and }
B^0= m\partial_1(\eta^0+y) ,
\end{align}
then the initial-boundary value problem \eqref{01dsaf16asdfasf00}, together with the relation \eqref{dstist01}, implies that
  \begin{align}\label{01dsaf16asdfasf}
          &                    \begin{cases}
\eta_t=u ,\\[1mm]
\bar{\rho}u_t+\nabla_{\mathcal{A}} q-\mu \Delta_{\mathcal{\mathcal{A}}} u=\lambda m^2\partial_1^2\eta+ {G}_{\eta}\mathbf{e}_2  ,\\[1mm]
\div_{\mathcal{A}} u=0  , \\[1mm]
(\eta,u)|_{t=0}=(\eta^0,u^0)  ,
\end{cases} \\
&
 (\eta_2, u_2) |_{\partial\Omega}=0, \label{20202201182345} \\
& ( (1+\partial_1\eta_1  )\partial_2u_1 - \partial_2\eta_1 \partial_1u_1  )|_{\partial\Omega}=0  \label{202345}
\end{align}
and
\begin{align}\label{202012280945}
\vartheta=\bar{\rho}(y_2),\
B=m\partial_1(y+\eta),
\end{align}
please refer to \cite{JFJSJMFMOSERT} for the derivation. We mention that the term $\lambda m^2\partial_1^2\eta$ physically represents the magnetic tension, which can inhibit flow instabilities \cite{JFJSARMA2019}. It should be remarked that \eqref{01dsaf16asdfasf}--\eqref{202012280945} also implies \eqref{01dsaf16asdfasf00},
and that $q$, still called the perturbation pressure for simplicity,
is in fact the sum of the perturbation pressure and perturbation magnetic pressure in Lagrangian coordinates.

Unfortunately, it seems to be difficult to capture the estimates of high-order normal  derivatives of $\eta$ due to the absence the boundary condition of $\eta_1$.
Thus we shall pose an additional boundary condition
\begin{equation}
\partial_2\eta_1|_{\partial\Omega}=0,
\label{202220110182343}
 \end{equation}
 which, together with \eqref{01dsaf16asdfasf}$_1$, formally yields
 \begin{equation}
\partial_2u_1|_{\partial\Omega}=0  .\nonumber
 \end{equation}
 It is easy to see that \eqref{202345} automatically holds under the above two boundary conditions. Hence we use \eqref{202220110182343} to replace \eqref{202345}, and thus pose the new boundary condition
\begin{align}
& (\eta_2,\partial_2\eta_1, u_2,\partial_2u_1) |_{\partial\Omega}=0 \label{20safd45}
\end{align}
From now on, we call the
initial-boundary value problem \eqref{01dsaf16asdfasf} and \eqref{20safd45} {\it the transformed MRT problem}.
The stability problem of
the magnetic RT problem reduces to investigating the stability of the transformed MRT problem.

We mention that the boundary condition
\begin{align}
(\eta_2,\partial_2\eta_1) |_{\partial\Omega}=0
\label{202220118000}
 \end{align}
is called   the characteristic boundary condition. Indeed, if the initial data $\eta^0$ satisfies $(\eta_2^0,\partial_2\eta_1^0) |_{\partial\Omega}=0$, then $\eta$ automatically satisfies \eqref{202220118000}
due to the facts \eqref{01dsaf16asdfasf}$_1$ and the boundary condition
\begin{align}
\label{202201180853}
(u_2,\partial_2u_1) |_{\partial\Omega}=0.
 \end{align}
It should be noted that the   boundary condition  \eqref{202220118000} automatically implies
\begin{align}
\label{2022011091553}
\mm{cur}_{\mathcal{A}}\partial_1^i\eta|_{\partial\Omega} =0\mbox{ for }i=0,\ 1,
\end{align}
which will plays an important to capture the high-order normal estimates for $\eta$,  see Lemma \ref{2055nnn}. This is also a key idea in the mathematical proof for the magnetic inhibition phenomenon under the horizontal field in our paper.

\subsection{Notations}\label{subsec:04}

Before stating our main results on  the transformed MRT problem, we shall  introduce simplified notations throughout this paper.

\begin{enumerate}[(1)]
  \item Simplified basic notations: $\mathbf{e}_1:=(1,0)^{\mm{T}}$, $I_a:=(0,a)$ denotes a time interval, in particular, $I_\infty=\mathbb{R}^+$.  $\overline{S}$ denotes the closure of a set $S\subset \mathbb{R}^n$ with $n\geqslant 1$, in particular, $\overline{I_T} =[0,T]$ and $\overline{I_\infty} = \mathbb{R}^+_0$. $\Omega_t:=\Omega\times I_t$, $\int:= \int_{(0,2\pi L)\times (0,h)}$. $(u)_{\Omega}$ denotes the mean value of $u$ in a periodic cell $ (0,2\pi L)\times (0,h)$.
      $a\lesssim b$ means that $a\leqslant cb$ for some constant $c>0$.
      If not stated explicitly, the positive constant $c$ may depend on $\mu$, $g$,   $\lambda$, $m$, $\bar{\rho}$ and $\Omega$
      in the transformed MRT problem, and may vary from one place to other place. Sometimes, we use $c_i$
      to replace $c$ in order to emphasize that $c_i$ is a fixed value for $1\leqslant i\leqslant 3$. The letter $\alpha$ always denotes the multi-index with respect to
      the variable $y$, $|\alpha|=\alpha_1+\alpha_2$ is called the order of multi-index,
  $\partial^{\alpha}:=\partial_{1}^{\alpha_1} \partial_{2}^{\alpha_2}$ and     $[\partial^{\alpha},\phi]\varphi:=\partial^{\alpha}(\phi\varphi)-\phi\partial^{\alpha}\varphi$.
  \item  Simplified Banach spaces, norms and semi-norms:
  \begin{align}
&L^p:=L^p (\Omega)=W^{0,p}(\Omega),\
{H}^i:=W^{i,2}(\Omega ), \  H^{j}_{\mathrm{s}}:=\{w\in {H}^{j}~|~w_2|_{\partial\Omega}=0\}\nonumber \\[1mm]
&H^{j}_\sigma:=\{w\in {H}^{j}_{\mathrm{s}}~|~\div w=0\},  \    H^3_{\gamma }:=\{w\in H^3_{\mm{s}}~|~ \|\nabla w\|_2 \leqslant \gamma\},\nonumber\\ & \mathcal{H}^{k}_{\mathrm{s}}:=\{w\in H^{k}_{\mm{s}}~|~ \partial_2w_1|_{\partial\Omega} =0\}
,\ {H}^{j}_1:=\{w\in H^{j}~|~\det(\nabla w+I)=1\},\nonumber
  \\ & ^0\!{X }  := \{w\in X~|~(\bar{\rho}w_1)_\Omega=0,\ w_1\mbox{ is the first compent of $w$}\},  \nonumber  \\
&  \underline{X}:=\{w\in X~|~(w)_{\Omega}=0\},\ {\mathcal{H}}^k_\sigma:= \mathcal{H}^k_{\mm{s}}\cap {H}^1_\sigma,\ { {\mathcal{H}}^{3,\mm{s}}_{1,\gamma}}:= {H}^3_1\cap{ \mathcal{H}^3_{\mm{s}}}\cap H_{\gamma}^3,\nonumber  \\
& \textstyle{ {\mathcal{H}}^{\infty}_{\sigma}}:= \cap_{n=2}^\infty \mathcal{H}^n_{\sigma},\  \|\cdot \|_i :=\|\cdot \|_{H^i},\ \|\cdot\|_{l,i}:= \|\partial_{1}^{l}\cdot\|_{i},\ \|\cdot\|_{\underline{l},i}:=\sqrt{\sum_{0\leqslant n \leqslant l}\|\cdot\|_{n,i}^2}, \nonumber
\end{align}
where $1\leqslant p\leqslant \infty$, $i$, $l\geqslant 0$,  $j \geqslant 1$, $k\geqslant 2$, $X$ denotes a general Banach space and
  $\gamma \in(0,1)$ is the constant in Lemma \ref{pro:1221}. It should be noted that if $w\in H^3_{\gamma }$, then $\psi:=w+y$ (after possibly being redefined on a set of measure zero with respect to variable $y$) satisfies
the same diffeomorphism  properties as $\zeta$ in \eqref{20210301715x} and \eqref{20210301715} by Lemma \ref{pro:1221}. In addition, for simplicity, we denote $\sqrt{\sum_{1\leqslant n\leqslant j}\|f^k\|_{\mathcal{X}}^2}$ by $\|(f^1,\ldots,f^j)\|_{\mathcal{X}}$, where $\|\cdot\|_{\mathcal{X}}$ represents a norm or a semi-norm, and $f^k$ may be a scalar function, a vector or a matrix for $1\leqslant n\leqslant j$.
 \item Simplified spaces of functions with values in a Banach space:
\begin{align}
& L^p_TX:=L^p(I_T,X),\nonumber \\
& {\mathcal{U}}_{ T} =  \{u\in C^0(\overline{I_T},  {\mathcal{H}^2_{\mm{s}}})\cap L^2_T { {H}}^3  ~|~
 u_t\in C^0(\overline{I_T} ,L^2)\cap  L^2_TH^1_{\mm{s}}\}, \nonumber\\
&   \widetilde{\mathfrak{H}}^{1,3}_{\gamma,T}:=\{\eta\in  C^0(\overline{I_T} , { \mathcal{H}^{3}_{ \mathrm{s}}}) ~|~ \eta(t) \in  {^0 {\mathcal{H}}^{3,\mm{s}}_{1,\gamma}}\mbox{ for each }t\in \overline{I_T}\} .
\nonumber
\end{align}
It should be noted that $L^2_TL^2= L^2(\Omega_T)$.
\item A functional of potential energy: for any given $w\in H^1$,
\begin{align}\nonumber
E(w):=g\int\bar{\rho}'w_2^2\mm{d}y-\lambda \| m\partial_1w\|_0^2 .
\end{align}
  \item Energy and dissipation functionals (generalized):
\begin{align}
& \mathcal{E}:= \|   \eta \|_3^2+\| u\|_2^2+\|u_t\|_0^2+\| q\|_1^2,\nonumber \\
 & \mathcal{D}:= \|\partial_1 \eta_1\|_2^2+\|\eta_2\|_{3}^2+\| u \|_3^2+\|u_t\|_1^2+\|q\|_2^2.\nonumber
\end{align}
We call $\mathcal{E}$, resp. $\mathcal{D}$ the total energy, resp. dissipation functionals.
 \item Other notations for decay-in-times:
 \begin{align}
 \mathfrak{E}:=&  \langle t\rangle (\|\partial_2^3\eta_2\|_0^2+  \|\partial_2^2\eta\|_{1,0}^2)  + \langle t\rangle^2 (\|  \partial_2^2 \eta_2 \|_{0 }^2+ \|\partial_2\partial_1\eta\|_{\underline{1},0}^2 ) \nonumber \\
& +\langle t\rangle^3(\|(\eta_2,\partial_2\eta_2)\|_0^2 +\|\partial_1\eta\|_{\underline{2},0}^2+\|u\|_2^2
+ \|q\|_1^2+ \|u_t\|^2_0 ), \label{2022202180904} \\
  \mathfrak{D}:=&   \langle t\rangle  (\|\partial_2 \eta\|_{2,0}^2+\|u\|_3^2) + \langle t\rangle^2  ( \|(\eta_2,\partial_2\eta_2)\|_{0}^2
 +\|\partial_1\eta\|_{\underline{2},0}^2\nonumber \\
 &+ \|u\|_{\underline{1},2}^2+\|q\|_{\underline{1},1}^2)+
 \langle t \rangle^3(\|  \partial_1 u\|_{\underline{1},1}^2+\|   u_t\|_1^2) .\label{2022202180904x}
\end{align}
\end{enumerate}

\subsection{Main results}\label{subsec:04xx}
Now, we introduce the stability result for the transformed MRT problem.
\begin{thm}[Stability]\label{thm2}
Let  $\bar{\rho}$ satisfy \eqref{0102}, \eqref{0102n} and \begin{align}
\label{2020102241504}
|m|>m_{\mm{C}}:=\sqrt{\sup_{ { w}\in H_{\sigma}^1}\frac{g\int\bar{\rho}' { w}_2^2\mm{d}y}
{\lambda\|\partial_1  w \|^2_0}}.
 \end{align}
Further assume    $(\eta^0,u^0)\in{^0{\mathcal{H}}^{3,\mm{s}}_{1,\gamma}} \times {^0\mathcal{H}^2_{\mm{s}}}$
and
$\mm{div}_{\mathcal{A}^0}u^0=0$,  where $\mathcal{A}^0:=(\nabla \eta^0+I)^{-\mm{T}}$.
Then there is a sufficiently small constant $\delta >0$, such that for any $(\eta^0,u^0)$ satisfying
 $$\|(\nabla \eta^0,u^0)\|_2 \leqslant\delta ,$$
the transformed MRT problem \eqref{01dsaf16asdfasf} and \eqref{20safd45} admits a unique global strong solution $(\eta,u,q)$ in the function class $\widetilde{\mathfrak{H}}^{1,3}_{\gamma,\infty}\times {^0\mathcal{U}_{\infty} }\times (C^0(\mathbb{R}^+ _0, \underline{H}^1)\cap L^2_\infty {{H}^2})$. Moreover, the solution enjoys the following properties:
\begin{enumerate}[(1)]
   \item stability estimate of total energy: for a.e. $t>0$,
   \begin{align}\label{1.200} \mathcal{E}(t)+\int_0^t\mathcal{D}(\tau)\mm{d} \tau
\lesssim \| (\nabla \eta^0,u^0)\|_2^2.\end{align}
  \item  algebraic decay-in-time: for a.e. $t>0$,
    \begin{align}
    & \mathfrak{E} (t)+c\int_0^t\mathfrak{D}(\tau)\mm{d}\tau
\lesssim  \|(\nabla\eta^0,u^0)\|_2^2   , \label{1.200n0} \\
& \|\eta_1(t)-\eta^\infty_1\|_2^2\lesssim   (\|  \nabla \eta^0\|_{2}^2+ \|u^0\|_2^2  )\langle t\rangle,\label{1.200xx}
\end{align}  where $\eta_1^\infty\in H^2$ only depends on $y_2$.
\end{enumerate}
\end{thm}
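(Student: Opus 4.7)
The plan has four parts: local well-posedness in the prescribed class, a closed a priori bound of the form $\tfrac{\mm{d}}{\mm{d}t}\widetilde{\mathcal{E}}+c\mathcal{D}\leqslant 0$ under smallness, continuation to global existence, and a time-weighted iteration yielding the algebraic decay. Local existence can be obtained by a Picard-type iteration on the linearized Stokes system augmented by the magnetic-tension term $\lambda m^2\partial_1^2\eta$ and coupled to $\eta_t=u$, treating all Lagrangian nonlinearities (those hidden in $\mathcal{A}$ and in $\Delta_{\mathcal{A}}$) as source terms; the additional condition $\partial_2\eta_1|_{\partial\Omega}=0$ from \eqref{202220110182343} is propagated automatically by $\eta_t=u$ once $\partial_2u_1|_{\partial\Omega}=0$ holds, and the compatibility of the initial data ensures $\mm{div}_{\mathcal{A}}u=0$ is preserved.

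The core is the nonlinear energy estimate. Testing $(\ref{01dsaf16asdfasf})_2$ against $u$ produces a basic identity
\[\tfrac{1}{2}\tfrac{\mm{d}}{\mm{d}t}\bigl(\|\sqrt{\bar{\rho}}\,u\|_0^2 - E(\eta)\bigr)+\mu\|\mathbb{D}_{\mathcal{A}}u\|_0^2 = R_0,\]
where $R_0$ collects Lagrangian remainders and $E$ is exactly the functional in the definition of $m_{\mm{C}}$. The hypothesis $|m|>m_{\mm{C}}$ yields, by a variational argument on divergence-free fields with $\eta_2|_{\partial\Omega}=0$, the spectral gap $-E(\eta)\geqslant c_0\|\partial_1\eta\|_0^2$, making the energy coercive. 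Since $\partial_1$ preserves every boundary condition in \eqref{20safd45} and commutes with the linearized operator, applying $\partial_1^k$ for $k=1,2$ produces horizontal-derivative bounds on $\|u\|_{\underline{2},0}$, $\|\partial_1\eta\|_{\underline{2},0}$ and the dissipation $\|\nabla u\|_{\underline{2},0}$ in precisely the same way.

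The main obstacle, and the genuinely new step, is upgrading these to the full $\mathcal{E}$ and $\mathcal{D}$: this requires normal-derivative control of $\eta$ even though viscosity acts only on $u$. Here the extra boundary condition $\partial_2\eta_1|_{\partial\Omega}=0$ is decisive---together with $\eta_2|_{\partial\Omega}=0$ it forces $\mm{curl}_{\mathcal{A}}\partial_1^i\eta|_{\partial\Omega}=0$ for $i=0,1$, so the div-curl-trace estimate anticipated in Lemma~\ref{2055nnn} bounds $\|\eta\|_3$ by $\|\mm{div}\,\eta\|_2+\|\mm{curl}\,\eta\|_2+\|\partial_1\eta\|$-type norms plus vanishing traces. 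Since $\mm{div}_{\mathcal{A}}u=0$ integrates in time to express $\mm{div}\,\eta$ as a quadratic in $\nabla\eta$, and $\mm{curl}\,\eta$ is comparable to $\mm{curl}_{\mathcal{A}}\eta$ modulo a similar quadratic, every normal derivative of $\eta$ is recovered from tangential ones up to cubic remainders controlled by $\|\nabla\eta\|_2\leqslant\gamma$. The pressure $q$ is then obtained from an elliptic problem driven by $u_t,\nabla u,\nabla\eta$, while $u_t$-estimates come from differentiating $(\ref{01dsaf16asdfasf})_2$ in time and testing against $u_t$, the Navier condition being preserved. Combining everything gives $\tfrac{\mm{d}}{\mm{d}t}\widetilde{\mathcal{E}}+c\mathcal{D}\lesssim\sqrt{\widetilde{\mathcal{E}}}\,\mathcal{D}$ for an energy $\widetilde{\mathcal{E}}$ equivalent to $\mathcal{E}$; this closes under smallness and yields \eqref{1.200}, and continuation delivers the global solution in the stated class.

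For \eqref{1.200n0} I would exploit the fact that $\partial_1$ is the stabilizing direction, so each additional $\partial_1$ derivative buys one extra power of $\langle t\rangle$ in a time-weighted estimate. Writing $\mathcal{E}_k,\mathcal{D}_k$ for the $k$-tangential pieces of the energy-dissipation pair, one derives $\tfrac{\mm{d}}{\mm{d}t}(\langle t\rangle^k\mathcal{E}_k)+c\langle t\rangle^k\mathcal{D}_k\leqslant k\langle t\rangle^{k-1}\mathcal{E}_k$ and absorbs the right-hand side by interpolation against the uniform $H^3$-bound \eqref{1.200}. The heavier weights on $\partial_2^j\eta_2$ in \eqref{2022202180904} are obtained by re-testing the momentum equation with $\partial_2^j\eta_2$-type multipliers and invoking the $m_{\mm{C}}$-coercivity of $E$ once more. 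Finally, \eqref{1.200xx} follows from $\eta_1(t)=\eta_1^0+\int_0^t u_1\,\mm{d}\tau$: the $L^2_tH^2$-integrability of $u_1$ weighted by $\langle t\rangle$ from $\int_0^\infty\mathfrak{D}\,\mm{d}\tau<\infty$ gives Cauchy convergence of $\eta_1(\cdot,t)$ in $H^2$ at rate $\langle t\rangle$, while the finite integral $\int_0^\infty\|\partial_1\eta_1(\tau)\|_2^2\,\mm{d}\tau$ coming from the same dissipation forces the limit $\eta_1^\infty$ to be $y_1$-independent.
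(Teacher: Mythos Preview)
Your outline captures the overall architecture correctly, but there is a genuine gap at the crucial step where you recover $\|\eta\|_3$ from tangential information. You write that the Hodge-type estimate bounds $\|\eta\|_3$ by $\|\mm{div}\eta\|_2+\|\mm{curl}\eta\|_2$, that $\mm{div}\eta$ is quadratic in $\nabla\eta$, and that ``$\mm{curl}\,\eta$ is comparable to $\mm{curl}_{\mathcal{A}}\eta$ modulo a similar quadratic, so every normal derivative of $\eta$ is recovered from tangential ones.'' This last assertion is false: $\mm{curl}\eta=\partial_1\eta_2-\partial_2\eta_1$ contains $\partial_2\eta_1$, and neither $\mm{curl}\eta$ nor $\mm{curl}_{\mathcal{A}}\eta$ is controlled by $\partial_1$-derivatives of $\eta$ or by $\mm{div}\eta$. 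You have correctly set up the Hodge decomposition but have not explained where the $H^2$-bound on $\mm{curl}\eta$ comes from---and this is precisely the new idea the paper supplies.

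What Lemma~\ref{2055nnn} actually does is not a static div--curl--trace estimate but a \emph{dynamic} energy identity obtained by applying $\mm{curl}_{\mathcal{A}}$ to the momentum equation \eqref{01dsaf16asdfasf}$_2$ (which kills $\nabla_{\mathcal{A}}q$) and testing against $\partial^\alpha\mm{curl}_{\mathcal{A}}\eta$. The viscous term $\mu\Delta_{\mathcal{A}}\mm{curl}_{\mathcal{A}}u$, after integration by parts using the boundary condition $\mm{curl}_{\mathcal{A}}\partial_1^i\eta|_{\partial\Omega}=0$ that you correctly identified, pairs with $\partial^\alpha\mm{curl}_{\mathcal{A}}\eta$ to produce $-\tfrac{\mu}{2}\tfrac{\mm{d}}{\mm{d}t}\|\nabla_{\mathcal{A}}\partial^\alpha\mm{curl}_{\mathcal{A}}\eta\|_0^2$ via $\eta_t=u$; the magnetic tension $\lambda m^2\partial_1^2\eta$ supplies the dissipation $\|\partial^\alpha\mm{curl}_{\mathcal{A}}\partial_1\eta\|_0^2$. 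This is the mechanism by which viscosity, acting only on $u$, transfers $H^3$ energy control to $\eta$. The case $\alpha=(0,1)$ requires additional commutator manipulations (the $\mathcal{I}_1,\mathcal{I}_2$ terms in the paper) because $\partial_2$ does not commute cleanly with the boundary condition. Without this curl-evolution estimate your energy inequality cannot close at the level of $\mathcal{E}$, and the continuation argument for global existence breaks down.
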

\begin{rem}
By the assumptions of $\bar{\rho}$, we easily find that
\begin{align}
0<m_{\mm{C}}\leqslant \frac{h  }{ \pi }\sqrt{\frac{g\|\bar{\rho}'\|_{L^\infty}}{\lambda}},
\label{2022201281805}
\end{align}
please refer to (4.25) and Lemma 4.6 in \cite{JFJSARMA2019}. Thus, in view of Theorem \ref{thm2}, we see that
the horizontal field can inhibit the RT instability,
if the  field strength  is properly large.
Since \eqref{2022201281805} also holds for the domain $\Omega=\mathbb{R}\times (0,h)$, we naturally believe \emph{the conclusion that the properly large horizontal field can also inhibit the RT instability in the domain $\Omega=\mathbb{R}\times (0,h)$}. Such conclusion  will be further investigated  in an independent paper in future.
\end{rem}
\begin{rem}
It is easy to see from the proof of Theorem \ref{thm2} that
\begin{enumerate}[(1)]
  \item if the assumptions \eqref{0102n} and \eqref{2020102241504} are replaced by
$$ \bar{\rho}'\leqslant 0\;\mbox{ in }\Omega \mbox{ and }  |m|>0,$$
then the conclusions in Theorem \ref{thm2} still hold.
  \item Theorem \ref{thm2} is also valid for the case $g=0$.
\end{enumerate}
\end{rem}
\begin{rem}\label{202201271239}
For each fixed $t\in \mathbb{R}_0^+$, the solution $\eta(y,t)$ in Theorem \ref{thm2} belongs to $ H^3_{\gamma}$. Let $\zeta=\eta+y$, then $\zeta $  satisfies \eqref{20210301715x} and \eqref{20210301715} for each $t\in \mathbb{R}_0^+$ by Lemma \ref{pro:1221}. We denote the inverse transformation of $\zeta$ by $\zeta^{-1}$, and then define that
\begin{align}
(\varrho, v,N,\beta)(x,t):=(\bar{\rho}(y_2)-\bar{\rho}(\zeta_2), u(y,t), m\partial_1\eta(y,t),q(y,t))|_{y=\zeta^{-1}(x,t)}.
\label{2022201271250}
\end{align}
Consequently, $(\varrho,v,N,\beta)$ is a strong solution of the magnetic RT problem  \eqref{0103}--\eqref{0105}
and enjoys stability estimates, which are similar to \eqref{1.200}--\eqref{1.200n0} for sufficiently small $\delta$.
\end{rem}
\begin{rem}
In Theorem \ref{thm2}, we have assumed
\begin{align*}
 (\bar{\rho}\eta^0_1)_{\Omega}=(\bar{\rho}u^0_1)_{\Omega}=0 .
\end{align*}
If $(\bar{\rho}\eta^0_1)_{\Omega}$, $(\bar{\rho} u^0_1)_{\Omega} \neq 0$, we  define
$\bar{\eta}^0_1:=\eta^0_1-(\bar{\rho}\eta^0_1)_\Omega (\bar{\rho})_\Omega^{-1}$, $\bar{u}^0_1:=u^0_1-(\bar{\rho}u^0_1)_\Omega(\bar{\rho})_\Omega^{-1}$ and
$( \bar{\eta}^0_2,\bar{u}^0_2):=( {\eta}^0_2, {u}^0_2)$. Then, by virtue of Theorem \ref{thm2}, there exists a unique global strong solution $(\bar{\eta},\bar{u}, {q})$ to the transformed MRT problem   with initial data $( \bar{\eta}^0,\bar{u}^0)$. It is easy to verify that
$(\eta_1,\eta_2,u_1,u_2,q):=(\bar{\eta}_1+ t (\bar{\rho}u^0_1)_\Omega(\bar{\rho})_\Omega^{-1} +  (\bar{\rho}\eta^0_1)_\Omega(\bar{\rho})_\Omega^{-1},\bar{\eta}_2, \bar{u}_1+( \bar{\rho}u^0_1)_\Omega(\bar{\rho})_\Omega^{-1},\bar{u}_2, {q})$
is just the unique strong solution of  the transformed MRT problem  with initial data $({\eta}^0,{u}^0)$.
\end{rem}
\begin{rem}
If additionally, the initial data $(\eta^0,u^0)$ in Theorem \ref{thm2} satisfies the odevity conditions:
\begin{align*}
&
(\eta_1^0,u_1^0)(y_1,y_2)=-(\eta_1^0,u_1^0)(-y_1,y_2),\\[1mm]
&
(\eta_2^0,u_2^0)(y_1,y_2)=(\eta_2^0,u_2^0)(-y_1,y_2).
\end{align*}
then the solution $(\eta,u,q)$ established in Theorem \ref{thm2} also satisfies the odevity conditions:
\begin{align*}
&(\eta_1,u_1)(y_1,y_2,t)=-(\eta_1,u_1)(-y_1,y_2,t),\\ &(\eta_2,u_2)(y_1,y_2,t)=(\eta_2,u_2)(-y_1,y_2,t), \ q(y_1,y_2,t)=q(-y_1,y_2,t).
\end{align*}
Hence we have $\|\eta_1\|_0\lesssim \| \eta_1\|_{1,0}$,
which, together with \eqref{1.200n0}, yields $\eta^\infty_1=0$ in \eqref{1.200xx}.
This presents that all particles of the fluid restore to their initial locations,
and thus the odevity conditions  also strengthen  the stabilizing effect of horizontal magnetic field as well as the fixed condition in \cite{WYJ2019ARMA}.
\end{rem}

Now we roughly sketch the proof of Theorem \ref{thm2}, and the details will be presented in Section \ref{sec:global}. The key step
in the existence proof for global small solutions is to derive an \emph{a priori} energy inequality  \eqref{1.200}.
To this purpose, let $(\eta,u)$ be a solution
to the transformed MRT problem, satisfying that, for some $T>0$,
\begin{align}
&(\bar{\rho} \eta_1)_\Omega\equiv 0\mbox{ for any }t\in \overline{I_T},\label{apresnew}  \\
&  \det(I+\nabla \eta)=1\mbox{ in }\Omega\times \overline{I_T},\label{aprpiosasfesnew}\\
&\sup_{t\in\overline{I_T}} \|(\nabla \eta,u)(t)\|_2\leqslant {\delta} \in (0,1] . \label{aprpiosesnew}
\end{align}

For sufficiently small $\delta$, similarly to \cite{WYJ2019ARMA} where
Wang verified that the vertical magnetic field can inhibit the RT instability
in a stratified incompressible \emph{viscous} MHD fluid in a 3D slab domain,
the first step in our proof is also to derive the tangential energy inequality (i.e.  \eqref{202008250856n0} including the estimates of the both horizontal derivatives and temporal derivative). The next step is to  capture the estimates of high-order normal derivatives of $\eta$. For the vertical magnetic field considered by Wang in \cite{WYJ2019ARMA}, the magnetic tension in 3D case is given by $\lambda m^2\partial_3^2\eta$, which can be rewritten as follows
$$\lambda m^2\Delta \eta-\lambda m^2(\partial_1^2+\partial_2^2 )\eta.$$
Thus the normal estimates of $\nabla \eta$ (not only includes the horizontal derivatives $\partial_1\eta$ and $\partial_2\eta$, but also the normal derivative  $\partial_3\eta$) can be converted into the tangential estimates  by exploiting the regularity theory of Stokes equations. Obviously, this key idea fails to the horizonal magnetic field, and thus we shall seek a new idea.

In view of the first two equations in \eqref{01dsaf16asdfasf}, we easily consider
other  two roads to  capture the high-order normal estimates for $\eta$: one is to use the transport equation \eqref{01dsaf16asdfasf}$_1$, and the other one is to exploit the viscosity term $\Delta_{\mathcal{A}}u$ in the momentum equation \eqref{01dsaf16asdfasf}$_2$.
Since the first road seems to be more difficult, we naturally turn to the second one.
By careful analysis of the structure of \eqref{01dsaf16asdfasf}$_2$, we find that the energy estimates of  $\nabla_{\mathcal{A}}\partial_1^i\mm{curl}_{\mathcal{A}}\eta$ and $\nabla \partial_2 \mm{curl}_{\mathcal{A}}\eta$ (associated with the dissipation estimates of $\partial_1^i\mm{curl}_{\mathcal{A}}\partial_1\eta$ and $\partial_2\mm{curl}_{\mathcal{A}}\partial_1\eta$, resp.)
can  be established under the Navier boundary condition, see Lemma \ref{2055nnn}.
Thus we further derive the normal estimates of $ \eta$
  by using the curl estimates of $\eta$, the nonlinear estimates of $\mm{div}\eta$ and Hodge-type elliptic estimate.

  Summing up the tangential energy inequality  and the $\mm{curl}$-estimates of $\eta$, we can arrive at the total energy inequality
\begin{align}\label{for:0202n}
\frac{\mm{d}}{\mm{d}t}\tilde{\mathcal{E}}+ \mathcal{D}\lesssim \sqrt{\mathcal{E}}
 \mathcal{D}
\end{align}
for some  energy functional $\tilde{\mathcal{E}}$, which is equivalent to $\mathcal{E}$ under \emph{the stability condition \eqref{2020102241504}}. In particular, \eqref{for:0202n} further implies
\begin{align}
2\frac{\mm{d}}{\mm{d}t}\tilde{\mathcal{E}}+ \mathcal{D}\leqslant 0, \label{1.200xyx}
\end{align}  which yields the \emph{priori} stability estimate
\eqref{1.200}.  Thanks to the \emph{priori} estimate \eqref{1.200} and the unique local (-in-time) solvability of the transformed MRT problem  in Proposition \ref{202102182115}, we immediately get the unique global solvability for the transformed MRT problem.
 We mention that the derivation for the \emph{a priori} stability estimate strongly depends on the 2D structures of $\mm{div}\eta$ and $\mm{div}u$.

The decay-in-time estimate \eqref{1.200n0} can be easily observed from linear analysis.
  However the rigours derivation  is very complicated due to the nonlinear terms.
   In \cite{jiang2021asymptotic}, Jiang--Jiang  investigated the decay-in-time of solutions
to the incompressible non-resistive viscous
 MHD  equations in two-dimensional periodic domains, and used a bootstrap method in decay-in-time to obtain the higher  rate of decay-in-time of solutions, similarly to the method to improve the regularity of solutions of elliptic equations. However
    such method is too complicated to be applied our problem.
     To simplify the proof, we fore an additional  \emph{a priori} assumption
     \begin{align}
     \label{202220201915}
 \langle t\rangle^2  (\|\eta\|_{2,1}^2+ \|u\|_2^2)  \leqslant \delta.
     \end{align}
   Then we can also follow the idea in  \cite{jiang2021asymptotic} with simplified derivation to quickly establish \eqref{1.200n0}.
 It should be noted the derivation for the decay-in-time of $ \|u(t )\|_{2} $ in \eqref{1.200n0} is different to the one  in \cite{jiang2021asymptotic}. In fact, Jiang--Jiang  obtained the rate of decay-in-time $ \langle t\rangle^{-1}$ for $\|u(t )\|_{2}$ by directly using the momentum equation \eqref{01dsaf16asdfasf}$_2$ \cite{jiang2021asymptotic}. However,  we further get the better rate of decay-in-time $ \langle t\rangle^{-3/2}$ for $\|u(t )\|_{2}$ by using the estimate of temporal derivative of $u$ and the Stoke estimates, see \eqref{202225}.  Finally, we eaily further get \eqref{1.200xx}  from \eqref{1.200n0} by an asymptotic analysis method.

We can not expect the stability result for the transformed MRT problem under the condition $|m|\in [0,m_{\mm{C}})$. In fact,
this condition results in the RT instability.
\begin{thm}[Instability]\label{thm1}
Let  $\bar{\rho}$ satisfy \eqref{0102} and \eqref{0102n}. If $|m|\in [0,m_{\mm{C}})$,
then the equilibria $(\bar{\rho},0,\bar{M})$ is unstable in the Hadamard sense, that is, there are positive constants $\varpi $, $\epsilon$,
$\delta_0$, and $(\tilde{\eta}^0,\eta^\mm{r},\tilde{u}^0,u^\mm{r})\in  {^0\!\mathcal{H}^3_{\mm{s}}}$,
such that for any $\delta\in (0,\delta_0]$ and the initial data
 $$ (\eta^0, u^0):=\delta(\tilde{\eta}^0,\tilde{u}^0)
 +\delta^2(\eta^\mm{r},u^\mm{r})  , $$
there exists a unique strong solution $(\eta,u,q) $ to the transformed MRT problem \eqref{01dsaf16asdfasf} and  \eqref{20safd45}, where
$(\eta,u,q)\in \widetilde{\mathfrak{H}}^{1,3}_{\gamma, \tau  }\times {^0\mathcal{U}_{\tau}} \times (C^0(\overline{I_\tau},\underline{H}^1)\cap L^2_\tau {{H}^2})$ for any
$\tau\in I_{T^{\max}}$ and  $T^{\max}$ denotes the maximal time of existence of the solution. However,
the solution satisfies
\begin{align}
&\| \bar{\rho}(y_2)-\bar{\rho}(\chi_2(y, T^\delta)+y_2) \|_{L^1},\ \|\chi_i(T^\delta)\|_{L^1}, \ \| \partial_1\chi_{i} (T^\delta)\|_{L^1} , \nonumber  \\
& \|\partial_2\chi_i(T^\delta)\|_{{L^1}}, \ \| \mathcal{A}_{1k}\partial_k \chi_{i} (T^\delta)\|_{L^1},\ \|\mathcal{A}_{2k}\partial_k\chi_i(T^\delta)\|_{{L^1}}  \geqslant  \epsilon \label{201806012326}
\end{align}
for some escape time $T^\delta:= {\Lambda}^{-1}\mm{ln}({2\epsilon}/{\varpi \delta})\in I_T$,
where $i=1$, $2$ and $\chi$ means $\eta$ or $u$.
\end{thm}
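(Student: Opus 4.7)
The plan is to adopt the classical three-step framework for nonlinear Rayleigh--Taylor instability in the Lagrangian setting, as developed for magnetic RT problems in \cite{JFJSARMA2019,JFJSJMFMOSERT}: (i) construct a linear growing mode with maximal growth rate $\Lambda > 0$; (ii) correct the initial data to meet the nonlinear Lagrangian constraints while preserving this mode as its leading order; and (iii) run a Gronwall/bootstrap comparison of the nonlinear solution against the exponentially growing mode up to an escape time $T^\delta$ at which the linear part dominates.

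\textbf{Step 1 (Linear growing mode).} Under $|m| < m_{\mm{C}}$, the functional $-E(w) = \lambda\|m\partial_1 w\|_0^2 - g\int \bar{\rho}' w_2^2\,\mm{d}y$ is negative for some $w \in H^1_\sigma$, so the linearization of the transformed MRT problem admits growing normal modes. Seeking $(\eta, u)(y,t) = e^{\Lambda t}(\tilde\eta, \tilde u)(y)$ in the linearization
\begin{align*}
\bar{\rho}\Lambda^2 \tilde\eta + \nabla \tilde q - \mu \Lambda \Delta \tilde\eta = \lambda m^2 \partial_1^2 \tilde\eta + g \bar{\rho}'(y_2)\tilde\eta_2 \mathbf{e}_2, \qquad \mm{div}\,\tilde\eta = 0,
\end{align*}
with $(\tilde\eta_2, \partial_2 \tilde\eta_1)|_{\partial\Omega} = 0$, I would recover the largest growth rate $\Lambda > 0$ together with an eigenfunction $(\tilde\eta^0, \tilde u^0) \in {^0\!\mathcal{H}^3_{\mm{s}}}$ by the modified variational method of \cite{JFJSARMA2019}, setting $\tilde u^0 := \Lambda \tilde\eta^0$. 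Fixing a single horizontal Fourier mode in the amplifying subspace, I can further arrange that each of the six $L^1$-quantities in \eqref{201806012326} evaluated on the linear part is bounded below by a positive constant multiple of $\varpi$.

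\textbf{Step 2 (Admissible initial data).} The linear mode generically violates the nonlinear Lagrangian constraints $\det(I + \nabla\eta^0) = 1$ and $\mm{div}_{\mathcal{A}^0} u^0 = 0$. I would construct a correction $(\eta^{\mm{r}}, u^{\mm{r}}) \in {^0\!\mathcal{H}^3_{\mm{s}}}$ so that
\begin{align*}
(\eta^0, u^0) := \delta(\tilde\eta^0, \tilde u^0) + \delta^2(\eta^{\mm{r}}, u^{\mm{r}})
\end{align*}
lies in ${^0{\mathcal{H}}^{3,\mm{s}}_{1,\gamma}} \times {^0\mathcal{H}^2_{\mm{s}}}$ and satisfies $\mm{div}_{\mathcal{A}^0} u^0 = 0$. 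Expanding the two constraints in $\delta$ produces quadratic source terms in $(\tilde\eta^0, \tilde u^0)$; the resulting inhomogeneous div problems with Navier data are solved by a Bogovskii-type operator compatible with $(\eta_2, \partial_2\eta_1)|_{\partial\Omega} = 0$, giving $\|(\eta^{\mm{r}}, u^{\mm{r}})\|_3 \lesssim \|(\tilde\eta^0, \tilde u^0)\|_3^2$ and thus $\|(\nabla\eta^0, u^0)\|_2 \leqslant C\delta$. The local well-posedness Proposition \ref{202102182115} then supplies a unique strong solution on a maximal interval $I_{T^{\mm{max}}}$.

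\textbf{Step 3 (Bootstrap, escape, and main obstacle).} Define $T^\delta := \Lambda^{-1}\ln(2\epsilon/(\varpi\delta))$ and the error $(\eta^{\mm{d}}, u^{\mm{d}}) := (\eta, u) - \delta e^{\Lambda t}(\tilde\eta^0, \tilde u^0)$, which solves the linearized MRT system with source terms that are quadratic in the full solution. Under the bootstrap hypothesis $\|(\nabla\eta, u)(t)\|_2 \leqslant \epsilon_0$, these sources are bounded by $\delta^2 e^{2\Lambda t}$. Combining a largest-growth-rate estimate for the linear semigroup (any solution grows at most like $e^{\Lambda t}$ in a suitable energy norm) with Duhamel's formula yields
\begin{align*}
\|(\eta^{\mm{d}}, u^{\mm{d}})(t)\|_2 \lesssim \delta^2 e^{2\Lambda t}, \qquad t \in [0, T^\delta].
\end{align*}
Choosing $\epsilon$ small enough that the right-hand side at $T^\delta$ is dominated by the linear part $\delta e^{\Lambda T^\delta}\|(\tilde\eta^0, \tilde u^0)\|_2 \sim \epsilon$ propagates the bootstrap up to $T^\delta < T^{\mm{max}}$, and evaluating each $L^1$-norm in \eqref{201806012326} at $T^\delta$, with $\mathcal{A} = I + O(\epsilon)$, delivers the six stated lower bounds. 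The hard part will be establishing the largest-growth-rate estimate for the error equation: unlike the stable regime of Theorem \ref{thm2}, the natural energy identity has the wrong sign because $-E(\cdot)$ is no longer coercive, so I would need to design an equivalent modified energy whose time derivative is bounded above by $2\Lambda$ times itself modulo harmless dissipative remainders, following the eigen-decomposition argument of \cite{JFJSARMA2019}. A subsidiary technical point is preserving the Lagrangian constraint $\det(I + \nabla\eta) = 1$ along the flow, which is exactly what the quadratic correction $\delta^2(\eta^{\mm{r}}, u^{\mm{r}})$ handles at $t = 0$ and what $\eta_t = u$ together with $\mm{div}_{\mathcal{A}} u = 0$ propagates thereafter.
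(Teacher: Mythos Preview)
Your three-step framework matches the paper's, and Steps~1--2 are essentially what the paper does (Propositions~\ref{pro:08252100} and~\ref{pro:0101}). Step~3, however, differs from the paper's route in a way worth flagging.

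The paper does not use Duhamel or any linear semigroup bound. Instead it runs a \emph{two}-parameter bootstrap: one time $T^*$ controls smallness of $\|(\nabla\eta,u)\|_2$, another $T^{**}$ controls the exponential growth $\|\eta\|_0\leqslant 2c_3^I\delta e^{\Lambda t}$. The high-norm bootstrap $T^*$ is closed not by the error estimate but by a separate Gronwall-type energy inequality for the full nonlinear solution (Proposition~\ref{pro:0301n0845}),
\[
\mathcal{E}(t)+c\int_0^t\mathcal{D}\,\mm{d}\tau\lesssim \|(\nabla\eta^0,u^0)\|_2^2+\int_0^t\|(\eta_2,u_2)\|_0^2\,\mm{d}\tau,
\]
which your plan omits. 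The error $(\eta^{\mm d},u^{\mm d})$ is then estimated only in $H^1$ (Proposition~\ref{2022202101315}), via a direct energy identity on the difference and the variational inequality $E(v)\leqslant\Lambda^2\|\sqrt{\bar\rho}v\|_0^2+\Lambda\mu\|\nabla v\|_0^2$ for $v\in H^1_\sigma$, which is immediate from the definition of $\Lambda$ as the largest growth rate. Because $u^{\mm d}$ is not divergence-free, a small Stokes correction is inserted before applying this inequality. The resulting bound $\|(\eta^{\mm d},u^{\mm d})\|_1\lesssim\sqrt{\delta^3e^{3\Lambda t}}$ suffices for the $L^1$ lower bounds via $H^1\hookrightarrow W^{1,1}$ and closes $T^{**}$.

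Your single-bootstrap Duhamel scheme would instead need $\|e^{Lt}\|_{H^2\to H^2}\leqslant Ce^{\Lambda t}$ for the non-self-adjoint linearized operator---a genuinely sharper statement than the variational inequality above, and not something an ``eigen-decomposition argument of \cite{JFJSARMA2019}'' provides; the correct template for this problem is \cite{JFJSZWC}. The paper's decoupling (Gronwall for the $H^2\times H^3$ norms, variational bound for the $H^1$ error) is precisely what sidesteps the need for any semigroup estimate.
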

\begin{rem}
Following the arguments of Theorem \ref{thm1} and \cite[Corollary 2.2]{JFJSZWC},
it is easy to check that the  corresponding 3D transformed MRT problem is always unstable for any $|m|\geqslant 0$.
\end{rem}
\begin{rem}
By the inverse transformation of Lagrangian coordinates in  \eqref{2022201271250} in Remark \ref{202201271239} and  the instability relation in  \eqref{201806012326}, we easily obtain the instability expressions in Eulerian coordinates: for $i=1$, $2$,
$$\|\varrho(T^\delta)\|_{L^1},\ \|v_i(T^\delta)\|_{L^1},\ \|\partial_1 v_i(T^\delta)\|_{L^1},\ \|\partial_2 v_i(T^\delta)\|_{L^1} \geqslant  \epsilon  $$
and
$$ \| N_i(T^\delta)\|_{L^1} \geqslant  m \epsilon . $$
\end{rem}

The proof of Theorem \ref{thm1} is based on the so-called bootstrap instability method. The bootstrap instability method has its origin
in \cite{GYSWIC,GYSWICNonlinea}, and adapted and generalized by many authors to investigate other flow instabilities, see \cite{FSSWVMNA,GYHCSDDC,JFJSZWC} for examples.
In particular, recently Jiang--Jiang--Zhan proved the existence of the RT instability solution under $L^1$-norm for the stratified viscous,
 non-resistive MHD fluids \cite{JFJSZWC}. In this paper, we will adapt the version of the bootstrap instability method in \cite{JFJSZWC}
 to prove Theorem \ref{thm1}. For the completeness, we will present the detailed proof in Section \ref{sec:instable}.

The rest of this paper is organized as follows. In Sections \ref{sec:global}--\ref{sec:instable}, we provide the proofs for  Theorems \ref{thm2}  and  \ref{thm1} in sequence.  Finally, in \ref{sec:09} we list some mathematical results, which will be used in Sections \ref{sec:global}--\ref{sec:instable}.

\section{Proof of Theorem \ref{thm2}}\label{sec:global}
This section is devoted to the proof of Theorem \ref{thm2}. The key step in the proof is to \emph{a priori}  derive the  total energy estimate  \eqref{1.200}
and the decay-in-time \eqref{1.200n0} for the transformed MRT problem \eqref{01dsaf16asdfasf} and \eqref{20safd45}.
To this end, let $(\eta,u,q)$ be a solution to the transformed MRT problem,  and satisfy \eqref{apresnew}--\eqref{aprpiosesnew},
where ${\delta}$ is sufficiently small, and the smallness of
$\delta$ depends on $\mu $, $g$, $\lambda$, $m$, $\bar{\rho}$ and $\Omega$. It should be noted  that   $m$ and $\bar{\rho}$ satisfy
the assumptions in Theorem \ref{thm2}. Next, we start with \emph{a priori} estimates.
 \subsection{Preliminary estimates}
First, we shall establish some preliminary estimates involving $(\eta,u)$.
\begin{lem}[Nonlinear estimates]
\label{201805141072}
 For any given $t\in \overline{I_T}$, we have
\begin{enumerate}[(1)]
\item  the estimates of $\mm{div} { {\eta}}$:
\begin{align}
&\label{improxtian1}
\|\mm{div} { {\eta}}\|_{i} \lesssim
\|\nabla \eta\|_2\| \eta\|_{1,i}\mbox{ for }0\leqslant i\leqslant  2,\\
&\label{improtian1}
\|\mm{div} { {\eta}}\|_{i,0} \lesssim
\begin{cases}
\|\nabla \eta\|_2\|\eta\|_{1+i,0}&\mbox{for }i=0,\ 1;\\
 \|\eta\|_{3,0}\|\nabla \eta\|_2+ \|\eta\|_{2,1}^2 &\mbox{for }i=2. \end{cases}  \end{align}
%\item  the estimates of $\mm{div}{ {u}}$: \begin{align} &\label{201808181500} \|\mm{div} u\|_{i}\lesssim \| \nabla \eta \|_2(\|\nabla u\|_0+\| u\|_{\underline{1}, i}) \mbox{ for }0\leqslant i\leqslant 2.\end{align}
\item the estimate  involving the gravity term: for sufficiently small $\delta$,
\begin{align}
&\left\| \mathcal{G}\right\|_{\underline{1},0}\lesssim
\|\eta_2\|_0\|\eta_2\|_{\underline{2},1}  \label{2022011130957}
\end{align}
where $\mathcal{G}:=G_{\eta}-g  \bar{\rho}'\eta_2 $.
\item the estimates involving cur:
\begin{align}
&\label{201907291432} \|  \operatorname{curl}_{\tilde{\mathcal{A}}} \eta  \|_2\lesssim\| \eta\|_{1,2}\|\nabla \eta\|_2,\\
%&\| \mm{curl}_{\partial_1 {\mathcal{A}}} \eta \|_{ 1}  \lesssim \|\eta\|_{2,1}\|\nabla\eta\|_{2},  \label{220202011843}\\
&\| \mm{curl}_{\partial_1\mathcal{A}}    \partial_1\eta \|_{ 0}\lesssim \|\eta\|_{2,1}^2.  \label{202220202011843}
\end{align}
\end{enumerate}
\end{lem}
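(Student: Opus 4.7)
The lemma gathers three groups of nonlinear estimates that share a common recipe---Leibniz followed by 2D Sobolev, with Gagliardo--Nirenberg/Ladyzhenskaya on balanced products---but are anchored in three different algebraic identities; I would treat them in the order \eqref{improxtian1}--\eqref{improtian1}, then \eqref{2022011130957}, and finally \eqref{201907291432}--\eqref{202220202011843}. For the divergence estimates the starting point is the identity
\begin{equation*}
\mm{div}\,\eta=\partial_1\eta_2\,\partial_2\eta_1-\partial_1\eta_1\,\partial_2\eta_2,
\end{equation*}
obtained by expanding the \emph{a priori} constraint \eqref{aprpiosasfesnew}, $\det(I+\nabla\eta)=1$. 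The key structural feature is that every product on the right-hand side carries a $\partial_1$-derivative on at least one of its factors, so applying $\partial^\alpha$ with $|\alpha|\leq i$ (for \eqref{improxtian1}) or $\alpha=(i,0)$ (for \eqref{improtian1}) and the Leibniz rule produces sums of terms $\partial_1^{a}\partial_2^{b}\eta\cdot\partial_1^{c}\partial_2^{d}\eta$ with $a+c\geq 1$. For each such product I would place the higher-order factor in $L^2$, controlled by $\|\partial_1\eta\|_i=\|\eta\|_{1,i}$ or by $\|\eta\|_{1+i,0}$ as appropriate, and the remaining factor in $L^\infty$ via the 2D embedding $H^2\hookrightarrow L^\infty$, absorbing its norm into $\|\nabla\eta\|_2$. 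The $i=2$ case of \eqref{improtian1} is the main obstacle: the symmetric balanced products $\partial_1^2\eta\cdot\partial_1\partial_2\eta$ admit neither factor in $L^\infty$ at $H^3$ regularity, so I would switch to Hölder with $L^4\times L^4$ and the 2D Gagliardo--Nirenberg inequality $\|f\|_{L^4}^2\lesssim\|f\|_0\|f\|_1$, which is precisely what generates the residual $\|\eta\|_{2,1}^2$ in the stated bound; where useful, I would also invoke the scalar identity $\partial_2\eta_2=-\partial_1\eta_1+\mm{div}\,\eta$ (from $\det(I+\nabla\eta)=1$) to trade a $\partial_1\partial_2\eta_2$ for $\partial_1^2\eta_1$ modulo a quadratic self-referential correction that is absorbed by the smallness $\|\nabla\eta\|_2\leq\delta$.

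For the gravity estimate \eqref{2022011130957}, I would invoke the integral form of the second-order Taylor remainder of $\bar{\rho}$,
\begin{equation*}
\mathcal{G}=g\eta_2^{\,2}\int_0^1(1-s)\,\bar{\rho}''(y_2+s\eta_2)\,\mm{d}s,
\end{equation*}
which uses only $\bar{\rho}\in C^2$ and displays $\mathcal{G}$ as quadratic in $\eta_2$ with a uniformly bounded integral (since $\|\eta_2\|_{L^\infty}\lesssim\|\eta\|_2\leq\delta$ by \eqref{aprpiosesnew}). For the $\partial_1$-piece I would not differentiate the remainder (to avoid requiring $\bar{\rho}\in C^3$) but instead compute directly
\begin{equation*}
\partial_1\mathcal{G}=g\partial_1\eta_2\,[\bar{\rho}'(y_2+\eta_2)-\bar{\rho}'(y_2)]=g\,\eta_2\,\partial_1\eta_2\int_0^1\bar{\rho}''(y_2+s\eta_2)\,\mm{d}s,
\end{equation*}
which is again quadratic, of the form $\eta_2\cdot\partial_1\eta_2$ times a bounded coefficient. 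The stated bound then follows from Hölder with $L^4\times L^4$, the 2D Ladyzhenskaya inequality, and Young's inequality, by which $\|\eta_2^{\,2}\|_0$ and $\|\eta_2\,\partial_1\eta_2\|_0$ are both controlled in terms of $\|\eta_2\|_0\,\|\eta_2\|_{\underline{2},1}$.

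For the curl estimates I would use the explicit matrix formula
\begin{equation*}
\tma=\mathcal{A}-I=\begin{pmatrix}\partial_2\eta_2 & -\partial_1\eta_2\\ -\partial_2\eta_1 & \partial_1\eta_1\end{pmatrix},
\end{equation*}
which is valid once $\det(I+\nabla\eta)=1$ and shows entrywise that $\tma=O(|\nabla\eta|)$ and $\partial_1\tma=O(|\partial_1\nabla\eta|)$. Substituting into $\mm{curl}_{\tma}\eta=\tma_{1k}\partial_k\eta_2-\tma_{2k}\partial_k\eta_1$ and into $\mm{curl}_{\partial_1\mathcal{A}}\partial_1\eta$ and expanding, every resulting term is a product of two derivatives of $\eta$ in which the total number of $\partial_1$'s distributed across the two factors is at least one for \eqref{201907291432} and at least two for \eqref{202220202011843}. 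Bound \eqref{201907291432} then follows from the Banach-algebra property of $H^2$ in 2D applied factor-by-factor, while \eqref{202220202011843} follows from Hölder with $L^4\times L^4$ combined with Gagliardo--Nirenberg, exactly as in the divergence argument. In both curl estimates the apparent tightness of the right-hand sides is actually misleadingly pessimistic: under $\det(I+\nabla\eta)=1$ the two expressions in fact vanish identically, which is consistent with (and a quick sanity check for) the quadratic bounds we prove.
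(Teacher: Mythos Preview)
Your approach mirrors the paper's: same determinant identity for $\mm{div}\,\eta$, same integral Taylor remainder for $\mathcal{G}$ with $\partial_1\mathcal{G}$ computed directly, and same expansion of the curl expressions via the explicit cofactor form of $\tilde{\mathcal{A}}$. The paper packages the product estimates as \eqref{fgestims}--\eqref{fgessfdims} together with the horizontal Poincar\'e inequality \eqref{202012241002}, but these are the same tools you invoke.

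One point needs sharpening. In the $i=2$ case of \eqref{improtian1} the isotropic $L^4\times L^4$ with Ladyzhenskaya $\|f\|_{L^4}^2\lesssim\|f\|_0\|f\|_1$ does \emph{not} give $\|\eta\|_{2,1}^2$: for a term like $\partial_1^2\eta_2\cdot\partial_1\partial_2\eta_1$ it yields at best $\|\eta\|_{2,1}^{3/2}\|\nabla\eta\|_2^{1/2}$, because the $H^1$-norm of $\partial_1\partial_2\eta$ drags in $\partial_2^2\partial_1\eta$, which is controlled only by $\|\nabla\eta\|_2$, not by $\|\eta\|_{2,1}$; the same obstruction hits the ``unbalanced'' term $\partial_1\eta_2\cdot\partial_1^2\partial_2\eta_1$ under your $H^2\hookrightarrow L^\infty$ scheme. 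The paper closes this with the \emph{anisotropic} product estimate \eqref{fgessfdims}, which uses one-dimensional Sobolev in $y_1$ only, combined with the horizontal Poincar\'e bound $\|\eta\|_{1,1}\lesssim\|\eta\|_{2,1}$ from \eqref{202012241002}; with that pairing every cross term lands in $\|\eta\|_{2,1}^2$ as stated. This is a refinement, not a different strategy, but your sketch as written does not reach the stated right-hand side.

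Your closing observation is correct and in fact sharper than the paper: under $\det(I+\nabla\eta)=1$ both $\mathrm{curl}_{\tilde{\mathcal{A}}}\eta$ and $\mathrm{curl}_{\partial_1\mathcal{A}}\partial_1\eta$ vanish identically, since in the paper's own formula for $\mathrm{curl}_{\partial_1^k\tilde{\mathcal{A}}}\partial_1^l\eta$ each bracket is antisymmetric in its two factors whenever $k=l$. The paper proves the quadratic bounds without noting this, so \eqref{201907291432}--\eqref{202220202011843} are trivially true here; your argument for them is therefore fine either way.
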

\begin{rem}
Here and in what follows, we define  $\tilde{\mathcal{A}}:=\mathcal{A}-I$. Then
\begin{align}
\tilde{\mathcal{A}}=
\begin{pmatrix}
\partial_2\eta_2 \quad-\partial_1\eta_2\\[1mm]
-\partial_2\eta_1\quad\partial_1\eta_1
             \end{pmatrix}. \nonumber
\end{align}
\end{rem}
\begin{pf}
(1)
Recalling \eqref{aprpiosasfesnew}, we can calculate that
\begin{align}
&\mm{div}\eta=\partial_1\eta_2\partial_2\eta_1-\partial_1\eta_1\partial_2\eta_2 \nonumber
\end{align}
and \begin{align}
\partial_1\mm{div}\eta=\partial_1 \eta_2\partial_2\partial_1 \eta_1+\partial_1^2\eta_2\partial_2\eta_1-\partial_1\eta_1\partial_2\partial_1  \eta_2
-\partial_1^2\eta_1\partial_2\eta_2
. \label{202201162022}
\end{align}
Exploiting   the product estimates \eqref{fgestims}, \eqref{fgessfdims} and Poincar\'e's inequality \eqref{202012241002}, it is easy to see from the above two relations that \eqref{improxtian1} and \eqref{improtian1} hold for $i=0$, $1$.
 If we further apply $\partial_1$ to the  identity \eqref{202201162022}, we also check that $\|\mm{div} { {\eta}}\|_{2,0}$ satisfies \eqref{improtian1} with $i=2$.

 %By \eqref{01dsaf16asdfasf}$_3$, we have\begin{align}\mm{div}u=-\mm{div}_{\tilde{\mathcal{A}}}u =\partial_1\eta_2\partial_2 u_1 + \partial_2\eta_1\partial_1 u_2 -\partial_1\eta_1\partial_2 u_2-\partial_2\eta_2\partial_1 u_1. \label{2022202180922} \end{align}Applying the norm $\|\cdot\|_i$ to the above identity, and using the product estimate \eqref{fgestims}, we immediately get  \eqref{201808181500}.

(2)
  By virtue of \eqref{aprpiosesnew} and Lemma \ref{pro:1221}, $\zeta:=\eta+y$ satisfies the diffeomorphism  properties \eqref{20210301715x} and \eqref{20210301715} for sufficiently small $\delta$. Thus, $\bar{\rho}^{(j)}(y_2+\eta_2)$ for any $y\in\overline{\Omega}$ makes sense, and
\begin{align}
\label{20200830asdfa2114}
\bar{\rho}^{(j)}(y_2+\eta_2)-\bar{\rho}^{(j)}(y_2) =  \int_{0}^{\eta_2} \bar{\rho}^{(j+1)}(y_2+z)\mm{d}z
 \mbox{ for }   j=0, \ 1.
\end{align}
Moreover, for any given $t\in \overline{I_T}$,
\begin{align}
%&\label{esmmdforiasfanfty} \sup_{y\in \overline{\Omega}} \left| \bar{\rho}^{(j+1)}(y_2+\eta_2)\right|\lesssim 1, \\
&\label{esmmdforiasdfaasfanfty}
 \sup_{y\in \overline{\Omega}} \sup_{z\in \Psi}\left| \bar{\rho}^{(j+1)}(y_2+z)\right| \lesssim 1,
\end{align}
where $\Psi:= \{\tau~|~0\leqslant\tau\leqslant\eta_2\}$ for $\eta_2\geqslant 0$ and $:=(\eta_2,0]$ for $\eta_2< 0$.

Making use of \eqref{20200830asdfa2114}, \eqref{esmmdforiasdfaasfanfty},  \eqref{fgessfdims}  and the relation
\begin{align}\nonumber
\bar{\rho}(y_2+\eta_2)-\bar{\rho}(y_2) =\bar{\rho}'(y_2)\eta_2+{ \int_{0}^{\eta_2}\left(\eta_2(y,t) -
z\right)\bar{\rho}''(y_2+z)\mm{d}z},
\end{align}
 it is easy to estimate that
$$\begin{aligned}
& \left\|\mathcal{G}\right\|_0 =g\left\|\int_{0}^{\eta_2}\left(\eta_2(y,t) -
z\right)\bar{\rho}^{''}(y_2+z)\mm{d}z \right\|_0 \lesssim\|\eta_2^2\|_{0} \lesssim\|\eta_2\|_{0}\|\eta_2\|_{\underline{1},1}
 \end{aligned}
$$
and
\begin{align}
\left\| \mathcal{G}\right\|_{1,0}
&=g \left\|\big(\bar{\rho}'(y_2+\eta_2)-\bar{\rho}'\big)\partial_1
\eta_2\right\|_0\lesssim\|\eta_2\partial_1\eta_2\|_{0}  \lesssim
\|\eta_2\|_0\|\partial_1\eta_2\|_{\underline{1},1} .
\label{202022011201820}
\end{align}
Thanks to the above two estimates, we immediately obtain   \eqref{2022011130957}.

(3) Noting that, for $0\leqslant k+l\leqslant 2$,
\begin{align*}
&\operatorname{curl}_{\partial_1^k\tilde{\mathcal{A}}  } \partial_1^l\eta= (\partial_2\partial_1^k\eta_1\partial_{1}-\partial_1^{1+k}\eta_1\partial_{2})\partial_1^l\eta_1+ (\partial_2\partial_1^k\eta_2\partial_{1}-\partial_1^{1+k}\eta_2\partial_{2})\partial_1^l\eta_2 ,
\end{align*}
thus it is easy to check that \eqref{201907291432} and \eqref{202220202011843}  hold by following the arguments  of \eqref{improxtian1} and \eqref{improtian1}.
\hfill $\Box$
\end{pf}

\begin{lem}\label{lem:202012242115}
We have
\begin{enumerate}[(1)]
%\item the estimate for $G_{\eta}$: for sufficiently small $\delta$,\begin{align}&\label{omessetsim122}\|G_{\eta}\|_2 \lesssim  \|\eta_2\|_2.\end{align}
\item the estimate of $\eta_2$:
\begin{align}
&\label{omessetsim122n}
\|\eta_2\|_i\lesssim
\begin{cases}
\|\eta\|_{1,0} &\mbox{for } i=0,\ 1;\\
\|\eta\|_{1,i-1}&\mbox{for } i=2,\ 3.
\end{cases}
%\\   &\label{omem122n} \|u_2\|_{2}\lesssim \|u\|_{1,2}+\|\partial_1\eta\|_{2}\|u\|_{2}+\|\partial_1\eta\|_{\underline{1},1}\|u\|_3.
%\\ &\label{omesm122n} \|u_2\|_0 \lesssim \|u\|_{1,0}+\|\eta\|_{1,1}\|u\|_2.
\end{align}
\item Poincar\'e inequality for $\eta$, $u$ and $u_t$:  for $j=1$ and $2$,
\begin{align}
&\|  \eta_j\|_1\lesssim \|\nabla \eta_j\|_0 ,  \label{202211}\\
&\|    u_j\|_1\lesssim \| \nabla u_j\|_0, \label{2022111522311}\\
&\|  \partial_t u_j\|_1\lesssim \|\nabla \partial_t u_j\|_0. \label{2022220111522311}
\end{align}
\item  the estimate  involving the gravity term: for sufficiently small $\delta$,
\begin{align}
 \left\| \mathcal{G}\right\|_{1}\lesssim \|  \eta_2 \|_{ 0} ,
\label{2022201201821}
\end{align}
\item curl estimates:   for sufficiently small $\delta$,
\begin{align}
\|    \eta\|_{k,3-k}\lesssim   \|   \mm{curl}  \eta  \|_{k,2-k}\mbox{ where }0\leqslant k \leqslant 2 .\label{2022202011749}
\end{align}
\end{enumerate}
\end{lem}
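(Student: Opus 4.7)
The four items share two structural ingredients that I would exploit throughout: a Poincaré inequality in the vertical variable (via the Navier trace \eqref{20safd45} on $\eta_2$, $u_2$, $\partial_2\eta_1$, $\partial_2 u_1$) or in the mean (via the weighted zero-mean conditions in \eqref{apresnew}), and the observation that under the incompressibility constraint $\det(I+\nabla\eta)=1$, $\mm{div}\,\eta$ is purely quadratic in $\nabla\eta$ and therefore absorbable under the a priori smallness \eqref{aprpiosesnew}.

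For (1), I would expand the Jacobian identity to read $\partial_2\eta_2=-\partial_1\eta_1+\mm{div}\,\eta$, combine it with Poincaré in $y_2$ (legitimate since $\eta_2|_{\partial\Omega}=0$) and with the nonlinear bounds of Lemma \ref{201805141072}, giving $\|\eta_2\|_0\lesssim\|\partial_2\eta_2\|_0\lesssim\|\partial_1\eta_1\|_0+\|\mm{div}\,\eta\|_0\lesssim(1+\delta)\|\eta\|_{1,0}$. For $i=1,2,3$ I would iterate with the tangential derivative $\partial_1$ (which preserves the vanishing trace) and at most one normal derivative, using \eqref{improxtian1}-\eqref{improtian1} to control $\mm{div}\,\eta$ in the appropriate norm. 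For (2), the case $j=2$ follows by direct Poincaré from the zero trace of $\eta_2$, $u_2$, $\partial_t u_2$. For $j=1$, the weighted identities $(\bar\rho\eta_1)_\Omega=0$ from \eqref{apresnew}, together with $(\bar\rho u_1)_\Omega=(\bar\rho\partial_t u_1)_\Omega=0$ obtained by time-differentiating in light of $\eta_t=u$, combine with the strict positivity of $\bar\rho$ from \eqref{0102} to supply a weighted Poincaré-Wirtinger inequality.

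For (3), the second-order Taylor expansion
\[
\mathcal{G}=g\int_0^{\eta_2}(\eta_2-z)\,\bar\rho''(y_2+z)\,\mm{d}z
\]
gives $|\mathcal{G}|\lesssim\eta_2^2$ pointwise, and Sobolev embedding $H^2\hookrightarrow L^\infty$ together with part (1) yields $\|\eta_2\|_{L^\infty}\lesssim\|\eta_2\|_2\lesssim\|\nabla\eta\|_2\leq\delta$, so $\|\mathcal{G}\|_0\lesssim\delta\|\eta_2\|_0$. Differentiating $\mathcal{G}$ produces terms of the form $\bigl(\bar\rho'(y_2+\eta_2)-\bar\rho'(y_2)\bigr)\partial_i\eta_2$ and the pure-remainder term $\bigl(\bar\rho'(y_2+\eta_2)-\bar\rho'(y_2)-\bar\rho''(y_2)\eta_2\bigr)\delta_{i2}$, each handled by the same Taylor/Hölder argument and each picking up an extra factor of $\delta$ through either $\|\eta_2\|_{L^\infty}$ or $\|\nabla\eta_2\|_{L^\infty}$; together they give $\|\mathcal{G}\|_1\lesssim\delta\|\eta_2\|_0$, which is dominated by $\|\eta_2\|_0$ once $\delta\leqslant 1$.

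Part (4) is the main technical obstacle and the key novelty of the lemma. The plan is to apply a Hodge-type elliptic estimate of the form
\[
\|w\|_{H^{s+1}}\lesssim\|\mm{curl}\,w\|_{H^s}+\|\mm{div}\,w\|_{H^s}+|(w_1)_\Omega|
\]
valid for vector fields $w$ on the slab $\Omega$ with $w_2|_{\partial\Omega}=0$ (such a result should be among the auxiliary estimates collected in the appendix), taking $w=\partial_1^k\eta$ for $k=0,1,2$; the trace $\partial_1^k\eta_2|_{\partial\Omega}=0$ is preserved by tangential differentiation. For $k\geqslant 1$ the mean $(\partial_1^k\eta_1)_\Omega$ vanishes automatically by horizontal periodicity, while for $k=0$ the weighted constraint $(\bar\rho\eta_1)_\Omega=0$ together with a Poincaré-Wirtinger estimate and the positivity of $\bar\rho$ converts $|(\eta_1)_\Omega|$ into a quantity dominated by $\|\nabla\eta_1\|_0$, which is in turn controlled by the curl contribution. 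The divergence contribution $\|\partial_1^k\mm{div}\,\eta\|_{H^{2-k}}$ is bounded by $\delta\|\eta\|_{k,3-k}$ by Lemma \ref{201805141072}, so it is absorbed into the left-hand side for $\delta$ sufficiently small. The delicate point, and where I expect the main difficulty, is to match the regularity grading $\|\eta\|_{k,3-k}\leftrightarrow\|\mm{curl}\,\eta\|_{k,2-k}$ uniformly in $k$ so that absorption is consistent across $k=0,1,2$; the anisotropy of the semi-norms forces one to set up the elliptic estimate with exactly the tangential/normal split dictated by the a priori scheme, rather than invoking an isotropic $H^{s+1}$ bound.
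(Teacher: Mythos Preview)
Your proposal is essentially correct and follows the same strategy as the paper throughout: Poincar\'e in $y_2$ plus the quadratic structure of $\mm{div}\,\eta$ for (1), weighted/boundary Poincar\'e for (2), Taylor expansion for (3), and the Hodge estimate \eqref{202005021302} plus absorption of the nonlinear divergence for (4). Two small points of divergence are worth noting.

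In part (2), the paper obtains $(\bar\rho\,\partial_t u_1)_\Omega=0$ not by time-differentiating $(\bar\rho u_1)_\Omega$ but by testing the momentum equation \eqref{01dsaf16asdfasf}$_2$ against $\mathbf{e}_1$, integrating by parts with the Piola identity \eqref{202201152005}, and using the boundary condition \eqref{20safd45}. Your route via twice time-differentiating $(\bar\rho\eta_1)_\Omega=0$ is also valid given the available regularity $u_t\in C^0(\overline{I_T},L^2)$, and is arguably more direct.

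In part (3), your claim that the ``pure remainder'' $\bar\rho'(y_2+\eta_2)-\bar\rho'(y_2)-\bar\rho''(y_2)\eta_2$ gains an extra factor of $\delta$ through $\|\eta_2\|_{L^\infty}$ tacitly uses a second-order Taylor bound on $\bar\rho'$, which requires $\bar\rho\in C^3$; the paper only assumes $\bar\rho\in C^2$ (see \eqref{0102}). Under that weaker hypothesis the remainder is merely $O(|\eta_2|)$ pointwise via the mean-value theorem on $\bar\rho'$, which still delivers $\|\partial_2\mathcal{G}\|_0\lesssim\|\eta_2\|_0$---exactly what \eqref{2022201201821} asserts---so your overclaim is harmless for the stated lemma, but the sharper bound $\|\mathcal{G}\|_1\lesssim\delta\|\eta_2\|_0$ is not available in general.
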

\begin{pf}
 (1) Noting that
\begin{align}
\eta_2|_{\partial\Omega}=0 \mbox{ and } \partial_2\eta_2=\mm{div}\eta-\partial_1\eta_1,
\label{2020101131413}
\end{align} we use \eqref{aprpiosesnew}, \eqref{improxtian1},  \eqref{2020101131413}  and \eqref{poinsafdcasfaressadf1} to get
\begin{align}
&\nonumber \|\eta_2\|_0\lesssim\| (\partial_1\eta_1,\mm{div}\eta)\|_{0}\lesssim \|\eta\|_{1,0},\\
& \nonumber
\|\eta_2\|_1\lesssim\|\eta_2\|_{0}+\|\nabla\eta_2\|_{0}\lesssim\|(\partial_1\eta ,\mm{div}\eta)\|_{0}\lesssim \|\eta\|_{1,0},\\
&
\|\eta_2\|_{i}\lesssim
\|\eta_2\|_0+ \|\eta_2\|_{1,i-1} + \|\partial_2\eta_2\|_{i-1} \lesssim
 \|\eta\|_{1,i-1} \mbox{ for }i=2,\ 3 .
\end{align}
Thus, we immediately get \eqref{omessetsim122n}   from the above four estimates.

(2) By \eqref{01dsaf16asdfasf}$_1$ and \eqref{apresnew}, it is easy to see that
\begin{align}
( \bar{\rho}u_1)_\Omega= 0
\label{2022202052011}
\end{align}
Thus the estimates \eqref{202211} and \eqref{2022111522311} obviously hold due to \eqref{apresnew}, \eqref{2022202052011}, \eqref{poinsafdcasfaressadf1}, Lemma \ref{lem:0102} and the boundary  condition $(\eta_2, u_2)|_{\partial\Omega}=0$.

  Multiplying \eqref{01dsaf16asdfasf}$_2$ by $\mathbf{e}_1$ in $L^2$, and then using the integral by parts and the relation \begin{align}
\label{202201152005}
\partial_j(\partial_t^k\mathcal{A}_{ij}f)=\partial_t^k\mathcal{A}_{ij}\partial_jf\mbox{ for }k=0,\ 1,
\end{align} we get
$$ \int \bar{\rho}\partial_t u_1 \mm{d}y+ \int_{\partial\Omega} \vec{n}_2\left(  \mathcal{A}_{12}q -\mu \mathcal{\mathcal{A}}_{i2}\mathcal{\mathcal{A}}_{ij} \partial_j u_1\right)\mm{d}y=0 ,$$
which, together with the boundary condition \eqref{20safd45}, yields
\begin{align}
\label{20222011181844}
( \bar{\rho} \partial_t u_1)_\Omega=0.
\end{align}
Thanks to \eqref{20222011181844} and Lemma \ref{lem:0102}, thus we  have \eqref{2022220111522311} for $j=1$.
 Noting that $\partial_t u_2|_{\partial \Omega}=0$, thus, by \eqref{poinsafdcasfaressadf1}, we also have \eqref{2022220111522311} for $j=2$. Hence \eqref{2022220111522311} holds.

(3) Similarly to \eqref{202022011201820}, we can estimate that
 \begin{align*}\left\|\partial_2\mathcal{G}\right\|_0
=&g\left\| \big(\bar{\rho}'(y_2+\eta_2)-\bar{\rho}'\big)(1+\partial_2
\eta_2)-\bar{\rho}''\eta_2 \right\|_0
  \\
 & \lesssim \| ( \eta_2, \eta_2  \partial_2\eta_2 )\|_0\lesssim \|  \eta_2  \|_0,
 \end{align*}
 which, together with \eqref{2022011130957}, yields \eqref{2022201201821}.

 (4) Making use of \eqref{improxtian1}, \eqref{improtian1}, \eqref{202211}, \eqref{202012241002} and
\eqref{202005021302}, we have
\begin{align*}
\|    \eta\|_{k,3-k}\lesssim \|    \nabla \eta\|_{k,2-k}\lesssim  \| (  \mm{curl}  \eta  ,\mm{div}   \eta ) \|_{k,2-k}\lesssim    \|    \mm{curl}  \eta \|_{k,2-k}
+\|\nabla \eta\|_2\|\eta\|_{k,3-k} ,
\end{align*}
which yields \eqref{2022202011749} for sufficiently small $\delta$.
\hfill $\Box$
\end{pf}

\subsection{Tangential estimates}\label{subsec:Horizon}
This section is devoted to establishing the tangential estimates by the following three lemmas, which include the estimates of horizontal derivatives of $(\eta,u)$ and temporal derivative of $u$.
\begin{lem}\label{lem:082sdaf41545}
For sufficiently small $\delta$, it holds that,  for $0\leqslant i\leqslant 2$,
\begin{align}
&
\frac{\mm{d}}{\mm{d}t}\left(\int\bar{\rho}\partial_1^{i}\eta\cdot\partial_1^{i} u\mm{d}y
+\frac{\mu}{2}\| \nabla \partial_1^{i} \eta\|_{0}^2\right)
-E(\partial_1^{i}\eta)
\nonumber \\
& \leqslant   \|\sqrt{\bar{\rho}}\partial_1^{i}u\|_{0}^2
+\|\nabla \eta\|_{2}\|\eta\|_{3,0} ( \|\eta_2\|_{0}+ \|u\|_{\underline{1},2}+\|q\|_{\underline{1},1}
 )\nonumber \\
& + \|\eta\|_{2,1}^2 \|q\|_{\underline{1},1}+\begin{cases}
\|\eta\|_{1,1}\|\nabla \eta\|_{1}\| u\|_1 &\mbox{for }i=0;\\
0 &\mbox{for } i=1,\ 2.
 \end{cases}
\label{202008241446}
\end{align}
\end{lem}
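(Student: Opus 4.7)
The plan is to derive \eqref{202008241446} by testing the $i$-th horizontal derivative of the momentum equation \eqref{01dsaf16asdfasf}$_2$ against $\partial_1^i\eta$ (not against $\partial_1^i u$); this is the natural pairing here because the magnetic term $\lambda m^2\partial_1^2\eta$ is elliptic in $\eta$ and the buoyancy term is linear in $\eta_2$, so pairing with $\partial_1^i\eta$ is exactly what extracts the potential-energy functional $E(\partial_1^i\eta)$. First I would differentiate the product $\int\bar{\rho}\,\partial_1^i\eta\cdot\partial_1^i u\,\mm{d}y$ in time; since $\bar{\rho}=\bar{\rho}(y_2)$ is stationary and $\eta_t=u$,
\begin{equation*}
\frac{\mm{d}}{\mm{d}t}\int\bar{\rho}\,\partial_1^i\eta\cdot\partial_1^i u\,\mm{d}y=\|\sqrt{\bar{\rho}}\,\partial_1^i u\|_0^2+\int\bar{\rho}\,\partial_1^i\eta\cdot\partial_1^i u_t\,\mm{d}y,
\end{equation*}
and I would then substitute $\bar{\rho}\,\partial_1^i u_t=\partial_1^i(-\nabla_{\mathcal{A}}q+\mu\Delta_{\mathcal{A}}u+\lambda m^2\partial_1^2\eta+G_\eta\mathbf{e}_2)$ into the last integral and treat the four resulting pieces separately.

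For the four principal (i.e.\ $\mathcal{A}=I$) pieces the calculation is clean. The magnetic piece gives $\lambda m^2\int\partial_1^i\eta\cdot\partial_1^{i+2}\eta\,\mm{d}y=-\lambda\|m\partial_1^{i+1}\eta\|_0^2$ after one periodic integration by parts in $y_1$. Splitting $G_\eta=g\bar{\rho}'\eta_2+\mathcal{G}$, the buoyancy piece produces $g\int\bar{\rho}'(\partial_1^i\eta_2)^2\mm{d}y$ (because $\bar{\rho}'$ depends only on $y_2$ and commutes with $\partial_1$) plus a remainder controlled, via \eqref{2022011130957} and \eqref{2022201201821}, by $\|\nabla\eta\|_2\|\eta\|_{3,0}\|\eta_2\|_0$; together these two contributions reproduce $-E(\partial_1^i\eta)$. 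For the viscous piece I would integrate by parts twice in space: the boundary integral $\pm\int_{\partial\Omega}\partial_1^i\eta_l\,\partial_1^i\partial_2 u_l\,\mm{d}y_1$ vanishes because $\partial_2 u_1|_{\partial\Omega}=0$ kills the $l=1$ component while $\eta_2|_{\partial\Omega}=0$ kills the $l=2$ component (this is the only place where both parts of the full Navier condition \eqref{20safd45} are used), and $u=\eta_t$ converts the interior integral into $-\tfrac{\mu}{2}\tfrac{\mm{d}}{\mm{d}t}\|\nabla\partial_1^i\eta\|_0^2$. For the pressure piece a single integration by parts yields $\int\partial_1^i(\mm{div}\,\eta)\,\partial_1^i q\,\mm{d}y$ (the boundary again vanishes via $\eta_2|_{\partial\Omega}=0$), which by the divergence estimates \eqref{improxtian1}--\eqref{improtian1} is bounded by $\|\nabla\eta\|_2\|\eta\|_{3,0}\|q\|_{\underline{1},1}$ when $i\in\{0,1\}$ and by $(\|\nabla\eta\|_2\|\eta\|_{3,0}+\|\eta\|_{2,1}^2)\|q\|_{\underline{1},1}$ when $i=2$, producing both error terms on the right-hand side involving $\|q\|_{\underline{1},1}$.

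The genuinely nonlinear contributions are $\partial_1^i[(\Delta_{\mathcal{A}}-\Delta)u]$ and $\partial_1^i[(\nabla_{\mathcal{A}}-\nabla)q]$, each a sum of products of $\partial_1^j\tilde{\mathcal{A}}$ (for $0\leqslant j\leqslant i$) with $\partial_1^{i-j}\nabla^2 u$ or $\partial_1^{i-j}\nabla q$. After expanding these via Leibniz, arranging a single integration by parts in $y_k$ so the highest-order factor carries no more than one $y$-derivative, and applying the product-type estimates \eqref{fgestims}--\eqref{fgessfdims} together with Poincar\'e's inequalities \eqref{202211}--\eqref{2022220111522311}, each of these residues fits into the budget $\|\nabla\eta\|_2\|\eta\|_{3,0}(\|u\|_{\underline{1},2}+\|q\|_{\underline{1},1})$ whenever $i\geqslant 1$. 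For $i=0$ there is no $\partial_1$-derivative available to promote $\|u\|_1$ into the stronger norm $\|u\|_{\underline{1},2}$, and one is forced to absorb one nonlinear residue into the cruder bound $\|\eta\|_{1,1}\|\nabla\eta\|_1\|u\|_1$, which is precisely the extra case appearing in \eqref{202008241446}.

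The main technical obstacle is the bookkeeping of the commutator $[\partial_1^i,\mathcal{A}]$ acting on the viscous term, because $\mathcal{A}=(I+\nabla\eta)^{-\mm{T}}$ is a rational function of $\nabla\eta$ and its horizontal derivatives $\partial_1^j\tilde{\mathcal{A}}$ are polynomials in $\partial_1^k\nabla\eta$ whose top-order parts are essentially $-\partial_1^j\nabla\eta$ plus lower-order products. In the $i=2$ subcase the integrations by parts must be arranged so that one $y_2$-derivative lands on a factor whose norm is controlled by $\|\eta\|_{2,1}$ rather than on the top factor $\partial_1^3\eta$; this is precisely what permits the sharper form of \eqref{improtian1} to contribute the $\|\eta\|_{2,1}^2\|q\|_{\underline{1},1}$ term instead of the weaker generic bound $\|\nabla\eta\|_2\|\eta\|_{3,0}\|q\|_{\underline{1},1}$, and more generally it is what makes all the nonlinear residues close at the $H^3$-regularity level demanded by the energy functional $\mathcal{E}$.
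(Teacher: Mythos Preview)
Your proposal is correct and follows essentially the same route as the paper: test $\partial_1^i$ of \eqref{01dsaf16asdfasf}$_2$ against $\partial_1^i\eta$, use $\eta_t=u$ to produce the time derivative $\tfrac{\mm d}{\mm dt}\big(\int\bar\rho\,\partial_1^i\eta\cdot\partial_1^i u+\tfrac{\mu}{2}\|\nabla\partial_1^i\eta\|_0^2\big)$, extract $-E(\partial_1^i\eta)$ from the magnetic and linear buoyancy pieces, and treat the four remainders (gravity correction $\mathcal{G}$, viscous correction, pressure, and the $\|\sqrt{\bar\rho}\partial_1^i u\|_0^2$ term) separately. The only presentational difference is that the paper packages the viscous correction $\mu(\Delta_{\mathcal{A}}-\Delta)u$ from the start in divergence form, writing $\mathcal{N}^\mu=\partial_l(\mathcal{N}^\mu_{j,l})$ with explicit quadratic expressions for $\mathcal{N}^\mu_{j,l}$, and then uses the boundary identity $\partial_1^i\mathcal{N}^\mu_{1,2}|_{\partial\Omega}=0$ (which follows from \eqref{20safd45}) to justify the $y_2$-integration by parts in $I_{3,i}$; this is exactly the integration-by-parts bookkeeping you allude to, made explicit.
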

\begin{pf} We apply $\partial_1^{i}$ to \eqref{01dsaf16asdfasf} and \eqref{20safd45},  and then use the relation
\eqref{202201152005}
 to derive that
\begin{equation}\label{01dsaf16asdfasf03n}
                              \begin{cases}
\partial_1^{i}\eta_t=\partial_1^{i}u ,\\[1mm]
\partial_1^{i}(\bar{\rho}u_t-\mu\Delta  u)
=\partial_1^{i}(\lambda m^2 \partial_1^{ 2}\eta+g\bar{\rho}'\eta_2\mathbf{e}_2
+ \mathcal{G}\mathbf{e}_2+\mathcal{N}^\mu-  \nabla_{\mathcal{A}} q ) ,\\[1mm]
[\partial_1^{i}, \mathcal{A}_{kl} ]\partial_l u_k+\mathcal{A}_{kl}\partial_1^{i}\partial_l u_k =0 , \\[1mm]
\partial_1^{i}(\eta_2, u_2,\partial_2(\eta_1, u_1))|_{\partial\Omega} =0,
\end{cases}
\end{equation}
where
\begin{align*}
 \mathcal{N}^\mu :=& \partial_l(\mathcal{N}^\mu _{1,l}, \mathcal{N}^\mu _{2,l})^{\mm{T}} ,\\
\mathcal{N}^\mu _{j,1} :=  &\mu (\mathcal{A}_{k1}\tilde{\mathcal{A}}_{km}+\tilde{\mathcal{A}}_{m1} )\partial_mu_j\\
=&\mu (
( 2\partial_2\eta_2+ (\partial_2\eta_1)^2+ (\partial_2\eta_2)^2 )\partial_1u_j
-  \Theta\partial_2u_j), \\
\mathcal{N}^\mu _{j,2} :=  &\mu (\mathcal{A}_{k2}\tilde{\mathcal{A}}_{km}+\tilde{\mathcal{A}}_{m2} )\partial_mu_j\\
=&\mu ((2\partial_1\eta_1 + (\partial_1\eta_1)^2+(\partial_1\eta_2)^2 )\partial_2u_j  -\Theta\partial_1u_j)
\end{align*}
and
$$ \Theta:=\partial_1\eta_2+\partial_2\eta_1 + \partial_1\eta_2\partial_2\eta_2  + \partial_1\eta_1 \partial_2\eta_1 .$$
Moreover, by the boundary condition \eqref{20safd45}, we have
\begin{align}
\partial_1^i\mathcal{N}^\mu _{1,2}|_{\partial\Omega}=0.  \label{202220112002138}
\end{align}

Let $0\leqslant i\leqslant 2$.
Multiplying \eqref{01dsaf16asdfasf03n}$_2$ by $\partial_1^{i}\eta$, then we us the
integral by parts,  \eqref{01dsaf16asdfasf03n}$_1$ and \eqref{01dsaf16asdfasf03n}$_4$ to obtain
\begin{align}
&\frac{\mm{d}}{\mm{d}t}\left(\int\bar{\rho}\partial_1^{i}\eta\cdot\partial_1^{i} u\mm{d}y
+\frac{\mu }{2}\| \nabla \partial_1^{i} \eta\|_{0}^2\right)
-E(\partial_1^{i}\eta) =\|\sqrt{\bar{\rho}}\partial_1^{i}u\|_{0}^2+\sum_{j=1}^4I_{j,i},\label{202008241510}
\end{align}
where we have defined that
\begin{align*}
&I_{1,i}:=\int\partial_1^{i} \mathcal{G}\partial_1^{i} \eta_2\mm{d}y,\ I_{2,i}:=-  \int\partial_1^{i} \mathcal{N}^\mu _{j,1}  \partial_1^{i+1}\eta_j\mm{d}y \\
&I_{3,i}:=\int \partial_2\partial_1^{i} \mathcal{N}^\mu _{j,2} \partial_1^{i}\eta_j\mm{d}y\mbox{ and }I_{4,i}:=-\int \partial_1^{i}\nabla_{\mathcal{A}}q\cdot\partial_1^{i} \eta\mm{d}y.
\end{align*}
Next we estimate for the above four integrals $I_{1,i}$--$I_{4,i}$ in sequence.

  (1) By the integral by parts, H\"older's inequality  and \eqref{2022011130957}, we infer that
\begin{align}
&\label{202008241546}
I_{1,i}\leqslant
\begin{cases}
\| \eta_2\|_{0}\| \mathcal{G}\|_{0}\lesssim
\|\eta_2\|_{0}^2 \|\eta_2\|_{\underline{2},1}  &\mbox{for }i=0;\\
\| \eta_2\|_{1+i,0}\| \mathcal{G}\|_{i-1,0}\lesssim
\|\eta_2\|_0\|\eta_2\|_{\underline{2},1}   \| \eta_2\|_{1+i,0} &\mbox{for }i=1,\ 2.
\end{cases}
\end{align}

(2) Exploiting H\"older's inequality, \eqref{aprpiosesnew}, \eqref{fgestims} and \eqref{fgessfdims}, we can see that
\begin{align}
I_{2,i}=&  \mu \int\partial_1^{i+1}\eta \cdot \partial_1^{i} ( \Theta\partial_2u-
( 2\partial_2\eta_2+ (\partial_2\eta_1)^2+ (\partial_2\eta_2)^2 )\partial_1u
  ) \mm{d}y  \nonumber \\
\lesssim&
\|\nabla \eta\|_{2}\|\eta\|_{i+1,0}\|  u\|_{\underline{1},2}  . \label{2022201161405} \end{align}

(3)
Using  the integral by parts,  \eqref{01dsaf16asdfasf03n}$_4$, \eqref{202220112002138}  and  the product estimate \eqref{fgestims},  we  can estimate
\begin{align}
I_{3,0}
\nonumber = &  \mu \int  \partial_2\eta \cdot(  \Theta\partial_1u -(2\partial_1\eta_1 + (\partial_1\eta_1)^2+(\partial_1\eta_2)^2 )\partial_2u  )\mm{d}y\\
= &   \mu \int (\partial_2\eta\cdot ((\partial_1\eta_2 + \partial_1\eta_2\partial_2\eta_2  + \partial_1\eta_1 \partial_2\eta_1 )\partial_1u- (2\partial_1\eta_1 + (\partial_1\eta_1)^2\nonumber \\
 &+(\partial_1\eta_2)^2 )\partial_2u ) -\partial_2\eta_1\partial_2\partial_1\eta \cdot u-\partial_2\partial_1\eta_1\partial_2\eta\cdot u)\mm{d}y\nonumber \\
 \lesssim &\|\eta\|_{1,1}\|\nabla \eta\|_{1}\| u\|_1   .   \label{2061032}
\end{align}
 Similarly to \eqref{2022201161405}, we also have
\begin{align}
 I_{3,i}
&=  \mu \int \partial_1^{i+1} \eta \cdot \partial_2\partial_1^{i-1}  (\Theta\partial_1u -(2\partial_1\eta_1 + (\partial_1\eta_1)^2+(\partial_1\eta_2)^2 )\partial_2u  ) \mm{d}y\nonumber \\
&  \lesssim\|\nabla \eta\|_{2}\|\eta\|_{i+1,0}\| u\|_{\underline{1},2}    \mbox{ for }i=1,\ 2.   \label{20222011161447} \end{align}

(4)
Finally we bound the last integral $I_{4,i}$. Noting
$$\mm{div}_{\tilde{\mathcal{A}}}\eta=2(\partial_1\eta_1 \partial_2\eta_2-\partial_1\eta_2\partial_2 \eta_1),$$
  making use of the above identity, the integral by parts, \eqref{improtian1}, \eqref{202201152005}, \eqref{01dsaf16asdfasf03n}$_4$, \eqref{fgessfdims} and  \eqref{202012241002}, we can estimate that
\begin{align}
I_{4,0} =\int \mm{div}_{\tilde{\mathcal{A}}}\eta q\mm{d}y+ \int \mm{div}\eta q\mm{d}y
\lesssim   \| \eta\|_{1,0} \|\nabla \eta\|_2\|  q\|_0   \label{201910040902n02}
\end{align}
and, for $i=1$, $2$,
\begin{align}
I_{4,i}
=&\int\partial_1^{i+1} \eta \cdot\partial_1^{i-1}\nabla_{\tilde{\mathcal{A}}}q\mm{d}y
+\int\mm{div}\partial_1^{i} \eta\partial_1^{i}q\mm{d}y\nonumber \\[1mm]
\leqslant&
\|\eta\|_{{i+1},0}\| \nabla_{\tilde{\mathcal{A}}}q\|_{{i-1},0}
+\|\mm{div}\eta\|_{i,0}\|q\|_{i,0}
\nonumber \\
\lesssim& (\|\eta\|_{3,0} \|\nabla \eta\|_2 +\| \eta\|_{2,1}^2 )\|q\|_{\underline{1},1} . \label{202008241550}
\end{align}

Consequently, putting \eqref{202008241546}--\eqref{202008241550} into \eqref{202008241510}, and then using \eqref{omessetsim122n} and \eqref{202012241002}, we arrive at \eqref{202008241446}.
This completes the proof.
\hfill $\Box$
\end{pf}

\begin{lem}\label{lem:08241445}
For sufficiently small $\delta$, it holds that,  for $0\leqslant i\leqslant 2$,
\begin{align}
& \frac{\mm{d}}{\mm{d}t}\left(\|\sqrt{ \bar{\rho} }  u\|^2_{i,0}
-E(\partial_1^{i} \eta) \right)+ c\|   u\|_{i,1}^2 \nonumber \\
&\lesssim
 \|\eta\|_{2,1} (\|\eta_2\|_{0}  + \|  u\|_{\underline{1},2}  ) \| u\|_{i,1} + ( \|\eta\|_{3,0} \| u\|_{\underline{1},2}+\|\eta\|_{2,1} \| u\|_{2,1})\| q\|_{\underline{1},1 }     . \label{202008241448}
\end{align}
\end{lem}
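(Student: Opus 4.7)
The natural approach is to test the $\partial_1^i$-differentiated momentum equation \eqref{01dsaf16asdfasf03n}$_2$ against $\partial_1^i u$ in $L^2(\Omega)$, in contrast with Lemma \ref{lem:082sdaf41545} where one tests against $\partial_1^i\eta$. The time derivative produces $\tfrac12\tfrac{\mm{d}}{\mm{d}t}\|\sqrt{\bar{\rho}}\partial_1^i u\|_0^2$. For the viscous term, integration by parts combined with the boundary condition $\partial_1^i(u_2,\partial_2u_1)|_{\partial\Omega}=0$ (which kills all boundary contributions from $\int\partial_2\partial_1^i u_j\,\partial_1^i u_j$) yields $\mu\|\nabla\partial_1^i u\|_0^2$, and Poincar\'e's inequality \eqref{2022111522311} (applicable thanks to \eqref{20222011181844} and $u_2|_{\partial\Omega}=0$) promotes this to the coercive dissipation $c\|u\|_{i,1}^2$.

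The key linear cancellation producing $-E(\partial_1^i\eta)$ comes from processing the magnetic tension and leading gravity terms. Using $\eta_t=u$ and integrating by parts in $y_1$ (no boundary terms, by periodicity),
\begin{equation*}
\lambda m^2\!\int\partial_1^{i+2}\eta\cdot\partial_1^i u\,\mm{d}y=-\frac{\lambda m^2}{2}\frac{\mm{d}}{\mm{d}t}\|\partial_1^{i+1}\eta\|_0^2,
\end{equation*}
while the principal gravity contribution is $\int g\bar{\rho}'\partial_1^i\eta_2\,\partial_1^i u_2\,\mm{d}y=\tfrac12\tfrac{\mm{d}}{\mm{d}t}g\int\bar{\rho}'(\partial_1^i\eta_2)^2\mm{d}y$. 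Combined, these give exactly $\tfrac12\tfrac{\mm{d}}{\mm{d}t}E(\partial_1^i\eta)$, which, after rearrangement and multiplication by $2$, becomes the $-\tfrac{\mm{d}}{\mm{d}t}E(\partial_1^i\eta)$ term on the left-hand side of \eqref{202008241448}.

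The nonlinear contributions are then estimated one by one. The residual gravity term $\int\partial_1^i\mathcal{G}\,\partial_1^i u_2\,\mm{d}y$ is controlled by H\"older combined with \eqref{2022011130957} (and \eqref{2022201201821} for $i=0$), producing a factor of $\|\eta_2\|_0\|u\|_{i,1}$ that can be absorbed into the first family on the RHS via the prefactor $\|\eta\|_{2,1}$ and smallness. For the stress correction $\int\partial_1^i\mathcal{N}^\mu\cdot\partial_1^i u\,\mm{d}y$, we integrate by parts to move the divergence off $\mathcal{N}^\mu_{j,l}$ onto $\partial_l\partial_1^i u_j$; the boundary integral vanishes using $u_2|_{\partial\Omega}=0$ together with \eqref{202220112002138}. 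The resulting bulk term is estimated by the product estimates \eqref{fgestims}--\eqref{fgessfdims} applied to the explicit form of $\mathcal{N}^\mu_{j,l}$ (quadratic and cubic in $\nabla\eta$ multiplying $\nabla u$), giving a bound of the shape $\|\eta\|_{2,1}\|u\|_{\underline{1},2}\|u\|_{i,1}$ consistent with the first family in \eqref{202008241448}. The pressure term $-\int\partial_1^i\nabla_{\mathcal{A}}q\cdot\partial_1^i u\,\mm{d}y$ is split via $\mathcal{A}=I+\tilde{\mathcal{A}}$: for the $I$-part, integrating by parts and using $\mm{div}_{\mathcal{A}}u=0$ (equivalently $\mm{div}\,u=-\mm{div}_{\tilde{\mathcal{A}}}u$, quadratic in $\nabla\eta$) produces $\int\partial_1^i q\cdot\mm{div}_{\tilde{\mathcal{A}}}\partial_1^i u$, bounded using \eqref{improtian1}; the $\tilde{\mathcal{A}}$-part, together with the commutators $[\partial_1^i,\mathcal{A}_{jk}]\partial_k q$ that arise when $i\geqslant1$, is handled by product estimates to yield a contribution controlled by $(\|\eta\|_{3,0}\|u\|_{\underline{1},2}+\|\eta\|_{2,1}\|u\|_{2,1})\|q\|_{\underline{1},1}$.

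\textbf{Main obstacle.} The delicate point is making the pressure estimate sharp enough to yield the second RHS family with only $\|q\|_{\underline{1},1}$ on the right (rather than $\|q\|_2$). For $i=2$ one cannot afford to absorb two derivatives on $q$; the commutator $[\partial_1^2,\mathcal{A}]\partial q$ must be rearranged so that two derivatives fall on $\eta$ (producing $\|\eta\|_{2,1}$ or $\|\eta\|_{3,0}$) while $q$ only sees one derivative, which is precisely where the structural interplay with the divergence-free condition and the split $\mathcal{A}=I+\tilde{\mathcal{A}}$ is critical. A secondary subtlety is ensuring that the stress term, after integration by parts, does not create any boundary contribution with $\partial_2 u_1$; this is guaranteed by the slip boundary condition \eqref{202220112002138} and should be tracked carefully for all three values of $i$.
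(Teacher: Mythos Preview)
Your plan is correct and essentially identical to the paper's proof: test \eqref{01dsaf16asdfasf03n}$_2$ against $\partial_1^i u$, use $\eta_t=u$ to extract $\tfrac12\tfrac{\mm{d}}{\mm{d}t}E(\partial_1^i\eta)$ from the magnetic tension and leading gravity terms, integrate the viscous correction by parts using \eqref{202220112002138}, and absorb the $\|\nabla\eta\|_2\|u\|_{i,1}^2$ piece of $I_{6,i}$ into the dissipation via smallness. The only tactical difference is in the pressure term: the paper does not split $\mathcal{A}=I+\tilde{\mathcal{A}}$ but instead observes that $I_{7,0}=0$ exactly, and for $i\geqslant 1$ integrates by parts and invokes \eqref{01dsaf16asdfasf03n}$_3$ in the form $\mathcal{A}_{kl}\partial_l\partial_1^i u_k=-[\partial_1^i,\mathcal{A}_{kl}]\partial_l u_k$, so that both surviving pieces are pure commutators $[\partial_1^i,\mathcal{A}]$ acting on $\nabla q$ or $\nabla u$; this route is marginally cleaner than your $I+\tilde{\mathcal{A}}$ split (which also works) and makes it transparent why only $\|q\|_{\underline{1},1}$ appears.
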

\begin{pf}
Multiplying \eqref{01dsaf16asdfasf03n}$_2$ by  $\partial_1^{i}u$ in $L^2$, and then using the integrate by parts, \eqref{01dsaf16asdfasf03n}$_1$ and the boundary conditions \eqref{01dsaf16asdfasf03n}$_4$ and  \eqref{202220112002138}, we have
\begin{align}
& \frac{1}{2}\frac{\mm{d}}{\mm{d}t}\left(\|\sqrt{\bar{\rho}} u\|_{i,0}^2
-E(\partial_1^{i}\eta)\right)+\mu \|  \nabla u\|_{i,0}^2
\nonumber\\
 &=\int\partial_1^{i} \mathcal{G}\partial_1^{i} u_2\mm{d}y-  \int \partial_1^{i} \mathcal{N}^\mu _{j,l}\partial_l \partial_1^{i}u_j\mm{d}y-\int \partial_1^{i}\nabla_{\mathcal{A}}q \cdot\partial_1^{i}u\mm{d}y=:\sum_{j=5}^7I_{j,i}.\label{201510n}
\end{align}

Similarly to
\eqref{202008241546}, we have \begin{align}
& \label{202008241624}
I_{5,i} \lesssim \begin{cases}
\|\eta_2\|_{0}  \|\eta_2\|_{\underline{2},1} \|u_2\|_{0} &\mbox{for }i=0;\\
\|\eta_2\|_0\|\eta_2\|_{\underline{2},1}   \| u_2\|_{1+i,0} &\mbox{for }i=1,\ 2.
\end{cases}
\end{align}
By \eqref{fgestims} and \eqref{fgessfdims}, we can estimate that
\begin{align}
I_{6,i}=& \mu \int (\partial_1^{i}(
( 2\partial_2\eta_2+ (\partial_2\eta_1)^2+ (\partial_2\eta_2)^2 )\partial_1u
-  \Theta\partial_2u ) \cdot \partial_1^{i+1}u
\nonumber  \\
&  +\partial_1^{i} ((2\partial_1\eta_1 + (\partial_1\eta_1)^2+(\partial_1\eta_2)^2 )\partial_2u   -\Theta\partial_1u ) \cdot\partial_2\partial_1^{i}u)\mm{d}y
 \nonumber \\
 \lesssim  &  \|\partial_1 \eta\|_{\underline{1},1}\|  u\|_{\underline{1},2} \|   u\|_{i,1} +\|\nabla \eta\|_{2}\|   u\|_{i,1}^2.  \label{2022202181730}
\end{align}

Similarly, for $i$=1, $2$,
 \begin{align}
  I_{7,i,1}:=-\int  [\partial_1^{i},\mathcal{A}_{kl}]\partial_l q \partial_1^{i}u_k \mm{d}y    \lesssim  \| \partial_1\eta\|_{\underline{1},1}\|  u\|_{i,1}\| q\|_{\underline{1},1 }  \nonumber
\end{align}
and \begin{align}
 I_{7,i,2}=&-\int [\partial_1^{i}, \mathcal{A}_{kl} ]\partial_l u_k\partial_1^{i} q  \mm{d}y  \nonumber \\
 =& \int ([\partial_1^{i},\partial_1 \eta_2 ]\partial_2 u_1-[\partial_1^{i}, \partial_2\eta_2]\partial_1 u_1+[\partial_1^{i},\partial_2 \eta_1  ]\partial_1 u_2 -[\partial_1^{i},\partial_1 \eta_1  ]\partial_2 u_2)\partial_1^{i} q  \mm{d}y  \nonumber \\
  \lesssim   &(  \|\partial_1 \eta\|_{\underline{2},0}\| u\|_{\underline{1},2}+\|\partial_1 \eta\|_{\underline{1},1}\| \partial_1 u\|_{\underline{1},1})\| q\|_{i,0 } .   \nonumber
\end{align}
Making use of the  integral by parts, \eqref{202201152005}, \eqref{01dsaf16asdfasf03n}$_3$, \eqref{202012241002} and the above two estimates, we have
 \begin{align}
 I_{7,i}
=&\begin{cases}
0&\mbox{for }i=0;\\
  I_{7,i,1}+  I_{7,i,2}
&\mbox{for }i=1,\ 2\end{cases}\nonumber \\
\lesssim   &(    \|\partial_1 \eta\|_{\underline{2},0}\|  u\|_{\underline{1},2}+\| \partial_1\eta\|_{\underline{1},1}\| \partial_1 u\|_{\underline{1},1})\| q\|_{\underline{1},1 } .\label{20200n}
\end{align}

Consequently, putting \eqref{202008241624}--\eqref{20200n} into \eqref{201510n}, and then using \eqref{aprpiosesnew}, \eqref{omessetsim122n}, \eqref{2022111522311} and \eqref{202012241002}, we arrive at \eqref{202008241448} for sufficiently small $\delta$. This completes the proof. \hfill $\Box$
\end{pf}
\begin{lem}\label{2019100216355nnn}
For sufficiently small $\delta$, we have
\begin{align}
&\frac{\mm{d}}{\mm{d}t}\|\nabla_{\mathcal{A}} u \|^2_0 +
c\|   u_t\|_0^2\lesssim \| \eta \|_{2,0}^2+ \|u\|_2^3+\|u\|_2^2\|q\|_1
\label{202221235}
\end{align}
and
\begin{align}
&\frac{\mm{d}}{\mm{d}t}\left(\|\sqrt{\bar{\rho}}\psi \|^2_0- E(  u) \right)+
c\|   u_t\|_1^2\lesssim(\|  \eta \|_{2,0} +\|u \|_{2 }      ) \|u\|_2^2 ,
\label{20222saf201121235}
\end{align}
where $\psi :=u_t- {u} \cdot \nabla_{\mathcal{A}} u $.
\end{lem}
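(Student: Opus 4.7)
Both inequalities are obtained by testing the momentum equation \eqref{01dsaf16asdfasf}$_2$ (or its time derivative) in $L^2$ against an appropriate velocity quantity and isolating the $u_t$-dissipation. A recurring technical ingredient is the Piola identity $\partial_k\mathcal{A}_{jk}=0$, valid because $\det(I+\nabla\eta)\equiv 1$, which makes integration by parts through $\nabla_{\mathcal{A}}$ behave exactly like standard integration by parts; all boundary contributions vanish by \eqref{20safd45} together with $(u_t)_2|_{\partial\Omega}=0$, obtained from differentiating $u_2|_{\partial\Omega}=0$ in time.

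\textbf{Plan for \eqref{202221235}.} Pair \eqref{01dsaf16asdfasf}$_2$ with $u_t$ in $L^2$. The inertial term directly provides $\|\sqrt{\bar{\rho}}u_t\|_0^2$. Integrating the viscous term by parts (using Piola) gives $\frac{\mu}{2}\frac{\mm{d}}{\mm{d}t}\|\nabla_{\mathcal{A}}u\|_0^2$ plus a correction $\int\mathcal{A}_{jk}\partial_ku_i(\partial_t\mathcal{A})_{jl}\partial_lu_i\,\mm{d}y$; since $\partial_t\mathcal{A}$ is linear in $\nabla u$ up to higher-order terms, Sobolev embedding yields a bound by $\|u\|_2^3$. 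The pressure term, after integration by parts, becomes $\int q\,\mm{div}_{\mathcal{A}}u_t\,\mm{d}y$; differentiating the constraint $\mm{div}_{\mathcal{A}}u\equiv 0$ gives $\mm{div}_{\mathcal{A}}u_t=-\mm{div}_{\mathcal{A}_t}u$, producing a contribution bounded by $\|q\|_1\|u\|_2^2$. The magnetic tension $\lambda m^2\partial_1^2\eta$ and the gravity $G_\eta\mathbf{e}_2$ are both controlled by $\|\eta\|_{2,0}\|u_t\|_0$ (using $\|G_\eta\|_0\lesssim\|\eta_2\|_0$ from \eqref{2022201201821} and the horizontal Poincar\'e inequality $\|\eta_2\|_0\lesssim\|\partial_1\eta\|_0\lesssim\|\partial_1^2\eta\|_0$, where the second step uses that $\partial_1\eta$ has zero mean in $y_1$ by periodicity); Young's inequality then splits this into $\epsilon\|u_t\|_0^2+C\|\eta\|_{2,0}^2$ and absorbs the $\epsilon$ part into the left-hand side.

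\textbf{Plan for \eqref{20222saf201121235}.} The quantity $\psi=u_t-u\cdot\nabla_{\mathcal{A}}u$ is the Lagrangian image of the Eulerian time derivative and segregates the advective nonlinearity. Substituting $u_t=\psi+u\cdot\nabla_{\mathcal{A}}u$ I rewrite \eqref{01dsaf16asdfasf}$_2$ as $\bar{\rho}\psi+\bar{\rho}(u\cdot\nabla_{\mathcal{A}}u)+\nabla_{\mathcal{A}}q-\mu\Delta_{\mathcal{A}}u=\lambda m^2\partial_1^2\eta+G_\eta\mathbf{e}_2$, differentiate in $t$ (using $\eta_t=u$ so that $\partial_t\partial_1^2\eta=\partial_1^2u$ and $\partial_tG_\eta=g\bar{\rho}'(y_2+\eta_2)u_2$), and test against $u_t$. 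The inertial piece yields $\int\bar{\rho}\psi_t\cdot\psi\,\mm{d}y$ plus a nonlinear remainder, i.e.\ $\frac{1}{2}\frac{\mm{d}}{\mm{d}t}\|\sqrt{\bar{\rho}}\psi\|_0^2$ up to controllable lower-order terms. The key algebraic cancellation is that the time-differentiated magnetic tension and gravity combine into $\frac{\mm{d}}{\mm{d}t}E(u)$: $\lambda m^2\int\partial_1^2u\cdot u_t\,\mm{d}y=-\frac{\lambda m^2}{2}\frac{\mm{d}}{\mm{d}t}\|\partial_1u\|_0^2$, while the Taylor expansion $\bar{\rho}'(y_2+\eta_2)=\bar{\rho}'+O(|\eta_2|)$ gives $\int\partial_tG_\eta\,(u_t)_2\,\mm{d}y=\frac{g}{2}\frac{\mm{d}}{\mm{d}t}\int\bar{\rho}'u_2^2\,\mm{d}y$ modulo a nonlinear remainder; their sum reproduces $-\frac{1}{2}\frac{\mm{d}}{\mm{d}t}E(u)$ in view of $E(u)=g\int\bar{\rho}'u_2^2\,\mm{d}y-\lambda\|m\partial_1u\|_0^2$. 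The viscous term, integrated by parts via Piola, produces $\mu\|\nabla_{\mathcal{A}}u_t\|_0^2$; promoting this to the required $c\|u_t\|_1^2$ uses \eqref{2022220111522311} together with $(\bar{\rho}(u_t)_1)_\Omega=0$ from \eqref{20222011181844}.

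\textbf{Main obstacle.} The heart of the work is bookkeeping the nonlinear remainders generated by (i) $\partial_t\mathcal{A}=\partial_t(I+\nabla\eta)^{-\mm{T}}$ inside $\nabla_{\mathcal{A}}$ and $\Delta_{\mathcal{A}}$, (ii) the differentiated constraint $\mm{div}_{\mathcal{A}}u_t=-\mm{div}_{\mathcal{A}_t}u$, (iii) the Taylor remainder $\bar{\rho}'(y_2+\eta_2)-\bar{\rho}'(y_2)=O(|\eta_2|)$, and (iv) the convective piece $\partial_t[\bar{\rho}(u\cdot\nabla_{\mathcal{A}}u)]$. Each factorizes into a small factor (some $\nabla u$, $\nabla\eta$, or $\eta_2$) times a higher-order derivative, and Sobolev embedding, the product estimates used in Lemma \ref{201805141072}, and the a priori smallness \eqref{aprpiosesnew} reduce every such remainder to one of the shapes $\|\eta\|_{2,0}^2$, $\|u\|_2^3$, $\|u\|_2^2\|q\|_1$, or $(\|\eta\|_{2,0}+\|u\|_2)\|u\|_2^2$, exactly as required by the right-hand sides of \eqref{202221235} and \eqref{20222saf201121235}.
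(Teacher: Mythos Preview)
Your plan for \eqref{202221235} is correct and matches the paper's argument essentially line for line: multiply \eqref{01dsaf16asdfasf}$_2$ by $u_t$, integrate by parts via Piola, use $\mathrm{div}_{\mathcal{A}}u_t=-\mathrm{div}_{\mathcal{A}_t}u$ on the pressure term, and estimate the remainders.

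Your plan for \eqref{20222saf201121235} has a genuine gap in the treatment of the pressure. You propose to time-differentiate \eqref{01dsaf16asdfasf}$_2$ and test against $u_t$. The differentiated pressure term is $\int\partial_t(\nabla_{\mathcal{A}}q)\cdot u_t\,\mathrm{d}y=\int\nabla_{\mathcal{A}_t}q\cdot u_t\,\mathrm{d}y-\int q_t\,\mathrm{div}_{\mathcal{A}}u_t\,\mathrm{d}y=\int\nabla_{\mathcal{A}_t}q\cdot u_t\,\mathrm{d}y+\int q_t\,\mathrm{div}_{\mathcal{A}_t}u\,\mathrm{d}y$. The second integral carries $q_t$, for which there is no a priori $L^2$ control here, and the right-hand side of \eqref{20222saf201121235} contains no $q$ at all, so this term must in fact \emph{vanish} rather than merely be estimated. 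Your ``differentiated constraint'' item (ii) does not resolve this.

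The paper avoids $q_t$ entirely by passing through Eulerian coordinates. Since $\psi=u_t-u\cdot\nabla_{\mathcal{A}}u$ equals $v_t\circ\zeta$ and $\mathrm{div}\,v_t=0$, one has $\mathrm{div}_{\mathcal{A}}\psi=0$; consequently $\int\nabla_{\mathcal{A}}q_t\cdot\psi\,\mathrm{d}y=0$ and the pressure drops out of the identity completely. The paper encodes this in the ready-made identity \eqref{eq0510} (derived in Section \ref{202202021613} via the Eulerian reformulation $\rho v_t$), which with $J=1$, $w=u$ yields $\tfrac12\tfrac{\mathrm{d}}{\mathrm{d}t}\|\sqrt{\bar\rho}\psi\|_0^2=I_9+I_{10}$ with no $q$ anywhere. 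You recognized that $\psi$ is the Lagrangian image of the Eulerian time derivative but did not exploit the consequence $\mathrm{div}_{\mathcal{A}}\psi=0$; the fix is to test against $\psi$ rather than $u_t$ (equivalently, invoke \eqref{eq0510}). After that correction the rest of your bookkeeping of remainders goes through as you describe.
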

\begin{pf}
(1)
By \eqref{01dsaf16asdfasf}$_3$, we see that
$$\div_{\mathcal{A}} u_t=-\div_{\mathcal{A}_t} u. $$
  Multiplying \eqref{01dsaf16asdfasf}$_2$ by $u_t$ in $L^2$, and then using the integral by parts,  \eqref{20safd45}, \eqref{202201152005} and the above relation, we obtain
  \begin{align}\label{01sdfasf}
\frac{\mu}{2} \frac{\mm{d}}{\mm{d}t}\|\nabla_{\mathcal{\mathcal{A}}} u\|_0^2 +\|\sqrt{\bar{\rho}}u_t\|_0^2  =I_8,
\end{align}
where we have defined that
\begin{align*}
& I_8:= \int (\lambda m^2\partial_1^2\eta+ (g\bar{\rho}'\eta_2+\mathcal{G})\mathbf{e}_2 )\cdot u_t\mm{d}y+\int ( \mu \nabla_{\mathcal{\mathcal{A}}} u :\nabla_{\mathcal{\mathcal{A}}_t} u +   \nabla q \cdot  ({\mathcal{A}_t^{\mm{T}} } u ))\mm{d}y.
   \end{align*}

Exploiting \eqref{2022011130957} and \eqref{fgestims},
we get
\begin{align*}
& I_8 \lesssim \|(\partial_1^2\eta, {\eta}_2)\|_0\| u_t\|_0+ \|u\|_2^3+\|u\|_2^2\|q\|_1.
   \end{align*}
   Putting the above estimate into \eqref{01sdfasf}, and then using \eqref{omessetsim122n} and   \eqref{202012241002}, we get \eqref{202221235}.

(2)
Let
\begin{align*}
   & I_9:=\int
(( \mu \mathcal{A}_{il} \partial_l( \mathcal{A}_{ik} \partial_k  u )   + \lambda m^2\partial_1^2\eta +(\mathcal{G}+g  \bar{\rho}'\eta_2 )\mathbf{e}_2
 \\
&\qquad-  \bar{\rho} u\cdot\nabla_{\mathcal{A}}    u   -\bar{\rho}\psi) \cdot ( u\cdot\nabla_{\mathcal{A}} \psi)-   \partial_t (  \bar{\rho} u\cdot\nabla_{\mathcal{A}}    u ) \cdot \psi ) \mm{d}y,
 \\
&I_{10}:=   \int ((\lambda m^2\partial_1^2 u+g\bar{\rho}'(y_2+\eta_2)u_2\mathbf{e}_2)\cdot \psi-   \mu
  \partial_t ( \mathcal{A}_{il} \mathcal{A}_{ik} \partial_k  u ) \cdot \partial_l \psi )\mm{d}y .
   \end{align*}
Recalling the derivation of \eqref{eq0510} in Section \ref{202202021613} and the relation
$$\frac{1}{2} \int |\psi|^2 u\cdot \nabla_{\mathcal{A}}  \bar{\rho}\mm{d}y=  -\int  \bar{\rho}u\cdot\nabla_{\mathcal{A}} \psi\cdot \psi\mm{d}y ,$$ we can get the following identity  (i.e. taking  $J=1$, $w=u$ and $f=\lambda m^2\partial_1^2\eta +(\mathcal{G}+g  \bar{\rho}'\eta_2 )\mathbf{e}_2$  in \eqref{eq0510}) from \eqref{01dsaf16asdfasf}:
\begin{align}
 \frac{1}{2}\frac{\mm{d}}{\mm{d}t}\|\sqrt{\bar{\rho}}\psi \|^2_0
=I_{9} + I_{10}.  \label{62053}
  \end{align}

The integral term $I_{10}$ can be further rewritten as follows:
\begin{align}
 I_{10}=\frac{1}{2}\frac{\mm{d}}{\mm{d}t}  E(  u)-
 \mu \|  \nabla u_t\|_0^2+\tilde{I}_{10},  \label{2022101262053}
  \end{align}
  where
 \begin{align*}
 \tilde{I}_{10}:= & \int ( g(\bar{\rho}'(y_2+\eta_2) - \bar{\rho}'(y_2))u_2\partial_t u_2-  (\lambda m^2\partial_1^2 u   + g \bar{\rho}'(y_2+\eta_2)u_2 \mathbf{e}_2   )\cdot({u} \cdot \nabla_{\mathcal{A}} u)  \\
 &+ \mu ( \partial_t ( \mathcal{A}_{il} \mathcal{A}_{ik} \partial_k  u) \cdot \partial_l(u \cdot \nabla_{\mathcal{A}} u)-\partial_t(\mathcal{A}_{il}\tilde{\mathcal{A}}_{ik} \partial_k  u +  \tilde{\mathcal{A}}_{il}\partial_i  u  )\cdot \partial_l \partial_t u ) )\mm{d}y.
  \end{align*}

Making use of \eqref{aprpiosesnew}, \eqref{01dsaf16asdfasf}$_1$, \eqref{20200830asdfa2114}, \eqref{2022201201821} and \eqref{fgestims},
we have
$$
I_{9}+\tilde{I}_{10}\lesssim ( \|\eta_2\|_0+\|  \eta \|_{2,0}+\|u\|_2 +\|u_t\|_1)\|u\|_2 \|u_t\|_1 + \|u\|_2^3 +\|\nabla \eta\|_2\|u_t\|_1^2.
$$
Putting \eqref{2022101262053} to \eqref{62053}, and then using  \eqref{omessetsim122n}, \eqref{2022220111522311}, \eqref{202012241002},  the above estimate   and  Young's inequality, we immediately get \eqref{20222saf201121235} for sufficiently small $\delta$. This completes the proof. \hfill $\Box$
 \end{pf}

\subsection{Curl estimates for $\eta$}\label{subec:Vor}
This  section is devoted to establishing the curl estimates of $\eta$ for the normal estimates of $\eta$.
\begin{lem}\label{2055nnn}
Let the multiindex $\alpha$ satisfy $|\alpha|\leqslant 1$. We have
\begin{align}
&\label{202005021600}
 \frac{\mm{d}}{\mm{d}t}  \left(\frac{\mu }{2}
 \|\nabla_{\mathcal{A}} \partial^{\alpha}\mm{curl}_{\mathcal{A}}\eta \|_0^2+
 \int\partial^{\alpha}\mm{curl}_{\mathcal{A}}   \eta
 \partial^{\alpha}\mm{curl}_{\mathcal{A}} \left(\bar{\rho}u\right)
 \mm{d}y\right)
 + c\| \partial^{\alpha}\mm{curl}_{\mathcal{A}} \partial_1\eta \|^2_{0}\nonumber\\
&\lesssim
 \|\eta\|_{1,0}\|\eta\|_{1+|\alpha|,1} +\|  u\|_{1 }\|u\|_3 + \|\nabla \eta\|_2(\|\eta\|_{1+|\alpha|,1}^2\nonumber \\
 &\quad+ \|\eta\|_{1,2}\|  u\|_{\underline{1},2}+\|u\|_2^2)
   \mbox{ for }\alpha_2\neq 1
           \end{align}
 and
    \begin{align}
 &\frac{\mm{d}}{\mm{d}t}
\left(
 \frac{\mu }{2}\|   \nabla \partial_2 \mm{curl}_{\mathcal{A}}\eta\|^2_0+\mathcal{I}_2\right)
+c\|  \partial_2\mm{curl}_{\mathcal{A}} \partial_1 \eta \|^2_{0}
\nonumber \\
& \lesssim  \|  \eta\|_{1,0}\|\eta\|_{1,2}+\|u\|_1\|  u\|_3+\sqrt{\mathcal{E}}\mathcal{D}, \label{202201211416}  \end{align}
           where
\begin{align*}
&\mathcal{I}_2:=\int ( \mu   \partial_2^2  ( \partial_1\eta_1\partial_2 \eta_1) \partial_2^2\mm{curl} \eta-\partial_2^2\mm{curl}_{\mathcal{A}} \eta
 \mm{curl}_{\mathcal{A}}(\bar{\rho}  u))  \mm{d}y -\mathcal{I}_1,\\
&\mathcal{I}_1:=\int   \partial_2^2  \mm{curl}_{\mathcal{A}}\eta (   \partial_2  (\partial_2\eta_1\mm{curl}_{\mathcal{A}}\partial_1 \eta  )
+\partial_2\eta_1 (  \partial_2\mm{curl}_{\mathcal{A}} \partial_1 \eta-\partial_2\eta_1 \mm{curl}_{\mathcal{A}} \partial_1^2 \eta) ) \mm{d}y.
 \end{align*}
\end{lem}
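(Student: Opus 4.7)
The plan is to apply $\partial^\alpha \mm{curl}_{\mathcal{A}}$ to the momentum equation \eqref{01dsaf16asdfasf}$_2$ and test the resulting identity against $\partial^\alpha \mm{curl}_{\mathcal{A}}\eta$ in $L^2$. The crucial algebraic identity is $\mm{curl}_{\mathcal{A}}(\nabla_{\mathcal{A}} q)=0$, which holds because $\det(I+\nabla\eta)=1$; this makes the pressure drop out entirely, so only the inertial term $\bar\rho u_t$, the viscous term $\mu\Delta_{\mathcal{A}}u$, the magnetic tension $\lambda m^2\partial_1^2\eta$, the gravity perturbation $G_\eta\mathbf{e}_2$, and various commutators survive.

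Using $\eta_t=u$, the inertial contribution can be rewritten as $\tfrac{\mm{d}}{\mm{d}t}\int \partial^\alpha\mm{curl}_{\mathcal{A}}\eta\cdot\partial^\alpha\mm{curl}_{\mathcal{A}}(\bar\rho u)\,\mm{d}y$ modulo a commutator involving $\partial_t\mathcal{A}$. Integrating by parts in the viscous term produces $\tfrac{\mm{d}}{\mm{d}t}\tfrac{\mu}{2}\|\nabla_{\mathcal{A}}\partial^\alpha\mm{curl}_{\mathcal{A}}\eta\|_0^2$ up to commutators from $[\partial_t,\mathcal{A}]$ and $[\mm{curl}_{\mathcal{A}},\Delta_{\mathcal{A}}]$, while integrating by parts in the magnetic-tension term yields the dissipation $\lambda m^2\|\partial^\alpha \mm{curl}_{\mathcal{A}}\partial_1\eta\|_0^2$. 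The gravity perturbation and all commutator remainders can then be bounded by the nonlinear estimates of Lemma \ref{201805141072}, the product inequalities \eqref{fgestims}--\eqref{fgessfdims}, and the smallness hypothesis \eqref{aprpiosesnew}, yielding the right-hand sides of \eqref{202005021600} and \eqref{202201211416}.

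For the first estimate the constraint $\alpha_2\neq 1$ forces $\partial^\alpha$ to be either the identity or $\partial_1$, so every integration by parts is boundary-free thanks to \eqref{2022011091553}, which gives $\mm{curl}_{\mathcal{A}}\partial_1^i\eta|_{\partial\Omega}=0$ for $i=0,1$. In the second estimate, by contrast, $\partial_2\mm{curl}_{\mathcal{A}}\eta$ does not vanish on $\partial\Omega$ in general, and the integrations by parts in the viscous and tension terms do generate boundary contributions. The plan is to absorb these into the correction terms $\mathcal{I}_1$ and $\mathcal{I}_2$: expanding $\mm{curl}_{\mathcal{A}}=\mm{curl}+\mm{curl}_{\tilde{\mathcal{A}}}$ isolates the boundary-obstructing piece forced by the nonlinearity $\partial_1\eta_1\partial_2\eta_1$ arising from $\mathcal{A}$, which is placed inside $\mathcal{I}_2$ via the entry $\int\mu\,\partial_2^2(\partial_1\eta_1\partial_2\eta_1)\,\partial_2^2\mm{curl}\,\eta\,\mm{d}y$, while $\mathcal{I}_1$ collects the residual pieces that redistribute derivatives onto $\partial_2\eta_1$ and $\mm{curl}_{\mathcal{A}}\partial_1^j\eta$.

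The principal obstacle is the $\partial_2$-case bookkeeping: several commutator products feature two spatial derivatives acting on $\eta$, which is only controlled in $H^3$, so each must be carefully paired against a factor that is either $\partial_1$-differentiated (producing a dissipation factor) or bounded in $L^\infty$ via $\|\nabla\eta\|_2$. The objective is to verify that after extracting the time derivatives and the dissipation, every remainder factors into $\sqrt{\mathcal{E}}\,\mathcal{D}$ or an already-estimated quantity; in particular, the worst case, where $\partial_2^2$ lands on $\mm{curl}_{\mathcal{A}}\eta$ multiplied by a nonlinear boundary residue, is precisely what the subtraction of $\mathcal{I}_1$ inside the definition of $\mathcal{I}_2$ is engineered to cancel.
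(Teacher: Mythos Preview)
Your overall strategy matches the paper's: apply $\mm{curl}_{\mathcal{A}}$ to the momentum equation, use $\mm{curl}_{\mathcal{A}}\nabla_{\mathcal{A}}q=0$ and $\mm{curl}_{\mathcal{A}}\Delta_{\mathcal{A}}u=\Delta_{\mathcal{A}}\mm{curl}_{\mathcal{A}}u$ (so the commutator $[\mm{curl}_{\mathcal{A}},\Delta_{\mathcal{A}}]$ you mention is actually zero), then test against $\partial^\alpha\mm{curl}_{\mathcal{A}}\eta$ and extract the time-derivative and dissipation pieces. For $\alpha_2=0$ your use of \eqref{2022011091553} to kill boundary terms is exactly what the paper does.

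Your account of the $\partial_2$-case, however, misidentifies the origin of $\mathcal{I}_1$ and $\mathcal{I}_2$. They are \emph{not} boundary corrections. In the paper's derivation of \eqref{f202005720} all boundary integrals still vanish, now using $\mm{curl}_{\mathcal{A}}\partial_1\eta|_{\partial\Omega}=0$ together with $\partial_1\mm{curl}_{\mathcal{A}}u|_{\partial\Omega}=0$: for the tension term one integrates by parts first in $\partial_2$ (the boundary factor $\partial_1\mm{curl}_{\mathcal{A}}\partial_1\eta$ vanishes) and then in $\partial_1$; for the viscous term one writes $\Delta_{\mathcal{A}}=\Delta+(\Delta_{\mathcal{A}}-\Delta)$ and handles $\mu\int\Delta\,\mm{curl}_{\mathcal{A}}u\,\partial_2^2\mm{curl}_{\mathcal{A}}\eta\,\mm{d}y$ by moving $\partial_1$ first (periodic) and then $\partial_2$ (the boundary factor $\partial_1\mm{curl}_{\mathcal{A}}u$ vanishes). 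So no boundary residues survive.

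The actual purpose of $\mathcal{I}_1$ and $\mathcal{I}_2$ is a \emph{regularity} device, not a boundary one. Inside the commutator integrals $J_8$ and $J_9$ one meets products where a full $\partial_2^2$ hits $\mm{curl}_{\mathcal{A}}\eta$ while another $\partial_2^2$ (or $\partial_2$) hits a nonlinear factor built from $\nabla\eta$; the worst such products, e.g.\ $\int\partial_2^2\mm{curl}\eta\,\partial_2^2(\partial_1\eta_1\partial_2 u_1)\,\mm{d}y$ in $J_8$ and $\int\partial_2^2\mm{curl}_{\mathcal{A}}\eta\,\partial_2(\partial_2\eta_1\,\mm{curl}_{\mathcal{A}}\partial_1 u)\,\mm{d}y$ in $J_{9,2}$, cannot be bounded by $\sqrt{\mathcal{E}}\mathcal{D}$ because both factors sit at top order. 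The paper's remedy is to recognize each as a total time derivative (trading $u$ for $\eta_t$) plus a remainder that \emph{is} controllable; the extracted time derivatives are precisely the pieces $\mu\int\partial_2^2(\partial_1\eta_1\partial_2\eta_1)\,\partial_2^2\mm{curl}\eta\,\mm{d}y$ and $-\mathcal{I}_1$ assembled into $\mathcal{I}_2$. Your last paragraph gestures at this regularity obstruction but wraps it in boundary language that would send the detailed computation in the wrong direction.
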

\begin{pf}In view of \eqref{dstist01}, we have
$$\mm{curl}_{\mathcal{A}}\left(gG_{\eta} \mathbf{e}_2\right)=\mm{curl}_{\mathcal{A}}\left(-g\bar{\rho} \mathbf{e}_2\right)
=-g\mathcal{A}_{1j}\partial_j \bar{\rho} = g\bar{\rho}'\partial_1\eta_2 .
$$
Therefore, applying $\mm{curl}_{\mathcal{A}}$ to \eqref{01dsaf16asdfasf}$_2$  and then using the fact
$$\mm{curl}_{\mathcal{A}} \Delta_{\mathcal{A}}u=\Delta_{\mathcal{A}}\mm{curl}_{\mathcal{A}} u , $$
we get
\begin{align}\label{201910072117}
&\partial_t\mm{curl}_{\mathcal{A}}( \bar{\rho} u) -\mu \Delta_{\mathcal{A}} \mm{curl}_{\mathcal{A}}u-\lambda m^2 \partial_1\mm{curl}_{ \mathcal{A}}  \partial_1   \eta\nonumber \\
&= g\bar{\rho}'\partial_1 \eta_2-\lambda m^2\mm{curl}_{\partial_1\mathcal{A}}  \partial_1 \eta
+  \mm{curl}_{\mathcal{A}_t}( \bar{\rho} u) .
\end{align}

Let $\alpha$ be a multi-index. Applying $\partial^{\alpha}$ to \eqref{201910072117} yields
\begin{align}
& \partial_t\partial^{\alpha}\mm{curl}_{\mathcal{A}}( \bar{\rho} u)
-\mu \partial^{\alpha}\Delta_{\mathcal{A}}\mm{curl}_{\mathcal{A}} u- \lambda m^2\partial_1 \partial^{\alpha}\mm{curl}_{\mathcal{A}}  \partial_1  \eta\nonumber \\
& =g \bar{\rho}'\partial^{\alpha}\partial_1\eta_2  - \lambda m^2 \partial^{\alpha}\mm{curl}_{\partial_1\mathcal{A}}  \partial_1  \eta+ \partial^{\alpha}\mm{curl}_{\mathcal{A}_t}( \bar{\rho} u).\label{202005021542}
\end{align}

(1) We take $\alpha=(i,0)$ and $0\leqslant i\leqslant 1$. Multiply \eqref{202005021542} by $ \partial_1^i\mm{curl}_{\mathcal{A}} \eta $ in $L^2$ and then using the integral by parts,
 we have
 \begin{align}
&\frac{\mm{d}}{\mm{d}t}
\int
  \partial_1^i \mm{curl}_{\mathcal{A}} (\bar{\rho} u)\partial_1^i\mm{curl}_{\mathcal{A}} \eta
 \mm{d}y
+{\lambda}m^2\|  \partial_1^i \mm{curl}_{\mathcal{A}} \partial_1 \eta \|^2_{0}  =\sum_{j=1}^4J_{j,i}
,\label{f202005031720}
\end{align}
where
\begin{align}
J_{1,i}:=&\int \left(\partial_1^i
  \mm{curl}_{\mathcal{A}} (\bar{\rho} u )\partial_1^i\mm{curl}_{\mathcal{A}} u  - g  \bar{\rho}'\partial_1^{i}\eta_2
   \partial_1^{1+i} \mm{curl}_{\mathcal{A}} \eta\right)\mm{d}y,\nonumber \\
    J_{2,i}:=&
\int (
  \partial_1^i \mm{curl}_{\mathcal{A}}( \bar{\rho}   u  ) \partial_1^i\mm{curl}_{\mathcal{A}_t} \eta + \partial_1^i \mm{curl}_{\mathcal{A}_t}( \bar{\rho} u) \partial_1^i \mm{curl}_{\mathcal{A}} \eta )\mm{d}y,\nonumber\\
J_{3,i} :=& - \lambda m^2 \int (
  \partial^i\mm{curl}_{\partial_1\mathcal{A}}  \partial_1  \eta \partial^{i}_1 \mm{curl}_{\mathcal{A}}    \eta+ \partial_1^i \mm{curl}_{\mathcal{A}} \partial_1\eta \partial_1^i \mm{curl}_{\partial_1\mathcal{A}}  \eta ) \mm{d}y,
\nonumber\\
J_{4,i}:=&  \mu \int  \partial^{i}_1\Delta_{\mathcal{A}}\mm{curl}_{\mathcal{A}} u  \partial_1^i\mm{curl}_{\mathcal{A}} \eta \mm{d}y. \nonumber
\end{align}

Making use of \eqref{aprpiosesnew}, \eqref{202220202011843}, \eqref{fgestims} and the integral by parts, we have
\begin{align}
J_{1,i}
   \lesssim&\|\partial_1^i
  \mm{curl}_{\mathcal{A}} (\bar{\rho} u )\|_0\|\partial_1^i\mm{curl}_{\mathcal{A}} u \|_0+\|\partial_1^{i}\eta_2
   \|_0\|\partial_1^{1+i} \mm{curl}_{\mathcal{A}} \eta\|_0\nonumber \\
        \lesssim & \|u\|_2^2+ \| \eta_2\|_{i,0} \| \eta \|_{1+i,1} \label{20221011131856},\\
 \label{f202008231718}  J_{2,i} \lesssim &\|\nabla \eta\|_2\|  u\|_{2}^2 .
\end{align}
and
\begin{align}   J_{3,i}\leqslant & c\|\nabla \eta\|_2\|\eta\|_{1+i,1}^2 +
\begin{cases}
0 &\mbox{for }i=0;\\
   \lambda m^2 \int
   \mm{curl}_{\partial_1\mathcal{A}}  \partial_1   \eta \partial^{2}_1 \mm{curl}_{\mathcal{A}}    \eta    \mm{d}y &\mbox{for }i=1\end{cases}
  \nonumber \\
 \leqslant  &c\|\nabla \eta\|_2\|\eta\|_{1+i,1}^2.\label{202022010181634}
\end{align}
Next we turn to the estimate of $J_{4,i}$.

Using the integral by parts, the boundary condition \eqref{2022011091553}  and \eqref{202201152005}, we have
\begin{align}
 J_{4,i} =
\sum_{j=1}^3  J_{4,i,j}    -\frac{\mu }{2}\frac{\mm{d}}{\mm{d}t} \| \nabla_{\mathcal{A}}\partial_1^i \mm{curl}_{\mathcal{A}}\eta \|_0^2 ,  \label{2022202041810}
  \end{align}
 where
 \begin{align*}
 J_{4,i,1} :=&\mu \int \nabla_{\mathcal{A}} \partial_1^i  \mm{curl}_{\mathcal{A}}\eta  \cdot \nabla_{\mathcal{A}}\partial_1^i \mm{curl}_{\mathcal{A}_t} \eta \mm{d}y,\\
 J_{4,i,2}:=&
\mu \int  \nabla_{\mathcal{A}} \partial_1^i \mm{curl}_{\mathcal{A}}\eta \cdot  \nabla_{\mathcal{A}_t}\partial_1^i \mm{curl}_{\mathcal{A}} \eta \mm{d}y,\\
 J_{4,i,3}:=&\begin{cases}
0&\mbox{for }i=0;\\
-\mu \int  \partial_1 ( \mathcal{A}_{kl}  \mathcal{A}_{kn}) \partial_n \mm{curl}_{\mathcal{A}}u\cdot   \partial_l \partial_1\mm{curl}_{\mathcal{A}} \eta \mm{d}y&\mbox{for }i=1.
\end{cases}\end{align*}

Exploiting \eqref{fgestims} and \eqref{fgessfdims}, we easily get
\begin{align}
 \sum_{j=1}^3  J_{4,1,j}
\lesssim  \|\eta\|_{1,2} \|\nabla \eta\|_2\| u\|_{\underline{1},2}.  \nonumber
\end{align}
In addition,
 \begin{align}  J_{4,0,1} =  &\mu \int  \nabla_{\mathcal{A}} \mm{curl}_{\mathcal{A}}\eta \cdot  \nabla_{\mathcal{A}}
  (\partial_1 \eta_1\partial_2 u_1+\partial_1 \eta_2\partial_2u_2 -\partial_2 \eta_1\partial_1u_1-\partial_2 \eta_2\partial_1u_2
)\mm{d}y\nonumber \\
\leqslant & c\|\nabla \eta\|_1\|(\partial_1\eta, \partial_2\eta_2)\|_{2}\|\nabla u\|_1+  \mu \int
  (\nabla_{\mathcal{A}} \mm{curl}_{\mathcal{A}}\eta \cdot  (\nabla_{\mathcal{A}}
  (  \partial_1\partial_2 \eta_1u_1)  \nonumber \\
  &+     \nabla_{\partial_1\mathcal{A}}
  (\partial_2 \eta_1u_1))+ \partial_1 \nabla_{\mathcal{A}} \mm{curl}_{\mathcal{A}}\eta \cdot  \nabla_{\mathcal{A}}
(\partial_2 \eta_1 u_1  ) ) \mm{d}y\nonumber \\
\lesssim & \|\nabla \eta\|_1\|(\partial_1\eta,\partial_2\eta_2)\|_{2}\|u\|_2  \nonumber
\end{align}
and
 \begin{align} J_{4,0,2} =  & \mu \int  ( {\mathcal{A}_{1j}} \partial_j \mm{curl}_{\mathcal{A}}\eta (  \partial_1 \mm{curl}_{\mathcal{A}} \eta\partial_2 u_2- \partial_2 \mm{curl}_{\mathcal{A}} \eta\partial_1 u_2)   \nonumber \\
&+    {\mathcal{A}_{2j}} \partial_j \mm{curl}_{\mathcal{A}}\eta  ( \partial_2 \mm{curl}_{\mathcal{A}} \eta\partial_1 u_1-\partial_1 \mm{curl}_{\mathcal{A}} \eta\partial_2 u_1 ) ) \mm{d}y\nonumber \\
 \lesssim& \|\nabla \eta\|_1\|\eta\|_{1,2}\| u\|_2.
  \nonumber
\end{align}
Thanks to the above three estimates and \eqref{omessetsim122n}, we can infer from \eqref{2022202041810} that
\begin{align}
J_{4,i}\leqslant c \|\eta\|_{1,2} \|\nabla \eta\|_2\| u\|_{\underline{1},2}-\frac{\mu }{2}\frac{\mm{d}}{\mm{d}t} \|\nabla_{\mathcal{A}}\partial_1^i \mm{curl}_{\mathcal{A}}\eta \|_0^2.
\label{2022202081350}
\end{align}

Inserting the estimates \eqref{20221011131856}--\eqref{2022202081350} into \eqref{f202005031720}, and then using  the interpolation inequality \eqref{201807291850}, we immediately obtain
   \eqref{202005021600}.

(2) Now we turn to the derivation of \eqref{202201211416}.
Multiplying \eqref{201910072117}   by  $- \partial_2^2 \mm{curl}_{\mathcal{A}} \eta $, and then using the integral by parts, and the boundary conditions \eqref{202220118000}, \eqref{2022011091553} and $\partial_1\mm{curl}_{\mathcal{A}}u|_{\partial\Omega}=0$,
 we have
 \begin{align}
 &\frac{\mm{d}}{\mm{d}t}
\left(
 \frac{\mu }{2}\| \nabla \partial_2\mm{curl}_{\mathcal{A}}\eta\|^2-\int \partial_2^2\mm{curl}_{\mathcal{A}} \eta \mm{curl}_{\mathcal{A}}(\bar{\rho}  u)
\mm{d}y\right)
+{\lambda}m^2\|  \partial_2\mm{curl}_{\mathcal{A}} \partial_1 \eta \|^2_{0}  =\sum_{ j=5}^{9}J_{j}
,\label{f202005720}
\end{align}
where
\begin{align}
& J_5  := - \int (g \bar{\rho}' \partial_2\eta_2
 \partial_1  \partial_2 \mm{curl}_{\mathcal{A}} \eta +
\mm{curl}_{\mathcal{A}} (\bar{\rho} u  )\partial_2^2 \mm{curl}_{\mathcal{A}}  u) \mm{d}y,\nonumber \\
  &   J_6 :=-
\int   (\mm{curl}_{\mathcal{A}_t} (\bar{\rho} u   )  \partial_2^2\mm{curl}_{\mathcal{A} } \eta  +  \mm{curl}_{\mathcal{A}}( \bar{\rho} u)  \partial_2^2 \mm{curl}_{\mathcal{A}_t} \eta)   \mm{d}y,\nonumber\\
&  J_7 :=    \lambda m^2 \int ( \mm{curl}_{\partial_1\mathcal{A}}  \partial_1  \eta \partial_2^2\mm{curl}_{ \mathcal{A}}  \eta - \partial_2\mm{curl}_{\mathcal{A}} \partial_1\eta\partial_2 \mm{curl}_{\partial_1 \mathcal{A}}  \eta
   )\mm{d}y,
\nonumber\\
& J_{8}:= \mu \int  \nabla \partial_2   \mm{curl}_{\mathcal{A}}\eta \cdot \nabla \partial_2\mm{curl}_{\mathcal{A}_t}\eta   \mm{d}y\mbox{ and }J_{9}:= \mu \int   ( \Delta-\Delta_{\mathcal{A}}) \mm{curl}_{\mathcal{A}} u   \partial_2^2  \mm{curl}_{\mathcal{A}}\eta  \mm{d}y.
\nonumber
\end{align}

It is easy to see that
\begin{align}\label{f2031718}
 \begin{cases}
 J_5   \lesssim  \|\partial_2 \eta_2\|_{0}\|\eta\|_{1,2}+\|u\|_1\|\nabla u\|_2,\\
   J_6 \lesssim    \|\nabla \eta\|_2\|\nabla u\|_2\|u\|_1\lesssim \sqrt{\mathcal{E}}\mathcal{D},\\
    J_7 \lesssim  \|\nabla \eta\|_2\|  \eta\|_{1,2}^2\lesssim \sqrt{\mathcal{E}}\mathcal{D}.
    \end{cases}
\end{align}

Obviously
\begin{align} J_{8}=& -  \mu \frac{\mm{d}}{\mm{d}t}\int  \partial_2^2  ( \partial_1\eta_1\partial_2 \eta_1) \partial_2^2\mm{curl} \eta  \mm{d}y+\tilde{J}_8,
\label{2022201191657}
\end{align}
where we have defined that
\begin{align*}
\tilde{J}_{8}:=  &\mu \int ( \partial_2 \partial_1   \mm{curl}_{ {\mathcal{A}}}\eta  \partial_2\partial_1 \mm{curl}_{\mathcal{A} }\eta+ \partial_2^2  \mm{curl}_{\tilde{\mathcal{A}}}\eta  \partial_2^2\mm{curl}_{\mathcal{A}_t}\eta ) \mm{d}y  \\
& + \mu \int  \partial_2^2\mm{curl} \eta\partial_2^2  (\partial_1 \eta_1 \partial_2 u_1+\partial_1 \eta_2\partial_2u_2
-\partial_2 \eta_2\partial_1u_2
)  \mm{d}y   \\
& + \mu \int \partial_2^2\mm{curl} \eta \partial_2^2  ( \partial_1\eta_1\partial_2 u_1) \mm{d}y+ \mu \int  \partial_2^2  ( \partial_1\eta_1\partial_2 \eta_1) \partial_2^2\mm{curl} u  \mm{d}y.
\end{align*}
Exploiting \eqref{201907291432} and \eqref{fgestims}, we have
\begin{align} &\tilde{J}_8\lesssim
(\|\eta\|_{1,2}+\|\partial_2\eta_2\|_2)\|\nabla \eta\|_{2}\|\nabla u\|_2\lesssim \sqrt{\mathcal{E}}\mathcal{D}.  \label{2022201191653}
\end{align}

The integral $J_9$ can be rewritten as follows:
\begin{align} J_9:= &\mu \int  \partial_2^2  \mm{curl}_{\mathcal{A}}\eta  ( (1+\partial_2\eta_2)\partial_1((\partial_1\eta_2\partial_2- \partial_2\eta_2 \partial_1 )\mm{curl}_{\mathcal{A}} u  )
 \nonumber \\ &+(1+\partial_1\eta_1)\partial_2((\partial_2\eta_1\partial_1 -\partial_1\eta_1 \partial_2 )\mm{curl}_{\mathcal{A}} u  ) +
 \partial_1\eta_2\partial_2(((1+\partial_2\eta_2)\partial_1\nonumber \\
 &-\partial_1\eta_2 \partial_2 ) \mm{curl}_{\mathcal{A}} u  ) +\partial_2\eta_1\partial_1(((1+\partial_1\eta_1)\partial_2\nonumber \\
 &-\partial_2\eta_1\partial_1)\mm{curl}_{\mathcal{A}} u) - (\partial_1\eta_1 \partial_2^2  +\partial_2\eta_2 \partial_1^2 ) \mm{curl}_{\mathcal{A}} u    ) \mm{d}y  =  {J}_{9,1} + {J}_{9,2} ,
 \label{2022201191423}
\end{align}
where
\begin{align*}
 {J}_{9,1}:=  & \mu \int \partial_2^2  \mm{curl}_{\mathcal{A}}\eta     ( (1+\partial_2\eta_2)( \partial_1(\partial_1\eta_2\partial_2\mm{curl}_{\mathcal{A}} u  )-\partial_2\partial_1\eta_2 \partial_1\mm{curl}_{\mathcal{A}} u  )
 \nonumber \\
 &-  \partial_2\eta_2   (1+\partial_2\eta_2)  \partial_1^2 \mm{curl}_{\mathcal{A}} u  +  \partial_2( \partial_2\eta_1\mm{curl}_{\partial_1\mathcal{A}} u  ) \nonumber \\
 &
+ \partial_1\eta_1 \partial_2( \partial_2\eta_1\partial_1 \mm{curl}_{\mathcal{A}} u  )-(1+\partial_1\eta_1)\partial_2( \partial_1\eta_1 \partial_2  \mm{curl}_{\mathcal{A}} u  )
 \nonumber \\
 &+
 \partial_1\eta_2\partial_2(((1+\partial_2\eta_2)\partial_1-\partial_1\eta_2 \partial_2 ) \mm{curl}_{\mathcal{A}} u  ) +
 \partial_2\eta_1\partial_1^2\eta_1 \partial_2\mm{curl}_{ \mathcal{A}}   u \nonumber \\
& +  \partial_1\eta_1\partial_2\eta_1\partial_2\partial_1 \mm{curl}_{\mathcal{A}} u+\partial_2\eta_1\partial_2\mm{curl}_{\partial_1 \mathcal{A}} u-\partial_2\eta_1 \partial_2\partial_1 \eta_1\partial_1 \mm{curl}_{ \mathcal{A}} u
\nonumber \\
 &
  -(\partial_2\eta_1)^2 (  \mm{curl}_{\partial_1 ^2\mathcal{A}}  u + \mm{curl}_{\partial_1  \mathcal{A}} \partial_1 u) -(\partial_1\eta_1 \partial_2^2  +\partial_2\eta_2 \partial_1^2 ) \mm{curl}_{\mathcal{A}} u   ) \mm{d}y,\\
 {J}_{9,2}:=  & \mu \int  \partial_2^2  \mm{curl}_{\mathcal{A}}\eta (\partial_2  (\partial_2\eta_1\mm{curl}_{\mathcal{A}}\partial_1 u  )- \partial_2\eta_2(1+ \partial_2\eta_2 ) \mm{curl}_{\mathcal{A}} \partial_1^2  u
 \nonumber \\
 & +\partial_2\eta_1 ( \partial_2\mm{curl}_{\mathcal{A}} \partial_1 u -\partial_2\eta_1 \mm{curl}_{\mathcal{A}} \partial_1^2 u)  ) \mm{d}y .\end{align*}
 We further rewrite the integral $ {J}_{9,2}$ as follows:
 \begin{align}
 {J}_{9,2}:=  &  \tilde{J}_{9,2}-\frac{\mm{d}}{\mm{d}t}\mathcal{I}_1 , \label{2022201191633}
 \end{align}
 where
  \begin{align*}
 \tilde{J}_{9,2}:=  \mu  \int &((\partial_2\eta_2 (1 +\partial_2\eta_2 )
\mm{curl}_{\mathcal{A}} \partial_1^2 \eta-    \partial_2  (\partial_2\eta_1\mm{curl}_{\mathcal{A}}\partial_1 \eta  )-  \partial_2\eta_1 (  \partial_2\mm{curl}_{\mathcal{A}} \partial_1 \eta
 \nonumber \\
 &-\partial_2\eta_1 \mm{curl}_{\mathcal{A}} \partial_1^2 \eta))  \partial_t\partial_2^2  \mm{curl}_{\mathcal{A}}\eta -\partial_2^2  \mm{curl}_{\mathcal{A}}\eta(  \partial_2  (\mm{curl}_{\mathcal{A}}\partial_1  \eta  \partial_2u_1 \nonumber \\
 &+\partial_2\eta_1\mm{curl}_{\mathcal{A}_t}\partial_1 \eta    )  +( \partial_2\mm{curl}_{\mathcal{A}} \partial_1 \eta -\partial_2\eta_1 \mm{curl}_{\mathcal{A}} \partial_1^2\eta )\partial_2u_1
 \nonumber \\
    &+\partial_2\eta_1 ( \partial_2\mm{curl}_{\mathcal{A}_t} \partial_1 \eta-\partial_2u_1 \mm{curl}_{\mathcal{A}} \partial_1^2 \eta-\partial_2\eta_1 \mm{curl}_{\mathcal{A}_t} \partial_1^2 \eta) ) \mm{d}y.
 \end{align*}

 It is easy to estimate that
 \begin{align}
{J}_{9,1} +  \tilde{J}_{9,2}\lesssim (\|\eta\|_{1,2}+\|\partial_2\eta_2\|_2)\|\nabla \eta\|_2\|\nabla u\|_2. \nonumber
  \end{align}
Inserting \eqref{2022201191633} into \eqref{2022201191423} and then using  the above  estimate, we get
\begin{align} J_9\leqslant  c  \sqrt{\mathcal{E}}\mathcal{D} - \frac{\mm{d}}{\mm{d}t}\mathcal{I}_1.
 \label{291423}
\end{align}

Finally, putting \eqref{2022201191657} and  \eqref{291423} into \eqref{f202005720}, and then using the estimates \eqref{omessetsim122n}, \eqref{f2031718}  and \eqref{2022201191653}, we arrive at \eqref{202201211416}. This completes the proof.
\hfill$\Box$
\end{pf}

\subsection{Stabilizing estimates}

Now  we further establish the stabilizing estimates for $E(\partial_1^{i}\eta)$ and $E(u)$ appearing in Lemmas \ref{lem:082sdaf41545}--\ref{2019100216355nnn}.
\begin{lem}\label{lem:08250749}
We have
\begin{align}
&\label{202008250745}
\|\eta\|_{{1+i},0}^2
\lesssim-E(\partial_1^{i}\eta)+\begin{cases}
0 &\mbox{for }i=0,\ 1;  \\
\|\eta\|_{2,1}^4&\mbox{for   }i= 2,
\end{cases}
\\
&
\label{20250745}
\| u\|_{1, 0}^2
\lesssim-E(u)+\|\nabla \eta\|_2\| u\|_1^2.
\end{align}
\end{lem}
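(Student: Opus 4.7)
The plan is to extract a coercivity constant from the stability condition on $m$ and then reduce the estimates for $\partial_1^i\eta$ and $u$ to functions in $H_\sigma^1$ via a Helmholtz-type correction.

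First, since $|m| > m_{\mm{C}}$, the variational definition \eqref{2020102241504} produces a constant $c_0 := \lambda(m^2 - m_{\mm{C}}^2) > 0$ such that
\[
-E(v) = \lambda m^2\|\partial_1 v\|_0^2 - g\textstyle\int\bar{\rho}' v_2^2\mm{d}y \geqslant c_0\|\partial_1 v\|_0^2 \quad \text{for every } v\in H_\sigma^1.
\]
The task is then to transfer this coercivity from a divergence-free object to $w \in \{\partial_1^i\eta, u\}$, which is only \emph{approximately} divergence-free because $\det(I+\nabla\eta)=1$ yields \eqref{improtian1}, while $\div_{\mathcal A}u=0$ only implies $\div u = -\tilde{\mathcal A}_{lk}\partial_k u_l$.

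The key step is a Helmholtz correction. For each $w$ with $w_2|_{\partial\Omega}=0$, the compatibility $\int_\Omega \div w\,\mm{d}y=0$ (checked by the divergence theorem and the boundary condition) lets me solve the Neumann problem $\Delta\phi=\div w$ in $\Omega$, $\partial_2\phi|_{\partial\Omega}=0$, with the elliptic bound $\|\phi\|_2\lesssim\|\div w\|_0$. Setting $v:=w-\nabla\phi$ one checks $\div v=0$ and $v_2|_{\partial\Omega}=w_2|_{\partial\Omega}-\partial_2\phi|_{\partial\Omega}=0$, so $v\in H_\sigma^1$. Crucially, because $\div v=0$ forces $\partial_2 v_2=-\partial_1 v_1$ and $v_2$ vanishes on $\partial\Omega$, a Poincaré (Friedrichs) inequality in the $y_2$-direction gives $\|v_2\|_0\lesssim\|\partial_1 v\|_0$. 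I will then expand
\[
-E(w)=-E(v)+2\lambda m^2\textstyle\int\partial_1 v\cdot\partial_1\nabla\phi\,\mm{d}y +\lambda m^2\|\partial_1\nabla\phi\|_0^2-2g\int\bar{\rho}' v_2\partial_2\phi\,\mm{d}y-g\int\bar{\rho}'(\partial_2\phi)^2\mm{d}y,
\]
bound the cross terms by $C\|\partial_1 v\|_0\|\phi\|_2+C\|v_2\|_0\|\phi\|_2+C\|\phi\|_2^2$, apply Young's inequality together with $\|v_2\|_0\lesssim\|\partial_1 v\|_0$ to absorb the $\|\partial_1 v\|_0$ piece into $-E(v)\geqslant c_0\|\partial_1 v\|_0^2$, and conclude
\[
\|\partial_1 w\|_0^2\leqslant 2\|\partial_1 v\|_0^2+2\|\partial_1\nabla\phi\|_0^2 \lesssim -E(w)+\|\phi\|_2^2.
\]

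It then remains to plug in the nonlinear $\div$ estimates. For $w=\partial_1^i\eta$ with $i=0,1$, the bound \eqref{improtian1} gives $\|\phi\|_2\lesssim\|\nabla\eta\|_2\|\eta\|_{1+i,0}\leqslant\delta\|\eta\|_{1+i,0}$, so $\|\phi\|_2^2$ can be absorbed into the left-hand side for sufficiently small $\delta$, yielding \eqref{202008250745} with a vanishing remainder. For $i=2$, \eqref{improtian1} gives $\|\phi\|_2\lesssim\|\eta\|_{3,0}\|\nabla\eta\|_2+\|\eta\|_{2,1}^2$; the first contribution is again absorbed by smallness, while the second produces exactly the $\|\eta\|_{2,1}^4$ remainder. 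For $w=u$, using $\div u=-\tilde{\mathcal A}_{lk}\partial_k u_l$ and the product estimate I get $\|\phi\|_2\lesssim\|\nabla\eta\|_2\|u\|_1$, so $\|\phi\|_2^2\lesssim\|\nabla\eta\|_2^2\|u\|_1^2\leqslant\|\nabla\eta\|_2\|u\|_1^2$ since $\|\nabla\eta\|_2\leqslant\delta\leqslant 1$, which is the stated remainder in \eqref{20250745}.

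The main obstacle is just keeping the book-keeping clean in the $i=2$ case, because the $\div$-estimate becomes genuinely quadratic in $\eta$ there and one must carefully track which terms can be absorbed under the smallness condition \eqref{aprpiosesnew} and which must be kept on the right-hand side; the rest of the argument is a standard Hodge-type decomposition combined with Young's inequality.
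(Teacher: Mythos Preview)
Your proof is correct and follows the same overall strategy as the paper: extract coercivity on $H_\sigma^1$ from the stability condition, correct $w\in\{\partial_1^i\eta,u\}$ to a divergence-free field, expand $E(w)$ around $E$ of the corrected field, and control the cross terms and remainder by the nonlinear $\mathrm{div}$ estimates \eqref{improtian1}.

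The only substantive difference is the choice of correction operator. The paper uses Bogovskii's operator $\mathcal{B}:\underline{L}^2\to H_0^1$ (Lemma~\ref{pro:12x21}), setting $v:=\partial_1^i\eta-\mathcal{B}(\partial_1^i\mathrm{div}\,\eta)$, whereas you solve the Neumann problem $\Delta\phi=\mathrm{div}\,w$, $\partial_2\phi|_{\partial\Omega}=0$ and take $v:=w-\nabla\phi$. Both corrections land in $H_\sigma^1$ and both satisfy the same $H^1$ bound in terms of $\|\mathrm{div}\,w\|_0$, so the subsequent bookkeeping is identical. Your Neumann-potential route is arguably more elementary (it avoids invoking Bogovskii and uses only standard elliptic regularity on the slab), while the paper's Bogovskii correction has the minor convenience that $\mathcal{B}(\cdot)\in H_0^1$ vanishes entirely on $\partial\Omega$, which slightly shortens the verification that $v\in H_\sigma^1$. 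Either way the argument closes with the same absorption step and the same $\|\eta\|_{2,1}^4$ remainder for $i=2$.
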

\begin{pf}
By virtue of the definition of $m_{\mm{C}}$, it is easy to see that
\begin{align*}
-g\int \bar{\rho}'w_2^2\mm{d}y\geqslant-\lambda \| m_{\mm{C}}\partial_1w\|_0^2\mbox{ for any }w\in H_{\sigma}^1,
\end{align*}
 which, together with the stability condition $|m|>m_{\mm{C}}$, implies
\begin{align}\label{202008250814}
\| w\|_{1,0}^2\lesssim\lambda (m^2-m_{\mm{C}}^2)\|\partial_1w\|_0^2\lesssim-E(w)\;\;\mbox{ for any } w\in H_{\sigma}^1.
\end{align}

For applying the above estimate to $\eta$,  we shall modify $\eta$.
 Let $0\leqslant i\leqslant 2$ be given. By Lemma \ref{pro:12x21}, there exists a Bogovskii's operator $\mathcal{B} : \partial_1^{i}\mm{div}\eta\in \underline{L}^2 \to H^1_0$ such that
\begin{align}
 \mathrm{div}\mathcal{B}(\partial_1^{i}\mm{div} \eta)=\partial_1^{i}\mm{div}\eta\mbox{ and }\|{\mathcal{B}}(\partial_1^{i}\mm{div}\eta)\|_1\lesssim \|\partial_1^{i}\mm{div}\eta\|_0.
 \label{2022202191518}
\end{align}

Now we use $\partial_1^i \eta- \mathcal{B}(\partial_1^{i}\mm{div} \eta)$ to rewrite $ {E}(\partial_1^{i}\eta)$ as follows.
\begin{align}\label{201912191626}
 {E}(\partial_1^{i}\eta) = {E}( \partial_1^{i} \eta-\mathcal{B}(\partial_1^{i}\mm{div} \eta)) +{E}(\mathcal{B}(\partial_1^{i}\mm{div} \eta) ) -K_{1+i},
\end{align}
where
\begin{align*}
K_{1+i}: = 2\lambda m^2\int \partial_1^{1+i}\eta\cdot \partial_1 \mathcal{B}(\partial_1^{i}\mm{div} \eta)\mm{d}y
-2g\int \bar{\rho}'\partial_1^{i}\eta_2\mathcal{B}(\partial_1^{i}\mm{div} \eta)\cdot \mathbf{e}_2\mm{d}y.
\end{align*}
Recalling $\partial_1^{i} \eta- \mathcal{B}(\partial_1^{i}\mm{div} \eta) \in H_{\sigma}^1$, we use \eqref{202008250814} to get
\begin{align}\nonumber
\| \partial_1^{i}\eta- \mathcal{B}(\partial_1^{i}\mm{div} \eta)\|_{ 1,0}^2 \lesssim-E(\partial_1^{i} \eta-\mathcal{B}(\partial_1^{i}\mm{div} \eta)),
\end{align}
which, together with \eqref{201912191626} and Young's inequality, gives
\begin{align}\label{202008250814nn}
\| \eta\|_{i+1,0}^2 \lesssim{E}(\mathcal{B}(\partial_1^{i}\mm{div} \eta) ) -E(\partial_1^{i}\eta) -K_{1+i}+\| \mathcal{B}(\partial_1^{i}\mm{div} \eta)\|_{ 1,0}^2.
\end{align}

Exploiting \eqref{2022202191518},  we find that
\begin{align}
  & {E}(\mathcal{B}(\partial_1^{i}\mm{div} \eta) )
-K_{1+i}
+\|\mathcal{B}(\partial_1^{i}\mm{div} \eta)\|_{1, 0}^2
 \nonumber \\
 &\lesssim \|\eta\|_{1+i,0}\|\mathcal{B}(\partial_1^{i}\mm{div} \eta)\|_{ 1,0}+ \|\eta_2\|_{i,0} \|\mathcal{B}(\partial_1^{i}\mm{div} \eta)\|_{ 0}+\|\mathcal{B}(\partial_1^{i}\mm{div} \eta)\|_{\underline{1}, 0}^2\nonumber \\
 & \lesssim
(\|\eta\|_{1+i,0}+ \|\eta_2\|_{i,0} )\| \mm{div} \eta \|_{ i,0}+\|  \mm{div} \eta \|_{i, 0}^2.\nonumber
\end{align}
Finally, putting the above estimate into \eqref{202008250814nn}, and then using \eqref{improtian1}, \eqref{omessetsim122n}, \eqref{202012241002} and Young's inequality, we get \eqref{202008250745}.

Similarly we can easily follow the argument of \eqref{202008250745} with $i=0$ to establish \eqref{20250745} by further using the relation
\begin{align}
\label{2022202121921}
\mm{div}u=-\mm{div}_{\tilde{\mathcal{A}}}u
\end{align} and the estimate
$$  \|\mm{div}_{\tilde{\mathcal{A}}}u\|_0\lesssim \|\nabla \eta\|_2\|u\|_1.$$
This completes the proof.
\hfill $\Box$
\end{pf}

\subsection{Stokes estimates}\label{subsec:Vor}

In this  section, we use the regularity theory of the Stokes problem  (with Navier boundary condition)  to
derive the estimates of normal estimates of $u$ and the equivalence estimates for $\mathcal{E}$.
\begin{lem}
For sufficiently small $\delta$, we have
\begin{align}
\label{omessetsim}
&\|  u\|_{\underline{i},2+j} +\|    q\|_{\underline{i},1+j} \lesssim \|    ( \partial_1^{ 2}\eta, u_t)\|_{\underline{i},j
}\mbox{ for }0\leqslant i+j\leqslant  1,  \\
&\label{202012252005}\mathcal{E} \mbox{ and } \| (\nabla \eta,u)\|_2^2\mbox{ are equivalent to each other} ,
\end{align}
where the  equivalent coefficients in \eqref{202012252005}   are independent of $\delta$.
\end{lem}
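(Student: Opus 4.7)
The plan is to recast the momentum equation \eqref{01dsaf16asdfasf}$_2$ as a classical (constant-coefficient) Stokes system with perturbations driven by $\tilde{\mathcal{A}} = \mathcal{A} - I$, then invoke the standard Stokes regularity theory for the Navier boundary condition (which I assume is supplied in the appendix), and finally absorb all perturbations using the smallness of $\|\nabla \eta\|_2 \leqslant \delta$.

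Concretely, I would split $G_\eta = g\bar{\rho}'\eta_2 + \mathcal{G}$ and rewrite \eqref{01dsaf16asdfasf}$_2$--\eqref{01dsaf16asdfasf}$_3$ as
\begin{equation*}
-\mu \Delta u + \nabla q = F, \quad \mm{div}\, u = h, \quad (u_2,\partial_2 u_1)|_{\partial\Omega}=0,
\end{equation*}
where $F := \lambda m^2 \partial_1^2 \eta + (g\bar{\rho}'\eta_2 + \mathcal{G})\mathbf{e}_2 - \bar{\rho} u_t + \mu(\Delta_{\mathcal{A}} - \Delta)u - (\nabla_{\mathcal{A}} - \nabla)q$ and $h := -\mm{div}_{\tilde{\mathcal{A}}}u$. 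Applying $\partial_1^i$ (which is tangential to $\partial\Omega$, hence preserves the Navier conditions and produces no boundary terms thanks to periodicity in $y_1$) and feeding the result into the Stokes regularity estimate yields
\begin{equation*}
\|u\|_{\underline{i},2+j} + \|q\|_{\underline{i},1+j} \lesssim \|F\|_{\underline{i},j} + \|h\|_{\underline{i},1+j}.
\end{equation*}
I would then control the perturbations $(\Delta_{\mathcal{A}}-\Delta)u$, $(\nabla_{\mathcal{A}}-\nabla)q$ and $\mm{div}_{\tilde{\mathcal{A}}}u$ by the product estimates \eqref{fgestims}--\eqref{fgessfdims}, each giving a bound of the form $\|\nabla \eta\|_2(\|u\|_{\underline{i},2+j}+\|q\|_{\underline{i},1+j})$, and absorb them into the left side for small $\delta$. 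The gravity contribution is handled by splitting into $\mathcal{G}$, estimated via \eqref{2022011130957} and \eqref{2022201201821} (purely quadratic, hence absorbable), and the linear part $g\bar{\rho}'\eta_2$ handled through \eqref{omessetsim122n}, which bounds $\|\eta_2\|_{\underline{i},j}$ by $\partial_1$-derivatives of $\eta$ plus $\|\mm{div}\,\eta\|$-type terms already covered by the nonlinear estimate \eqref{improxtian1}. This establishes \eqref{omessetsim}.

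For the equivalence \eqref{202012252005}, the direction $\|(\nabla\eta,u)\|_2^2 \lesssim \mathcal{E}$ is immediate. For the reverse direction, Poincar\'e's inequality combined with $(\bar{\rho}\eta_1)_\Omega=0$ (from \eqref{apresnew} and Lemma \ref{lem:0102}) and $\eta_2|_{\partial\Omega}=0$ (from \eqref{202211}) gives $\|\eta\|_3 \lesssim \|\nabla\eta\|_2$. To bound $\|u_t\|_0 + \|q\|_1$, I would combine the already-proved Stokes estimate \eqref{omessetsim} in the case $i=j=0$,
\begin{equation*}
\|u\|_2 + \|q\|_1 \lesssim \|\partial_1^2 \eta\|_0 + \|u_t\|_0,
\end{equation*}
with the direct algebraic consequence of the momentum equation
\begin{equation*}
\|u_t\|_0 \lesssim \|q\|_1 + \|u\|_2 + \|\partial_1^2\eta\|_0 + \|\eta_2\|_0 + \|\nabla\eta\|_2(\|q\|_1+\|u\|_2),
\end{equation*}
and absorb the mutual $\|q\|_1,\|u\|_2$ contributions using $\delta\ll 1$, yielding $\|u_t\|_0+\|q\|_1 \lesssim \|\nabla\eta\|_2 + \|u\|_2$. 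The $\delta$-independence of the equivalence constants follows since all hidden multiplicative constants depend only on the Stokes constant and $\bar{\rho}$, not on $\delta$.

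The main obstacle is the circular coupling between $q$ and $u_t$: each is naturally estimated in terms of the other via the momentum equation, so the Stokes estimate cannot be closed on its own but must be combined with the pointwise algebraic form $\bar{\rho}u_t = -\nabla_{\mathcal{A}} q + \mu\Delta_{\mathcal{A}} u + \lambda m^2 \partial_1^2 \eta + G_\eta \mathbf{e}_2$. Smallness of $\delta$ is essential to break this cycle, both in the Stokes regularity step (absorbing $\tilde{\mathcal{A}}$-perturbations) and in the equivalence step (absorbing the cross term $\|\nabla\eta\|_2 \|q\|_1$). A secondary routine check is that $\partial_1^i$ of the Stokes system still satisfies the admissible boundary data for the regularity theory, which holds because $\partial_1$ is tangential to $\partial\Omega = 2\pi L\mathbb{T}\times\{0,h\}$.
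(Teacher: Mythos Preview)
Your treatment of \eqref{omessetsim} is correct and essentially identical to the paper's: rewrite \eqref{01dsaf16asdfasf}$_2$--\eqref{01dsaf16asdfasf}$_3$ as a constant-coefficient Stokes system with $\tilde{\mathcal{A}}$-perturbations, apply the regularity estimate \eqref{202201122130}, and absorb the perturbations by $\|\nabla\eta\|_2\leqslant\delta$.

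For the equivalence \eqref{202012252005}, however, there is a genuine gap. You correctly identify the circular coupling between $u_t$ and $q$, but the fix you propose does not work. The direct algebraic bound from the momentum equation reads
\[
\|u_t\|_0 \lesssim \|\nabla_{\mathcal{A}} q\|_0 + \|\Delta_{\mathcal{A}} u\|_0 + \|\partial_1^2\eta\|_0 + \|\eta_2\|_0 \lesssim \|q\|_1 + \|u\|_2 + \dots,
\]
and here the coefficient in front of $\|q\|_1$ is of order one, not of order $\delta$ (it comes from $\nabla_{\mathcal{A}} q = \nabla q + \nabla_{\tilde{\mathcal{A}}}q$, and the $\nabla q$ part survives). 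Substituting the Stokes estimate $\|q\|_1 \lesssim \|\partial_1^2\eta\|_0 + \|u_t\|_0$ then returns $\|u_t\|_0 \leqslant C\|u_t\|_0 + \dots$ with a fixed constant $C$ that you have no mechanism to make small. Smallness of $\delta$ only kills the $\tilde{\mathcal{A}}$-corrections, not the principal term.

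The paper breaks this cycle differently: it multiplies \eqref{01dsaf16asdfasf}$_2$ by $u_t$ in $L^2$ and integrates by parts. The point is that $\int \nabla_{\mathcal{A}} q \cdot u_t\,\mm{d}y = -\int q\,\mm{div}_{\mathcal{A}} u_t\,\mm{d}y = \int q\,\mm{div}_{\mathcal{A}_t} u\,\mm{d}y$, using $\mm{div}_{\mathcal{A}} u_t = -\mm{div}_{\mathcal{A}_t} u$. Since $\mathcal{A}_t$ is built from $\nabla u$, this term is now $O(\|\nabla u\|_0\|u\|_2\|\nabla q\|_0)$, genuinely small after invoking \eqref{omessetsim} once more and using $\|u\|_2\leqslant\delta$. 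This yields $\|u_t\|_0 \lesssim \|\eta\|_{2,0}+\|u\|_2$ directly, without any $O(1)$ pressure term to absorb.
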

\begin{pf}
To begin with, we derive \eqref{omessetsim}. By \eqref{202201152005}, \eqref{01dsaf16asdfasf03n}$_2$ and \eqref{2022202121921}, we have the following Stokes problem
\begin{equation}
\label{Stokesequson}
\begin{cases}
\partial_1^i ( \nabla q-\mu\Delta  u)
=   \partial_1^i(\lambda m^2 \partial_1^{ 2}\eta+g\bar{\rho}'\eta_2\mathbf{e}_2- \bar{\rho}u_t
+ \mathcal{G}\mathbf{e}_2+\mathcal{N}^\mu-  \nabla_{\tilde{\mathcal{A}}} q ) ,\\
 \partial_1^i \div u=-  \partial_1^i\div({\tilde{\mathcal{A}}}^{\mm{T}} u) ,\\
 \partial_1^i (u_2,
 \partial_2   u_1) |_{\partial\Omega} =0,
\end{cases}
\end{equation}
where $i=0$, $1$.
By Remark \ref{pro:12x21}, we can apply  the regularity estimate  \eqref{202201122130} to above Stokes problem  to get
\begin{equation}   \label{ometsim}
  \|  u \|_{i,2+j} +\|   q\|_{i,1+j} \lesssim \|    ( \partial_1^{ 2}\eta,\eta_2,u_t)\|_{i,j} +K_4, \end{equation}
where $0\leqslant i+j\leqslant 1$ and
$$K_4:=\|  ( \mathcal{G}, \mathcal{N}^\mu,\nabla_{\tilde{\mathcal{A}}} q,{\tilde{\mathcal{A}}}^{\mm{T}} u)  \|_{i,j}  +\| \div({\tilde{\mathcal{A}}}^{\mm{T}} u)\|_{i,1+j}.$$

It is easy to estimate that
\begin{align}
 & \|    \mathcal{G} \|_{i,j}\lesssim \|    \mathcal{G} \|_{1}\lesssim \| \eta_2\|_1\lesssim \| \eta\|_{2,0},\label{2022202081423} \\
&  \|  \nabla_{\tilde{\mathcal{A}}} q  \|_{i,j}\lesssim \|\nabla \eta\|_2\|\nabla q\|_{\underline{i},j},\nonumber\\
& \|   {\tilde{\mathcal{A}}}^{\mm{T}} u  \|_{i,j}\lesssim  \|\nabla \eta\|_2   \|u\|_{\underline{i},j} ,\nonumber\\
  &\|    \mathcal{N}^\mu\|_{i,j}+\|\div({\tilde{\mathcal{A}}}^{\mm{T}} u) \|_{i,1+j}\nonumber \\
  &=\|    \mathcal{N}^\mu \|_{i,j}+\|\div_{\tilde{\mathcal{A}}}u  \|_{i,1+j}\lesssim  \|\nabla \eta\|_{2}\|\nabla u\|_{\underline{i},1+j},\nonumber
 \end{align}
 where we shall use \eqref{omessetsim122n} and \eqref{2022201201821} in \eqref{2022202081423}.
Putting the above estimates into \eqref{ometsim} yields the desired estimate \eqref{omessetsim}.

Next we turn to the derivation of  \eqref{202012252005}. By \eqref{202211}, the Stokes estimate \eqref{omessetsim} with $i=j=0$ and the definition  of $\mathcal{E}$, we easily see that
\begin{align}
& \|(\nabla \eta,u)\|_2^2 \lesssim \mathcal{E}\lesssim \|(\nabla \eta,u)\|_2^2+\|u_t\|_0^2.
\end{align}
 Obviously, to complete the proof, it suffices to prove that
 \begin{align}
\|u_t\|_0  \lesssim   \|   \eta\|_{2,0}+\| u\|_2 . \label{20222012120226}
\end{align}

 To  this end, we multiply \eqref{01dsaf16asdfasf}$_2$ by $u_t$ in $L^2$ to obtain
\begin{align}
\|\sqrt{ \bar{\rho} }u_{t}\|_{0}^2   = & \int  ((\lambda m^2 \partial_1^2\eta+(\mathcal{G}+g  \bar{\rho}'\eta_2)\mathbf{e}_2+\mu\Delta_{\mathcal{A}}  u)\cdot u_t+ \nabla q \cdot({\mathcal{A}}_t^{\mm{T}}u ))\mathrm{d}y =:K_5. \label{20222012120232}
\end{align}
Making use of  \eqref{2022011130957}, \eqref{omessetsim122n}, \eqref{omessetsim} and  \eqref{202012241002}, we obtain
\begin{align*}
 K_5   \lesssim & (\|\eta_2\|_0+\|\eta\|_{2,0}+ \|u\|_2)\|u_t \|_0 +\|\nabla u\|_0\| u\|_2\|\nabla q\|_0 \\
 \lesssim & ( \|\eta\|_{2,0} +\|u\|_2+\|\nabla u\|_0\| u\|_2 )\|u_t \|_0 +\|\eta\|_{2,0} \| u\|_2^2 . \end{align*}
    Putting the above estimate into \eqref{20222012120232}, and then using
 \eqref{aprpiosesnew} and Young's inequality, we arrive at \eqref{20222012120226}. This completes the proof.
  \hfill $\Box$
\end{pf}

\subsection{A priori stability estimates}\label{202220201221244}

Now we are in a position to establish the \emph{a priori} stability estimate  \eqref{1.200}  under the assumptions \eqref{apresnew}--\eqref{aprpiosesnew}.

We can derive from  Lemmas \ref{lem:082sdaf41545}--\ref{lem:08241445} and the estimate \eqref{20222saf201121235} that there exist two different constants $ {c}$, such that for any sufficiently large $\chi\geqslant 1 $ (depending on $\mu$ and $\bar{\rho}$), and for any sufficiently small
$\delta\in (0,1]$ (independent of $\chi$),
  the following  tangential energy inequality holds:
  \begin{align}&\label{202008250856n0}
\frac{\mm{d}}{\mm{d}t} {\mathcal{E}}_{\mm{tan}}
+c  {\mathcal{D}}_{\mm{tan}}  \lesssim \chi\sqrt{\mathcal{E}}\mathcal{D} ,
\end{align}
where
$$
\begin{aligned}
&\begin{aligned} {\mathcal{E}}_{\mm{tan}} :=& \sum_{0\leqslant i \leqslant 2}\left(\int\bar{\rho}\partial_1^{i}\eta\cdot\partial_1^{i} u\mm{d}y
+\frac{\mu}{2}\| \nabla \partial_1^{i} \eta\|_{0}^2\right) \\
&+\chi   \left(\|\sqrt{ \bar{\rho} }  u\|^2_{\underline{2},0}
-\sum_{0\leqslant i \leqslant 2}E(\partial_1^{i} \eta)\right)+\|\sqrt{\bar{\rho}}\psi\|_0^2- E(  u) ,\end{aligned}\\
  &  {\mathcal{D}}_{\mm{tan}} :=\chi\|  u\|_{\underline{2},1}^2 -\sum_{0\leqslant i\leqslant 2 }E(\partial_1^{i}\eta) + \|   u_t\|_1^2  .
\end{aligned}
$$

Making use of Lemma \ref{lem:08250749}, \eqref{202211}, \eqref{omessetsim}  and Young's inequality, we get, for any sufficiently large $\chi$ and for any sufficiently small $\delta\in (0,\chi^{-1})$,
\begin{align}
&\| \eta\|_{\underline{2},1}^2 +\|u\|_2^2+ \|u_t \|_0^2+\|q\|_1^2+ \chi \|( \partial_1 \eta, u)\|^2_{\underline{2},0}  \lesssim  {\mathcal{E}}_{\mm{tan}} \lesssim \chi \mathcal{E}.
\label{20221057}
\end{align}
and
\begin{align}
   \|\partial_1\eta\|_{\underline{2},0}^2+ \chi\|  u\|_{\underline{2},1}^2  +\|   u_t\|_1^2 \lesssim  {\mathcal{D}}_{\mm{tan}}+\|\eta\|_{2,1}^4 . \label{2022202012047}
\end{align}

Obviously,
\begin{align}
&\|  \mm{curl}  \eta \|^2_{1,0}
\lesssim\|  \mm{curl}_{\mathcal{A}} \partial_1\eta \|^2_{0}+\|    \eta \|^2_{1,1} \|    \nabla \eta \|^2_2 ,\nonumber \\
&\|  \mm{curl}  \eta \|^2_{2,0}
\lesssim\| \partial_1 \mm{curl}_{\mathcal{A}} \partial_1\eta \|^2_{0}+ \|\eta\|_{2,1}^2\|\nabla \eta\|_{2}^2, \label{2022202181239}\\
& \|  \partial_2\mm{curl}  \eta \|^2_{1,0}
\lesssim\|\partial_2\mm{curl}_{\mathcal{A}} \partial_1 \eta \|^2_{0}+ \|\eta\|_{1,2}^2\|\nabla \eta\|_{2}^2.\nonumber
\end{align}
Thanks to   the above three estimates,
 we can further derive
the total energy inequality from Lemma  \ref{2055nnn} and \eqref{202008250856n0}:
  \begin{align}
\frac{\mm{d}}{\mm{d}t} \tilde{\mathcal{E}} +c \tilde{\mathcal{D}} \lesssim
 \| \eta\|_{ {1},0}  \|\eta\|_{1,2}   +\| u\|_1\| u\|_3 + \chi^2 \sqrt{\mathcal{E}}\mathcal{D} ,\label{202008250856}
\end{align}
where
\begin{align*}
&
\tilde{\mathcal{E}}:=
{\mathcal{I}_3 + \frac{\mu}{2} \| ( \nabla_{\mathcal{A}} \mm{curl}_{\mathcal{A}} \eta, \nabla_{\mathcal{A}}\partial_1 \mm{curl}_{\mathcal{A}} \eta,\nabla  \partial_2 \mm{curl}_{\mathcal{A}}\eta)\|^2_0+\chi  {\mathcal{E}}_{\mm{tan}} } ,
   \\
& \tilde{\mathcal{D}} := \|  \mm{curl}\eta \|^2_{1,1} +  \chi  {\mathcal{D}}_{\mm{tan}}  ,
 \\
&  {\mathcal{I}_3} := \mathcal{I}_2
+ \int(   \mm{curl}_{\mathcal{A}} \eta
 \mm{curl}_{\mathcal{A}}(\bar{\rho} u ) -   \partial_1^2\mm{curl}_{\mathcal{A}} \eta  \mm{curl}_{\mathcal{A}}(\bar{\rho}  u)  )\mm{d}y   .
\end{align*}
Moreover, by \eqref{omessetsim122n}, \eqref{2022202011749} and \eqref{2022202012047}, it holds that, for any sufficiently small $\delta\in (0,\chi^{-1})$,
\begin{align}
 {\mathcal{D}} \lesssim &\|    \eta\|_{1,2}^2+ \|\eta_2\|_3^2+\|  u\|_{3}^2+\|q\|_2^2+  \chi(\|\partial_1\eta\|_{\underline{2},0}^2\nonumber \\
 &+ \chi\|  u\|_{\underline{2},1}^2+\|   u_t\|_1^2) \lesssim  \tilde{\mathcal{D}} . \label{20x012047}
\end{align}

Noting that
 \begin{align}
 \|   \mm{curl}  \eta\|^2_2
 \lesssim&\|\nabla \eta\|_0^2 + \|  \nabla \mm{curl}  \eta\|^2_{1}\nonumber \\
 \lesssim &\|(\nabla \eta, \nabla_{\mathcal{A}} \mm{curl}_{\mathcal{A}} \eta,
  \nabla_{\mathcal{A}}\partial_1 \mm{curl}_{\mathcal{A}}\eta,
 \nabla \partial_2 \mm{curl}_{\mathcal{A}}\eta, \nabla_{\tilde{\mathcal{A}}}  \mm{curl}_{\mathcal{A}} \eta, \nabla  \mm{curl}_{\tilde{\mathcal{A}} } \eta  ,\nonumber \\
 & \nabla_{\tilde{\mathcal{A}}}\partial_1\mm{curl}_{ {\mathcal{A}}}\eta,   \nabla \partial_1\mm{curl}_{\tilde{\mathcal{A}}}\eta ,\nabla \partial_2\mm{curl}_{\tilde{\mathcal{A}}}\eta  )\|^2_0  \nonumber \\
\lesssim& \|(\nabla \eta,\nabla_{\mathcal{A}} \mm{curl}_{\mathcal{A}} \eta, \nabla_{\mathcal{A}} \partial_1 \mm{curl}_{\mathcal{A}} \eta , \nabla \partial_2 \mm{curl}_{\mathcal{A}}\eta)\|^2_0 + \| \nabla \eta\|^4_2  ,
\label{202202172007}
\end{align}
thus, we get from \eqref{2022202011749} and the above  estimate  that
\begin{align}
 \|  \eta \|_3^2 \lesssim \|(\nabla \eta,\nabla_{\mathcal{A}} \mm{curl}_{\mathcal{A}} \eta, \nabla_{\mathcal{A}} \partial_1 \mm{curl}_{\mathcal{A}} \eta , \nabla \partial_2 \mm{curl}_{\mathcal{A}}\eta)\|^2_0 . \nonumber
\end{align}
 In addition, it is easy to check that
\begin{align}
&| {\mathcal{I}_3}| \lesssim
\| \nabla \eta\|_2\|u\|_1+\|\nabla \eta\|_2^3  . \nonumber
\end{align}
Exploiting \eqref{202012252005}, \eqref{20221057}  and the above two estimates, we obtain that, for any sufficiently large $\chi $ and for any sufficiently small $\delta$,
\begin{align}
&\mathcal{E}\lesssim \|\eta\|_3^2+ \chi  ( \|u\|_2^2+ \|u_t \|_0^2+\|q\|_1^2 )
\lesssim   \tilde{\mathcal{E}}
\lesssim \chi^2\mathcal{E}\lesssim \chi^2\|(\nabla \eta,u)\|_2^2.
\label{2027}
\end{align}

Using  Young's inequality, \eqref{20x012047} and the last inequality  \eqref{2027}, we further derive from \eqref{202008250856} with some sufficiently large $\chi $ that
\begin{align}\label{20191005asdf0940}
\frac{\mm{d}}{\mm{d}t}\tilde{\mathcal{E}}+c  \mathcal{D} \leqslant 0\mbox{  for any sufficiently small }\delta,
\end{align}
where $\tilde{\mathcal{E}}$ satisfies \eqref{2027}. Integrating the above inequality over $(0,t)$, and then using  \eqref{2027}, we arrive at, for some $c_1\geqslant 1$,
\begin{align}
\label{estemalas}
\mathcal{E}(t)+\int_0^t\mathcal{D}(\tau)\mm{d}\tau\leqslant c_1 \|(\nabla \eta^0,u^0)\|_2^2.
\end{align}

\subsection{Decay-in-time estimates}\label{202244}

This secretion is devoted to the derivation of decay-in-time estimate \eqref{1.200n0} under the additional  \emph{a priori} assumption \eqref{202220201915}.

Exploiting the integral by parts, we can derive  from Lemmas \ref{lem:082sdaf41545} and \ref{lem:08241445} that, for $i=1$,  $2$,
\begin{align}
&
\frac{\mm{d}}{\mm{d}t}\left(\langle t\rangle^i\left( \int\bar{\rho}\partial_1^{i}\eta\cdot\partial_1^{i} u\mm{d}y
+\frac{\mu}{2}\| \nabla \partial_1^{i} \eta\|_{0}^2 + \gamma  \left(\|\sqrt{ \bar{\rho} }  u\|^2_{i,0}
-E(\partial_1^i \eta) \right)\right)\right)
\nonumber \\
& +c \langle t\rangle^i(\gamma \|   u\|_{i,1}^2 -  E(\partial_1^{i}\eta) )
 \lesssim
 \langle t\rangle^{i-1}  (\| ( \nabla \partial_1^{i} \eta,\partial_1^{i-1} u  ) \|_{0}^2+ \gamma (\|   u\|^2_{i,0} -E(\partial_1^{i} \eta)))+ K_6,
\label{2020046}
\end{align}
where $\gamma \geqslant 1$ is a sufficiently large  constant (may depending on $\mu$, $g$,   $\lambda$, $m$, $\bar{\rho}$ and $\Omega$) and
\begin{align*}  K_6:=&  \langle t\rangle^2(\|\nabla \eta\|_{2}\|\eta\|_{3,0} ( \|\eta_2\|_{0}+ \|u\|_{\underline{1},2}+\|q\|_{\underline{1},1}
 ) +\|\eta\|_{2,1}^2 \| q\|_{\underline{1},1 } )\nonumber \\
&+ \gamma \langle t\rangle^3( \|\eta\|_{2,1} ( \|\eta_2\|_{0}  + \|  u\|_{\underline{1},2}  ) \| u\|_{\underline{2},1} +  \|\eta\|_{2,1} \| u\|_{\underline{1},2} \| q\|_{\underline{1},1 } )   .   \end{align*}
Making use of  \eqref{202008250745}, \eqref{2022202181239} and \eqref{202012241002}, we further derive from \eqref{202008241448} with $i=2$,
\eqref{202005021600} with $\alpha_1=1$ and \eqref{2020046} that
\begin{align}
& \frac{\mm{d}}{\mm{d}t}\left(\mathcal{E}_{\mm{D}}+\gamma^2 \langle t\rangle
 \int\partial_1\mm{curl}_{\mathcal{A}}   \eta
 \partial_1\mm{curl}_{\mathcal{A}} \left(\bar{\rho}u\right)
 \mm{d}y\right)  +c \mathcal{D}_{\mm{D}} \nonumber\\
&\lesssim  \gamma   (\gamma ( \|    \eta \|_{1,2}^2+ \|u\|_2^2) + \langle t\rangle( \| ( \nabla \partial_1^2 \eta,\partial_1 u  ) \|_{0}^2 +     \gamma (\|   u\|^2_{2,0} +\|\eta\|_{3,0}^2  \nonumber\\
&\quad
+ \|\eta \|_{1,0}\|\eta\|_{2,1} +\|  u\|_1\|u\|_3)  ))+ \langle t\rangle^2 (\|   u\|^2_{2,0} +\|\eta\|_{3,0}^2  ) +K_7,
\label{2020046xx}
\end{align}
where
\begin{align*}
&\mathcal{E}_{\mm{D}}:= \gamma \sum_{i=1}^2 \left(\langle t\rangle^i\left( \int\bar{\rho}\partial_1^{i}\eta\cdot\partial_1^{i} u\mm{d}y
+\frac{\mu}{2}\| \nabla \partial_1^{i} \eta\|_{0}^2 + \gamma  \left(\|\sqrt{ \bar{\rho} }  u\|^2_{i,0}
-E(\partial_1^i \eta) \right)\right)\right) \nonumber \\
&\qquad  + \langle t\rangle^{3}\left(\|\sqrt{ \bar{\rho} }  u\|^2_{2,0}
-E(\partial_1^{2} \eta) \right)+ \gamma^2 \langle t\rangle  { \mu }
 \|\nabla_{\mathcal{A}} \partial_1\mm{curl}_{\mathcal{A}}\eta \|_0^2 /{2},\nonumber \\
&\mathcal{D}_{\mm{D}}:= \gamma\sum_{i=1}^2 \langle t\rangle^i(\gamma \|   u\|_{i,1}^2 + \| \eta\|_{ {1+i},0}^2)+ \langle t\rangle^3 \|   u\|_{2,1}^2 +\gamma^2 \langle t\rangle\| \mm{curl} \eta \|^2_{2,0} ,\nonumber \\
&K_7:=(1+\gamma )K_6+\gamma^2 \langle t\rangle\|\nabla \eta\|_2( \|\eta\|_{2,1}^2+\|\eta\|_{1,2}\|u\|_{\underline{1},2}
+\|u\|_2^2 )+\gamma\langle t\rangle^2 \|\eta\|_{2,1}^4.
\end{align*}

Following the argument of \eqref{202202172007}, we have
\begin{align}
 \|   \mm{curl}  \eta\|^2_{1,1}
 \lesssim&\|\nabla \eta\|_{1,0}^2 + \|  \nabla \mm{curl}  \eta\|^2_{1,0}\nonumber \\
 \lesssim &\|(\nabla \partial_1\eta,   \nabla_{\mathcal{A}}\partial_1 \mm{curl}_{\mathcal{A}}\eta,  \nabla_{\tilde{\mathcal{A}}}\partial_1\mm{curl}_{ {\mathcal{A}}}\eta , \nabla \partial_1\mm{curl}_{\tilde{\mathcal{A}}}\eta   )\|^2_0\nonumber \\
\lesssim& \|(\nabla \partial_1\eta, \nabla_{\mathcal{A}} \partial_1 \mm{curl}_{\mathcal{A}} \eta  )\|^2_0 + \| \eta\|^2_{1,2}   \| \nabla \eta\|^2_2, \nonumber
\end{align}
which, together with \eqref{2022202011749}, implies that
\begin{align}
 \|  \eta \|_{1,2}^2 \lesssim\|(\nabla \partial_1\eta, \nabla_{\mathcal{A}} \partial_1 \mm{curl}_{\mathcal{A}} \eta  )\|^2_0. \nonumber
\end{align}
Thanks to \eqref{2022202011749}, \eqref{202008250745}, \eqref{202012241002} and the above estimate, we easily further  see   that,
 for any given sufficiently large $\gamma $,
\begin{align}
\begin{cases}
\langle t\rangle   \| \partial_2^2 \eta\|_{1,0}^2  + \langle t\rangle^2 \| \partial_2\partial_1\eta\|_{\underline{1},0}^2
+ \langle t\rangle^3 \| \partial_1\eta \|_{\underline{2},0}^2\\
-c(1+\gamma^2)\langle t\rangle^3 \|\eta\|_{2,1}^4 \lesssim  \mathcal{E}_{\mm{D}}\lesssim \gamma^2\langle t\rangle^3\|(\nabla\eta,u)\|_0^2 , \\
 \gamma^2\langle t\rangle  \|\partial_1\eta\|_{\underline{1},1}^2 +\gamma \langle t\rangle^2    \|\partial_1\eta\|_{\underline{2},0}^2 + (\gamma^2\langle t\rangle^2+ \langle t\rangle^3) \|\partial_1u\|_{ {1},1}^2\lesssim  \mathcal{D}_{\mm{D}}.
 \end{cases}
\label{20222020171604}
\end{align}
In addition,
$$
 \left|\int\partial_1\mm{curl}_{\mathcal{A}}   \eta
 \partial_1\mm{curl}_{\mathcal{A}} \left(\bar{\rho}u\right)
 \mm{d}y\right|\lesssim  \|\eta\|_{1,1}\|\nabla u\|_{1} .
$$
Thus integrating \eqref{2020046xx} with some sufficiently large $\gamma$ over $(0,t)$ and then using   \eqref{estemalas}, \eqref{20222020171604}, the above estimate and Young's inequality, we arrive at, for any sufficiently small $\delta$,
\begin{align}
&\langle t\rangle   \| \partial_2^2 \eta\|_{1,0}^2  + \langle t\rangle^2 \| \partial_2\partial_1\eta\|_{\underline{1},0}^2
+ \langle t\rangle^3 \| \partial_1\eta \|_{\underline{2},0}^2\nonumber \\
& +\int_0^t( \langle \tau\rangle  \|\partial_2\partial_1\eta\|_{\underline{1},0}^2 + \langle \tau\rangle^2    \|\partial_1 \eta\|_{\underline{2},0}^2 + \langle \tau\rangle^3 \|\partial_1 u\|_{\underline{1},1}^2)\mm{d}\tau \nonumber \\
&\lesssim  \|(\nabla\eta^0,u^0)\|_2^2+\langle t\rangle^3 \|\eta\|_{2,1}^4 +\int_0^t( \langle \tau\rangle \|  u\|_1\|u\|_3 + K_7)\mm{d}\tau. \label{2022235}
\end{align}

It is easy see  from \eqref{202221235} and \eqref{20222saf201121235} that, for any sufficiently large $\alpha $ (may depending on $\mu$, $g$,   $\lambda$, $m$, $\bar{\rho}$ and $\Omega$),
\begin{align}
&\frac{\mm{d}}{\mm{d}t}(\alpha\langle t\rangle^2\|\nabla_{\mathcal{A}} u \|^2_0+\langle t\rangle^3 (\|\psi\|^2_0- E(  u)) ) +
c( \alpha\langle t\rangle^2\|   u_t\|_0^2+\langle t\rangle^3 \|   u_{t}\|_1^2)\nonumber \\
&\lesssim \alpha \langle t\rangle  \| \nabla_{\mathcal{A}} u \|^2_0  +\langle t\rangle^2(\alpha (\| \eta \|_{2,0}^2 + \|u\|_2^3 +\|u\|_2^2\|q\|_1)\nonumber \\
&\quad +E(u)+\|u\cdot\nabla_{\mathcal{A}}u\|_0^2 )+  \langle t\rangle^3(\|  \eta \|_{2,0}+\|u \|_{2 }     ) \|u\|_2^2.
\label{20235}
\end{align}
Integrating the resulting inequality over $(0,t)$ yields
 \begin{align}
&   \langle t\rangle^2\|\nabla_{\mathcal{A}} u \|^2_0 +\langle t\rangle^3 (\|u_t\|^2_0- E(  u)) +
c\int_0^t(\alpha\langle \tau\rangle^2\|   u_{\tau}\|_0^2+   \langle    \tau \rangle^3 \|   u_{\tau}\|_1^2 )\mm{d}\tau\nonumber \\
&\lesssim \alpha \| \nabla_{\mathcal{A}^0} u^0 \|^2_1+E(u^0)+\|\psi^0\|^2_0  +\langle t\rangle^3 \|u\cdot\nabla_{\mathcal{A}}u  \|^2_0+ \int_0^t ( \alpha \langle \tau\rangle \|  \nabla_{\mathcal{A}}u \|^2_0+\langle \tau\rangle^{2}
  ( \alpha( \| {\eta} \|_{2,0}^2\nonumber \\
 &\quad+\|u\|_2^3+\|u\|_2^2\|q\|_1)+ E(u)+\|u\cdot\nabla_{\mathcal{A}}u  \|^2_0)   +    \langle \tau\rangle^3(\|  \eta \|_{2,0}^2   +\|u \|_{2 } )\|   u\|_{2}^2 ) \mm{d}\tau,
\nonumber \\
&\lesssim  \alpha\| (\nabla\eta^0, u^0) \|^2_2+\langle t\rangle^3 \|u\|^4_2+ \int_0^t (  \langle \tau\rangle \|  u \|^2_1+\langle \tau\rangle^{2}
  ( \alpha( \| {\eta} \|_{2,0}^2\nonumber \\
 &\quad+\|u\|_2^3+\|u\|_2^2\|q\|_1)+\|u\|_{1}^2)   +    \langle \tau\rangle^3(\|  \eta \|_{2,0} +\|u \|_{2 })\|   u\|_{2}^2 ) \mm{d}\tau,
\label{202225}
\end{align}
where we have used \eqref{aprpiosesnew}, \eqref{202012252005} and \eqref{fgestims}  in the last inequality above.
Exploiting \eqref{aprpiosesnew}, \eqref{20250745}, \eqref{omessetsim}   and Young's inequality, we further derive from \eqref{202225} with some sufficiently large $\alpha $ that
 \begin{align}
&\langle t\rangle^3(\|u\|_2^2+ \|q\|_1^2+ \|u_t\|^2_0 ) +
c\int_0^t(\langle \tau \rangle\|u\|_3^2+\langle \tau \rangle^2( \|u\|_{\underline{1},2}^2+\|q\|_{\underline{1},1}^2)+\langle \tau \rangle^3\|   u_{\tau}\|_1^2)\mm{d}\tau\nonumber \\
&\lesssim \|(\nabla\eta^0,u^0)\|_2^2+\langle t\rangle^3 \|\eta\|_{2,0}^2\nonumber \\
 &\quad + \int_0^t\left(\langle \tau\rangle \| {\eta} \|_{ {2},1}^2+  \langle \tau\rangle^{2} \| {\eta} \|_{3,0}^2 +\langle \tau\rangle^3(\|  \eta \|_{2,0}^2 +\|u \|_{2 })\|   u\|_{2}^2 \right) \mm{d}\tau.
\nonumber
\end{align}

Finally, using  \eqref{omessetsim122n}   and Young's inequality,   we further derive from \eqref{estemalas}, \eqref{2022235}  and the above inequality that
\begin{align}
 \mathfrak{E} (t)+c\int_0^t\mathfrak{D}(\tau)\mm{d}\tau
  \lesssim  &\|(\nabla\eta^0,u^0)\|_2^2+\langle t\rangle^3 \|\eta\|_{2,1}^4 +\int_0^t (K_7 +\langle \tau\rangle^3 (\|  \eta \|_{2,0}^2 +\|u \|_{2 } )\|   u\|_{2}^2)   ) \mm{d}\tau,\nonumber
\end{align}
which together with   \eqref{aprpiosesnew}, \eqref{202220201915}  and Young's inequality,  yields \begin{align}
 \mathfrak{E} (t)+ \int_0^t\mathfrak{D}(\tau)\mm{d}\tau
\leqslant c_2  \|(\nabla\eta^0,u^0)\|_2^2, \label{estemalasn0}
\end{align}
see \eqref{2022202180904} and  \eqref{2022202180904x}  for the definitions of $ \mathfrak{E} $ and $\mathfrak{D}$.

Now we sum up the  \emph{a priori} estimates \eqref{estemalas} and \eqref{estemalasn0} as follows.
\begin{pro}Let $(\eta,u,q)$ be a solution to the transformed MRT problem \eqref{01dsaf16asdfasf} and \eqref{20safd45},  and satisfy \eqref{apresnew}--\eqref{aprpiosesnew}. If  $m$ and $\bar{\rho}$ satisfy
the assumptions in Theorem \ref{thm2}, there exists a constant $\delta_1$, depending on $\mu $, $\lambda$, $m$, $g$, $\bar{\rho}$ and $\Omega$, such that the solution  $(\eta,u,q)$ enjoys the \emph{a prior} estimates \eqref{estemalas} and \eqref{estemalasn0}  for any $\delta\leqslant  \delta_1$.
\end{pro}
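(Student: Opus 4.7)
The plan is to assemble the proposition by collecting the two a priori estimates \eqref{estemalas} and \eqref{estemalasn0} that are effectively built up through the preceding subsections. First I would combine the tangential estimates of Lemmas \ref{lem:082sdaf41545}--\ref{lem:08241445} and the $\psi$-based estimate from Lemma \ref{2019100216355nnn} with a sufficiently large weight $\chi\geqslant 1$ on the dissipative terms $\|\sqrt{\bar\rho}u\|_{\underline{2},0}^2 - \sum_i E(\partial_1^i\eta)$, so that the stabilizing estimates of Lemma \ref{lem:08250749} (based on the condition $|m|>m_{\mm{C}}$) turn the indefinite terms $-E(\partial_1^i\eta)$ and $-E(u)$ into positive norms of $\eta$ and $u$. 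This yields the tangential inequality \eqref{202008250856n0} together with the equivalences \eqref{20221057}--\eqref{2022202012047}.

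Next I would bring in the curl inequalities of Lemma \ref{2055nnn}, which under the Navier condition \eqref{20safd45} furnish normal dissipation $\|\partial^\alpha\mm{curl}_{\mathcal{A}}\partial_1\eta\|_0^2$ and $\|\partial_2\mm{curl}_{\mathcal{A}}\partial_1\eta\|_0^2$, and then recover the full $H^3$-norm of $\eta$ through the Hodge-type estimate \eqref{2022202011749} combined with the nonlinear control of $\mm{div}\eta$ from \eqref{improxtian1}--\eqref{improtian1}. Together with the Stokes estimates of \eqref{omessetsim} and the equivalence \eqref{202012252005} this produces the total-energy inequality \eqref{202008250856} and the two-sided comparison $\mathcal{E}\lesssim\tilde{\mathcal{E}}\lesssim\chi^2\mathcal{E}$. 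Absorbing the cubic right-hand side $\chi^2\sqrt{\mathcal{E}}\mathcal{D}$ by choosing $\delta$ small relative to $\chi^{-1}$ gives \eqref{20191005asdf0940}, and integration in time produces the stability estimate \eqref{estemalas}.

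For the decay estimate \eqref{estemalasn0}, I would repeat the scheme multiplied by increasing powers $\langle t\rangle^i$. Applying $\langle t\rangle^i$ to the tangential inequalities from Lemmas \ref{lem:082sdaf41545}--\ref{lem:08241445} and to the first $\mm{curl}$ estimate of Lemma \ref{2055nnn} generates \eqref{2020046}--\eqref{2020046xx}, where a sufficiently large constant $\gamma$ lets Lemma \ref{lem:08250749} absorb the $-E(\partial_1^i\eta)$ terms on the left and time-derivatives of the weight on the right. The curl-to-$H^2$ conversion again produces the weighted normal control of $\eta$ as in \eqref{20222020171604}. For the refined $\langle t\rangle^{-3/2}$ decay of $\|u\|_2$, I would use the $\psi$-identity \eqref{20222saf201121235} in conjunction with the Stokes estimate \eqref{omessetsim} to trade regularity of $u$ for that of $u_t$, obtaining \eqref{202225}. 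Collecting all weighted bounds, using \eqref{estemalas} to control the lower-order histories, and exploiting the extra a priori assumption \eqref{202220201915} to absorb the cubic weighted terms $\langle t\rangle^3\|\eta\|_{2,1}^4$ and $\langle t\rangle^3\|u\|_2^3$, yields \eqref{estemalasn0}.

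The main obstacle is the simultaneous choice of the weights $\chi$, $\gamma$ (and later $\alpha$) large enough to convert all indefinite quantities $E(\cdot)$ into positive dissipation via Lemma \ref{lem:08250749}, while keeping $\delta$ small enough, \emph{independently of these weights}, to absorb every nonlinear term of type $\chi^2\sqrt{\mathcal{E}}\mathcal{D}$ or $\langle t\rangle^3\|\eta\|_{2,1}^4$ on the right. A secondary difficulty is that, because the magnetic tension acts only in the horizontal direction $\lambda m^2\partial_1^2\eta$, the normal estimates of $\eta$ cannot come from the momentum equation directly as in the vertical-field case; they must be extracted from the curl identity \eqref{201910072117} together with the boundary vanishing \eqref{2022011091553}, which is precisely where the Navier condition $\partial_2\eta_1|_{\partial\Omega}=0$ is essential. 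Once these compatibilities are verified, the conclusion of the proposition follows by simply combining \eqref{estemalas} and \eqref{estemalasn0} with $\delta_1:=\min\{\delta_{\mm{stab}},\delta_{\mm{dec}}\}$.
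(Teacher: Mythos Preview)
Your proposal is correct and follows essentially the same route as the paper: the proposition is a summary statement whose content is precisely the derivations in the subsections on tangential, curl, stabilizing, and Stokes estimates leading to \eqref{estemalas}, followed by the time-weighted repetition yielding \eqref{estemalasn0}. One small inaccuracy: you say $\delta$ is chosen ``independently of these weights,'' but in fact the paper requires $\delta\in(0,\chi^{-1})$ at the stage of \eqref{20221057}--\eqref{20x012047}; since $\chi$ (and likewise $\gamma,\alpha$) is fixed once and for all as a function of $\mu,g,\lambda,m,\bar\rho,\Omega$, the final $\delta_1$ still depends only on those parameters, so this does not affect the argument.
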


\subsection{Proof  of Theorem \ref{thm2}}\label{subsec:08}

Now we state the local well-posedness result for the transformed MRT problem.
\begin{pro}\label{202102182115}
Let $b>0$  be a constant  and $\gamma >0$   the same constant in Lemma \ref{pro:1221}. Assume that $\bar{\rho}$ satisfies \eqref{0102},
$(  \eta^0,u^0)\in  \mathcal{H}^{3}_{\mm{s}} \times {\mathcal{H}^2_{\mm{s}}}$, $\|(\nabla \eta^0,u^0)\|_2\leqslant b $  and $\mm{div}_{\mathcal{A}^0}u^0=0$,
where $\mathcal{A}^0:=(\nabla\zeta^0)^{-\mm{T}}$ and $\zeta^0 = \eta^0+y$.
Then, there is a sufficiently small constant $\delta_2\leqslant \gamma /2$, such that if $\eta^0$ satisfies
\begin{align}
\|\nabla \eta^0\|_2\leqslant \delta_2, \nonumber
\end{align}
 the transformed MRT  problem \eqref{01dsaf16asdfasf}  and \eqref{20safd45} admits a unique local-in-time classical  solution
$( \eta, u,q)\in {C}^0(\overline{I_{T}}, \mathcal{H}^{3}_{\mm{s}} )\times {\mathcal{U}}_T \times  (C^0(\overline{I_T} ,\underline{H}^1)\cap L^2_T{H}^2)$ for some $T>0$.
Moreover, $(\eta,u) $ satisfies\footnote{ {
Here the uniqueness means that if there is another solution
$(\tilde{u}, \tilde{\eta},\tilde{q})\in {C}^0 (\overline{I_{T }},\mathcal{H}^{3}_{\mm{s}} )\times \mathcal{U}_{T } \times  (C^0(\overline{I_T} ,\underline{H}^1)\cap L^2_T {{H}^2})$
 satisfying $0<\inf_{(y,t)\in \Omega_T} \det(\nabla \tilde{\eta}+I)$, then
 $(\tilde{\eta},\tilde{u},\tilde{q})=(\eta,u,q)$ by virtue of the smallness condition
 ``$\sup_{t\in \overline{I_T}}\|\nabla \eta\|_2\leqslant 2\delta_2$''. In addition,
  we have, by the fact ``$\sup\nolimits_{t\in \overline{I_T}}\|\nabla \eta\|_2\leqslant  \gamma $" and  Lemma \ref{pro:1221},
$$\inf\nolimits_{(y,t)\in \Omega_T} \det(\nabla \eta+I)\geqslant 1/4.$$}}
$$ \sup\nolimits_{t\in  {I_T}} \| \nabla \eta\|_2\leqslant 2\delta_2.
$$
  Here $\delta_2$ and $T$ may depend on $\mu $,  $g$, $\lambda$, $m$, $\bar{\rho}$ and $\Omega$, while $T$ further depends on $b$.
\end{pro}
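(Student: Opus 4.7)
The plan is to establish the local well-posedness via a linearization--plus--iteration scheme, followed by verification of the geometric constraints. The core feature complicating matters is that the magnetic tension $\lambda m^2\partial_1^2\eta$ is hyperbolic and $\eta$ evolves only by $\eta_t=u$, so there is no smoothing mechanism for $\eta$ itself; all regularity of $\eta$ must be inherited from $\eta^0$ and controlled through the velocity.

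\textbf{Linearization.} Given an iterate $\hat\eta$ with $\hat\eta|_{t=0}=\eta^0$ and $\sup_t\|\nabla\hat\eta\|_2\leqslant \gamma$, set $\hat{\mathcal{A}}:=(I+\nabla\hat\eta)^{-\mm{T}}$; by Lemma \ref{pro:1221}, this matrix is well-defined in $H^2$ with $\det(I+\nabla\hat\eta)$ bounded away from $0$. I would then solve the linear Stokes-type problem
\begin{equation*}
\begin{cases}
\bar\rho u_t+\nabla_{\hat{\mathcal{A}}}q-\mu\Delta_{\hat{\mathcal{A}}}u=\lambda m^2\partial_1^2\hat\eta+G_{\hat\eta}\mathbf{e}_2,\\
\mm{div}_{\hat{\mathcal{A}}}u=0,\quad u|_{t=0}=u^0,\quad (u_2,\partial_2 u_1)|_{\partial\Omega}=0,
\end{cases}
\end{equation*}
via a Faedo--Galerkin scheme using a basis adapted to the Navier boundary condition and the $\hat{\mathcal{A}}$-divergence-free constraint. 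Energy estimates modelled on Lemmas \ref{lem:082sdaf41545}--\ref{lem:08241445} (with $\hat{\mathcal{A}}$ frozen), combined with the Stokes regularity estimate \eqref{202201122130} applied in turn to $u$, $\partial_1 u$ and $u_t$, should yield $(u,q)\in\mathcal{U}_T\times(C^0(\overline{I_T},\underline H^1)\cap L^2_T H^2)$ with bounds depending only on $b$ and $\|\hat\eta\|_{L^\infty_T\mathcal{H}^3_{\mm{s}}}$. I then define $\eta(y,t):=\eta^0(y)+\int_0^t u(y,s)\,\mm{d}s\in C^0(\overline{I_T},\mathcal{H}^3_{\mm{s}})$.

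\textbf{Fixed point and propagation of constraints.} Denote the above map by $\mathscr{T}:\hat\eta\mapsto\eta$. For $T=T(b)$ sufficiently small, $\mathscr{T}$ should leave invariant a closed ball $\mathscr{B}\subset\widetilde{\mathfrak{H}}^{1,3}_{\gamma,T}$ of radius $\sim\delta_2$ centered at the constant-in-time extension of $\eta^0$ (via $\|\eta-\eta^0\|_3\lesssim T\|u\|_{L^\infty_T H^3}$), and be a contraction in the weaker norm $C^0(\overline{I_T},H^2)$: writing the difference of two iterates gives a linear system whose forcing involves $\hat{\mathcal{A}}_1-\hat{\mathcal{A}}_2$ and $G_{\hat\eta_1}-G_{\hat\eta_2}$, each controlled by $\|\hat\eta_1-\hat\eta_2\|_3$, so smallness of $T$ closes the estimate. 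The fixed point is a strong solution of \eqref{01dsaf16asdfasf}--\eqref{20safd45}. The constraint $J:=\det(I+\nabla\eta)\equiv 1$ propagates from $J^0=1$ through $J_t=J\,\mm{div}_{\mathcal{A}}u=0$, while $(\eta_2,\partial_2\eta_1)|_{\partial\Omega}=0$ follows from $\eta_t=u$ combined with the boundary data on $u$. Uniqueness under the smallness $\sup_t\|\nabla\eta\|_2\leqslant 2\delta_2$ is a routine energy estimate on the difference of two solutions using the invertibility of $\mathcal{A}$.

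\textbf{Main obstacle.} The principal difficulty lies in the linear step: securing the $L^2_T H^3$-regularity of $u$ required by $\mathcal{U}_T$ for a Stokes-type problem with \emph{variable} coefficients and under the \emph{Navier} (not no-slip) boundary condition. Unlike the no-slip case, a direct Stokes bootstrap at the $H^3$ level is obstructed by the absence of a Dirichlet condition on $u_1$; instead, one must first establish tangential and temporal estimates (noting that $u_t$ inherits $(u_t)_2|_{\partial\Omega}=\partial_2(u_t)_1|_{\partial\Omega}=0$), and only then recover the normal $H^3$-regularity by invoking \eqref{202201122130} with all remaining terms moved to the right-hand side. A secondary point is that the non-dissipative forcing $\lambda m^2\partial_1^2\hat\eta$ has to be bounded in $L^2_T H^2$; this is delivered for free by $\|\partial_1^2\hat\eta\|_{L^2_T H^2}\leqslant T^{1/2}\|\hat\eta\|_{L^\infty_T H^3}$, so that time-smallness absorbs it and no structural use of the stability condition $|m|>m_{\mm{C}}$ is needed at this local stage.
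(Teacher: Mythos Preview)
Your plan is essentially the paper's own: freeze the coefficients coming from a previous iterate, solve the resulting linear Stokes-type problem by a Galerkin scheme adapted to the $\mathcal{A}$-divergence-free constraint, and close by showing the iterates form a Cauchy sequence in a weaker topology for $T$ small. Two small remarks: (i) since $u\in\mathcal{U}_T$ only gives $u\in L^2_TH^3$, the invariance bound should read $\|\eta-\eta^0\|_3\lesssim T^{1/2}\|u\|_{L^2_TH^3}$, not $T\|u\|_{L^\infty_TH^3}$; (ii) for the $L^2_TH^3$ regularity in the linear step the paper does not bootstrap directly in Lagrangian variables but instead passes to Eulerian coordinates, solves a constant-coefficient Stokes problem there (whose Navier-boundary regularity is Lemma~\ref{20222011161567}), and pulls the solution back --- your direct route via \eqref{202201122130} with the $\tilde{\mathcal{A}}$-terms thrown to the right works equally well once $\delta$ is small.
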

\begin{pf} The proof of Proposition \ref{202102182115} will be provided in Section \ref{202202021613}.
\hfill $\Box$
\end{pf}
\begin{rem}\label{2022202021617}
If the initial data $(\eta^0,u^0)$ in Proposition \ref{202102182115} additionally satisfies $\det (\nabla \eta^0+I)=1$ and $( \bar{\rho}\eta_1^0)_\Omega= ( \bar{\rho}u_1^0)_\Omega= 0$, then $(\eta,u)\in \widetilde{\mathfrak{H}}^{1,3}_{\gamma,T}\times  {^0\mathcal{U}_T}$.
\end{rem}

Thanks to the \emph{priori} estimate \eqref{estemalas}  and Proposition  \ref{202102182115}, we can easily establish the global solvability in Theorem \ref{thm2}. Next, we briefly describe the proof.

Let $(\eta^0,u^0)$ satisfy the assumptions in Theorem \ref{thm2},
\begin{align}
\|(\nabla \eta^0,u^0)\|_2 \leqslant\delta /\sqrt{2},\
\delta=\min\left\{ \delta_1, \delta_2  \right\}/ \sqrt{{c}_3}\mbox{ and } c_3= c_1+c_2 \geqslant 1 ,\nonumber
\end{align}
where $c_1$ and $c_2$ are the same constants in \eqref{estemalas} and \eqref{estemalasn0}, resp..
By virtue of Proposition \ref{202102182115} and Remark \ref{2022202021617}, there exists a unique local solution $(\eta,u,q)$ to the transformed MRT problem
\eqref{01dsaf16asdfasf} and   \eqref{20safd45} with a maximal existence time $T^{\max}$, which satisfies
\begin{itemize}
  \item for any $a\in I_{T^{\max}}$,
the solution $(\eta,u,q)$ belongs to $\widetilde{\mathfrak{H}}^{1,3}_{\gamma,a}\times {^0\mathcal{U}}_{a} \times ( C^0(\overline{I_a }, \underline{H}^1)\cap L^2_a {{H}^2})$,
$ \sup\nolimits_{t\in  {I_a}} \|\nabla \eta\|_2\leqslant 2\delta_2$;
  \item $\limsup_{t\to T^{\max} }\|\nabla \eta (t)\|_2 >\delta_2$ or $\limsup_{t\to T^{\max} }\|(\nabla \eta,u)( t)\|_2=\infty$, if $T^{\max}<\infty$.
\end{itemize}

Let
\begin{align}  \nonumber
T^{*}:=&\sup \{\tau\in I_{T^{\max}}~ |~  \|(\nabla \eta,u)(t)\|_2^2 +\langle t\rangle^2( \|\eta(t)\|_{2,1}^2+\|u(t)\|_2^2) \leqslant c_3 \delta^2 \mbox{ for any }t\leqslant\tau \}.\nonumber
\end{align}
It is easy to see that the definition of $T^*$ makes sense.
 Thus, to show the existence of a global solution, it suffices to verify $T^*=\infty$.
We shall prove this fact by contradiction below.

Assume $T^*<\infty$, then, by Proposition \ref{202102182115},
\begin{align}T^*\in (0,T^{\max})
\label{20222022519850}
 \end{align}and
$$ (\|(\nabla \eta,u)(t)\|_2^2 +\langle t\rangle^2( \|\eta\|_{2,1}^2+\|u\|_2^2))|_{t=T^*} = c_3 \delta^2 . $$

 Noting that
\begin{equation}
\label{201911262sadf202}
\sup\nolimits_{\overline{I_{T^{*}}}}(\|(\nabla \eta,u)(t) \|_2  +\langle t\rangle^2( \|\eta\|_{2,1}^2+\|u\|_2^2))   \leqslant c_3 \delta^2 \leqslant \delta_1,
\end{equation}
thus, using \eqref{201911262sadf202} and a standard regularization method,
we can follow the same arguments as in the derivation of \eqref{estemalas} and \eqref{estemalasn0} to verify  that
\begin{align*}
 \mathcal{E}(t)+\mathfrak{E} (t)+ \int_0^{T^{*}} ( \mathcal{D}(\tau) + \mathfrak{D}(\tau))\mm{d}\tau \leqslant {c}_3 \|(\nabla \eta^0,u^0)\|_2^2 \leqslant {{c}_3} \delta^2/2.
\end{align*}
In particular,
\begin{align}
{ \sup\nolimits_{t\in \overline{I_{T^{*}}}}(\|(\nabla \eta,u)(t) \|_2  +\langle t\rangle^2( \|\eta\|_{2,1}^2+\|u\|_2^2))  \leqslant  {{c}_3} \delta^2/2.}  \label{2020103261534}
\end{align}

By \eqref{20222022519850}, \eqref{2020103261534} and the strong continuity $(\nabla\eta,u)\in C^0([0,T^{\max}), H^2)$,   there exists $\tilde{T}\in (T^*,T^{\max})$  such that
\begin{align}
 \sup\nolimits_{t\in \overline{I_{\tilde{T} }}} (\|(\nabla \eta,u)(t) \|_2^2+\langle t\rangle  \| \eta\|_{1,2}^2 +\langle t\rangle^2( \|\eta\|_{2,1}^2+\|u\|_2^2)) \leqslant  {{c}_3}\delta^2  , \nonumber
\end{align}
which contradicts with the definition of $T^*$. Hence, $T^*=\infty$ and thus $T^{\max}=\infty$.
This completes the proof of the existence of a global solution. The uniqueness of the global solution is obvious due to
the uniqueness result of the local solutions in Proposition \ref{202102182115}
and the fact $\sup_{t\geqslant 0}\|\nabla \eta\|_2\leqslant 2 \delta_2$.

To complete the proof of Theorem \ref{thm2}, we have to show that the solution $(\eta,u,q)$ satisfies the properties \eqref{1.200}--\eqref{1.200xx}.
Recalling the derivation of  \eqref{estemalas} and \eqref{estemalasn0}, we easily verify that
 the global solution $(\eta,u)$ enjoys \eqref{1.200} and \eqref{1.200n0} by a standard regularization method. Hence next it suffices to show \eqref{1.200xx}.

 From \eqref{1.200n0} we get
\begin{align}
\label{202101242112}
\partial_1\eta(t)\to 0\mbox{ in }H^2 \mbox{ as }t\to \infty
\end{align}
and
\begin{align}
\label{20safda2101242112}
\left\|\int_0^t u\mm{d}\tau\right\|_2\lesssim\int_0^t \|u  \|_2\mm{d}\tau\lesssim \sqrt{\| \nabla \eta^0\|_{2}^2+ \|u^0\|_2^2 }\end{align} for any $t>0$.
Due to \eqref{20safda2101242112}, there are a subsequence $\{t_n\}_{n=1}^{\infty}$ and some function $\eta^\infty_1\in H^2$, such that
$$ \int_0^{t_n} u_1\mm{d}\tau \to\eta^\infty_1-\eta^0_1\mbox{ weakly in }H^2.$$
Utilizing $\eqref{01dsaf16asdfasf}_1$, \eqref{1.200n0}, and the weakly lower semi-continuity, we conclude
$$
\begin{aligned}
\|\eta_1(t)-\eta^\infty_1\|_2\leqslant &\liminf_{t_n\to \infty} \left\|\int_t^{t_n } u_1\mm{d}\tau\right\|_2\\
\lesssim &\sqrt{\|  \eta^0\|_{2,1}^2+ \|u^0\|_2^2 } \liminf_{t_n \to \infty}\int_t^{t_n} \langle \tau \rangle^{-3/2}\mm{d}\tau
 \nonumber \\
 \lesssim &\sqrt{\|\nabla \eta^0\|_{2}^2+ \|u^0\|_2^2 }\langle t \rangle^{-1/2},
\end{aligned}$$
which, combined with \eqref{202101242112}, yields \eqref{1.200xx} holds and that $\eta_1^\infty $ depends on ${y_2}$ only.
This completes the proof of Theorem \ref{thm2}.

\section{Proof of Theorem \ref{thm1}}\label{sec:instable}
The existence of RT instability solutions had been widely investigated, see \cite{JFJSO2014,JFJSZWC} for examples. In particular, Jiang et.al. proved the
 the existence of RT instability solutions for the stratified incompressible viscous resistive MHD fluids with Dirichlet boundary conditions on the both of upper and lower boundaries of a slab domain \cite{JFJSZWC}.  We can easily establish the instability result of the transformed MRT problem  in Theorem \ref{thm1} by following the proof frame in \cite{JFJSZWC}. Next we briefly sketch the proof for the completeness.  In what follows, the fixed positive constant $c_i^I$ for $i\geqslant 1$
may depend on $\mu $, $g$,  $\lambda$, $m$, $\bar{\rho}$ and the domain $\Omega$.

To being with, we introduce the instability result for the  linearized MRT problem under the instability condition $|m|\in[0,m_{\mm{C}})$.
\begin{pro}\label{pro:08252100}
Let $\mu >0$ and $\bar{\rho}$ satisfy \eqref{0102} and \eqref{0102n}. If  $|m|\in[0,m_{\mm{C}})$,
 then the zero solution
 is unstable to the linearized MRT problem
\begin{equation}\label{01dsaf16asdfasf0101}
                              \begin{cases}
\eta_t=u,   \\[1mm]
\bar{\rho}u_t+\nabla q-\mu  \Delta u=\lambda m^2\partial_1^2\eta+g\bar{\rho}'\eta_2\mathbf{e}_2   ,\\[1mm]
\div u=0 , \\[1mm]
(\eta_2,\partial_2\eta_1,u_2,\partial_2u_1)|_{\partial\Omega}=0.
\end{cases}
\end{equation} That is, there is an unstable solution
$(\eta, u,  q):=e^{ \Lambda   t}(w/ \Lambda ,w, \beta )$
 to the above problem \eqref{01dsaf16asdfasf0101}, where
 \begin{equation}\nonumber
 (w, \beta )\in {^0\mathcal{H}^3_{\mm{s}}}\times\underline{H}^2
 \end{equation}
 solves  the boundary-value problem  \begin{equation*}
                              \begin{cases}
\Lambda ^2\bar{\rho}w+ \Lambda  \nabla  \beta
- \Lambda\mu  \Delta w=  m^2\partial_1^2w+g\bar{\rho}'w_2\mathbf{e}_2
 ,\\[1mm]
\div w=0  , \\[1mm]
(w_2,\ \partial_2 w_1)|_{\partial\Omega}=0 .
\end{cases}
\end{equation*}
  with some growth rate $ \Lambda>0 $, where $\Lambda$ satisfies
\begin{equation}
E(v)\leqslant  {\Lambda^2}\|\sqrt{\bar{\rho}}v\|_0^2+ { \Lambda }\mu  \|\nabla v\|_0^2 \mbox{ for any }v\in H_{\sigma}^{1} . \label{Lambdard} \end{equation}
In addition,
\begin{align}  \label{201602081445MH}
 \int \bar{\rho}'|w_2|^2\mm{d}y\|  w_i\|_{0}\|\partial_1w_i\|_{0}\|\partial_2w_i\|_0\neq 0  \mbox{ for }i=1,\ 2. \end{align}
\end{pro}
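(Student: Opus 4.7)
The plan is to construct the unstable growing mode $(w,\beta)$ by the modified variational method developed in \cite{JFJSZWC,JFJSO2014}. For each $s>0$, define the quadratic functional $F_s(w):=E(w)-s\mu\|\nabla w\|_0^2$ on the constrained set $\mathscr{M}:=\{w\in H^1_\sigma : \|\sqrt{\bar{\rho}}\,w\|_0=1\}$, and set $\alpha(s):=\sup_{w\in\mathscr{M}}F_s(w)$. First I would show that $\alpha(s)$ is finite and attained by some maximizer $w_s\in\mathscr{M}$: the upper bound $E(w)\leqslant g\|\bar{\rho}'\|_{L^\infty}\|w_2\|_0^2$ controls $F_s$ from above, and existence of a maximizer follows by the direct method of calculus of variations, using weak lower-semicontinuity of $\|\nabla w\|_0^2$ together with weak continuity of $\int\bar{\rho}'w_2^2\,\mm{d}y$ via Rellich compactness.

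Next I would verify that the continuous, non-increasing function $s\mapsto\alpha(s)$ satisfies $\alpha(s_0)>0$ for some $s_0>0$ and $\alpha(s)\to-\infty$ as $s\to\infty$. The positivity is precisely where the instability assumption $|m|<m_{\mm C}$ enters: by the definition of $m_{\mm C}$ in \eqref{2020102241504}, there exists $w_*\in H^1_\sigma$ with $E(w_*)>0$, and taking $s$ sufficiently small gives $F_s(w_*)>0$. Coercivity of $\|\nabla w\|_0^2$ together with Poincar\'e's inequality yields the decay to $-\infty$. Applying the intermediate value theorem to $s\mapsto s^2-\alpha(s)$ then produces a positive root $\Lambda>0$, which is exactly the bound \eqref{Lambdard}.

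The third step is to extract the eigen-system. Taking divergence-free variations of $F_\Lambda$ at the maximizer $w$ and introducing a Lagrange multiplier $\beta$ for the incompressibility constraint yields the stated elliptic equation, with the natural boundary condition $\partial_2 w_1|_{\partial\Omega}=0$ emerging from the variations (the essential condition $w_2|_{\partial\Omega}=0$ being already built into $H^1_\sigma$). Regularity $(w,\beta)\in H^3\times\underline{H}^2$ then follows by bootstrapping the Stokes estimate under the Navier boundary condition (cf.\ \eqref{omessetsim}): the system is a Stokes problem with source $m^2\partial_1^2 w+g\bar{\rho}'w_2\mathbf{e}_2-\Lambda^2\bar{\rho}w$, which upgrades $w$ from $H^1$ to $H^3$ in two iterations. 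The normalizations $(\bar{\rho}w_1)_\Omega=0$ and $(\beta)_\Omega=0$ are preserved by the equation.

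The main obstacle I anticipate is the nontriviality assertion \eqref{201602081445MH}. Pairing the Euler--Lagrange equation with $w$ in $L^2$ and using $\mm{div}\,w=0$, the boundary data, and horizontal periodicity gives an identity of the form $\Lambda^2\|\sqrt{\bar{\rho}}\,w\|_0^2+\Lambda\mu\|\nabla w\|_0^2=E(w)$; since $\Lambda>0$, this forces $\int\bar{\rho}'w_2^2\,\mm{d}y>0$ and $\partial_1 w\not\equiv 0$. For the remaining individual statements $\|w_i\|_0\|\partial_1 w_i\|_0\|\partial_2 w_i\|_0\neq 0$ for $i=1,2$, I would expand $w$ in the horizontal Fourier basis and analyze the resulting system of ODEs in $y_2$: the $k=0$ mode of $w_1$ satisfies a purely dissipative ODE with Navier boundary data and zero forcing, and therefore vanishes; for each $k\neq 0$, the coupled system for $(w_1^{(k)},w_2^{(k)})$ together with $\mm{div}\,w=0$ and the Navier conditions forbids any of the three quantities from vanishing at the maximizer, by a unique-continuation argument analogous to that of \cite{JFJSZWC}, with the only adjustment being the replacement of the Dirichlet condition on $w_1$ by the Navier condition $\partial_2 w_1|_{\partial\Omega}=0$.
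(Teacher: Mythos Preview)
Your proposal is correct and follows essentially the same route as the paper: the paper's proof is a one-line reference to the modified variational method of \cite[Proposition 3.1]{JFJSZWC} together with the Stokes estimate in Lemma~\ref{20222011161567}, and you have fleshed out precisely that argument. One small remark: for the regularity bootstrap you should cite Lemma~\ref{20222011161567} (the abstract Stokes estimate under the Navier boundary condition) rather than \eqref{omessetsim}, which is an a~priori estimate specific to the nonlinear problem.
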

\begin{pf} We can use the modified variational method  as in \cite[Proposition 3.1]{JFJSZWC} and Lemma \ref{20222011161567} to easily verify Proposition \ref{pro:08252100}, and thus omit the trivial proof.
\hfill$\Box$
\end{pf}

Then  we follow the derivation of \emph{a priori} stability estimate \eqref{estemalas} in Section \ref{202220201221244} with some slight modifications to establish the following Gronwall-type energy inequality for the solutions of
the transformed MRT problem.
\begin{pro}  \label{pro:0301n0845}
Let $\Lambda>0$ be the same as in Proposition \ref{01dsaf16asdfasf0101} and $(\eta,u,q)$ be the local solution constructed
by Proposition \ref{202102182115} with initial condition $(\eta^0,u^0)\in {^0{\mathcal{H}}^{3,\mm{s}}_{1,\gamma}} \times {^0\mathcal{H}^2_{\mm{s}}}$. There are two constants $\delta_1^I$ and $ {c}_1^I>0$, such that if $\|(\nabla \eta,u)\|_2\leqslant\delta_1^I$
in some time interval $ I_{\tilde{T}}\subset I_T$ where $I_T$ is the existence time interval of $(\eta,u,q)$, then  $(\eta,u,q)$  satisfies the Gronwall-type energy inequality:   for any $t\in I_{\tilde{T}}$,
\begin{align}
{\mathcal{E}}(t) +c\int_0^t\mathcal{D}(\tau)\mm{d}\tau\leqslant &   {c}_1^I \left(\|(\nabla \eta^0, u^0)\|_2^2
+\int_0^t\|(\eta_2,u_2)(\tau)\|_{0}^2 \mm{d}\tau\right),\label{2019120521430850}
\end{align}
where the constants $\delta_1^I$ may depend on $\mu$, $g$,   $\lambda$,  $m$, $\bar{\rho}$ and $\Omega$.
\end{pro}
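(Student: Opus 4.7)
The plan is to follow the derivation of the a priori stability estimate \eqref{estemalas} in Section \ref{202220201221244} essentially verbatim, deviating only at the single step that invokes the stability condition $|m|>m_{\mm{C}}$. The tangential energy identities of Lemmas \ref{lem:082sdaf41545}--\ref{lem:08241445}, the curl estimates of Lemma \ref{2055nnn}, the Stokes estimate \eqref{omessetsim} and the equivalence \eqref{202012252005} all rely only on the smallness of $\|(\nabla\eta,u)\|_2$, so they transfer without change. Combining them as in \eqref{202008250856} produces, for a sufficiently large weight $\chi\geqslant 1$ and any sufficiently small $\delta$,
\begin{equation*}
\frac{\mm{d}}{\mm{d}t}\tilde{\mathcal{E}}+ c\bigl(\|u\|_{\underline{2},1}^2+\|u_t\|_1^2+\|\mm{curl}\,\eta\|_{1,1}^2\bigr) -\chi\sum_{0\leqslant i\leqslant 2} E(\partial_1^i\eta)-E(u) \lesssim \chi^2\sqrt{\mathcal{E}}\mathcal{D},
\end{equation*}
where $\tilde{\mathcal{E}}$ is assembled from the positive viscous contributions $\tfrac{\mu}{2}\|\nabla\partial_1^i\eta\|_0^2$ and $\|\sqrt{\bar\rho}\,u\|_{i,0}^2$ together with the curl and Stokes quantities; it is equivalent to $\mathcal{E}$ under \eqref{aprpiosesnew} without invoking $|m|>m_{\mm{C}}$.

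To replace the stabilizing step (Lemma \ref{lem:08250749}) I would bound each $E$-functional from below directly by its definition, yielding
\begin{equation*}
-E(\partial_1^i\eta)\geqslant \lambda m^2\|\partial_1^{i+1}\eta\|_0^2-g\|\bar\rho'\|_{L^\infty}\|\partial_1^i\eta_2\|_0^2,\quad -E(u)\geqslant \lambda m^2\|\partial_1 u\|_0^2-g\|\bar\rho'\|_{L^\infty}\|u_2\|_0^2.
\end{equation*}
The magnetic parts are genuine dissipation and return to the left-hand side. For the gravity residues with $i=1,2$, the Poincar\'e-type interpolation $\|\partial_1^i\eta_2\|_0^2\leqslant \epsilon\|\partial_1^{i+1}\eta_2\|_0^2+C_\epsilon\|\eta_2\|_0^2$, valid since $\eta_2|_{\partial\Omega}=0$ and $\eta_2$ is periodic in $y_1$, lets one absorb the higher-derivative part into $\|\eta_2\|_3^2\leqslant \mathcal{D}$, leaving only $\|\eta_2\|_0^2+\|u_2\|_0^2$ on the right. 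The outcome is
\begin{equation*}
\frac{\mm{d}}{\mm{d}t}\tilde{\mathcal{E}}+ c\mathcal{D} \lesssim \|(\eta_2,u_2)\|_0^2 + \sqrt{\mathcal{E}}\mathcal{D}.
\end{equation*}
The smallness $\|(\nabla\eta,u)\|_2\leqslant \delta_1^I$ absorbs the nonlinear $\sqrt{\mathcal{E}}\mathcal{D}$-term into the dissipation, and integrating in time together with $\tilde{\mathcal{E}}\simeq\mathcal{E}$ produces \eqref{2019120521430850}.

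The principal obstacle will be preserving coercivity of $\tilde{\mathcal{E}}$ without the stability condition. In Section \ref{202220201221244} part of the $\|\eta\|_{\underline{2},1}^2$-control inside $\mathcal{E}$ was provided through $-\sum_i E(\partial_1^i\eta)$, a quadratic form that may now change sign. The remedy is to keep instead the viscous contribution $\tfrac{\mu}{2}\sum_i\|\nabla\partial_1^i\eta\|_0^2$ already available from Lemma \ref{lem:082sdaf41545}, combined with the curl and Stokes bounds \eqref{202202172007} and \eqref{omessetsim}, and to choose the multiplier $\chi$ large enough for a Cauchy--Schwarz argument to dominate the indefinite cross terms $\int\bar\rho\,\partial_1^i\eta\cdot\partial_1^i u\,\mm{d}y$; the resulting $\tilde{\mathcal{E}}$ is then equivalent to $\mathcal{E}$ regardless of the sign of $E$, and the remaining calculation is bookkeeping.
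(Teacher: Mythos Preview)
Your overall strategy---treat the gravity contribution as a source and then interpolate the resulting $\|(\eta_2,u_2)\|_{\underline{2},0}^2$ down to $\|(\eta_2,u_2)\|_0^2$---is correct and is precisely what the paper does. But the coercivity argument you sketch does not close. In the assembly \eqref{202008250856}, the indefinite form $-E(\partial_1^i\eta)$ enters the \emph{energy} with weight $\chi^2$ (it sits inside $\chi\,\mathcal{E}_{\mm{tan}}$, which itself carries $\chi(-\sum_iE)$), whereas the viscous piece $\tfrac{\mu}{2}\|\nabla\partial_1^i\eta\|_0^2$ enters only with weight $\chi$. For $|m|<m_{\mm{C}}$ the gravity part of $-E$ can be strictly negative, so for large $\chi$ the term $-\chi^2\sum_iE$ overwhelms the viscous one and ``choosing $\chi$ large'' worsens, not cures, the problem. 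A related slip: $-E(u)$ comes from \eqref{20222saf201121235} under the time derivative and hence belongs to the energy, not to the dissipation where you placed it.

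The paper bypasses all of this in one line: it reruns the derivation of \eqref{20191005asdf0940} with $g=0$ and calls the resulting functional $\Xi:=\tilde{\mathcal{E}}|_{g=0}$. Concretely, in Lemmas~\ref{lem:082sdaf41545}--\ref{2019100216355nnn} the linear gravity term $g\bar\rho'\eta_2\mathbf{e}_2$ is \emph{not} merged into $E$; it is kept on the right as a source and bounded by $\|(\eta_2,u_2)\|_{\underline{2},0}^2$. Every occurrence of $-E(\cdot)$, in both the energy and the dissipation, is thereby replaced by the nonnegative magnetic piece $\lambda m^2\|\partial_1(\cdot)\|_0^2$, so the equivalence $\Xi\simeq\mathcal{E}\simeq\|(\nabla\eta,u)\|_2^2$ holds automatically, with no balancing of weights. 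The interpolation \eqref{202102181813} then reduces the right-hand side to $\|(\eta_2,u_2)\|_0^2$ plus an absorbable $\varepsilon\mathcal{D}$, exactly as you describe.
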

\begin{pf}
Let  $(\eta,u,q)$ be the local solution constructed by Proposition  \ref{202102182115}. Then $(\eta,u)\in \widetilde{\mathfrak{H}}^{1,3}_{\gamma,T}\times  {^0\mathcal{U}_T}$. We further assume
\begin{align*}
\|(\nabla \eta,u)\|^2_2\leqslant \delta\in (0,1]\;\;\mbox{ for any }t\in I_{\tilde{T}}\subset I_T.
\end{align*}

Recalling the derivation of \eqref{20191005asdf0940} and using the regularity of $(\eta,u,q)$, we easily verify that, for sufficiently small $\delta$,
\begin{align}\label{201sdf0940}
\frac{\mm{d}}{\mm{d}t}\Xi+c  \mathcal{D} \lesssim \|(\eta_2,u_2)\|_{\underline{2},0}^2,
\end{align}
 where $\Xi$ has been defined by $\tilde{\mathcal{E}}$ with $g=0$ and satisfies
\begin{align}&\label{11111648}
\Xi,\ \mathcal{E}  \mbox{ and }\|(\nabla \eta,u)\|^2_2\mbox{ are equivalent to each other}     .\end{align}
It should be  noted that the equivalent coefficients in \eqref{11111648}  are independent of $\delta$.

By the interpolation inequality \eqref{201807291850}, we have, for any $\varepsilon\in (0,1]$,
\begin{align}
 \| \chi_2\|_{k,0}  \lesssim
                                         \begin{cases}
                       \varepsilon^{-1}  \|\chi_2\|_0 + \varepsilon \| \chi_2\|_2   & \hbox{for }k=1; \\
     \varepsilon^{-1}\| \chi_2\|_{1,0} + \varepsilon \| \chi _2\|_{1,2} & \hbox{for }  k=2,
                                         \end{cases}
  \label{202102181813}
\end{align}
where $\chi=\eta$ or $u$.
Therefore, with the help of \eqref{11111648}--\eqref{202102181813}, we easily deduce \eqref{2019120521430850} from \eqref{201sdf0940}  for sufficiently small $\delta$. The proof is complete.
\hfill$\Box$
\end{pf}

For any given $\delta>0$, let
\begin{equation}\label{0501}
\left(\eta^\mm{a}, u^\mm{a}, q^\mm{a}\right)=\delta e^{ \Lambda  t } (\tilde{\eta}^0, \tilde{u}^0, \tilde{q}^0),
\end{equation}
where $ (\tilde{\eta}^0, \tilde{u}^0, \tilde{q}^0):=(w/\Lambda ,w,\beta )$, and $(w,\beta ,\Lambda)$ is given by Proposition \ref{pro:08252100}.
Then $\left(\eta^\mm{a},u^\mm{a},q^\mm{a}\right)$ is also a  solution to the linearized MRT problem \eqref{01dsaf16asdfasf0101}, and enjoys the estimate: for $j\geqslant 0$,
\begin{equation}
\label{appesimtsofu1857}
\|\partial_{t}^j(\eta^\mm{a}, u^\mm{a})\|_3+\|\partial_{t}^j q^\mm{a} \|_2=\Lambda^j\delta e^{\Lambda t}(\|(\tilde{\eta}^0,\tilde{u}^0)\|_3
+\|\tilde{q}^0\|_2)\lesssim \Lambda^j \delta e^{\Lambda t}.
\end{equation}
In addition, we have by \eqref{201602081445MH} that
\begin{eqnarray}\label{n05022052}
\|\bar{\rho}'\chi_2\|_0\|\chi _i\|_{0}\|\partial_1\chi _i\|_{0}\|\partial_2\chi _i\|_{0} >0,
\end{eqnarray}
where $\chi  =\tilde{\eta}^0 $ or $\tilde{u}^0 $, and $i=1$, $2$.

 Since the initial data of the solution $( {\eta}^{\mm{a}},{u}^{\mm{a}},{q}^{\mm{a}})$ to the linearized MRT problem
may not satisfy the necessary compatibility conditions required by initial data of the corresponding nonlinear transformed MRT problem.
So, we shall modify the initial data of the linearized problem
as in \cite[Proposition 5.1]{JFJSZWC}, such that the modified initial data approximates the original initial data
of the linearized problem, and satisfies the compatibility conditions for the corresponding nonlinear problem.
\begin{pro}\label{pro:0101}
Let $(\tilde{\eta}^0, \tilde{u}^0 ):=(w/\Lambda , w)$  be the same as in \eqref{0501}.
 There is a constant $\delta_2^I \in (0,1]$, such that for any $\delta\in(0, \delta_2^I ]$, there exists
$(\eta^{\mm{r}}, u^{\mm{r}})\in {^0\mathcal{H}^3_{\mm{s}}}$ enjoying the following properties:
\begin{enumerate}
\item[(1)] The modified initial data
\begin{align}\nonumber
({\eta}^{\delta}_{0},{u}^{\delta}_{0 } )
:=\delta(\tilde{\eta}^0,\tilde{u}^0 )+\delta^2(\eta^{\mm{r}},u^{\mm{r}} )
\end{align}
belongs to
${^0{\mathcal{H}}^{3,\mm{s}}_{1,\gamma}} \times {^0\mathcal{H}^3_{\mm{s}}}$ and satisfies
the compatibility condition
\begin{align}\nonumber
&\mm{div}_{\mathcal{A}_{0 }^{\delta}}u_{0 }^{\delta}=0  \mbox{ in } \Omega,
\end{align}  where  $\mathcal{A}^{\delta}_0$ is defined as $\mathcal{A}$ with $\eta^{\delta}_{0 }$ in place of $\eta$.
\item[(2)]
Uniform estimate:
\begin{align}
\label{202103281107}
 \| (\eta^{\mm{r}},u^{\mm{r}}  )\|_3 \leqslant {c}_2^I,
\end{align}
where the positive constant $ {c}_2^I $ is independent of $\delta$.
\end{enumerate}
\end{pro}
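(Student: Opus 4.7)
The plan is to construct $(\eta^{\mm{r}},u^{\mm{r}})$ by two successive divergence-type corrections that absorb the $O(\delta^{2})$ residuals in the two nonlinear compatibility conditions $\det(I+\nabla\eta_{0}^{\delta})=1$ and $\mm{div}_{\mathcal{A}_{0}^{\delta}}u_{0}^{\delta}=0$. Since $\tilde{\eta}^{0}=w/\Lambda$ and $\tilde{u}^{0}=w$ are divergence-free (because $w\in{^0\mathcal{H}^3_{\mm{s}}}$ solves the linearized problem), both constraints are already satisfied at order $\delta$; the corrections at order $\delta^{2}$ therefore inherit $\delta$-independent $H^{3}$-bounds, which is exactly what \eqref{202103281107} demands.

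First I would construct $\eta^{\mm{r}}$. The 2D algebraic identity
\begin{equation*}
\det(I+\nabla\eta_{0}^{\delta})-1 = \mm{div}\,\eta_{0}^{\delta} + \partial_{1}\eta_{0,1}^{\delta}\partial_{2}\eta_{0,2}^{\delta}-\partial_{1}\eta_{0,2}^{\delta}\partial_{2}\eta_{0,1}^{\delta},
\end{equation*}
together with $\eta_{0}^{\delta}=\delta\tilde{\eta}^{0}+\delta^{2}\eta^{\mm{r}}$ and $\mm{div}\,\tilde{\eta}^{0}=0$, turns $\det(I+\nabla\eta_{0}^{\delta})=1$ into a fixed-point equation of the form
\begin{equation*}
\mm{div}\,\eta^{\mm{r}}=-\det(\nabla\tilde{\eta}^{0})-\delta\,\mathcal{R}_{1}(\eta^{\mm{r}};\delta),
\end{equation*}
where $\mathcal{R}_{1}$ is polynomial in $\nabla\eta^{\mm{r}}$ and uniformly bounded in $H^{2}$ on any $H^{3}$-ball. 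The right-hand side has zero mean, since $\det(\nabla\tilde{\eta}^{0})=\partial_{1}(\tilde{\eta}_{1}^{0}\partial_{2}\tilde{\eta}_{2}^{0})-\partial_{2}(\tilde{\eta}_{1}^{0}\partial_{1}\tilde{\eta}_{2}^{0})$ integrates to zero thanks to horizontal periodicity and $\tilde{\eta}_{2}^{0}|_{\partial\Omega}=0$. I then solve by a contraction mapping on a closed ball of ${^0\mathcal{H}^3_{\mm{s}}}$: the divergence is inverted by a Bogovski-type right inverse (Lemma \ref{pro:12x21}) built to enforce simultaneously the Navier data $\eta_{2}^{\mm{r}}|_{\partial\Omega}=0$, $\partial_{2}\eta_{1}^{\mm{r}}|_{\partial\Omega}=0$ and the weighted mean condition $(\bar{\rho}\eta_{1}^{\mm{r}})_{\Omega}=0$; smallness of $\delta$ yields contractivity.

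With $\eta^{\mm{r}}$ fixed, I would construct $u^{\mm{r}}$ along the same lines. Because $\det(I+\nabla\eta_{0}^{\delta})=1$, the Piola identity reduces $\mm{div}_{\mathcal{A}_{0}^{\delta}}u_{0}^{\delta}=0$ to $\mm{div}\bigl((\mathcal{A}_{0}^{\delta})^{\mm{T}}u_{0}^{\delta}\bigr)=0$. Substituting $\mathcal{A}_{0}^{\delta}=I+O(\delta)$ and using $\mm{div}\,\tilde{u}^{0}=0$, this becomes a linear divergence equation $\mm{div}\,u^{\mm{r}}=\mathcal{R}_{2}(u^{\mm{r}};\delta,\eta^{\mm{r}},\tilde{\eta}^{0},\tilde{u}^{0})$ with $\mathcal{R}_{2}$ bounded in $H^{2}$ uniformly in $\delta$ and a small contraction in $u^{\mm{r}}$, and the same Bogovski construction, now adapted to $u_{2}^{\mm{r}}|_{\partial\Omega}=0$, $\partial_{2}u_{1}^{\mm{r}}|_{\partial\Omega}=0$ and $(\bar{\rho}u_{1}^{\mm{r}})_{\Omega}=0$, delivers $u^{\mm{r}}\in{^0\mathcal{H}^3_{\mm{s}}}$ with the required bound.

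The main obstacle is to exhibit a single right inverse of $\mm{div}$ that respects the full Navier-type boundary data $(\chi_{2},\partial_{2}\chi_{1})|_{\partial\Omega}=0$ \emph{and} the weighted mean condition defining ${^0\mathcal{H}^3_{\mm{s}}}$; a plain Bogovski operator only produces solutions vanishing on the boundary, not the mixed trace. I would resolve this by a two-step decomposition: first solve $\mm{div}\,\chi^{(1)}=\phi$ with $\chi^{(1)}|_{\partial\Omega}=0$ via Lemma \ref{pro:12x21}, then add a divergence-free correction of the form $(-\partial_{2}\psi,\partial_{1}\psi)$ with a stream function $\psi\in H^{4}$ chosen so that the combined $\partial_{2}\chi_{1}$-trace vanishes and the $\chi_{2}$-trace remains zero (the latter forces $\partial_{1}\psi|_{\partial\Omega}=0$, leaving $\psi|_{\partial\Omega}$ to be an arbitrary horizontally-constant function on each component of $\partial\Omega$, one free degree in each slice to absorb the single nonzero boundary datum $\partial_{2}\chi_{1}^{(1)}|_{\partial\Omega}$), and finally subtract a $y_{2}$-dependent scalar from $\chi_{1}$ to kill $(\bar{\rho}\chi_{1})_{\Omega}$. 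Checking that this composite right inverse is continuous in the $H^{3}$-topology and that the resulting nonlinear map is a contraction on a ball of size $O(1)$ for all $\delta\leqslant\delta_{2}^{I}$ is the technical heart of the argument.
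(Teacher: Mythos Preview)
Your overall strategy—setting up a contraction for $\eta^{\mm{r}}$ and then $u^{\mm{r}}$ via divergence equations with $O(\delta)$-small remainders—is the right one and matches the spirit of the paper's proof (which defers to \cite[Proposition~5.1]{JFJSZWC}). The key difference is the choice of right inverse for $\mm{div}$: the paper invokes the Stokes estimate with Navier boundary condition (Lemma~\ref{20222011161567}) together with Remark~\ref{202202202212011}, not the Bogovskii operator of Lemma~\ref{pro:12x21}. The Stokes solver takes $\mm{div}\,\varpi\in H^{2}$ with $\varpi_{2}|_{\partial\Omega}=0$ and returns $v\in{^0\mathcal{H}^{3}_{\mm{s}}}$ directly, so the Navier traces $(v_{2},\partial_{2}v_{1})|_{\partial\Omega}=0$, the weighted mean $(\bar{\rho}v_{1})_{\Omega}=0$, and the $H^{3}$ regularity all come for free. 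This collapses your entire ``main obstacle'' paragraph to a single invocation, and the contraction estimate becomes immediate.

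Your proposed route through Lemma~\ref{pro:12x21} has two genuine gaps. First, that lemma only records the mapping $\underline{L}^{2}\to H^{1}_{0}$, so as written it cannot produce the $H^{3}$ regularity you need; in particular the trace $\partial_{2}\chi_{1}^{(1)}|_{\partial\Omega}$ in your stream-function step is not even defined for an $H^{1}$ function. Second, the sentence ``one free degree in each slice to absorb the single nonzero boundary datum $\partial_{2}\chi_{1}^{(1)}|_{\partial\Omega}$'' is incorrect: that boundary datum is a full function of $y_{1}$ on each component of $\partial\Omega$, not a scalar, and forcing $\psi|_{\partial\Omega}$ to be horizontally constant in no way pins down $\partial_{2}^{2}\psi|_{\partial\Omega}$. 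You would need a genuine two-trace extension prescribing $\psi|_{\partial\Omega}$ and $\partial_{2}^{2}\psi|_{\partial\Omega}$ independently, with $H^{4}$ control. All of this extra machinery is bypassed by using the Stokes solver of Lemma~\ref{20222011161567}.
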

\begin{pf}
Thanks to Lemma \ref{20222011161567} and Remark \ref{202202202212011}, we can easily establish Proposition \ref{pro:0101}   by following the argument of \cite[Proposition 5.1]{JFJSZWC}, and thus omit the trivial proof.
\hfill$\Box$
\end{pf}

Let $({\eta}^{\delta}_{0},{u}^{\delta}_0 )\in {^0{\mathcal{H}}^{3,\mm{s}}_{1,\gamma}} \times {^0\mathcal{H}^3_{\mm{s}}} $ be constructed by  Proposition \ref{pro:0101},
\begin{align}
&\label{201912041727}
 {c}_3^I= {\|(\tilde{\eta}^0,\tilde{u}^0)\|_3}  + {c}_2^I>0
 \end{align}and
 \begin{align}
&\label{201912041705}
 {\delta}_0= \frac{1}{2c_3^I } \min\left\{\gamma,{\delta_1^I}, {{\delta}_2} , 2c_3^I {\delta}_2^I \right\}\leqslant 1 .
\end{align}
From now on, we assume that $\delta\leqslant {\delta}_0$. Since $\delta\leqslant\delta_2^I$, we can use Proposition \ref{pro:0101} to construct
$(\eta_0^\delta, u_0^\delta)$ that satisfies
$$  {\|(\eta_0^\delta,u_0^\delta)\|_3}
\leqslant c_3^I\delta\leqslant  {\delta}_2 .$$
By Proposition  \ref{202102182115}, there exists a unique solution $(\eta, u, q)$ of the transformed MRT problem \eqref{01dsaf16asdfasf} and \eqref{20safd45}
with initial value $({\eta}_0^\delta, {u}_0^\delta)$ in place of $(\eta^0, u^0)$, where $(\eta, u, q)\in \widetilde{\mathfrak{H}}^{1,3}_{\gamma,\tau}\times  {^0\mathcal{U}_{\tau}} \times ( C^0(\overline{I_\tau}, \underline{H}^1)\cap L^2_{\tau} {H}^2) $ for any $\tau\in I_{T^{\mm{\max}}}$ and $T^{\mm{max}}$ denotes the maximal time of existence.

Let $\epsilon_0\in (0,1]$ be a constant, which will be given in \eqref{201907111842}.
We define
\begin{align}\label{times}
& T^\delta:=\Lambda^{-1}\mm{ln}({\epsilon_0}/{\delta})>0, \mbox{ i.e. }
 \delta e^{ \Lambda  T^\delta }=\epsilon_0,\\[1mm]
&T^*:=\sup\left\{t\in I_{T^{\max}}\left|~
\sup\nolimits_{\tau\in [0,t)}\sqrt{\|\eta(\tau)\|_3^2+\| u (\tau)\|_2^2}\right.\leqslant 2   {c}_3^I \delta_0 \right\}, \label{xfdssdafatimes}\\[1mm]
&  T^{**}:=\sup\left\{t\in I_{T^{\max}} \left|~\sup\nolimits_{\tau\in [0,t)}\left\|\eta(\tau)\right\|_{0}\leqslant 2 {c}_3^I\delta  e^{\Lambda \tau}
 \right\}.\right.  \label{xfdstimes}
\end{align}

Since
\begin{align}
 \left.\sqrt{\|\eta(t)\|_3^2+\| u(t)\|_2^2} \right|_{t=0}
=  \sqrt{\|\eta_0^\delta\|_3^2+\|u_0^\delta\|_2^2}
\leqslant c_3^I \delta  <  2c_3^I \delta ,
\label{201809121553}
\end{align} we have $T^{*}>0$, $T^{**}>0$. Obviously,
\begin{align}
\label{0n111} & T^{*}=T^{\max}=\infty \mbox{ or }T^{*}<T^{\max},\\
\label{0502n111} & \left\|\eta (T^{**}) \right\|_0 =2{c}_3^I\delta e^{\Lambda T^{**}},\mbox{ if }T^{**}<T^{\max},\\
\label{050211} &\sqrt{\|\eta (T^{*})\|_3^2+\| u  (T^{*})\|_2^2} =2   {c}_3^I \delta_0,\mbox{ if }T^{*}<T^{\max}.
\end{align}

From now on, we define
$$T^{\min}:=\min\{T^\delta,T^*,T^{**}\} .$$
Noting that $\sup\nolimits_{t\in [0,T^{\min} )}\sqrt{\|\eta(t)\|_3^2+\| u (t)\|_2^2} \leqslant     \delta_1^I$,
thus, by Proposition \ref{pro:0301n0845}, $(\eta,u,q)$ enjoys  the  Gronwall-type energy inequality \eqref{2019120521430850}  for any $t\in I_{T^{\min}}$.
Making use of this fact, \eqref{xfdssdafatimes}--\eqref{201809121553}, Lemma \ref{pro:1221} and the condition $\|\eta\|_3\leqslant \gamma$,  we see that, for any $t\in [0 , T^{\min})$,
\begin{align}
  \mathcal{E} (t) + c\int_0^t \mathcal{D}(\tau) \mm{d}\tau  \leqslant c_4^I\delta^2e^{2\Lambda  t}
\label{20191204safda2114}\end{align}
and
\begin{align}  \left\|\int_{0}^{\eta_2 }\left(\eta_2 -
z\right)\bar{\rho} {''}(y_2+z)\mm{d}z \right\|_{{L^1}}\leqslant (c_4^I \delta  e^{ \Lambda t})^2 . \label{2022202101204}\end{align}

 Next we further establish the estimates for the errors between $(\eta,u)$ and $(\eta^{\mm{a}}, u^{\mm{a}})$  as in \cite[Proposition 6.1]{JFJSZWC}.
\begin{pro}\label{2022202101315}
Let
$(\eta^{\mathrm{d}}, u^{\mathrm{d}},q^{\mathrm{d}})=(\eta, u,q)-(\eta^{\mm{a}}, u^{\mm{a}}, q^{\mm{a}})$, then there is a constant
${c}_5^I$, such that for any $\delta\in(0,\delta_0]$ and for any $t\in I_{T^{\min}}$,
\begin{align}
\label{ereroe}
&\|(\bar{\rho}'\chi^{\mm{d}},\chi^{\mm{d}} ) \|_{\mathfrak{X}} +  \|u_t^{\mm{d}}\|_{0}   \leqslant {c}_5^I\sqrt{\delta^3e^{3\Lambda t}}, \\
&\|(\mathcal{A}_{ik}\partial_k\chi_j
-\partial_i \chi^{\mm{a}}_j)(t)
\|_{{L^1}}\leqslant  {c}_5^I\sqrt{\delta^3 e^{3\Lambda t}},\label{2018090119021}
\end{align}
where $i$, $j=1$, $2$, $\chi=\eta$ or $u$, $ \mathfrak{X}=W^{1,1}$ or $H^1$, and ${c}_5^I$ is independent  of  ${T^{\min}}$.
\end{pro}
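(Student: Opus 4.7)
The plan is to derive the equations satisfied by $(\eta^{\mm{d}},u^{\mm{d}},q^{\mm{d}})$, recognize that they amount to the linearized MRT problem \eqref{01dsaf16asdfasf0101} driven by a genuinely quadratic residual, and then run an energy estimate parallel to the analysis in Section~\ref{sec:global}. Subtracting \eqref{01dsaf16asdfasf0101} from \eqref{01dsaf16asdfasf} gives
\begin{equation*}
\eta^{\mm{d}}_t=u^{\mm{d}},\qquad
\bar{\rho}u^{\mm{d}}_t+\nabla q^{\mm{d}}-\mu\Delta u^{\mm{d}}=\lambda m^2\partial_1^2\eta^{\mm{d}}+g\bar{\rho}'\eta^{\mm{d}}_2\mathbf{e}_2+\mathcal{R},\qquad
\mm{div}\,u^{\mm{d}}=-\mm{div}_{\tilde{\mathcal{A}}}u,
\end{equation*}
where the residual $\mathcal{R}:=\mathcal{G}\mathbf{e}_2+\nabla_{\tilde{\mathcal{A}}}q+\mu(\Delta_{\mathcal{A}}u-\Delta u)$ collects precisely the nonlinearities that separate \eqref{01dsaf16asdfasf} from \eqref{01dsaf16asdfasf0101}. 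The boundary conditions $(\eta^{\mm{d}}_2,u^{\mm{d}}_2,\partial_2\eta^{\mm{d}}_1,\partial_2 u^{\mm{d}}_1)|_{\partial\Omega}=0$ transfer directly from \eqref{20safd45} and Proposition~\ref{pro:08252100}, while by \eqref{202103281107} the initial data satisfy $\|(\eta^{\mm{d}},u^{\mm{d}})(0)\|_3\lesssim\delta^2$.

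I would then quantify the residual. Using $\mathcal{E}(t)\leqslant c_4^I\delta^2 e^{2\Lambda t}$ from \eqref{20191204safda2114}, the linear bound $\|(\eta^{\mm{a}},u^{\mm{a}})\|_3+\|q^{\mm{a}}\|_2\lesssim\delta e^{\Lambda t}$ from \eqref{appesimtsofu1857}, the gravity remainder estimate \eqref{2022011130957} together with \eqref{2022202101204}, and standard product estimates, one obtains
\begin{equation*}
\|\mathcal{R}\|_0+\|\mathcal{R}\|_{L^1}+\|\mm{div}_{\tilde{\mathcal{A}}}u\|_1\lesssim\delta^2 e^{2\Lambda t},\qquad \|\tilde{\mathcal{A}}\|_2\lesssim\delta e^{\Lambda t}.
\end{equation*}
In particular the $L^1$ control of $\mathcal{G}$ comes from \eqref{2022202101204} (the quadratic Taylor remainder of $\bar{\rho}(y_2+\eta_2)-\bar{\rho}(y_2)-\bar{\rho}'\eta_2$), whereas the geometric distortions $\nabla_{\tilde{\mathcal{A}}}q$ and $(\Delta_{\mathcal{A}}-\Delta)u$ are handled by pairing $\|\tilde{\mathcal{A}}\|_2$ with $\|(u,q)\|_2$, both of order $\delta e^{\Lambda t}$.

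With the error system and residual bounds in hand, the energy estimate on $(\eta^{\mm{d}},u^{\mm{d}},q^{\mm{d}})$ mimics the tangential and Stokes-based estimates of Section~\ref{sec:global}: since the linear part of the error equation is exactly \eqref{01dsaf16asdfasf0101}, an energy functional $\mathcal{E}^{\mm{d}}$ equivalent to $\|(\eta^{\mm{d}},u^{\mm{d}})\|_1^2+\|u^{\mm{d}}_t\|_0^2$ satisfies a Gronwall-type inequality
\begin{equation*}
\frac{\mm{d}}{\mm{d}t}\mathcal{E}^{\mm{d}}\leqslant C_1\mathcal{E}^{\mm{d}}+C_2\delta^4 e^{4\Lambda t}.
\end{equation*}
Combined with $\mathcal{E}^{\mm{d}}(0)\lesssim\delta^4$ and the standing constraint $\delta e^{\Lambda t}\leqslant\epsilon_0\leqslant 1$ on $I_{T^{\min}}$, Gronwall yields $\mathcal{E}^{\mm{d}}(t)\lesssim\delta^4 e^{4\Lambda t}\leqslant\delta^3 e^{3\Lambda t}$, which is \eqref{ereroe} in the $H^1$ (hence in $W^{1,1}$, since the cell $(0,2\pi L)\times(0,h)$ has finite measure) part; the $u^{\mm{d}}_t$ bound is then read off from the momentum equation, and the weighted norm $\|\bar{\rho}'\chi^{\mm{d}}\|_{\mathfrak{X}}$ follows from $\bar{\rho}'\in L^\infty$ in \eqref{0102}. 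For \eqref{2018090119021} one simply splits
\begin{equation*}
\mathcal{A}_{ik}\partial_k\chi_j-\partial_i\chi^{\mm{a}}_j=\partial_i\chi^{\mm{d}}_j+\tilde{\mathcal{A}}_{ik}\partial_k\chi_j,
\end{equation*}
bounding the first summand by \eqref{ereroe} and the second by $\|\tilde{\mathcal{A}}\|_0\|\nabla\chi\|_2\lesssim\delta^2 e^{2\Lambda t}\leqslant\sqrt{\delta^3 e^{3\Lambda t}}$.

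The principal obstacle will be closing the Gronwall at the correct $O(\delta^4 e^{4\Lambda t})$ order: every contribution arising from $\tilde{\mathcal{A}}$, the viscous correction $(\Delta_{\mathcal{A}}-\Delta)u$ and the pressure distortion $\nabla_{\tilde{\mathcal{A}}}q$ must be carefully isolated from the linear background and paired so that the remainders, once tested against functions of size $\sqrt{\mathcal{E}^{\mm{d}}}$, generate only quartic-in-$\delta e^{\Lambda t}$ integrands rather than lower-order terms that would spoil the bootstrap. A secondary technical nuisance is the divergence defect $\mm{div}\,u^{\mm{d}}=-\mm{div}_{\tilde{\mathcal{A}}}u$, which prevents a clean solenoidal test against $u^{\mm{d}}$; one either corrects it via a Bogovskii-type potential with compatible boundary data (as in \eqref{2022202191518}) or retains it as an additional source term of the same quadratic order.
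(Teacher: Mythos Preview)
There is a genuine gap in the Gronwall step. You write $\frac{\mm{d}}{\mm{d}t}\mathcal{E}^{\mm{d}}\leqslant C_1\mathcal{E}^{\mm{d}}+C_2\delta^4 e^{4\Lambda t}$ and then assert that Gronwall delivers $\mathcal{E}^{\mm{d}}(t)\lesssim\delta^4 e^{4\Lambda t}$. That conclusion requires $C_1\leqslant 4\Lambda$, and nothing in your argument secures it. In the instability regime $|m|<m_{\mm{C}}$ the functional $-E(w)$ is no longer coercive, so the ``parallel to Section~\ref{sec:global}'' energy machinery (which leans on the stability condition \eqref{2020102241504}) does not apply; the naive constant one obtains by throwing $g\bar{\rho}'\eta^{\mm{d}}_2\mathbf{e}_2$ to the right is of order $g\|\bar{\rho}'\|_{L^\infty}$, with no relation to $\Lambda$. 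If $C_1>4\Lambda$, Gronwall only gives $\mathcal{E}^{\mm{d}}(t)\lesssim\delta^4 e^{C_1 t}$, which on $[0,T^\delta]$ becomes $\delta^{4-C_1/\Lambda}\epsilon_0^{C_1/\Lambda}$ and blows up as $\delta\to 0$; the bootstrap then cannot close.

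The paper avoids this by invoking the \emph{sharp} spectral bound \eqref{Lambdard}, $E(v)\leqslant\Lambda^2\|\sqrt{\bar{\rho}}v\|_0^2+\Lambda\mu\|\nabla v\|_0^2$ for $v\in H^1_\sigma$, which encodes that $\Lambda$ is the exact maximal linear growth rate. Concretely, it works with $\bar{u}^{\mm{d}}:=u-\nabla\zeta\,u^{\mm{a}}$ (which is $\mathcal{A}$-divergence-free) and the identity \eqref{eq0510} to reach \eqref{0314xxdfafdss}, then removes the divergence defect of $u^{\mm{d}}$ by a small Stokes corrector \eqref{201705012219xx} so that \eqref{Lambdard} applies to $v^{\mm{d}}=u^{\mm{d}}-\tilde{u}$; this produces \eqref{new0311} with the \emph{exact} constants $\Lambda^2$ and $\Lambda\mu$ on the right, from which \eqref{uestimate1n} follows by the standard Guo--Strauss bootstrap. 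You correctly flag the divergence defect as a technical nuisance, but the decisive missing ingredient is \eqref{Lambdard}: without it there is no mechanism forcing the error to grow no faster than the linear solution.
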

\begin{pf}
Let $\mathcal{K}^1=\mathcal{K}^1_{\mm{L}}+\mathcal{K}^1_{\mm{N}}$,
\begin{align*}
\mathcal{K}^1_{\mm{L}} = \lambda m^2\partial_1^2\eta^{\mathrm{a}}+g\bar{\rho}'\eta_2^{\mathrm{a}}\mathbf{e}_2   +\mu   \Delta u^{\mathrm{a}}
\end{align*}
and \begin{align*}
\mathcal{K}^1_{\mm{N}} =\nabla \eta(\lambda m^2\partial_1^2\eta^{\mathrm{a}}+g\bar{\rho}'\eta_2^{\mathrm{a}}\mathbf{e}_2 ) +\bar{\rho}\nabla u u^{\mathrm{a}}+\mu  \nabla \eta \Delta u^{\mathrm{a}}  +\nabla_{\tilde{\mathcal{A}}}q^{\mathrm{a}}  -\nabla \eta\nabla q^{\mathrm{a}}.
\end{align*}
It is easy to see that $(\eta^{\mathrm{a}},u^{\mathrm{a}},q^{\mathrm{a}})$ satisfies that
\begin{equation}\label{01dasf0101}
                              \begin{cases}
\eta_t^{\mathrm{a}}=u^{\mathrm{a}},   \\[1mm]
\bar{\rho}(\nabla \zeta u^{\mathrm{a}})_t+\nabla_{\mathcal{A}}   q^{\mathrm{a}}
= \mathcal{K}^1,\\[1mm]
\div_{\mathcal{A}}(\nabla \zeta u^{\mathrm{a}} )= \div u^{\mathrm{a}}=0 , \\[1mm]
(X_2,\partial_2X_1   )|_{\partial\Omega}=0,
\end{cases}
\end{equation}
where $\zeta:=\eta+y$, and $X=\eta^{\mathrm{a}}$, $u^{\mathrm{a}}$ or $  \nabla \zeta u^{\mathrm{a}} $.
Subtracting the both of the transformed MRT problem and the above problem, we get
\begin{equation}\label{201702052209} \begin{cases}
\eta_t^{\mathrm{d}}=u^{\mathrm{d}} ,\\[1mm]
\bar{\rho} \bar{u}_{t}^{\mathrm{d}}+\nabla_{\mathcal{A}}q^{\mm{d}}- \mu\Delta_{\mathcal{A}} u   =:\mathcal{K}^2 , \\
\div_{\mathcal{A}}  \bar{u}^{\mathrm{d}}=\div_{\mathcal{A}}  u=0 ,
 \\(Y_2,\partial_2Y_1 )|_{\partial\Omega}=0 ,\\(\eta^{\mathrm{d}},u^{\mathrm{d}})|_{t=0}=\delta^2 (\eta^{\mm{r}},u^{\mm{r}}), \end{cases}\end{equation}
 where $\bar{u}^{\mathrm{d}}:=u-\nabla \zeta u^{\mathrm{a}} $, $\mathcal{K}^2=\lambda m^2\partial_1^2\eta +G_\eta \mathbf{e}_2-\mathcal{K}^1 $, and $Y=\eta^{\mathrm{d}} $, $u^{\mathrm{d}} $ or $\bar{u}^{\mathrm{d}}  $.

Let $\Phi= \bar{u} ^{\mathrm{d}} _t- u \cdot \nabla_{\mathcal{A}} \bar{u}^{\mathrm{d}}  $,
\begin{align*}
I_1^I =  &  \int
 ((\mu   \mathcal{A}_{il} \partial_l(\mathcal{A}_{ik} \partial_k  u) +    \bar{\rho} u\cdot\nabla_{\mathcal{A}}    \bar{u}^{\mathrm{d}}  + \mathcal{K}^2     +  \bar{\rho}\Phi )\cdot (u \cdot\nabla_{\mathcal{A}}  \Phi )-   \partial_t (  \bar{\rho} u\cdot\nabla_{\mathcal{A}}   \bar{u}^{\mathrm{d}}) \cdot \Phi  )\mm{d}y
\end{align*}
and
\begin{align*}
I_{2}^I =  &  \int
(\mathcal{K}^2_t \cdot \Phi  -  \mu\partial_t (    \mathcal{A}_{il}\mathcal{A}_{ik} \partial_k  u  )\cdot \partial_l \Phi  )\mm{d}y.
\end{align*}
Following the argument of \eqref{eq0510} in Section \ref{202202021613},
we can deduce from \eqref{201702052209}$_2$--\eqref{201702052209}$_4$ with $X= \bar{u}^{\mathrm{d}}$ and the boundary conditions of $(\eta,u)$ that
\begin{align}
 \frac{1}{2}\frac{\mm{d}}{\mm{d}t}\|\sqrt{ \bar{\rho}  } \Phi\|^2_0
   = I_1^{{I}}+I_2^{{I}}. \label{eq0dsdfa510xx}
\end{align}

Obviously the integral $I_2^{{I}} $ can be rewritten as follows:
\begin{align}
I_2^{{I}}:=  \frac{1}{2}\frac{\mm{d}}{\mm{d}t}E(u^{\mm{d}})  -\mu \|\nabla u^{\mm{d}} _t\|_0^2+ \tilde{I}_2^{{I}},
\label{2022202101434}
\end{align}
where
\begin{align*}
\tilde{I}_2^{{I}}= &\int
(\mu\partial_t (    \mathcal{A}_{il}\mathcal{A}_{ik} \partial_k  u )\cdot \partial_l (  u \cdot \nabla_{\mathcal{A}} \bar{u}^{\mathrm{d}}  + \partial_t (\nabla \eta u^{\mathrm{a}} )) \nonumber \\
&-(\lambda m^2\partial_1^2u+g\bar{\rho}'(\eta_2+y_2)u_2\mathbf{e}_2 - \mathcal{K}^1_t) \cdot (u \cdot \nabla_{\mathcal{A}} \bar{u}^{\mathrm{d}}   \\
& +\partial_t(\nabla \eta u^{\mathrm{a}} )) -  \mu\partial_t (    \tilde{\mathcal{A}}_{il}\mathcal{A}_{ik} \partial_k  u + \tilde{\mathcal{A}}_{lk} \partial_k  u  )\cdot \partial_l  {u} ^{\mathrm{d}}_t
\\
& + (g(\bar{\rho}'(\eta_2+y_2)-\bar{\rho}'( y_2) )u_2\mathbf{e}_2  -\partial_t\mathcal{K}^1_{\mm{N}})\cdot   {u} ^{\mathrm{d}} _t) \mm{d}y,\end{align*}
Putting \eqref{2022202101434} into \eqref{eq0dsdfa510xx} yields
\begin{align}
 \frac{1}{2}\frac{\mm{d}}{\mm{d}t}(\|\sqrt{ \bar{\rho}  } \Phi\|^2_0-E(u^{\mm{d}})  )
 +\mu \int_0^t\|\nabla u^{\mm{d}} _{\tau}\|_0^2\mm{d}\tau
   = I_1^{{I}}+\tilde{I}_2^{{I}}  . \label{eq0dsdfa510}
\end{align}
Integrating the above identity in time from $0$ to $t$, we get
\begin{align}
& \|\sqrt{ \bar{\rho}  } \Phi\|^2_0-E(u^{\mm{d}})+2 \mu\int_0^t\|\nabla_{\mathcal{A} } u^{\mathrm{d}}_{\tau}\|_0^2\mm{d}\tau\nonumber \\
  &= \|\sqrt{ \bar{\rho}  } \Phi|_{t=0}\|^2_0 - {E}(u^{\mm{d}}|_{t=0})+ 2 \int_0^t( I_1^{{I}}+\tilde{I}_2^{{I}}) (\tau)\mm{d}\tau . \label{0314}
\end{align}

Making use of \eqref{appesimtsofu1857}, \eqref{20191204safda2114}, the fact $\delta e^{\Lambda t}\leqslant 1$ and the initial condition $u^\mm{d}(0)=\delta^2 u^{\mm{r}}$ in \eqref{201702052209}$_5$,  we  easily estimate that
\begin{align}
& \|\sqrt{ \bar{\rho}  } u^\mm{d}_t \|^2_0 = \|\sqrt{ \bar{\rho}  } \Phi\|^2_0-
\|\sqrt{\bar{\rho}} (u \cdot \nabla_{\mathcal{A}} \bar{u}^{\mathrm{d}}+ \partial_t( \nabla \eta u^{\mathrm{a}} ) )\|^2_0\nonumber \\
& \qquad \qquad  \quad  + 2\int\bar{\rho}(  u \cdot \nabla_{\mathcal{A}} \bar{u}^{\mathrm{d}}+\partial_t(\nabla \eta u^{\mathrm{a}} ))\cdot u^\mm{d}_t\mm{d}y
  \leqslant \|\sqrt{ \bar{\rho}  } \Phi\|^2_0+    \delta^3 e^{3\Lambda t},    \label{201811081020} \\
&\|\sqrt{ \bar{\rho}  } \Phi|_{t=0}\|^2_0\lesssim\|  u_t^{\mathrm{d}}|_{t=0}\|_0^2 + \delta^3 e^{3\Lambda t},\\
&
E(u^{\mm{d}}|_{t=0})\lesssim \delta^4\|u^{\mm{r}}\|_1^2\lesssim  \delta^3 e^{3\Lambda t},
\label{2018110702038}\\
&\int_0^t (I_1^{{I}}+\tilde{I}_2^{{I}})(\tau)\mm{d}\tau  \lesssim   \delta^3 e^{3\Lambda t}  . \label{2011020}
\end{align}

Next we shall estimate for $\|  u_t^{\mathrm{d}}|_{t=0}\|^2_0  $.
Noting that
$$\div  u_t^{\mm{d}}=  -\mm{div}\partial_t( \tilde{\mathcal{A}}^{\mm{T }}  u)$$
and
$$\bar{\rho} u_{t}^{\mathrm{d}} =
   \mu \Delta u^{\mm{d}}+\lambda m^2\partial_1^2\eta^{\mm{d}}+
g\bar{\rho}'\eta_2^{\mm{d}}\mathbf{e}_2 +\mathcal{N}^\mu  +\mathcal{G}\mathbf{e}_2 -\nabla  q-\nabla_{\tilde{\mathcal{A}}} q
 $$(see \eqref{01dsaf16asdfasf03n} for the definition of $\mathcal{N}^\mu$),
thus, multiplying the above identity  by $u_t^{\mm{d}}$ in $L^2$, and then using the integration by parts,  we have
\begin{align}
\int \bar{\rho} |u_{t}^{\mathrm{d}}|^2\mm{d}y=&
  \int\bigg( \mu \Delta u^{\mm{d}}+\lambda  m^2\partial_1^2\eta^{\mm{d}}+
g\bar{\rho}'\eta_2^{\mm{d}}\mathbf{e}_2 +\mathcal{N}^\mu +\mathcal{G}\mathbf{e}_2 -\nabla_{\tilde{\mathcal{A}}} q\bigg)\cdot u_t^{\mm{d}}   \mm{d}y \nonumber \\
&+
\int  \nabla q^{\mathrm{d}}\partial_t({\tilde{\mathcal{A}}}^{\mm{T}} u)  \mm{d}y  .\nonumber
  \end{align}
Thus we immediately  derive from the above estimate that
$$  \|  u_t^{\mathrm{d}}\|_0^2 \lesssim\| (\eta^{\mm{d}},u^{\mm{d}})\|_2^2+ \delta^3 e^{3\Lambda t},$$
which, together with the initial data \eqref{201702052209}$_5$, implies that
\begin{equation}
\label{201807112050}
\|  u_t^{\mathrm{d}}|_{t=0}\|_0^2\lesssim  \delta^3 e^{3\Lambda t} .
\end{equation}

Consequently, putting \eqref{201811081020}--\eqref{201807112050} into \eqref{0314}  yields
\begin{align}
 \|\sqrt{\bar{\rho}} u_t^\mm{d}\|^2_{0}+ 2\mu\int_0^t\| \nabla_{\mathcal{A} } u^{\mathrm{d}}_{\tau}\|_0^2\mm{d}\tau\leqslant   {E}( u^{\mm{d}})+  c\delta^3 e^{3\Lambda t}.\label{0314xxdfafdss}
\end{align}

Since $u^{\mm{d}}$ does not satisfies the divergence-free condition (i.e.,
 $\mm{div}u^{\mm{d}}=0$), thus we can not use \eqref{Lambdard} to deal with $ E(u^{\mm{d}})$.
To overcome this trouble, we consider the following  Stokes problem
\begin{equation*}
 \begin{cases}
\nabla \varpi-\Delta \tilde{u}=0,\quad
 \mm{div}\tilde{u} = \mm{div}{u} =-  \div_{\tilde{\mathcal{A}}}
  u  ,  \\
(\tilde{u}_2, \partial_2\tilde{u}_1)|_{\partial\Omega}    =0 .
 \end{cases}\end{equation*}
Then there exists a solution $(\tilde{u},\varpi)\in \mathcal{H}_{\mm{s}}^2\times  \underline{H}^1$ to the above Stokes problem for given $(\eta,u)$ by Lemma \ref{20222011161567}. Moreover,
\begin{equation}
\label{201705012219xx}
\|\tilde{u}\|_2\lesssim \|\mm{div}_{\tilde{\mathcal{A}}}u\|_1 \lesssim \delta^2 e^{2\Lambda t}.
\end{equation}
It is easy to check that $v^{\mm{d}}:=u^{\mm{d}}-
\tilde{u}\in H_\sigma^2$.

Now, we can apply \eqref{Lambdard} to $ E(v^{\mathrm{d}})$, and get
  \begin{equation*}\begin{aligned}
 E(v^{\mathrm{d}})\leqslant  \Lambda^2{\|v^{\mathrm{d}}\|^2_0}+ \Lambda\mu\| \nabla  v^{\mathrm{d}}\|_0^2  ,
\end{aligned}\end{equation*}
which, together with \eqref{201705012219xx}, immediately implies
\begin{equation*}\begin{aligned}
 E(u^{\mathrm{d}})\leqslant  \Lambda^2{\|\sqrt{\bar{\rho}}u^{\mathrm{d}}\|^2_0} +  \Lambda{\mu}\|\nabla   u^{\mathrm{d}}\|_0^2 +c\delta^3 e^{3\Lambda t}.
\end{aligned}\end{equation*}
Inserting it into \eqref{0314xxdfafdss}, we arrive at
\begin{equation}
\label{201702232221}
\begin{aligned}  \| \sqrt{\bar{\rho}}u_t^\mm{d}\|^2_{0}+2\mu\int_0^t\|\nabla_{\mathcal{A}}
u^{\mathrm{d}}_{\tau}\|_0^2\mm{d}\tau \leqslant {\Lambda^2} \|  \sqrt{\bar{\rho}}u^\mm{d}\|_{0}^2 +{\Lambda} \mu\|\nabla u^{\mathrm{d}}\|_0^2 +  c \delta^3 e^{3\Lambda t}.
\end{aligned}\end{equation}
In addition,
$$
 \int_0^t\|\nabla u_{\tau}^{\mathrm{d}}\|_0^2\mm{d}\tau
\leqslant   \int_0^t(\| \nabla _{\mathcal{A}}u_{\tau}^{\mathrm{d}}\|_0^2+
\|\tilde{\mathcal{A}}\|_2\|u_{\tau}^{\mathrm{d}}\|_1^2)\mm{d}\tau
\leqslant \int_0^t\|\nabla_{\mathcal{A}}u_{\tau}^{\mathrm{d}}\|_0^2\mm{d}\tau+  c \delta^3 e^{3\Lambda t}. $$
Putting the above estimate into \eqref{201702232221} yields that
  \begin{equation} \label{new0311}
\begin{aligned}  \| \sqrt{\bar{\rho}} u_t^\mm{d}\|^2_{0}+2 \mu\int_0^t\| \nabla u^{\mathrm{d}}_{\tau}\|_0^2\mm{d}\tau \leqslant {\Lambda^2} \|\sqrt{\bar{\rho}} u^\mm{d}\|_{0}^2 +  \Lambda\mu  \|\nabla  u^{\mathrm{d}}\|_0^2+  c \delta^3 e^{3\Lambda t}.
\end{aligned}\end{equation}
Then we can further deduce from the above estimate that
\begin{eqnarray}\label{uestimate1n}
\|u^{\mathrm{d}}\|_{1 }^2+\| u_t^{\mathrm{d}}\|^2_0  \lesssim \delta^3e^{3\Lambda t}.
\end{eqnarray}

We turn to derive the error estimate for $\eta^{\mathrm{d}}$.
It follows from \eqref{201702052209}$_1$ that
\begin{equation*}\begin{aligned}
 \frac{\mm{d}}{\mm{d}t}\|\eta^{\mathrm{d}}\|_{1}^2
\lesssim   \| u^{\mathrm{d}}\|_1 \|\eta^{\mathrm{d}}\|_{1}.
\end{aligned}\end{equation*}
Therefore, using \eqref{uestimate1n} and  the initial condition ``$\eta^{\mm{d}}|_{t=0}=\delta^2 \eta^{\mm{r}}$" in  \eqref{201702052209}$_5$,  it follows that
\begin{equation}\begin{aligned}\label{erroresimts}
 \|\eta^{\mathrm{d}}\|_{1 }\lesssim  &  \int_0^t \|u^{\mathrm{d}}\|_{1} \mm{d}\tau
+\delta^2\|\eta^{\mm{r}}\|_1\lesssim  \sqrt{ \delta^3e^{3\Lambda t}}.
\end{aligned}\end{equation}
Noting that $H^1\hookrightarrow W^{1,1}$, then we can derive
 \eqref{ereroe}  from \eqref{uestimate1n} and  \eqref{erroresimts}. Finally,  it is easy to see that
 $$\|(\mathcal{A}_{ik}\partial_k\chi_j
-\partial_i \chi^{\mm{a}}_j)(t)
\|_{{L^1}}\lesssim\|\tilde{\mathcal{A}}_{ik}\partial_k\chi_j
 \|_{{L^1}}+ \| \partial_i\chi_j^{\mm{d}} (t)
\|_{{L^1}} \lesssim  \sqrt{ \delta^3e^{3\Lambda t}},$$
which yields \eqref{2018090119021}.
This completes the proof of Proposition   \ref{2022202101315}. \hfill$\Box$
\end{pf}

Let
 $$   \begin{aligned}
\varpi :=& \min_{ {\chi} =\tilde{\eta}^0, \tilde{u}^0} \left\{\|{\chi}_1\|_{L^1}, \|\partial_1{\chi}_1\|_{L^1}, \|\partial_2{\chi}_1\|_{L^1},\|\chi_2\|_{L^1},\|\bar{\rho}'\chi_2^0\|_{L^1} ,\|\partial_1\chi_2\|_{L^1},\|\partial_2\chi_2\|_{L^1} \right\}.
\end{aligned}$$
Then $\varpi >0$ by \eqref{n05022052}.

Now defining
\begin{equation}\label{201907111842}
\epsilon_{0}=\min \left\{ \left(\frac{ {c}_3^I}{2 c_{5}^I }\right)^2, \frac{c_3^I \delta_0}{c_4^I} ,
\frac{\varpi ^2}{4 ({c}_{5}^I+ |{c}_{4}^I|^{2})^2},1 \right\}>0,
\end{equation}
we claim that
\begin{equation}\label{201907111840}
T^{\delta}=T^{\mm{min}}=
\min \left\{T^{\delta}, T^{*}, T^{* *}\right\}
\neq T^{*}\;\;\mbox{or}\;\;T^{**},
\end{equation}
which can be shown by contradiction as follows.
\begin{enumerate}[(1)]
  \item If $T^{\mm{min}}=T^{**} $, then $T^{**} < T^{\max} $ by \eqref{0n111}.
Noting that $\sqrt{\epsilon_0}\leqslant  {c}_3^I/2{c}_5^I$, we see  that by
\eqref{0501}, \eqref{201912041727}, \eqref{times} and \eqref{ereroe},
\begin{equation*}\begin{aligned}
\|  \eta  (T^{**})\|_{0}&\leqslant \| \eta^\mm{a} (T^{**})\|_{0}+\| \eta^\mm{d} (T^{**})\|_{0}\\
&\leqslant  \delta e^{{\Lambda T^{**}}}(c_3^I+ c_5^I\sqrt{\delta e^{\Lambda  T^{**}}})
\leqslant  \delta e^{{\Lambda  T^{**}}}(c_3^I+ c_5^I\sqrt{\epsilon_0})\\
&\leqslant 3c_3^I \delta e^{\Lambda  T^{**}}/2< 2c_3^I\delta e^{\Lambda T^{**}},
\end{aligned} \end{equation*}
which contradicts to \eqref{0502n111}. Hence $T^{\mm{min}}\neq T^{**}$.
  \item If $T^{\mm{min}}=T^{*}$, then $T^{*}<T^{**}  $. Recalling
$  \epsilon_0\leqslant {c}_3^I  \delta_0/c_4^{I}$, we deduce from \eqref{20191204safda2114}
that for any $t\in I_{T^{\min}}$,
\begin{equation}\nonumber
 \sqrt{\|\eta(t)\|_3^2+\| u(t)\|_2^2  }  \leqslant c_4^I \delta  e^{ \Lambda T^\delta} \leqslant  {c}_3^I  \delta_0<2 {c}_3^I \delta_0,
\end{equation}
which contradicts \eqref{050211}. Hence $T^{\mm{min}}\neq T^{*}$.
\end{enumerate}

Since $T^{\delta} $ satisfies \eqref{201907111840}, the inequalities \eqref{ereroe}--\eqref{2018090119021}   hold to $t=T^\delta$.
Using this fact, \eqref{0501},  \eqref{times}, \eqref{2022202101204}  and the condition $\epsilon_0\leqslant {\varpi ^2}/ 4 ({c}_{5}^I+|{c}_4^I |^{2})^2$, we find  the following instability relations:
for $i$, $j=1$, $2$,
 \begin{align}
\|\mathcal{A}_{ik}\partial_k\chi_j(T^\delta) \|_{{L^1}}\geqslant & \| \partial_i \chi^{\mm{a}}_j(T^\delta) \|_{{L^1}}
- \|\mathcal{A}_{ik}\partial_k\chi_j(T^\delta) -\partial_i\chi^{\mm{a}}_j(T^\delta) \|_{{L^1}}\nonumber  \\
\geqslant  & \delta e^{\Lambda T^\delta }( \|\partial_i\tilde{\chi}^{0}_j\|_{L^1}- {c}_{5}^I\sqrt{\delta e^{\Lambda T^\delta }}) \geqslant \epsilon:= \varpi \epsilon_0 /2 \nonumber
\end{align}
and
\begin{align}
&\|\bar{\rho}-\bar{\rho}(\chi_2(y,T^\delta) +y_2) \|_{{L^1}}\nonumber \\
&\geqslant  \| \bar{\rho}' \chi^{\mm{a}}_2(T^\delta) \|_{{L^1}}-\| \bar{\rho}' \chi^{\mm{d}}_2(T^\delta) \|_{{L^1}}
\nonumber \\
&\quad - \left\|\int_{0}^{\chi_2(y,T^\delta)}\left(\chi_2(y,(T^\delta)) -
z\right)\bar{\rho}''(y_2+z)\mm{d}z \right\|_{{L^1}}\nonumber  \\
&\geqslant   \delta e^{\Lambda T^\delta }( \|\bar{\rho}'\tilde{\chi}^{0}_2\|_{L^1}- {c}_{5}^I\sqrt{\delta e^{\Lambda T^\delta }}-(c_4^I)^2 \delta  e^{ \Lambda T^\delta} ) \geqslant  \varpi \epsilon_0 /2 ,\nonumber
\end{align}
where $\chi=\eta$ or $u$. Similarly, we can also verify that $(\eta,u)$ satisfies the rest instability relations in \eqref{201806012326} by using \eqref{ereroe}.
This completes the proof of  Theorem \ref{thm1}.

\section{Local well-posedness}\label{202202021613}

This section is devoted to the proof of the local well-posedness results in Propositions \ref{202102182115}. To begin with we shall establish the existence of  strong solutions to the following linear initial value problem:
 \begin{equation}
\label{201912060857}
  \begin{cases}
 \bar{\rho}u_t+\nabla_{\ml{A}}q -\mu  \Delta_{\ml{A}}u = f,    \\
\mm{div}_{\mathcal{A}}u=0 ,\\
 u|_{t=0}=u^0 ,\\
 (u_2,\partial_2 u_1)|_{\partial\Omega  }=0 , \end{cases}
  \end{equation}
where $\mu >0$,  $(\eta^0,u^0,w)$ are given,
\begin{align}
\ml{A}=(\nabla \zeta)^{-\mm{T}} \mbox{ and } \zeta=y+\eta^0+\int_0^t w\mm{d}y,
\label{202009151926}
 \end{align}
In what follows, we shall use the following notations.
\begin{align}
&  ^*\!X=\mbox{the dual space of }X , \    <\cdot,\cdot>_{^{*}\!X,X} \mbox{ denotes  {the} dual product},   \nonumber \\
 &  { \mathcal{G}_T:=\{f\in C^0(\overline{I_T}, L^2)~|~ (f,f_t)\in L^2_TH^1 \times L^2_T{^*\!H^{1}}\}, } \nonumber \\
& { \|v\|_{ {\mathcal{U}}_{ T}}:=  }
                            \sqrt{ \| v\|_{C^0(\overline{I_T}, H^2)}^2+\|v_t\|_{C^0(\overline{I_T} , L^2)}^2+ \sum_{0\leqslant j\leqslant 1} \|\partial_t^jv\|_{L^2_TH^{2(1-j)+1} }^2  }, \nonumber  \\
                          & a\lesssim_0 b \mbox{ means } a\leqslant c_0b,\  A\lesssim_{\mm{L}}B\mbox{ means }A\leqslant c^{\mm{L}} B, \nonumber   \end{align}
 where $X$ denotes a Banach space, $c_0$ a generic positive constant at most depending on the domain $\Omega$,   and $c^{\mm{L}}$ a generic positive constant depending on $\mu $, $\lambda$, $m$, $\bar{\rho}$ and $\Omega$, and may vary  {from line to line} (if not stated explicitly).
\begin{pro}
\label{qwepro:0sadfa401nxdxx}
Let $B >0 $, $\delta>0$, $(\eta^0,u^0)\in   \mathcal{H}^3_{\mm{s}}  \times {\mathcal{H}^2_{\mm{s}}}$,  $\mathcal{A}^0=(I+\nabla\eta^0)^{-\mm{T}}$, $w\in  {\mathcal{U}}_{T}$,
$f\in \mathcal{G}_T$, $\mathcal{A}$ and $\zeta$ be defined by \eqref{202009151926}, $\eta=\zeta -y$ and
\begin{align}  {T:=\min\{1,(\delta /B )^4\} }.  \label{201912061028}
\end{align}
Assume that
\begin{align}
& \|\nabla \eta^0\|_{2 } \leqslant \delta ,\ \mm{div}_{\mathcal{A}^0}u^0=0,\   w|_{t=0}=u^0,  \nonumber \\
& \sqrt{\|\nabla w\|_{C^0(\overline{I_T},H^1)}^2 +\| \nabla w\|_{L^2_TH^2}^2
+\|\nabla w_t\|_{L^2(\Omega_T)}}\leqslant B,\label{2019122821231}
\end{align}
then there is a sufficiently small constant $\delta^{\mm{L}}  \in (0,1]$ independent of  $\mu $, such that for any
$\delta\leqslant\delta^{\mm{L}} $, there exists a unique local strong solution
 $(u,q)\in {\mathcal{U}}_{T}\times (C^0(\overline{I_T},\underline{H})\cap L^2_T{H}^2)$ to the initial boundary value  problem \eqref{201912060857}.
Moreover, the solution enjoys the following {{estimates}}:
\begin{align}
& 1\leqslant 2\det\zeta\leqslant {3},\quad \| \nabla \eta \|_{2} \leqslant  2\delta, \label{2022202121142} \\
& \|u\|_{\mathcal{U}_{T}}+  \|q\|_{C^0(\overline{I_T} ,H^1)}+\|q\|_{L^2_TH^2}  \lesssim_{\mm{L}} \sqrt{\mathfrak{B}(u^0,f )},
\label{202011102145}
\end{align}
where
\begin{align}
&\mathfrak{B}(u^0,f) :=   \|u^0\|_2^2+ \| u^0\|_2^4 +  \|f\|_{C^0(\overline{I_T},L^2)}^2+ \| f\|_{L^2_TH^1}^2 \nonumber \\
&\qquad \qquad \quad +\|f_t\|_{L^2_{T}{^*\!H^{1}}}^2+\|\nabla w\|_{C^0(\overline{I_T},H^1)}(1+ \|  u^0 \|_1 )( \|  u^0 \|_2^2
+ \|f\|_{L^2(\Omega_T)}^2),\nonumber \end{align}
Moreover,  for a.e. $t\in I_T$,
\begin{align}
&\frac{1}{2}\frac{\mm{d}}{\mm{d}t}\|\sqrt{ \bar{\rho}J } \Psi\|^2_0\nonumber \\
 & =    \int
( ( J(  \mu    \mathcal{A}_{il}\partial_l( \mathcal{A}_{ik} \partial_k  u) - \bar{\rho} w\cdot\nabla_{\mathcal{A}}    u   + f) \cdot w\cdot\nabla_{\mathcal{A}} \Psi + |\Psi|^2 w\cdot\nabla_{\mathcal{A}}\bar{\rho}/2)
\nonumber \\
&\quad- \mu  \partial_t (  J \mathcal{A}_{il}\mathcal{A}_{ik} \partial_k  u)\cdot \partial_l \Psi -   \partial_t ( J\bar{\rho} w\cdot\nabla_{\mathcal{A}}    u ) \cdot \Psi+f \Psi J _t)\mm{d}y+ <f_t, \Psi J>_{^{*}\!H^1,H^1}   , \label{eq0510}
\end{align}
where $\Psi:=u_t- {w} \cdot \nabla_{\mathcal{A}} u $.
\end{pro}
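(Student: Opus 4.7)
The plan is to first verify the geometric bounds \eqref{2022202121142}, then construct a strong solution by a linearization/Galerkin scheme with estimates driven by the energy identity \eqref{eq0510}, and finally establish \eqref{eq0510} itself by a direct computation. For \eqref{2022202121142}, note that $\eta(t)=\eta^0+\int_0^t w\,\mm{d}\tau$, so
\[
\|\nabla\eta(t)\|_{2}\leqslant \|\nabla\eta^0\|_{2}+\sqrt{T}\,\|\nabla w\|_{L^2_TH^2}\leqslant \delta+\sqrt{T}\,B,
\]
and by the choice \eqref{201912061028}, $\sqrt{T}B\leqslant \delta$, so $\|\nabla\eta\|_{2}\leqslant 2\delta\leqslant 2\delta^{\mm{L}}$; Lemma \ref{pro:1221} and the continuity of the determinant then give $J=\det(I+\nabla\eta)\in[1/2,3/2]$ once $\delta^{\mm{L}}$ is small.

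For existence and uniqueness, I would set up a Galerkin scheme in an $\mathcal{A}(t)$-divergence-free finite-dimensional space (obtained from a basis of $H^1_\sigma$ corrected by a Bogovskii-type operator to kill $\mm{div}_{\tilde{\mathcal{A}}}$). The basic estimate comes from testing \eqref{201912060857}$_1$ with $u$, using that $\bar{\rho}$ is bounded from below and that $\mathcal{A}$ is a small perturbation of the identity, yielding control of $u$ in $L^\infty_T L^2\cap L^2_T H^1$. Uniqueness of the limit is immediate from the corresponding estimate for the difference of two solutions on the short interval $I_T$. Passage to the limit uses weak convergence plus Aubin--Lions.

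For the higher-order estimates, the key step is to differentiate \eqref{201912060857}$_1$ in time and test with $J\Psi$, where $\Psi:=u_t-w\cdot\nabla_{\mathcal{A}}u$. The pressure term disappears because Piola's identity $\partial_k(J\mathcal{A}_{ik})=0$ and the compatibility $\mm{div}_{\mathcal{A}}u=0$ together with $\zeta_t=w$ and $J_t=J\,\mm{div}_{\mathcal{A}}w$ imply $\mm{div}_{\mathcal{A}}(J\Psi)\equiv 0$, so $\int \nabla_{\mathcal{A}}q_t\cdot(J\Psi)\,\mm{d}y=0$. Rearranging the remaining terms using $\partial_t(J\bar{\rho}w\cdot\nabla_{\mathcal{A}}u)$, $\partial_t(J\mathcal{A}_{il}\mathcal{A}_{ik}\partial_k u)$, and the Reynolds-type formula $\frac{1}{2}\frac{\mm{d}}{\mm{d}t}\|\sqrt{\bar{\rho}J}\Psi\|_0^2=\int \bar{\rho}J\Psi\cdot\Psi_t\,\mm{d}y+\frac{1}{2}\int|\Psi|^2 w\cdot\nabla_{\mathcal{A}}\bar{\rho}\,\mm{d}y$ (coming from $\bar{\rho}_t=0$ and $J_t=J\,\mm{div}_{\mathcal{A}}w$) yields precisely \eqref{eq0510}. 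Once \eqref{eq0510} is established, Gronwall together with the hypothesis \eqref{2019122821231} on $w$ controls $\|u_t\|_{L^\infty_TL^2}$ by $\mathfrak{B}(u^0,f)$; the pressure is recovered from $\nabla_{\mathcal{A}}q=f-\bar{\rho}u_t+\mu\Delta_{\mathcal{A}}u$ via the de Rham/Bogovskii argument and the Stokes estimate of Lemma \ref{20222011161567}, whose application under the Navier boundary condition upgrades the regularity to $u\in L^2_TH^3$, $q\in L^2_TH^2$; differentiating the equation in $t$ once and applying the same Stokes estimate yields $u_t\in L^2_TH^1$, completing the bound \eqref{202011102145}.

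The main obstacle I expect is the derivation of \eqref{eq0510} itself: the cancellation $\int\nabla_{\mathcal{A}}q_t\cdot(J\Psi)\,\mm{d}y=0$ requires a careful verification that $\mm{div}_{\mathcal{A}}(J\Psi)=0$, and the bookkeeping of the time derivatives $\partial_t(J\mathcal{A}_{il}\mathcal{A}_{ik})$ and $\partial_t(J\bar{\rho}w\cdot\nabla_{\mathcal{A}}u)$ is combinatorially delicate because every factor of $\mathcal{A}$ and $J$ depends on $t$ through $w$. The strategy to tame it is to always keep Piola's identity and the transport identity $J_t=J\,\mm{div}_{\mathcal{A}}w$ as primary tools: they encode the incompressibility in the evolving frame and make the boundary terms (under the Navier condition $u_2|_{\partial\Omega}=\partial_2u_1|_{\partial\Omega}=0$) vanish after integration by parts. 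Everything subsequent—smallness of $T$, continuity in time via Aubin--Lions, and the structure of $\mathfrak{B}(u^0,f)$—then falls out by standard arguments.
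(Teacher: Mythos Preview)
Your outline has the right shape, but your derivation of \eqref{eq0510} has a real gap. If you differentiate \eqref{201912060857}$_1$ in time and then test with $J\Psi$, the pressure contributes $\partial_t(\nabla_{\mathcal A}q)=\nabla_{\mathcal A}q_t+\nabla_{\mathcal A_t}q$, not just $\nabla_{\mathcal A}q_t$. Your Piola argument (together with $\mm{div}_{\mathcal A}\Psi=0$, which is the correct incarnation of your claim) does kill $\int\nabla_{\mathcal A}q_t\cdot J\Psi\,\mm{d}y$, but $\int\nabla_{\mathcal A_t}q\cdot J\Psi\,\mm{d}y$ does not vanish by the same mechanism, and attempting to absorb it via $\tfrac{\mm d}{\mm dt}\int\nabla_{\mathcal A}q\cdot J\Psi\,\mm{d}y=0$ only trades it for $\int\nabla_{\mathcal A}q\cdot\partial_t(J\Psi)\,\mm{d}y$, which contains $u_{tt}$ and does not close. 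The paper handles this by passing to Eulerian coordinates: with $v=u(\zeta^{-1},\cdot)$, $\rho=\bar\rho(\zeta^{-1})$, and a \emph{time-independent} $\varphi\in H^1_\sigma$, the test function $\psi(y,t)=\varphi(\zeta(y,t))$ satisfies $\mm{div}_{\mathcal A}\psi=0$, so the pressure drops out of the weak identity $\int\rho v_t\cdot\varphi\,\mm{d}x=\int(f+\mu\Delta_{\mathcal A}u-\bar\rho w\cdot\nabla_{\mathcal A}u)\cdot\psi J\,\mm{d}y$ \emph{before} any time differentiation. One then differentiates this pressure-free identity in $t$ (producing the $\partial_t(J\mathcal A_{il}\mathcal A_{ik}\partial_k u)$, $\partial_t(J\bar\rho w\cdot\nabla_{\mathcal A}u)$, $f\Psi J_t$ and $\langle f_t,\psi J\rangle$ terms) and only at the end specializes $\varphi=v_t$, i.e.\ $\psi=\Psi$, via $\tfrac{\mm d}{\mm dt}\int\rho|v_t|^2\mm{d}x=2\langle(\rho v_t)_t,v_t\rangle-\int\rho_t|v_t|^2\mm{d}x$.

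There is also a structural difference worth noting. The paper does not use \eqref{eq0510} to prove the estimate \eqref{202011102145}; the bounds are obtained first at the Galerkin level by testing the approximate system with $u^n_t-Ru^n$ where $R:=\nabla w\,\mathcal A^{\mm T}$ (the finite-dimensional analogue of $\Psi$), and the spatial regularity of $(u,q)$ is then recovered by solving a Stokes problem in Eulerian coordinates and pulling back. The identity \eqref{eq0510} is verified separately, after the solution is already in hand. Incidentally, the paper's Galerkin basis is the push-forward $\psi^i(t)=\nabla\zeta\,\varphi^i$ of a fixed orthogonal family $\{\varphi^i\}\subset\mathcal H^\infty_\sigma$, chosen precisely because $\psi^i_t=R\psi^i$; this algebraic relation is what makes the time-differentiated Galerkin identity tractable. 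A Bogovskii-corrected basis could in principle work but forfeits this convenience.
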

\begin{pf}
We shall break up the proof into three steps.

(1) \emph{Existence of local strong solutions}

Recalling that $\|\nabla \eta^0\|_{2}\leqslant \delta$, {the definition \eqref{201912061028} } and the relation
\begin{align} \label{202010011513}
\eta=\eta^0+\int_0^tw\mm{d}\tau, \end{align}
we make use of the regularity of $w$, \eqref{201912061028} and \eqref{2019122821231} to find that $\eta\in C^0(\overline{I_T}, \mathcal{H}^3_{\mm{s}})$, $\eta_t=w$ and
\begin{equation}
\label{201912061030}
\| \nabla \eta(t)\|_{2 } \leqslant\delta +\sqrt{t} \|\nabla w\|_{L^2_TH^2 }\leqslant 2\delta \mbox{ for all }t\in \overline{I_T}.
\nonumber
\end{equation}
Obviously
\begin{align}
\label{201sdfa912061030}
\| \nabla \eta(t)\|_{L^\infty} \lesssim_0 \| \nabla \eta(t)\|_{2} \lesssim_0 \delta\;\;\;\mbox{ for any }t\in \overline{I_T}.
\end{align}
Thanks to the estimates \eqref{201sdfa912061030} and \eqref{esmmdforinfty}, we have for sufficiently small $\delta$ that $1\leqslant 2J\leqslant {3}$,
where and in what follows, $J:=\det\nabla \zeta$. Therefore, $\mathcal{A}$ makes sense and is given by the following formula:
\begin{align}
 {\mathcal{A}}=J^{-1} \begin{pmatrix}
          1+\partial_2\eta_2&  - \partial_1\eta_2\\
                - \partial_2\eta_1&   1+\partial_1\eta_1
                 \end{pmatrix} . \nonumber
\end{align}
\emph{We remark that the smallness of $\delta$ (independent of $\mu $) will be often used in the derivation of some estimates
and conclusions later, and we shall omit to mention it for the sake of simplicity.}

Inspired by the proof in \cite[Theorem 4.3]{GYTILW1}, we next solve the linear problem \eqref{201912060857} by applying the Galerkin method.
Let $\{\varphi^i\}_{i=1}^\infty\subset  {\mathcal{H}^\infty_{\sigma}}$ be a countable orthogonal basis in $ {{H}^1_{\sigma}}$ by Lemma \ref{20222011231648}.
For each $i\geqslant 1$ we define $\psi^i=\psi^i(t):=\nabla\zeta\varphi^i$. Let $\mathfrak{H}(t)= \{v\in \mathcal{H}^2_{\mm{s}}~|~\mm{div}_{\mathcal{A}}v=0 \}$ and $\mathfrak{M}(t)= \{v\in  {H}^1_{\mm{s}}~|~\mm{div}_{\mathcal{A}}v=0  \}$.
Then $\psi^i(t)\in \mathfrak{H}(t)$ and $\{\psi^i(t)\}_{i=1}^\infty$ is a basis of $\mathfrak{M}(t)$ for each $t\in \overline{I_T}$. Moreover,
\begin{equation}
\label{201912102051}
\psi_t^i =R \psi^i,
\end{equation}
where $R:=\nabla w\mathcal{A}^{\mm{T}}\in C^0(\overline{I_T}, H^1 )\cap L^2_TH^2$. Obviously, by \eqref{2012fsa2450}, $R_t=\nabla w_t\mathcal{A}^{\mm{T}}+ \nabla w \mathcal{A}^{\mm{T}}_t\in L^2_TL^2$.

For any integer $n\geqslant 1$, we define the finite-dimensional space
$\mathfrak{H}^n(t):=\mm{span}\{\psi^1,\ldots, \psi ^n\}\subset \mathfrak{H}(t)$,
and write $\mathcal{P}^n(t):\mathfrak{H}(t)\to \mathfrak{H}^n(t) $ for the $\mathfrak{H}$ orthogonal projection
onto $ \mathfrak{H}^n(t)$. Clearly, for each $v\in \mathfrak{H}(t)$,
$\mathcal{P}^n(t) v \rightarrow v $ in $\mathcal{H}^2_{\mm{s}}$ as $n\rightarrow\infty$ and $\|\mathcal{P}^n(t) v \|_2\leqslant \|v\|_2$.

Now, we define an approximate solution
$$u^n(t)=a_j^n(t) \psi^j \mbox{ with }a_j^n:\ \overline{I_T}\rightarrow\mathbb{R}\;\mbox{ for }j=1,\ldots, n,$$
where $n\geqslant 1$ is given.
We want to choose the coefficients $a_j^n$, so that for any $1\leqslant i\leqslant n$,
\begin{equation}\label{appweaksolux}
 \int \bar{\rho} u ^n_t\cdot \psi^i \mm{d} y +\mu \int
\nabla_{\mathcal{A}}  u ^n:\nabla_{\mathcal{A}} \psi^i \mm{d} y   =    \int  f  \cdot \psi^i \mm{d} y
\end{equation}
with initial data
$ u^n(0)=\mathcal{P}^n u _0\in \mathfrak{H}^n$.

Let
$$  \begin{aligned}
& X=(a_i^n)_{n\times 1},\ \mathfrak{N}=  \left(\int f \cdot \psi^i \mm{d} y\right)_{n\times 1},\
\mathfrak{C}^1=\left(  \int \bar{\rho}\psi^i\cdot \psi^j \mm{d} y\right)_{n\times n}, \\
 & \mathfrak{C}^2=\left( \int R \psi^i\cdot \psi^j \mm{d} y+\mu   \int
\nabla_{\mathcal{A}}\psi^i:\nabla_{\mathcal{A}}\psi^j\mm{d} y \right)_{n\times n}.
\end{aligned}$$
Recalling the regularity of $(\eta, \psi^i,R,f)$,  Lemma \ref{psdf221} and  \eqref{20122450}, we easily verify that
\begin{align}
\label{202001200222}
\mathfrak{C}^1 \in C^{1,1/2}(\overline{I_T}),\ \mathfrak{C}^2\in C^{0,1/2}(\overline{I_T}),\ \mathfrak{N} \in C^{0,1/2}(\overline{I_T})\mbox{ and } (\mathfrak{C}^2_t, \mathfrak{N}_t)\in L^2(I_T).
\end{align}

 Noting that $\mathfrak{C}^1$ is invertible, we can rewrite \eqref{appweaksolux} as follows.
 \begin{equation}  \label{appweaksolu}
 X_{t}+(\mathfrak{C}^1)^{-1}( \mathfrak{C}^2 X- \mathfrak{N})=0
\end{equation}
with initial data
$$ X|_{t=0}=\left(\int \mathcal{P}^n u^0 \cdot \psi^i\mm{d}y \right)_{n\times 1}  ,$$
where $(\mathfrak{C}^1)^{-1}$ denotes the inverse matrix of $\mathfrak{C}^1$.
By virtue of the well-posedness theory of ODEs (see \cite[Section 6 in Chapter II]{WWODE148}), the equation \eqref{appweaksolu} has
exactly one solution $X\in C^{1}(\overline{I_T})$. Moreover,  by \eqref{202001200222} and \eqref{appweaksolu}, we have
\begin{align}\ddot{a}_j^n(t)\in L^2({I_T}).
\label{20222021101801}
  \end{align}
  Thus, we have established the existence of the approximate solution
$u^n(t)=a_j^n(t)\psi^j\in C^0(\overline{I_T}, \mathcal{H}^3_{\mm{s}})$.  Next, we derive uniform-in-$n$ estimates for $u^n$.

Due to \eqref{201912061030}, we easily get from \eqref{appweaksolux} with $u^n$ in place of $\psi$ that
for sufficiently small $\delta$,
\begin{equation}   \label{201912112038}
\frac{\mm{d}}{\mm{d}t} \| \sqrt{\bar{\rho}}u ^n \|_0^2   + c^{\mm{L}} \|  u^n\|_1^2\lesssim_{\mm{L}} \|f\|_0^2.
\end{equation}

By \eqref{201912102051},
\begin{equation}
\label{201912122222}
u^n_t- R  u^n= \dot{a}_j^n  \psi^j\in C^0(\overline{I_T},  {H}^1_{\mm{s}}) \cap L^2_TH^2.
\end{equation}
Obviously, we can replace $\psi$ by $\dot{a}_j^n\psi^j$ in \eqref{appweaksolux} and use \eqref{201912122222} to deduce that
\begin{align}
& \| \sqrt{\bar{\rho}} u ^n_t\|_0^2 +\mu \int
\nabla_{\mathcal{A}}  u ^n:\nabla_{\mathcal{A}} u ^n_t \mm{d}y  \nonumber  \\
&   =  \int   {\bar{\rho}}u_t^n \cdot(R u^n)\mm{d}y +\mu  \int \nabla_{\mathcal{{A}}}u^n:\nabla_{\mathcal{A}} (R u^n)\mm{d}y
 + \int  f   \cdot   (u_t^n-R u^n) \mm{d}y. \label{appweakssafolux}
\end{align}

Thanks to  \eqref{201912061030}, \eqref{fgessfdims}, \eqref{20122450} and   \eqref{2012fsa2450}, we can further obtain from \eqref{appweakssafolux} that
\begin{align}
& {\mu  } \frac{\mm{d}}{\mm{d}t}   \|\nabla_{\mathcal{A}}  u ^n\|_0^2  +  \| \sqrt{\bar{\rho}} u ^n_t\|_0^2\nonumber \\
 &\lesssim_{\mm{L}}\|\nabla u^n\|_0(\|R \|_{L^\infty}\|\nabla u^n\|_0
+\|\mathcal{A}_t\|_{L^\infty} \|\nabla u^n\|_0 )+\int | \nabla u^n| |\nabla R ||u^n|\mm{d}y + \|(f, R u^n) \|_{0}^2  \nonumber \\
&  \lesssim_{\mm{L}}   \|\nabla w\|_{2}  (\|u^n\|_0^2+\|\nabla_{\mathcal{A}}u^n\|_0^2)
+\|\nabla w\|_{1}\|\nabla w\|_{2} \|u^n\|_0^2+\|f\|_0^2,  \label{appwfolux}
\end{align}
where $ \|\nabla_{\mathcal{A}}  u ^n\|_0^2 \in AC(\overline{I_T})$.

With the help of Gronwall's lemma, \eqref{201912061028} and \eqref{201912061030}, we infer from \eqref{201912112038} and \eqref{appwfolux}
  that for any $t\in\overline{I_T}$,
\begin{align}
\| u ^n
\|_1^2+\int_0^t\|u_\tau\|_0^2\mm{d}\tau \lesssim_{\mm{L}}&  \left( \|\mathcal{P}^nu^0\|_1^2+\int_0^t\|f\|_0^2\mm{d}\tau\right)
 e^{\int_0^t c (\|\nabla w\|_2+\|\nabla w\|_{1}\|\nabla w\|_{ 2})\mm{d}\tau}\nonumber \\
\lesssim_{\mm{L}}& \|u^0\|_2^2+ \|f\|_{L^2(\Omega_t )}^2 .
\label{201912241359}
\end{align}

By the regularity of $(f, \psi^i)$  and \eqref{20122450}, we have
\begin{align}
\frac{\mm{d}}{\mm{d}t}\int f(t)\cdot \psi^i(t)\mm{d}y=<f_\tau,\psi^i>_{^{*}\!H^1,{H}^1}+ \int f\cdot \psi^i_t\mm{d}y \mbox{ for a.e. } t\in I_T.
\label{202011091820}
\end{align}
 Recalling \eqref{20222021101801} and \eqref{201912122222}, we see that $u_{tt}^n \in L^2(\Omega_T)$ makes sense. So, with the help of \eqref{201912102051} and \eqref{202011091820},
we get from \eqref{appweaksolux} that
 \begin{align}
&  \int  {\bar{\rho}} u ^n_{tt}\cdot \psi^i \mm{d} y+\mu \int \nabla_{\mathcal{A}}  u ^n_t:\nabla_{\mathcal{A}} \psi^i \mm{d} y   \nonumber \\
& = <  f_t \cdot \psi^i>_{{^*H^{1}},{H}^1  } + \int  (f- \bar{\rho}u ^n_{t} )\cdot (R \psi^i)  \mm{d} y \nonumber \\
&\quad -\mu \int (\nabla_{\mathcal{A}_t} u^n:\nabla_{\mathcal{A}} \psi^i
+ \nabla_{\mathcal{A}} u^n:(\nabla_{\mathcal{A}_t} \psi^i
+ \nabla_{\mathcal{A}}(R\psi^i ))\mm{d} y \mbox{  a.e. in }I_T. \label{201912102242}
\end{align}

Noting that, by \eqref{20222012312450},
$$ \begin{aligned}
& \frac{1}{2}\| \sqrt{\bar{\rho}} u ^n_t\|_0^2 - \int   {\bar{\rho}} u_{t}^n\cdot (R u^n)\mm{d}y- \left.\left(\frac{1}{2}\| \sqrt{\bar{\rho}} u ^n_t\|_0^2
 - \int    {\bar{\rho}} u_{t}^n\cdot ( R u^n)\mm{d}y\right)\right|_{t=0} \\
 & = \int_0^t \left(\int   {\bar{\rho}} u ^n_{\tau\tau}\cdot  ( u^n_{\tau}-Ru^n) \mm{d} y
 -\int   {\bar{\rho}}  u ^n_{\tau}\cdot  (Ru^n)_{\tau} \mm{d} y\right)\mm{d}\tau
 \end{aligned} $$
and
$$  \begin{aligned}
&\int f(\tau)\cdot (Ru^n)(\tau) \mm{d}y\bigg|_{\tau=0}^{\tau=t}  =\int_0^t\left( <f_\tau,  R u^n>_{^*\!H^1,{H}^1  }
+  \int f\cdot( R u^n )_\tau\mm{d} y\right)\mm{d}\tau ,
 \end{aligned} $$
we utilize \eqref{201912122222} and the above two identities to infer from \eqref{201912102242} with $\psi^i$ replaced
by $(u^n_t-Ru^n)$ that
\begin{align}
&\frac{1}{2 }\| \sqrt{\bar{\rho}} u ^n_t\|_0^2  - \int    {\bar{\rho}} u_{t}^n\cdot (R u^n)\mm{d}y+ \int f\cdot (Ru^n) \mm{d}y
+\mu \int_0^t\|\nabla_{\mathcal{A}}  u ^n_\tau\|_0^2 \mm{d}\tau\nonumber \\
& = \left. \left(\frac{1}{2}\| \sqrt{\bar{\rho}} u ^n_t\|_0^2 - \int   {\bar{\rho}} u_{t}^n\cdot ( Ru^n)\mm{d}y
+\int f \cdot (Ru^n) \mm{d}y\right)\right|_{t=0}+I^L,
\label{201912241249}
\end{align}
where
$$    \begin{aligned}
I^L:=&\int_0^t\bigg( <f_{\tau},  u_{\tau}^n >_{^*\!H^{1},{H}^1  }
+\int \Big( f\cdot (2 Ru^n_{\tau} +R_{\tau}u^n -R^2 u^n)-\bar{\rho}u_\tau^n \cdot (R (u_\tau^n\nonumber \\
&\quad - R u^n))\Big)\mm{d}y -  \int   {\bar{\rho}} u_{\tau}^n\cdot (R u^n)_{\tau} \mm{d}y
-\mu \int\big(  \nabla_{\mathcal{A}} u^n: (\nabla_{\mathcal{A}_{\tau}} (u_{\tau}^n-R u^n) \nonumber \\
&\quad +\nabla_{\mathcal{A}}(R (u_{\tau}^n -R u^n))\big) +\nabla_{\mathcal{A}_{\tau}} u^n:\nabla_{\mathcal{A}} (u_{\tau}^n-R u^n)
- \nabla_{\mathcal{A}}  u^n_\tau :\nabla_{\mathcal{A}} (R u^n)) \mm{d} y\bigg)\mm{d}\tau .
\end{aligned}
$$

Keeping in mind that $H^1\hookrightarrow L^6$ and
\begin{align}
  \|\nabla w\|_0\leqslant \int_0^t\|\nabla w_{\tau}\|_0\mm{d}\tau+\|\nabla u^0\|_0, \label{202009121026}
\end{align}
we get from \eqref{201912241249} that
 \begin{align}
& \| \sqrt{\bar{\rho}} u ^n_t\|_0^2  +\int_0^t   \mu \| \nabla u ^n_{\tau}\|_0^2 \mm{d}\tau\nonumber \\
& \lesssim_{\mm{L}}\|\nabla w\|_{0}\|\nabla w\|_1\|u^n\|_1^2+\|u^0\|_2^4 +\|f\|_{C^0(\overline{I_T},L^2)}^2+\|u^n_t|_{t=0}\|_0^2+ I^{\mm{L}}\nonumber\\
& \lesssim_{\mm{L}}  \|\nabla w\|_1(1+ \|u^0 \|_1 )\left(\|u^0 \|_2^2 +\|f\|_{L^2(\Omega_t )}^2\right)\nonumber \\
&\quad+ \|u^0\|_2^4+ \|f\|_{C^0(\overline{I_T},L^2)}^2
+\|u ^n_t|_{t=0}\|_0^2+ I^L,
\label{201912242022}
\end{align}
where we have used  \eqref{201912061028}, \eqref{201912241359} and \eqref{202009121026}
in the second inequality. Below, we shall bound the the last two terms in \eqref{201912242022}.

Replacing $\psi^i$ by $(u_t^n-Ru^n)$ in \eqref{appweaksolux}, we see that
\begin{align}
 \|  \sqrt{\bar{\rho}} u^n_t \|_0^2 = &  \int f \cdot (u ^n_t-R u^n) \mm{d} y
 +\mu \int \Delta_{\mathcal{A}}u^n\cdot(u_t^n-R u^n)\mm{d}y + \int  {\bar{\rho}} u_t^n\cdot (R u^n)\mm{d}y  ,
\end{align}
which implies
$$ \| \sqrt{\bar{\rho}} u^n_t \|_0^2\lesssim_{\mm{L}}\| f \|_0^2+\|  u^n\|_2^2 +\|\nabla w\|_1^2\|u^n\|_1^2 . $$
In particular, \begin{equation}
\label{201912112039}
\| \sqrt{\bar{\rho}} u ^n_t|_{t=0} \|_0^2\lesssim_{\mm{L}}  \|u^0\|_2^2+ \|u^0\|_2^4+\|f^0\|_0^2 .
\end{equation}

In addition, the last term on the right-hand side of \eqref{201912242022} can be estimated as follows:
\begin{align}
I^L\lesssim_{\mm{L}} & \int_0^t\big(\|f_{\tau}\|_{^*H^1} \|u^n_{\tau}\|_{1}
+\|u^n\|_1\big( \| f\|_0 \|\nabla w\|_1 \sqrt{\|\nabla  w\|_1\|\nabla  w\|_{2}}  + \sqrt{\| f\|_0\|f\|_1}\|\nabla w_{\tau}\|_0\big) \nonumber \\
&\quad+ \|f\|_1\| \nabla w\|_1\|u_{\tau}^n\|_0+ \| u^n  \|_1^2  {\|\nabla  w\|_1\|\nabla  w\|_{2}}   \nonumber \\
 &\quad+ \| u^n_\tau \|_0(\|\nabla w\|_2\| u^n_\tau \|_0 +   \|\nabla w\|_1^2 \|u^n \|_1)\nonumber \\
&\quad
+ \|\nabla w_\tau\|_0 \|u^n \|_1\sqrt{\| u^n_\tau \|_0 \| u^n_\tau \|_1} +\|u^n \|_1\| u^n_\tau \|_1  \sqrt{\|\nabla  w\|_1\|\nabla  w\|_{2}}
\big)\mm{d}\tau \nonumber \\
 \lesssim_{\mm{L}}  & \|u^0\|_2^2+\|f\|_{L^\infty_tL^2}^2 +\|  f\|_{L^2_tH^1}^2
+  \int_0^t \|f_{\tau}\|_{^*\!H^{1}}  \| u_{\tau}^n\|_1 \mm{d}\tau\nonumber \\
& + \left(  \|u^0\|_2   + \| f\|_{L^2_tH^1} \right)( \|u_{\tau}^n\|_{{L^\infty_tL^2} } + \| \nabla u_\tau^n\|_{L^2(\Omega_t)} )
 +\delta  \|u_{\tau}^n\|_{{L^\infty_tL^2} }^2,\label{202001221640}
\end{align}
where we have used \eqref{fgessfdims} and the embedding $H^1\hookrightarrow L^6$  in the first inequality,  and \eqref{201912061028} in the second inequality.

Substituting \eqref{201912112039} and \eqref{202001221640} into \eqref{201912242022}, and applying Young's inequality, we arrive at
\begin{align}
&  \| u ^n_t \|_{L^\infty_TL^2}^2 +   \| u ^n_{t}\|_{L^2_TH^1}^2  \lesssim_{\mm{L}}  {\mathfrak{B}} (u^0,f).\label{appwasdfasfsadfeaksolux}
\end{align}
Summing up \eqref{201912241359} and \eqref{appwasdfasfsadfeaksolux}, we conclude
\begin{align}
 \| u^n\|_{L^\infty_TH^1}^2+\|u^n_t \|_{L^\infty_TL^2}^2+ \| u _t^n \|_{L^2_TH^1}^2 \lesssim_{\mm{L}}   {\mathfrak{B}} (u^0,f). \label{sumestimeat}
\end{align}

In view of \eqref{sumestimeat}, the Banach--Alaoglu and Arzel\`{a}--Ascoli theorems, up to the extraction of a subsequence
(still labelled by $u^n$), we have, as $n\to\infty$, that
$$\begin{aligned}
&(u^n,u^n_t)\rightharpoonup(u,u_t)  \mbox{ weakly-$*$ in }L^\infty_T {H}_{\mm{s}}^1\times L^\infty_T L^2 ,\\
& u^n_t \rightharpoonup  u_t  \mbox{ weakly  in } L^2_TH_{\mm{s}}^1,\\
& u^n\rightarrow u  \mbox{ strongly in }C^0(\overline{I_T},L^2),\\
&  \mm{div}_{\mathcal{A}} u =0   \mbox{ a.e. in }\Omega_T  \mbox{ and } u(0)=u^0,
\end{aligned}$$
where $u$ and $u_t$ are measurable functions defined on $\Omega_T$.
Moreover,
\begin{align}
&  \|u\|_{L^\infty_TH^1} +\|u_t \|_{L^\infty_TL^2} + \|u _t\|_{L^2_TH^1} \lesssim_{\mm{L}}\sqrt{ {\mathfrak{B}} (u^0,f)} .  \label{2019121asdfa12157}
\end{align}
Therefore, we can take to the limit in \eqref{appweaksolux} as $n\to\infty$, and obtain that there exists a zero-measurable set $\mathfrak{Z}$ such that, for any $t\in I_T\setminus\mathfrak{Z}$,
\begin{equation}\label{0507n1}
  \int \bar{\rho} u _t\cdot\zeta \mm{d} y +\mu \int\nabla_{\mathcal{A}} u:\nabla_{\mathcal{A}} \zeta\mm{d} y =\int f\cdot\zeta\mm{d} y,\ \forall\,\zeta\in  \mathfrak{H}(t) .
\end{equation}

Now, we begin to show spatial regularity of $u$. Let us further assume that $\delta\in (0,\gamma) $ is so small that
$\eta$ satisfies \eqref{20210301715x} and \eqref{20210301715} by virtue of Lemma  \ref{pro:1221}.
Denoting $F:=f- \bar{\rho}u_t$, $\tilde{F}:=F(\zeta^{-1},t)$ and $\tilde{J}:=J(\zeta^{-1},t)$, we see
that $\tilde{F}$ has the same regularity as that of $F$ by \eqref{2022104101908}, i.e.,
\begin{align}
 \label{20218181407} \| \tilde{F} \|_{L^\infty_TL^2}+  \|  \tilde{F} \|_{L^2_TH^1}  <\infty.\end{align}
Moreover, \begin{align}
\int F(y,t)\mm{d}y= \int \tilde{F}\tilde{J}^{-1}\mm{d}x . \nonumber  \end{align}

Applying the regularity theory of the Stokes problem with Naiver boundary condition, we see that there is a unique strong solution
 $\alpha \in  L^\infty_T \mathcal{ H}^2_\sigma\cap  L^2_T {H}^3$ with a unique associated function
 $P \in L^\infty_T\underline{H}^1 \cap  L^2_TH^2$, such that
\begin{equation}
\nabla  P -\mu  \Delta \alpha =\tilde{F}\mbox{ and }  (\alpha_1)_\Omega=0 .
\end{equation}
 Let $\varpi=\alpha(\zeta,t)$ and $q=P(\zeta,t)- (P(\zeta,t))_{\Omega} $, then, by \eqref{2021sfa04031901} and the regularity $\eta\in C^0(\overline{I_T}, \mathcal{H}^3_{\mm{s}})$, $(\varpi,q)\in (L^\infty_TH^2_{\mm{s}} \cap  L^2_TH^3)\times (L^\infty_T\underline{H}^1\cap  L^2_T {H}^2)$  satisfies the following boundary value problem:   for a.e. $t\in I_T$,
\begin{equation}
\label{appsdfstokes}
\begin{cases}
\nabla_{\mathcal{A}}  {q} -\mu  \Delta_{\mathcal{A}}\varpi= {F},  \\
\mm{div}_{\mathcal{A}} \varpi =0,  \\
(\varpi_2,\partial_2 \varpi_1)=0.
\end{cases}
\end{equation}
Recalling that   $\{\psi^i(t)\}_{i=1}^\infty\subset \mathfrak{H}(t)$ is a basis of $\mathfrak{M}(t)$, thus the identity \eqref{0507n1} also holds for any $\zeta \in \mathfrak{M}(t)$.
This fact, together with \eqref{appsdfstokes}, implies
\begin{align}
\label{20222020062007}
\nabla u=\nabla\varpi.
\end{align}

Exploiting Lemma \ref{20222011161567}, we easily derive from \eqref{appsdfstokes}  that, for sufficiently small $\delta$,
\begin{align}
 \| \varpi\|_{2+i} +  \|  q\|_{1+i}\lesssim_0
  \|(  u_t, f)\|_{i}\mbox{ for }i=0,\ 1.\label{20201fas092203}
\end{align}
So, it follows from  \eqref{2019121asdfa12157}, \eqref{20222020062007}  and \eqref{20201fas092203} that
\begin{align}
 \|  (u,u_t,q) \|_{L^\infty(I_T,H^{2 }\times L^2\times H^1)}  + \|  (u, u_t,q) \|_{L^2(I_T,H^3\times H^1\times H^2)}
\lesssim_{\mm{L}}  \sqrt{\mathfrak{B}(u^0,f) }.\label{201912221611}
 \end{align}
This completes the existence of local strong solutions. Moreover, a strong solution, which enjoys the regularity of $(\eta,u)$
constructed above, is obviously unique.

(2) \emph{Strong continuity of $(u,u_t,q)$.}

 Since $u\in L^2_TH^3$ and $ u_t\in L^2_T{H}^1$, $u\in C^0(\overline{I_T}, H^2)$. By Lemma \ref{lem:08181945}, for each given $t\in \overline{I_T}$, there exists a unique weal solution $ \tilde{q}\in \underline{H^1} $ such that
\begin{align}
\int \bar{\rho}^{-1}  \nabla_{\mathcal{A}}   \tilde{q} \cdot \nabla_{\mathcal{A}} \vartheta\mm{d}y =\int  \bar{\rho}^{-1}(f+\nu \Delta_{\mathcal{A}}u)  \cdot \nabla_{\mathcal{A}}\vartheta\mm{d}y+\int \mathcal{A}_t^{\mm{T}} u\cdot \nabla \vartheta\mm{d}y
\label{2022202101551}
\end{align}
for any  $\vartheta\in H^1$,
and
\begin{align}
\sup_{t\in \overline{I_T}}\| \tilde{q}\|_1\lesssim  \|(\bar{\rho}^{-1}\mathcal{A}^{\mm{T}}(f +\nu \Delta_{\mathcal{A}}u)+\mathcal{A}_t^{\mm{T}} u)\|_{C^0({\overline{I_T}},L^2)}<\infty.\label{2001551}
\end{align} Moreover, it is easy to check that $q \in C^0(\overline{I}_T, \underline{H^1})$ by \eqref{2022202101551} and \eqref{2001551}.

Multiplying \eqref{appsdfstokes}$_1$ by $\bar{\rho}^{-1}    \nabla_A \vartheta$ in $L^2$ and using the integral by parts and \eqref{20222020062007}, we get, for a.e. $t\in I_T$,
\begin{align}
\int \bar{\rho}^{-1}  \nabla_{\mathcal{A}}  {q} \cdot \nabla_{\mathcal{A}} \vartheta\mm{d}y =\int  \bar{\rho}^{-1}(f+\mu \Delta_{\mathcal{A}}u)  \cdot \nabla_{\mathcal{A}} \vartheta\mm{d}y+\int \mathcal{A}_t^{\mm{T}} u\cdot \nabla\vartheta\mm{d}y .
\end{align}
We immediately see $q=\tilde{q}\in C^0(\overline{I}_T, \underline{H^1})$ from \eqref{2022202101551} and the above identity for sufficiently small $\delta$.

 Finally, we can further derive $u_t \in C^0(\overline{I_T}, L^2)$ from \eqref{appsdfstokes}$_1$. Hence, $u\in \mathcal{U}_{T}$.
Thanks to the strong continuity of $(u,u_t,q)$,  we immediately
get \eqref{202011102145} from \eqref{201912221611}.

(3) \emph{Verification of the identity \eqref{eq0510}.}

  For any given $\varphi\in {H}^1_{\mm{s}}$, let $\psi =\varphi(\zeta(y,t)) $. Noting $J_t=J\mm{div}_{\mathcal{A}}w$, $\partial_j(J\mathcal{A}_{ij} )=0$ and  $\nabla \varphi|_{x=\zeta(y,t)}=\nabla_{\mathcal{A}}\psi$,  {we deduce}
from \eqref{appsdfstokes} and \eqref{20222020062007} that for any $\phi\in C_0^\infty(I_T)$,
\begin{align} &-\int_0^t \phi_t\int  \bar{\rho} u \cdot \psi J\mm{d} y \mm{d}\tau
 \nonumber \\&= \int_0^t\phi \int ( f +\mu  \Delta_{\mathcal{A}}  u  - \nabla_{\mathcal{A}}  q   -  \bar{\rho}w\cdot \nabla_{\mathcal{A}}u-w\cdot\nabla_{\mathcal{A}}\bar{\rho}u)\cdot\psi J  \mm{d} y
 \label{20222020917104} \mm{d}\tau .\end{align}

Let ${\rho}=\bar{\rho}(\zeta^{-1}(x,t))$ and ${v}=u(\zeta^{-1}(x,t),t)$. Using  Lemma \ref{20021032019018},  we can check that
\begin{align}
&\rho \in C^0(\overline{I_T},H^2),   \ v\in C^0(\overline{I_T},\mathcal{H}^2_{\mm{s}})\cap L^2_T H^3_\sigma ,\label{2sad0131}\\
&\rho_t  =- ({w} \cdot \nabla_{\mathcal{A}}\bar{\rho} )|_{y=\zeta^{-1}(x,t)}\in L^\infty_TH^1,\label{20222020881255}\\
&v_t  =(u_t- {w} \cdot \nabla_{\mathcal{A}} u )|_{y=\zeta^{-1}(x,t)}\in L^\infty_TL^2 \cap L^2_T H^1_\sigma.\label{20131}
\end{align}

Thanks to the regularity \eqref{2sad0131}, we derive
 from \eqref{20222020917104} that
\begin{align}
&  -\int_0^t \phi_t\int  {\rho}  v \cdot \varphi   \mm{d} x \mm{d}\tau\nonumber \\
& =\int_0^t\phi \int  (  f+\mu  \Delta_{\mathcal{A}}  u -    \nabla_{\mathcal{A}} q
-\bar{\rho} w\cdot\nabla_{\mathcal{A}}u-w\cdot\nabla_{\mathcal{A}}\bar{\rho}u )\cdot  \psi   J  \mm{d} y \mm{d}\tau.
\label{appweakdfsafssfaolux}
\end{align}
In addition, by \eqref{20222020881255} and \eqref{20131}, we have
\begin{align}
-\int_0^t \phi_t\int  \rho v \cdot \varphi \mm{d} y \mm{d}\tau=  &\int_0^t \phi \int (\rho_t v+ \rho v_t ) \cdot \varphi \mm{d} y \mm{d}\tau  \nonumber \\
=& \int_0^t \phi \int  \rho v_t   \cdot \varphi \mm{d} y \mm{d}\tau- \int_0^t \phi \int w\cdot\nabla_{\mathcal{A}} \bar{\rho} u   \cdot \psi J \mm{d} y \mm{d}\tau. \nonumber
\end{align}
Inserting the above identity into \eqref{appweakdfsafssfaolux} yields
\begin{align}
&  \int_0^t \phi \int  {\rho}  v_t \cdot \varphi   \mm{d} x \mm{d}\tau\nonumber \\
& =\int_0^t\phi \int  (  f+\mu  \Delta_{\mathcal{A}}  u -    \nabla_{\mathcal{A}} q
-\bar{\rho} w\cdot\nabla_{\mathcal{A}}u  )\cdot  \psi   J  \mm{d} y \mm{d}\tau.\nonumber
\end{align}

Now let us further assume $\varphi\in H^1_\sigma$, then $\mm{div}_{\mathcal{A}}\psi=0$ and $\psi_t|_{x=\zeta}= w\cdot \nabla_{\mathcal{A}}\psi$. The above identity further  implies
\begin{align}
 \frac{\mm{d}}{\mm{d}t}\int  \rho v_t \cdot \varphi  \mm{d} y =& \int
(J((\mu\mathcal{A}_{il} \partial_l (\mathcal{A}_{ik} \partial_k  u)- \bar{\rho} w\cdot\nabla_{\mathcal{A}}    u+f   )w\cdot \nabla_{\mathcal{A}} \psi  ) \nonumber\\
&  -\mu \partial_t (  J\mathcal{A}_{il} \mathcal{A}_{ik} \partial_k  u)\cdot \partial_l \psi    -   \partial_t ( J\bar{\rho} w\cdot\nabla_{\mathcal{A}}    u ) \cdot \psi  \nonumber \\
  &  + f \psi J _t)\mm{d}y  +<f_t,\psi J>_{^{*}\!H^{1},H^1} =:<\chi,\varphi>_{^{*}\!H^1_\sigma,H_\sigma^1}. \label{appwasfdaeakssfaolux}
\end{align}
 Recalling the definition of $<\chi,\varphi>_{^{*}\!H^1_\sigma,H_\sigma^1}$, $\|\psi\|_1\lesssim_0 \|\varphi\|_1\lesssim_0 \|\psi\|_1$ for any $t\in I_T$ and   $H_\sigma^1$ is a reflexive Banach space,  we immediately see that (referring to \cite[Lemma 1.66]{NASII04})
\begin{align}\label{202231836}(\rho v_t)_t=\chi \in L^2_T{^{*}\!H_\sigma^1}.\end{align}

Exploiting the regularity $\rho$, $u_t$ and \eqref{202231836},  by means of a classical regularization method,  we have
\begin{align}
\frac{\mm{d}}{\mm{d}t}\int \rho|v_t|^2
\mm{d}x=2<(\rho v_t)_t,v_t>_{^{*}\!H^1_\sigma,H_\sigma^1}-\int  \rho_t|v_t|^2\mm{d} {x}\mbox{ for a.e. in }I_T.
\nonumber
\end{align}
Consequently, making use of \eqref{20222020881255}, \eqref{20131} and the second identity in \eqref{appwasfdaeakssfaolux}, we get \eqref{eq0510} from the above identity. This completes the proof of Proposition \ref{qwepro:0sadfa401nxdxx}. \hfill $\Box$
\end{pf}

Now we are in a position to show Proposition \ref{202102182115}. To start with, let $(\eta^0,u^0)$ satisfy all the assumptions
in Proposition \ref{202102182115} and $\|\nabla\eta^0\|_{2}^2\leqslant\delta\leqslant\delta^{\mm{L}}$.
We should remark here that the smallness of $\delta$ (independent of $\lambda$ and  $m$) will be frequently used
in the calculations that follow.

Let $f=\partial_1^2\eta+ {G}_{\eta}\mathbf{e}_2$ with $\eta_t=w$ in Proposition \ref{qwepro:0sadfa401nxdxx}. Then, by \eqref{201912061028}--\eqref{2022202121142}, we have
\begin{align}
 \|f\|_{C^0(\overline{I_T},L^2)}^2+ \| f\|_{L^2_TH^1}^2 +\|f_t\|_{L^2_{T}{^*\!H^{1}}}^2
+ \|f\|_{L^2(\Omega_T)}^2\lesssim 1, \nonumber
\end{align}
which implies that
\begin{align}
&  \sqrt{ {\mathfrak{B}}(u^0,\partial_1^2\eta+ {G}_{\eta}\mathbf{e}_2)} \lesssim_0 1+ \| u^0\|_2^2 +\sqrt{\|\nabla w\|_{C^0(\overline{I_T},H^1)}}(1+ \| u^0\|_2^{3/2}). \label{202010192124}
\end{align}Thus, from \eqref{202011102145} and \eqref{202010192124} we get
\begin{align}
& \|u\|_{\mathcal{U}_{T}}+\|q\|_{C^0(\overline{I_T},H^1)}+\|q\|_{L^2_TH^2} \leqslant c^{\mm{L}} (1+ \| u^0\|_2^3) + \|\nabla w\|_{L^\infty H^1}/2.  \label{20201101111005}  \end{align}

Denote
\begin{align}
 {B}:= 2c^{\mm{L}}( 1+\| u^0\|_2^3 ),
\label{202011111007}
\end{align}
 where  the constant $c^{\mm{L}}$ is the same as in  \eqref{20201101111005}.
  By \eqref{20201101111005}  and Proposition \ref{qwepro:0sadfa401nxdxx} with $ {B}$ defined by \eqref{202011111007}, we can
 construct a function sequence $\{u^k,{q}^{k}\}_{k=1}^\infty$ defined on $\Omega_T$ with $T$ satisfying \eqref{201912061028}.
 Moreover,
\begin{itemize}
  \item for $k\geqslant 1$, $({u}^{k+1},q^{k+1})\in  \mathcal{U}_{T}\times (C^0(\overline{I_T},\underline{H^1})\cap L^2_T{H}^2) $, $(\bar{\rho}u^{k+1}_1)_\Omega= (\bar{\rho}u^{k+1}_1)_\Omega  |_{t=0} $ and
 \begin{equation}\label{iteratiequat}\begin{cases}
\eta^k=\int_0^tu^k\mm{d}\tau+\eta^0,\\
 \bar{\rho }u_t^{k+1}+\nabla_{\mathcal{A}^k} {q}^{k+1}-\mu  \Delta_{\mathcal{A}^k} {u}^{k+1}=\lambda m^2\partial_1^2\eta^k+gG_{\eta^{k}}\mathbf{e}_2 ,\\
\mathrm{div}_{\mathcal{A}^k} {u}^{k+1}=0,\\
   {u}^{k+1}  |_{t=0}=u^0 ,\\
( {\eta}^{k}_2,\partial_2 {\eta}^{k}_1, {u}^{k+1}_2, \partial_2 {u}^{k+1}_1 )|_{\partial\Omega}=0 \end{cases}  \end{equation}
with initial condition ${u}^{k+1} |_{t=0}=u^0$,
where $\mathcal{A}^k:=(\nabla \eta^k+I)^{-\mm{T}}$;
  \item $({u}^1,q^1)$ is constructed by Proposition \ref{qwepro:0sadfa401nxdxx} with $w=0$ and  with $(\partial_1^2\eta^0+gG_{\eta^0}\mathbf{e}_2)$ in place of $f$;
\item
the solution sequence $\{  {u}^k,q^k\}_{k=1}^\infty$ satisfies the following uniform estimates: for all $k\geqslant 1$,
\begin{align}
\label{2020100sfas11506}
& 1\leqslant 2\det(\nabla \eta^k+I)\leqslant 3,\  \|\nabla \eta^k\|_{2}\leqslant 2\delta \mbox{ for all }t\in\overline{I_T}, \\
 & \|u^k \|_{\mathcal{U}_{T}}+\|q^k \|_{C^0(\overline{I_T},H^1)} +\|q^k \|_{L^2_TH^2}\leqslant  {B}  .\label{n053112}\end{align}
\end{itemize}

In order to take the limit in \eqref{iteratiequat} as $k\to\infty$, we have to show that $\{{u}^k,q^k\}_{k=1}^\infty$ is a Cauchy sequence.
To this end, we define for $k\geqslant 2$,
$$( \bar{\eta}^{k}, \bar{ {u}}^{k+1}, \bar{\mathcal{A}}^k, \bar{q}^{k+1}):= (\eta^{k}- {\eta}^{k-1},{u}^{k+1}- {u}^k,\tilde{\mathcal{A}}^k-\tilde{\mathcal{A}}^{k-1},{q}^{k+1} -q^{k}),$$
which satisfies $(\bar{\rho}\eta^{k}_1)_\Omega= (\bar{\rho}\bar{u}^{k}_1)_\Omega  =0 $ and
 \begin{equation}\label{difeequion} \begin{cases}
\bar{\eta}^k=\int_0^t\bar{u}^k\mm{d}\tau,\\
  \bar{\rho }\bar{ { u}}_t^{k+1}+\nabla \bar{q}^{k+1}-\mu  \Delta  \bar{ {u}}^{k+1}- \lambda m^2 \partial_1^2 \bar{\eta}^k=\mathcal{N}^k,\\[1mm]
\mathrm{div} \bar{u}^{k+1}= -(\mathrm{div}_{\bar{\mathcal{A}}^k } {u}^{k+1}+\mathrm{div}_{\tilde{\mathcal{A}}^{k-1}} \bar{u}^{k+1}), \\
\bar{ {u}}^{k+1} |_{t=0}=0,\\
(\bar{\eta}^{k}_2,\partial_2\bar{\eta}^{k}_1,\bar{u}^{k+1}_2, \partial_2\bar{u}^{k+1}_1 )|_{\partial\Omega}=0 \end{cases}  \end{equation}
Here and in what follows, $\tilde{\mathcal{A}}^k= {\mathcal{A}}^k-I$ and
$$
\begin{aligned}
\mathcal{N}^k:=& \mu  ( \mm{div}_{\bar{\mathcal{A}}^k}\nabla_{\ml{A}^k}u^{k+1}
+ \mm{div}_{ \tilde{\ml{A}}^{k-1}}\nabla_{\bar{\mathcal{A}}^k}u^{k+1}+\mm{div}_{ \tilde{\ml{A}}^{k-1}}\nabla_{\ml{A}^{k-1}}\bar{u}^{k+1}\\
&+ \mm{div}\nabla_{\bar{\mathcal{A}}^k}u^{k+1}+ \mm{div} \nabla_{\tilde{\ml{A}}^{k-1}}\bar{u}^{k+1})
-\nabla_{\bar{\mathcal{A}}^k}q^{k+1}-\nabla_{\tilde{\ml{A}}^{k-1}}\bar{q}^{k+1}\\
& +g(\bar{\rho}(\eta_2^k(y,t)+y_2)-\bar{\rho}(\eta_2^{k-1}(y,t)+y_2) )\mathbf{e}_2.
\end{aligned}$$

Thanks to \eqref{201912061028}, \eqref{2020100sfas11506} and \eqref{n053112}, it is easy to check that
\begin{align}
&\|{\mathcal{A}}^{k-1} \|_{2}\lesssim_01,\ \|\tilde{\mathcal{A}}^{k-1} \|_{2}\lesssim_0\delta, \ \|{\mathcal{A}}^{k-1}_t \|_{1}\lesssim_0 \|\nabla u^{k-1}\|_1, \nonumber \\
&\|\bar{\mathcal{A}}^k \|_{1}\lesssim_0  \|\nabla \bar{\eta}^k\|_1\lesssim_0 T^{1/2} \| \nabla \bar{u}^k\|_{L^2_tH^1},\ \|\bar{\mathcal{A}}^k _t\|_0\lesssim_0  \|\nabla \bar{u}^k\|_0+B\|\nabla \bar{\eta}^k\|_1\label{20222011261534} \\
&\|\mathcal{N}^k\|_{L^2(\Omega_T)}
\leqslant c {T}^{1/4} \| \nabla \bar{u}^{k}\|_{L^2_TH^1} +c_0\delta (\mu\|\nabla \bar{u}^{k+1}\|_{L^2_TH^1} +
\|\bar{q}^{k+1}\|_{L^2_TH^1}). \label{20222534}
\end{align}
By Lemmas \ref{lem:0102} and \ref{20222011161567}, we can derive from \eqref{difeequion}$_2$, \eqref{difeequion}$_3$ and \eqref{difeequion}$_5$ that
\begin{align}
\mu\| \bar{ {u}}^{k+1} \|_{2}+\|\bar{q}^{k+1}\|_{1}  \lesssim_0  \|( \bar{\rho}\bar{ { u}}_t^{k+1}, \lambda m^2\partial_1^2 \bar{\eta}^k, \mathcal{N}^k, \mathrm{div}_{\bar{\mathcal{A}}^k } {u}^{k+1},\mathrm{div}_{\tilde{\mathcal{A}}^{k-1}} \bar{u}^{k+1})\|_{0} . \label{202202121416}
\end{align}

Multiplying \eqref{difeequion}$_2$ by $\bar{u}^{k+1}$, resp. $\bar{u}^{k+1}_t$ in $L^2$, we get
\begin{align}
\frac{1}{2}\frac{\mm{d}}{\mm{d}t}    \|\sqrt{\bar{\rho}}\bar{u}^{k+1}\|_0^2 + \mu \|\nabla \bar{ {u}}^{k+1}\|_0^2 =
\int (\lambda m^2  \partial_1^2 \bar{\eta}^k+\mathcal{N}^k -\nabla \bar{q}^{k+1} )\bar{u}^{k+1} \mm{d}y
\label{2022202121415} \end{align}
and
\begin{align}
\frac{\mu }{2} \frac{\mm{d}}{\mm{d}t} \|\nabla \bar{u}^{k+1}\|_0^2 + \|\sqrt{\bar{\rho }}\bar{ { u}}_t^{k+1}\|_0^2=
\int ( \lambda m^2  \partial_1^2 \bar{\eta}^k+\mathcal{N}^k-\nabla \bar{q}^{k+1})\bar{u}^{k+1}_t\mm{d}y.
\label{2022202sfda121415} \end{align}

Noting that
\begin{align*}
 \int  \nabla \bar{q}^{k+1} \cdot \bar{u}^{k+1} \mm{d}y= & \int (\mathrm{div}_{\bar{\mathcal{A}}^k } {u}^{k+1}+\mathrm{div}_{\tilde{\mathcal{A}}^{k-1}} \bar{u}^{k+1}) \bar{q}^{k+1}  \mm{d}y\\
  \lesssim  &( B\|\bar{\mathcal{A}}^k \|_{1}  + \delta\| \bar{u}^{k+1} \|_2)\| \bar{q}^{k+1} \|_1
\end{align*}
and
\begin{align*}
& \int  \nabla \bar{q}^{k+1} \cdot \bar{u}^{k+1} _t\mm{d}y =  \int  \partial_t(\mathrm{div}_{\bar{\mathcal{A}}^k } {u}^{k+1}+\mathrm{div}_{\tilde{\mathcal{A}}^{k-1}} \bar{u}^{k+1}) \bar{q}^{k+1} \mm{d}y\\
& = -\int  \partial_t(  (\bar{\mathcal{A}}^k) ^{\mm{T}} {u}^{k+1}+  (\tilde{\mathcal{A}}^{k-1} ) ^{\mm{T}} \bar{u}^{k+1})   \cdot \nabla \bar{q}^{k+1} \mm{d}y\\
& \lesssim    ( \|\bar{\mathcal{A}}^k\|_{1} \|{u}^{k+1}_t\|_1+B(\|\bar{\mathcal{A}}^k_t\|_{0} +    \| \bar{u}^{k+1} \|_1)+  \delta  \| \bar{u}^{k+1}_t \|_0)\| \bar{q}^{k+1} \|_1,
\end{align*}
thus putting \eqref{2022202121415} and \eqref{2022202sfda121415} together, and then using \eqref{202202121416}, the above two estimates and Young's inequality, we get, for sufficiently small $\delta$,
\begin{align}
&\frac{1}{2}\frac{\mm{d}}{\mm{d}t}   \|( \sqrt{\bar{\rho}}\bar{u}^{k+1}, \sqrt{\mu}\nabla \bar{u}^{k+1})\|_0^2  + c(  \| \bar{ {u}}^{k+1}\|_2^2+\|\bar{ { u}}_t^{k+1}\|_0^2+\|\bar{q}^{k+1}\|_{1}^2 )\nonumber \\
&\lesssim_{\mm{L}}  \| ( \partial_1^2 \bar{\eta}^k, \mathcal{N}^k )\|_{0}^2  + \|\bar{\mathcal{A}}^k\|_{1} ^2 \|{u}^{k+1}_t\|_1^2+B^2( \|\bar{\mathcal{A}}^k_t\|_{0}^2 +    \|(\bar{\mathcal{A}}^k, \bar{u}^{k+1}) \|_1^2) +T \| \nabla \bar{u}^k\|_{L^2_TH^1}  .
 \label{202202121421}
 \end{align}

Integrating the above inequality over $(0,t)$, and then using \eqref{201912061028},  \eqref{difeequion}$_1$, \eqref{20222011261534} and \eqref{20222534}, we get
\begin{align}
 &\|  \bar{\eta}^k\|_{L^\infty_TH^2}+  \|  \bar{u}^{k+1}\|_{L^\infty_TH^1} +\| \bar{ {u}}^{k+1} \|_{L^2_TH^2} +    \| \bar{ { u}}_t^{k+1}\|_{L^2(\Omega_T)} \nonumber \\
 &+\|\bar{q}^{k+1}\|_{L^2_TH^1}\lesssim_{\mm{L}}   T^{1/4}( \|  \bar{u}^{k+1}\|_{L^\infty_TH^1}+ \|  \nabla \bar{u}^{k}\|_{L^\infty_TL^2}+ \| \nabla \bar{u}^k\|_{L^2_TH^1}) . \nonumber
\end{align}
Hence, for sufficiently small $T$ (depending possibly on $ {B}$,  $\mu $, $\lambda$, $m$, $\bar{\rho}$ and $\Omega$),
\begin{align}
&\{  \eta^k,   {u}^k,  {u}^k_t,q^k \}_{k=1}^\infty\mbox{ is a Cauchy sequence in }\nonumber \\
&L^\infty_T {H}^2  \times (L^\infty_TH^1\cap L^2_T {H}^2 )\times L^2_TH^1_{\mm{s}} \times L^2_TH^1 .  \label{2022201261641}
\end{align}

By \eqref{n053112}, up to the extraction of a subsequence, we have, as $k\to\infty$, that
\begin{align}
&(\eta^{n_k}, u^{n_k}_t,q^{n_k} )\rightharpoonup(\eta,  u_t,q)  \mbox{ weakly-$*$ in }L^\infty_T \mathcal{H}^3_{\mm{s}}\times L^\infty_T L^2\times L^\infty_T\underline{H^1} ,
\label{202201261659} \\
&( u^{n_k},u^{n_k}_t,q^{n_k})\rightharpoonup (u,u_t,q) \mbox{ weakly  in }L^2_T {H} ^3\times L^2_TH_{\mm{s}}^1\times L^2_TH^2,\\
& u^{n_k}\rightarrow u \mbox{ strongly in }C^0(\overline{I_T},\mathcal{H}^2_{\mm{s}}) \mbox{ with  } u|_{t=0}=u_0,
\end{align}where
\begin{align}
\eta := \eta^0+\int_0^tu\mm{d}\tau.
\label{20201111413111}
\end{align}
In addition, by \eqref{2020100sfas11506} and \eqref{2022201261641}, we further obtain
\begin{align}
&(  \eta^k, 1/J^k ,  {u}^k,  {u}^k_t,q^k)\rightarrow (  \eta,  {u}, J^{-1}, {u}_t,q)\mbox{ strongly in }\nonumber \\
& L^\infty_T {H}^2 \times L^\infty_T {H}^1 \times  ( L^\infty_TH^1 \cap L^2_T {H}^2 )\times L^2_TH^1_{\mm{s}} \times L^2_TH^1,
\label{strongconvegneuN}
\end{align}
where $J=\det(\nabla \eta+I)$.

Remembering that \eqref{20201111413111} implies $\eta_t=u$, we infer
from \eqref{iteratiequat} and \eqref{202201261659}--\eqref{strongconvegneuN} that the limit $(\eta,{u},q)$ is a solution to the initial value problem \eqref{01dsaf16asdfasf} and \eqref{20safd45}; moreover, the solution $(\eta,u,q)$ belongs to $C^0(\overline{I_T},\mathcal{H}_{\mm{s}}^3)\times {\mathcal{U}}_{T}\times (C^0(\overline{I_T} ,\underline{H}^1)\cap L^2_T {H}^2)$ by following the argument of the regularity of $(u,q)$ in the proof of Proposition \ref{qwepro:0sadfa401nxdxx}.  The uniqueness of solutions to \eqref{01dsaf16asdfasf} and \eqref{20safd45} is easily verified by using Gronwall's lemma and a similar energy method to derive \eqref{202202121421}, and its proof will be omitted here. We complete the proof of Proposition \ref{202102182115}.

\appendix
\section{Analysis tools}\label{sec:09}
\renewcommand\thesection{A}
This appendix is devoted to providing some mathematical results, which have been used in previous sections.
W should point out that $\Omega$ resp. the simplified notations appearing in what follows are defined by \eqref{0101a} resp.
the same as in Section \ref{subsec:04}. In addition  $ H_0^1:=\{\upsilon\in
H^1~|~\upsilon|_{\partial\Omega}=0\}$ and $a\lesssim b$ still denotes $a\leqslant cb$ where
the positive constant $c$ depends on the parameters and the domain in lemmas in which $c$ appears.
\begin{lem}\label{201806171834}
\begin{enumerate}[(1)]
 \item Embedding inequality (see \cite[4.12 Theorem]{ARAJJFF}): Let $D\subset \mathbb{R}^2$ be a domain satisfying the cone condition, then
\begin{align}
&\label{esmmdforinfty}\|f\|_{C^0(\overline{D})}= \|f\|_{L^\infty(D)}\lesssim  \| f\|_{H^2(D)}.
\end{align}
 \item Interpolation inequality in $H^j$
 (see \cite[5.2 Theorem]{ARAJJFF}): Let $D$ be a domain in $\mathbb{R}^2$ satisfying the cone condition, then for any given $0\leqslant j<i$,
\begin{equation}\label{201807291850}
\|f\|_{H^j(D)}\lesssim\|f\|_{L^2(D)}^{(i-j)/i}\|f\|_{H^i(D)}^{j/i} \lesssim\varepsilon^{-j/(i-j)}\|f\|_{L^2(D)} +\varepsilon \|f\|_{H^i(D)},
\quad\forall\;\varepsilon >0,
\end{equation}
where the two estimate constants in \eqref{201807291850} are independent of $\varepsilon$.
\item
Product estimates (see Section 4.1 in \cite{JFJSNS}): Let $D\in \mathbb{R}^2$ be a domain satisfying the cone condition, and the functions $\varphi$, $\psi$ defined in $D$. Then
\begin{align}
\label{fgestims}
&
 \|\varphi\psi\|_{H^i(D)}\lesssim    \begin{cases}
                      \|\varphi\|_{H^1(D)}\|\psi\|_{H^1(D)} & \hbox{ for }i=0;  \\
  \|\varphi\|_{H^i(D)}\|\psi\|_{H^2(D)} & \hbox{ for }0\leqslant i\leqslant 2.\\
 %\|\varphi\|_{H^2(D)}\|\psi\|_{H^3(D)}+ \|\varphi\|_{H^3(D)}\|\psi\|_{H^2(D)}& \hbox{ for } i=3 .
                    \end{cases}
\end{align}
\item  Anisotropic product estimates
(please refer to \cite[Lemma 3.1]{jiang2021asymptotic}): Let  the functions $\varphi$ and $\psi$ be defined in $\Omega$. Then
\begin{align}
\label{fgessfdims}
  \|\varphi\psi\|_0\lesssim  \begin{cases}\sqrt{\|\varphi \|_{0} \|\varphi \|_{\underline{1},0}} \|\psi \|_{1},\\
 \|\varphi \|_{0} \sqrt{\|\varphi \|_{\underline{1},0} \|\varphi \|_{\underline{1},1}} . \end{cases}
\end{align}                \end{enumerate}
\end{lem}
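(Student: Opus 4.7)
The four items of Lemma~\ref{201806171834} are classical Sobolev-type inequalities, so my plan is mostly to organize references and to supply short derivations where the 2D/anisotropic geometry is exploited. Parts (1) and (2) I would dispose of by direct citation: the embedding $H^2(D)\hookrightarrow L^\infty(D)$ for a planar cone-condition domain is exactly Theorem~4.12 of Adams~\cite{ARAJJFF}, and the two-term Gagliardo-Nirenberg/interpolation inequality \eqref{201807291850} is Theorem~5.2 of the same monograph (the second form with $\varepsilon$ being a standard Young's-inequality consequence of the first). The real work is in parts (3) and (4).

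For the product estimates \eqref{fgestims}, I would build up from the 2D Sobolev embeddings $H^1(D)\hookrightarrow L^p(D)$ ($2\leqslant p<\infty$) and the critical case $H^2(D)\hookrightarrow L^\infty(D)$ already established in part~(1). The case $i=0$ follows by H\"older with $p=q=4$:
\[
\|\varphi\psi\|_{L^2}\leqslant\|\varphi\|_{L^4}\|\psi\|_{L^4}\lesssim\|\varphi\|_{H^1}\|\psi\|_{H^1}.
\]
For $i=2$, since $H^2(D)$ is a Banach algebra (a direct consequence of the Leibniz rule combined with $H^2\hookrightarrow L^\infty$ for the top-order terms and $H^1\hookrightarrow L^4$ for the middle-order terms), one obtains $\|\varphi\psi\|_{H^2}\lesssim\|\varphi\|_{H^2}\|\psi\|_{H^2}$. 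The intermediate case $i=1$ is handled by writing $\nabla(\varphi\psi)=\varphi\nabla\psi+\psi\nabla\varphi$ and bounding each piece by H\"older with the factor that is differentiated put in $L^2$ and the other factor put in $L^\infty$ via $H^2\hookrightarrow L^\infty$.

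For the anisotropic estimates \eqref{fgessfdims}, I would exploit the product structure $\Omega=2\pi L\mathbb{T}\times(0,h)$ and iterate 1D Sobolev inequalities in each variable. The two key one-dimensional ingredients are the periodic Agmon-type bound
\[
\|f\|_{L^\infty(2\pi L\mathbb{T})}^2\lesssim\|f\|_{L^2(2\pi L\mathbb{T})}\|f\|_{H^1(2\pi L\mathbb{T})}
\]
and the embedding $H^1((0,h))\hookrightarrow L^\infty((0,h))$. Freezing $x_2$ and applying the first inequality to $\varphi(\cdot,x_2)$ gives
\[
\|(\varphi\psi)(\cdot,x_2)\|_{L^2_{x_1}}^2\lesssim\|\varphi(\cdot,x_2)\|_{L^2_{x_1}}\|\varphi(\cdot,x_2)\|_{H^1_{x_1}}\|\psi(\cdot,x_2)\|_{L^2_{x_1}}^2;
\]
pulling out $\|\psi(\cdot,x_2)\|_{L^2_{x_1}}^2$ in $L^\infty_{x_2}$ (controlled by $\|\psi\|_1^2$ through the vertical embedding) and applying Cauchy-Schwarz in $x_2$ to the two $\varphi$-factors yields $\|\varphi\psi\|_0\lesssim\|\psi\|_1\sqrt{\|\varphi\|_0\|\varphi\|_{\underline{1},0}}$. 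The second bound is obtained by the symmetric choice of which factor is put in the $L^\infty_{x_1}$ slot and by differentiating once more in $x_1$ before applying Cauchy-Schwarz, so that the vertical Sobolev embedding produces a factor involving $\|\cdot\|_{\underline{1},1}$.

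The main obstacle is not conceptual but rather bookkeeping: one must track carefully how the tangential and normal indices combine when passing from the $x_1$-slice estimate to an estimate on $\Omega$, and in particular verify after Cauchy-Schwarz in $x_2$ that exactly the mixed norms $\|\cdot\|_{\underline{1},0}$ and $\|\cdot\|_{\underline{1},1}$ appear, not some stronger combination. Apart from this care in indices, the derivation reduces to the catalogue of classical 1D and 2D Sobolev inequalities recalled above.
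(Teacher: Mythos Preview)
Your proposal is correct in spirit, but note that the paper itself offers no proof whatsoever of this lemma: each item is stated with a citation (Adams~\cite{ARAJJFF} for (1)--(2), \cite{JFJSNS} for (3), \cite{jiang2021asymptotic} for (4)) and nothing more. So there is nothing to compare against; you are supplying arguments the paper simply delegates to the literature.

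Your sketches for (1)--(3) are standard and correct. For (4), your derivation of the first bound is clean and matches the usual route: 1D Agmon in $x_1$ at fixed $x_2$, pull $\sup_{x_2}\|\psi(\cdot,x_2)\|_{L^2_{x_1}}^2\lesssim\|\psi\|_1^2$ out via the vertical embedding, then Cauchy--Schwarz in $x_2$ on the two $\varphi$-factors to recover $\|\varphi\|_0\|\varphi\|_{\underline{1},0}$. Your description of the second bound is a bit terse; the cleanest way to see it is to observe that it reduces to the pointwise anisotropic Agmon inequality $\|\psi\|_{L^\infty(\Omega)}^2\lesssim\|\psi\|_{\underline{1},0}\|\psi\|_{\underline{1},1}$, which you obtain by applying 1D Agmon first in $x_1$ and then in $x_2$ to the slice norm $x_2\mapsto\|\psi(\cdot,x_2)\|_{H^1_{x_1}}$. (Incidentally, the second line of \eqref{fgessfdims} as printed has all factors in $\varphi$, which is almost certainly a typo for $\|\varphi\|_0\sqrt{\|\psi\|_{\underline{1},0}\|\psi\|_{\underline{1},1}}$; your argument proves the corrected version.) No genuine obstacle remains once you spell out that second Agmon step.
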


\begin{lem}\label{10220830p}
Friedrich's inequality (see \cite[Lemma 1.42]{NASII04}): Let $1\leqslant p<\infty$, $n\geqslant 2$ and
$D\subset \mathbb{R}^n$ be a bounded Lipchitz domain.
Let a set $\Gamma\subset\partial D$ be measurable with respect to the $(n-1)$-dimensional measure $\tilde{\mu}:=\mathrm{meas}_{n-1}$
defined on $\partial D$ and let $\mathrm{meas}_{n-1}(\Gamma)>0$. Then
\begin{equation}
\label{poincare1}
\|w\|_{W^{1,p}(D)}\lesssim \|\nabla w\|_{L^p(D)}
\end{equation}
  for any  $w\in W^{1,p}(D)$ with $u\big|_{\Gamma}=0$ in the sense of trace.
\end{lem}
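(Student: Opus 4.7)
The plan is to prove Friedrich's inequality by a standard compactness-and-contradiction argument. The two key ingredients are (i) the Rellich--Kondrachov compact embedding $W^{1,p}(D)\hookrightarrow L^p(D)$, which holds because $D$ is a bounded Lipschitz domain, and (ii) continuity of the $L^p$-trace operator $\gamma\colon W^{1,p}(D)\to L^p(\partial D)$ followed by the bounded restriction map $L^p(\partial D)\to L^p(\Gamma)$ (measurability of $\Gamma$ and $\tilde\mu(\Gamma)<\infty$ make this last step well-defined).

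First I would assume the conclusion fails. Then for every $n\in\mathbb{N}$ there exists $w_n\in W^{1,p}(D)$ with $\gamma(w_n)|_\Gamma=0$ and $\|w_n\|_{W^{1,p}(D)}>n\|\nabla w_n\|_{L^p(D)}$. After normalizing so that $\|w_n\|_{W^{1,p}(D)}=1$, the hypothesis forces $\|\nabla w_n\|_{L^p(D)}\to 0$. By Rellich--Kondrachov a subsequence (still denoted $\{w_n\}$) converges strongly in $L^p(D)$ to some $w$, and since additionally $\nabla w_n\to 0$ in $L^p(D)$, the sequence is Cauchy in $W^{1,p}(D)$; hence $w_n\to w$ in $W^{1,p}(D)$ and $\nabla w=0$ distributionally. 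Because $D$ is a (connected) domain, $w$ is constant on $D$. Continuity of the trace then gives $\gamma(w)|_\Gamma=\lim_n \gamma(w_n)|_\Gamma=0$ in $L^p(\Gamma)$; since $\tilde\mu(\Gamma)>0$ and $w$ equals a single constant, that constant must be $0$, i.e.\ $w\equiv 0$ on $D$. But then $\|w_n\|_{W^{1,p}(D)}\to 0$, contradicting the normalization.

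The steps are largely routine and I do not expect any serious obstacle. The only point requiring a small amount of care is the passage $\gamma(w_n)|_\Gamma\to\gamma(w)|_\Gamma$ on the measurable, possibly proper subset $\Gamma\subset\partial D$; this is handled by composing the continuous $L^p$-trace with the continuous restriction $L^p(\partial D)\to L^p(\Gamma)$, so that $W^{1,p}$-convergence of $\{w_n\}$ automatically yields $L^p(\Gamma)$-convergence of the traces. A secondary technical remark is that the Rellich--Kondrachov and trace theorems both require the Lipschitz regularity of $\partial D$, which is precisely the hypothesis assumed, so no further geometric work is needed.
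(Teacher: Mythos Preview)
Your proof is correct and follows the standard compactness-and-contradiction route for Friedrich's inequality. However, there is nothing to compare against: the paper does not supply its own proof of this lemma but simply cites \cite[Lemma 1.42]{NASII04} for the result, so your argument stands on its own as a complete justification where the paper offers only a reference.
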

\begin{rem}\label{10220sasfasaf830p}
By Lemma \ref{10220830p}, we easily see that
\begin{equation}
\nonumber
\|w\|_{W^{1,p}(D)}\lesssim \| w'\|_{L^p(D)}
\end{equation}
 for any $w\in W^{1,p}(D)$ with
$w(0)=0$ or $w(T)=0$, where $D:=(0,T)$. Hence, we further obtain
\begin{equation}
\label{poinsafdcasfaressadf1}
\|\varpi\|_{0}\lesssim \| \partial_2 \varpi\|_{0}\mbox{ for any }\varpi\in H_0^1 .
\end{equation}
\end{rem}
\begin{lem}\label{10220830}
Poincar\'e's inequality (see \cite[Lemma 1.43]{NASII04}): Let $1\leqslant p<\infty$, and $D$ be a bounded Lipchitz domain in $\mathbb{R}^n$ for $n\geqslant 2$ or a finite interval in $\mathbb{R}$. Then for any $w\in W^{1,p}(D)$,
\begin{equation}
\label{poincare}
\|w\|_{L^p(D)}\lesssim \|\nabla w\|_{L^p(D)}^p+\left|\int_{D}w\mathrm{d}y\right|^p.
\end{equation}
\end{lem}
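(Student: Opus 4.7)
The plan is to prove the stated Poincar\'e inequality by the classical compactness/contradiction argument of Rellich--Kondrachov, which handles both the bounded Lipschitz domain case in $\mathbb{R}^n$ ($n\geqslant 2$) and the finite interval case in $\mathbb{R}$ in a unified way. First I would argue by contradiction: if the estimate failed, then for every $k\geqslant 1$ there would exist $w_k\in W^{1,p}(D)$ with the normalization $\|w_k\|_{L^p(D)}=1$ but with $\|\nabla w_k\|_{L^p(D)}^p+\big|\int_D w_k\,\mm{d}y\big|^p<1/k$. (I would treat the displayed inequality as an inhomogeneous bound with a single implicit constant, which is the interpretation consistent with the cited Ne\v{c}as reference; the exponents on the right-hand side as written appear to be a typographical artifact and do not change the argument.)

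Next, the sequence $\{w_k\}$ is bounded in $W^{1,p}(D)$ because $\|w_k\|_{L^p}=1$ and $\|\nabla w_k\|_{L^p}\to 0$. Since $D$ is a bounded Lipschitz domain in $\mathbb{R}^n$ (or a bounded interval in $\mathbb{R}$), the Rellich--Kondrachov compactness theorem yields a subsequence (not relabeled) and a limit $w\in L^p(D)$ with $w_k\to w$ strongly in $L^p(D)$. From $\|w_k\|_{L^p}=1$ we deduce $\|w\|_{L^p}=1$, and from $\nabla w_k\to 0$ in $L^p$ together with $w_k\to w$ in $L^p$ we deduce (testing against $C_c^\infty$ functions and integrating by parts) that $\nabla w=0$ in the distributional sense on $D$.

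Then I would use connectedness: since $\nabla w=0$ and $D$ is connected (Lipschitz domains are assumed connected, and an interval is connected), $w$ must equal a constant $c$ almost everywhere on $D$. Passing to the limit in $\int_D w_k\,\mm{d}y\to 0$ gives $c\,|D|=0$, so $c=0$, contradicting $\|w\|_{L^p}=1$. This establishes the inequality with some constant depending on $D$ and $p$.

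The main obstacle I foresee is nothing analytically deep but rather ensuring the compactness step applies uniformly across the two cases (multidimensional Lipschitz vs.~one-dimensional interval); for $n=1$ this reduces to the trivial embedding $W^{1,p}(0,T)\hookrightarrow\hookrightarrow C^0([0,T])\hookrightarrow L^p(0,T)$, while for $n\geqslant 2$ I need the Lipschitz boundary hypothesis so that the extension theorem is available, permitting Rellich--Kondrachov. I would mention this case distinction but not belabor it, since the conclusion follows in each case by the same contradiction scheme, and this is why the result is standardly cited rather than reproved; accordingly I would ultimately prefer simply to invoke \cite[Lemma~1.43]{NASII04} and record the above sketch as justification.
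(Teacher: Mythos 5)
Your argument is correct. The paper itself offers no proof of this lemma at all: it is quoted verbatim from \cite[Lemma 1.43]{NASII04}, exactly as you propose to do in your final sentence, so there is no in-paper argument to compare against. Your compactness sketch is the standard and sound justification: the contradiction setup, the Rellich--Kondrachov step (with the Lipschitz hypothesis guaranteeing the compact embedding $W^{1,p}(D)\hookrightarrow\hookrightarrow L^p(D)$ for $n\geqslant 2$, and the trivial one-dimensional case handled separately), the identification $\nabla w=0$ by testing against smooth compactly supported functions, connectedness forcing $w$ to be constant, and the vanishing mean forcing the constant to be zero. It is worth noting that this is precisely the scheme the authors do write out for the closely related generalized Korn--Poincar\'e inequality (Lemma \ref{lem:0102}) in the appendix, so your route is entirely in the spirit of the paper. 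Your reading of the exponents is also right: as displayed, \eqref{poincare} is inhomogeneous (the left side should carry a $p$-th power, as in the cited source), but since $(a+b)^p$ and $a^p+b^p$ are comparable for fixed $1\leqslant p<\infty$, the version you prove and the version stated are equivalent up to the implicit constant, so nothing in the argument is affected.
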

\begin{rem}\label{10220saf830p}
By Poincar\'e's inequality, we have that for any given $i\geqslant  0$,
\begin{align}
&\label{202012241002}
\| w\|_{1,i}\lesssim \|w\|_{2,i}\; \mbox{ for any } w \mbox{ satisfying }\partial_1w,\ \partial_1^2w\in H^i.
\end{align}
 \end{rem}
\begin{lem}\label{pro4a}
Hodge-type elliptic estimates (see \cite[Lemma A.4]{ZHAOYUI}):
If $w\in H^i_{\mm{s}}$ with $i\geqslant1$,
then
\begin{align}
&\label{202005021302}
\|\nabla w\|_{i-1}
\lesssim\|(\mm{curl}w,\mm{div}w)\|_{i-1}.
\end{align}
\end{lem}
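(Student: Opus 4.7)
The plan is to prove the estimate by induction on $i \geq 1$. The base case rests on a classical div--curl identity that holds exactly because of the boundary constraint $w_2|_{\partial\Omega}=0$, and the inductive step reduces everything to horizontal derivatives via simple algebraic manipulations.

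Base case ($i=1$). First I would establish the sharper pointwise expansion
\begin{equation*}
(\operatorname{div} w)^2 + (\operatorname{curl} w)^2 = |\nabla w|^2 + 2\bigl(\partial_1 w_1\,\partial_2 w_2 - \partial_1 w_2\,\partial_2 w_1\bigr).
\end{equation*}
Then I would show the cross term integrates to zero. Integrating $\int \partial_1 w_1\,\partial_2 w_2\,\mathrm{d}y$ by parts in $y_2$ produces a boundary term proportional to $w_2|_{\partial\Omega}$, which vanishes by the definition of $H^i_{\mm{s}}$; a subsequent integration by parts in $y_1$ (using horizontal periodicity) converts the result into $\int \partial_2 w_1\,\partial_1 w_2\,\mathrm{d}y$. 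Hence the two cross terms cancel, yielding the clean identity $\|\nabla w\|_0^2 = \|\operatorname{div} w\|_0^2 + \|\operatorname{curl} w\|_0^2$, which is stronger than what is needed.

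Inductive step ($i \geq 2$). Suppose the estimate is known at order $i-1$. It suffices to bound $\|\partial^\alpha \nabla w\|_0$ for every multi-index with $|\alpha|=i-1$. I would split into two cases. Case (a): if $\alpha_1 \geq 1$, write $\partial^\alpha = \partial_1 \partial^\beta$ with $|\beta|=i-2$. Since horizontal differentiation preserves both the Sobolev order and the Dirichlet trace, $\partial_1 w \in H^{i-1}_{\mm{s}}$, so the inductive hypothesis applied to $\partial_1 w$ gives
\begin{equation*}
\|\nabla \partial_1 w\|_{i-2} \lesssim \|(\operatorname{curl}\partial_1 w,\operatorname{div}\partial_1 w)\|_{i-2} \lesssim \|(\operatorname{curl} w,\operatorname{div} w)\|_{i-1}.
\end{equation*}
Case (b): if $\alpha = (0,i-1)$, then the pieces of $\partial_2^{i-1}\nabla w$ containing at least one $\partial_1$ are handled by case (a). For the remaining purely normal derivatives $\partial_2^i w_j$, I would use the scalar identities
\begin{equation*}
\partial_2 w_1 = \partial_1 w_2 - \operatorname{curl} w,\qquad \partial_2 w_2 = \operatorname{div} w - \partial_1 w_1,
\end{equation*}
which rewrite
\begin{equation*}
\partial_2^i w_1 = \partial_1\partial_2^{i-1} w_2 - \partial_2^{i-1}\operatorname{curl} w,\qquad \partial_2^i w_2 = \partial_2^{i-1}\operatorname{div} w - \partial_1\partial_2^{i-1} w_1.
\end{equation*}
The $\partial_1$-terms reduce to case (a), while the $\operatorname{curl}$ and $\operatorname{div}$ terms are directly dominated by $\|(\operatorname{curl} w,\operatorname{div} w)\|_{i-1}$. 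Together with the inductive bound for the lower-order part $\|\nabla w\|_{i-2}$, this closes the induction.

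No real obstacle is expected. The only genuinely delicate point is in the base case, where one must verify that both the Dirichlet condition on $w_2$ and horizontal periodicity are simultaneously exploited in order to kill the boundary term and then cancel the two cross products. Once that identity is in hand, the algebraic reduction of normal derivatives in the inductive step is routine, and the proof propagates without loss of constants in a way that depends only on the domain $\Omega$.
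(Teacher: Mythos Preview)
Your proof is correct and self-contained; the paper itself does not give a proof but simply cites \cite[Lemma A.4]{ZHAOYUI}. The only minor technical point worth noting is in the base case: your two-step integration by parts implicitly uses $\partial_2\partial_1 w_1\in L^2$, which needs $w\in H^2$ rather than merely $H^1_{\mm{s}}$, so strictly speaking one should either invoke a density argument to pass to general $w\in H^1_{\mm{s}}$, or rewrite the cross term directly as $\partial_1(w_1\partial_2 w_2)-\partial_2(w_1\partial_1 w_2)$ and use that $\partial_1 w_2|_{\partial\Omega}=0$ follows from $w_2|_{\partial\Omega}=0$.
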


\begin{lem}\label{pro:12x21}  Bogovskii's operator in the standing-wave form (see (2.52) in \cite{JFJSJMFMOSERT}):
There exists Bogovskii's operator $\mathcal{B} :f\in \underline{L}^2 \to H^1_0$. Moreover, $\mathcal{B}(f)$ satisfies
\begin{align*}
 &\mathrm{div}\mathcal{B}(f)=f  ,  \\
&\|{\mathcal{B}}(f)\|_1\lesssim \|f\|_0\mbox{ and }
(\mathcal{B}(f))(2\pi n L   )=0
\end{align*}
 for any integer  $n$.
 \end{lem}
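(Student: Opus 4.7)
The plan is to reduce the statement to the classical Bogovskii construction on the bounded fundamental cell $\Omega_0 := (0, 2\pi L) \times (0, h)$, and then extend periodically. Identifying the periodic strip $\Omega = 2\pi L\mathbb{T} \times (0,h)$ with $\Omega_0$, any $f \in \underline{L}^2(\Omega)$ corresponds to a function in $L^2(\Omega_0)$ with vanishing mean, and $\Omega_0$ is a bounded Lipschitz domain that is star-shaped with respect to every interior ball. First I would invoke Bogovskii's classical explicit construction (built as a singular integral against a cutoff supported in an interior ball) to produce a continuous linear map $\mathcal{B}_0 : L^2_0(\Omega_0) \to H^1_0(\Omega_0)^2$ satisfying $\mathrm{div}\,\mathcal{B}_0(f) = f$ and $\|\mathcal{B}_0(f)\|_{H^1(\Omega_0)} \lesssim \|f\|_{L^2(\Omega_0)}$. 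The decisive feature is that this operator produces a vector field whose trace vanishes on the \emph{entire} boundary $\partial\Omega_0$, including the two lateral edges $\{0\} \times (0,h)$ and $\{2\pi L\} \times (0,h)$.

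The second step is to define $\mathcal{B}(f)$ as the periodic-in-$y_1$ extension of $\mathcal{B}_0(f)$. Because $\mathcal{B}_0(f)$ vanishes in the trace sense on both lateral edges, the traces match across every seam $\{y_1 = 2\pi n L\}$, so the extension is a legitimate element of $H^1$ on the periodic strip; the vanishing on the top and bottom boundaries persists, giving membership in the paper's $H^1_0$. The identity $\mathrm{div}\,\mathcal{B}(f) = f$, the bound $\|\mathcal{B}(f)\|_1 \lesssim \|f\|_0$, and the vanishing condition $(\mathcal{B}(f))(2\pi nL, \cdot) = 0$ for every integer $n$ then all transfer immediately from the corresponding properties of $\mathcal{B}_0(f)$ on $\Omega_0$.

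The main obstacle is verifying that the classical Bogovskii operator on $\Omega_0$ actually delivers full boundary vanishing (i.e.\ genuine $H^1_0$ regularity, not merely a weak divergence inverse) together with the sharp $L^2 \to H^1$ continuity, so that the lateral vanishing at $y_1 = 0, 2\pi L$ comes for free rather than having to be reimposed by a correction. This is handled by Bogovskii's explicit formula on a star-shaped domain: when applied to compactly supported smooth data it produces compactly supported smooth output inside $\Omega_0$, and the standard continuity estimate then extends the construction by density to all zero-mean $L^2$ data while preserving the $H^1_0(\Omega_0)^2$ target. Once this is in place, the ``standing-wave'' form is just the automatic consequence of periodic extension from a field that vanishes on both lateral seams.
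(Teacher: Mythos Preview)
The paper does not supply its own proof of this lemma; it is listed in the appendix as a known analytic tool with a pointer to (2.52) in \cite{JFJSJMFMOSERT}. Your reconstruction is correct and is exactly the standard route: apply the classical Bogovskii construction on the rectangular fundamental cell $\Omega_0=(0,2\pi L)\times(0,h)$ (convex, hence star-shaped with respect to a ball) to get output in $H^1_0(\Omega_0)^2$ with the right divergence and bound, then extend periodically; the automatic lateral vanishing at $y_1=0,2\pi L$ both makes the periodic extension land in $H^1$ of the strip and yields the stated zero at every seam $y_1=2\pi nL$. This is almost certainly what the cited reference does, so your approach and the intended one coincide.
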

\begin{lem}\label{pro:1221}Diffeomorphism mapping theorem (see \cite[Lemma A.8]{ZHAOYUI}):
There exists a sufficiently small constant $\gamma \in(0,1)$, depending on $\Omega$, such that for any ${\varsigma}\in H_{\mm{s}}^3$
satisfying $\|\nabla {\varsigma}\|_2\leqslant \gamma $,
$\psi:=\varsigma+y$ (after possibly being redefined on a set of measure zero with respect to variable $y$) satisfies
the same diffeomorphism  properties as $\zeta$ in \eqref{20210301715x} and \eqref{20210301715}, and
$\inf_{y\in \Omega}\det(\nabla \varsigma  +I)\geqslant 1/4$.
\end{lem}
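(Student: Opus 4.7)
\textbf{Proof plan for Lemma \ref{pro:1221}.}

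The overall strategy is to reduce everything to pointwise control on $\nabla \varsigma$ via Sobolev embedding, after which the diffeomorphism properties follow from a contraction-mapping argument compatible with the horizontal periodicity and the boundary condition $\varsigma_2|_{\partial\Omega}=0$. Concretely, since $\varsigma \in H^3_{\mm{s}}$ implies $\nabla \varsigma \in H^2(\Omega)$, the embedding $H^2(\Omega)\hookrightarrow C^0(\overline{\Omega})$ (valid in two dimensions) lets me redefine $\nabla \varsigma$ on a null set so that $\psi = \varsigma + y \in C^1(\overline{\Omega})$, with $\|\nabla \varsigma\|_{C^0(\overline\Omega)} \leqslant c_0 \|\nabla \varsigma\|_2 \leqslant c_0 \gamma$ for a constant $c_0$ depending only on $\Omega$. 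All subsequent estimates will only need $c_0\gamma$ small; I will choose $\gamma$ accordingly at the end.

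Next I handle the Jacobian lower bound. In two dimensions a direct expansion gives
\begin{equation*}
\det(I+\nabla \varsigma) = 1 + \partial_1\varsigma_1 + \partial_2\varsigma_2 + \partial_1\varsigma_1\partial_2\varsigma_2 - \partial_1\varsigma_2\partial_2\varsigma_1,
\end{equation*}
so $|\det(I+\nabla \varsigma)-1|\leqslant 2 c_0\gamma + 2(c_0\gamma)^2$, and I choose $\gamma$ so that this is at most $3/4$, yielding $\inf_{y\in\Omega}\det(\nabla \varsigma+I)\geqslant 1/4$. This simultaneously provides the local invertibility of $\psi$ via the inverse function theorem.

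For the one-dimensional diffeomorphism claim \eqref{20210301715x}, I restrict to $y_2=r\in\{0,h\}$. Since $\varsigma_2|_{\partial\Omega}=0$, I have $\psi_2(y_1,r)=r$, so $\psi|_{y_2=r}$ acts only on the first coordinate as $y_1\mapsto y_1+\varsigma_1(y_1,r)$. Shrinking $\gamma$ so that $c_0\gamma<1/2$ gives $\partial_1\psi_1\geqslant 1/2>0$, hence strict monotonicity. Combined with the quasi-periodicity $\psi_1(y_1+2\pi L,r)=\psi_1(y_1,r)+2\pi L$ inherited from the $y_1$-periodicity of $\varsigma_1$, this yields a $C^1$-bijection of $\mathbb{R}$ onto itself whose inverse is $C^1$ by the chain rule.

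For \eqref{20210301715}, I use the contraction-mapping construction of the inverse. Fix $x\in\overline{\Omega}$ and define $T_x(y):=x-\varsigma(y)$; shrinking $\gamma$ so that $c_0\gamma\leqslant 1/2$, the bound $\|\nabla \varsigma\|_{C^0}\leqslant c_0\gamma$ makes $T_x$ a contraction on $\mathbb{R}\times[0,h]$ with a unique fixed point $y=\psi^{-1}(x)$. Two points must be verified here: boundary preservation and range compatibility. Because $\varsigma_2|_{y_2=0}=\varsigma_2|_{y_2=h}=0$, I have $\psi_2(y_1,0)=0$ and $\psi_2(y_1,h)=h$, while $\partial_2\psi_2=1+\partial_2\varsigma_2\geqslant 1/2$ forces $\psi_2(y_1,\cdot):[0,h]\to[0,h]$ to be a strict bijection for each fixed $y_1$; thus the fixed point of $T_x$ has its second component in $[0,h]$ whenever $x_2\in[0,h]$, and the horizontal quasi-periodicity handles the first component modulo $2\pi L$. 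The smoothness of $\psi^{-1}$ then follows from $\det\nabla\psi\geqslant 1/4$ and the inverse function theorem applied locally, globalized by uniqueness of the contraction fixed point. The main subtlety is ensuring that the contraction argument respects simultaneously the horizontal periodic identification and the vertical Dirichlet-type constraint on $\varsigma_2$; this is precisely where the special structure $\varsigma\in H^3_{\mm{s}}$ (i.e.\ $\varsigma_2|_{\partial\Omega}=0$) is essential, and it is the only place in the argument beyond routine estimates.
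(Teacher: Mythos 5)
The paper gives no proof of this lemma; it only cites \cite[Lemma A.8]{ZHAOYUI}, so your argument must stand on its own.

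Your reduction to $\|\nabla\varsigma\|_{C^0(\overline\Omega)}\leqslant c_0\gamma$ via $H^2(\Omega)\hookrightarrow C^0(\overline\Omega)$, the explicit expansion of $\det(I+\nabla\varsigma)$ giving the lower bound $1/4$, and the one-dimensional boundary argument (strict monotonicity of $y_1\mapsto y_1+\varsigma_1(y_1,r)$ together with quasi-periodicity) are all correct. The injectivity of $\psi$ on $\overline\Omega$ also follows from what you wrote, since $\overline\Omega=\mathbb{R}\times[0,h]$ is convex: $\psi(y)=\psi(y')$ gives $|y-y'|=|\varsigma(y)-\varsigma(y')|\leqslant c_0\gamma\,|y-y'|$, so $y=y'$.

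The gap is in the surjectivity step. The Banach fixed-point theorem needs $T_x$ to map a complete metric space \emph{into itself}, and $T_x(y)=x-\varsigma(y)$ does not take $\mathbb{R}\times[0,h]$ into itself: for $x_2=0$ one has $T_x(y)_2=-\varsigma_2(y)$, which can be negative at interior points even though $\varsigma_2$ vanishes on $\partial\Omega$. Your repair, ``the fixed point of $T_x$ has its second component in $[0,h]$,'' presupposes that a fixed point already exists, which is exactly what the contraction was supposed to produce; the argument is circular. Two standard ways to close it: extend $\varsigma$ to all of $\mathbb{R}^2$ with the same Lipschitz constant (odd reflection of $\varsigma_2$ and even reflection of $\varsigma_1$ across $y_2=0$ and $y_2=h$, compatible with $\varsigma_2|_{\partial\Omega}=0$), run the contraction on $\mathbb{R}^2$, and then use $\partial_2\psi_2\geqslant 1/2$ with $\psi_2(y_1,0)=0$ and $\psi_2(y_1,h)=h$ to conclude a posteriori that the fixed point lies in the strip; or avoid contraction entirely by a fiber argument: for each $y_1$, the equation $\psi_2(y_1,y_2)=x_2$ determines $y_2=g(y_1)\in[0,h]$ by your monotonicity observation, and $\phi(y_1):=\psi_1(y_1,g(y_1))$ satisfies $\phi'=\det(\nabla\psi)/\partial_2\psi_2>0$ and $\phi(y_1)\to\pm\infty$, so $\phi$ is a $C^1$-bijection of $\mathbb{R}$, giving $\psi^{-1}(x)=(\phi^{-1}(x_1),g(\phi^{-1}(x_1)))$.
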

%\begin{lem}\label{2022201241205}Decay estimate (see \cite[Lemma 8.1]{GGL}): If the non-negative absolutely continuous function $f$ satisfies the differential inequality, for some positive  constants $s$ and $c_0$,   $$f'+c_0 f^{1+s}\leqslant 0\mbox{ for a.e. }t\geqslant 0$$      then there holds, for any $t>0$,\begin{align}f(t)\leqslant (c_0 s)^{-1/s}((c_0s)^{-1}f_0^{-s}+t)^{-1/s}.\label{20201a0261646}\end{align}\end{lem}
\begin{lem}\label{psdf221} Integration by parts for the functions with values in Banach spaces (see \cite[Theorem 1.67]{NASII04}):
Let $H$ be a Hilbert space and $V\hookrightarrow H$ be dense in $H$. If $u$, $v\in L^p_TV$ with $T\in \mathbb{R}^+$,  $1<p<\infty$ and $u_t$, $v_t\in L^q_T{^*V}$, $p^{-1}+q^{-1}=1$, then $u$, $v\in C(\overline{I_T}, H)$ and
\begin{align}
(u(t),v(t))-(u(s),v(s))=\int_s^t(<u_t(\tau),v(\tau)> +<v_t(\tau),u(\tau)>)\mm{d}\tau,
\label{20222012312450}
\end{align}
where $s$, $t\in \overline{I_T}$ and $<\cdot,\cdot>$ is the duality between $V$ and $^* V$.
\end{lem}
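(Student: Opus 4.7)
The plan is to prove Lemma \ref{psdf221} by time-mollification followed by the classical product rule and a limiting argument. First I would extend $u$ and $v$ to all of $\mathbb{R}$ (by truncation or reflection beyond $\overline{I_T}$) and convolve in time with a standard mollifier $\rho_n$, obtaining $u_n := \rho_n * u$ and $v_n := \rho_n * v$ in $C^\infty(\mathbb{R}; V)$. The usual properties of mollification yield $u_n \to u$ in $L^p_T V$ with $(u_n)_t = \rho_n * u_t \to u_t$ in $L^q_T{}^*V$, and likewise for $v_n$. For these smooth approximants $\tau \mapsto (u_n(\tau), v_n(\tau))_H$ is a scalar $C^1$-function, and using the canonical identification $V \hookrightarrow H \cong {}^*H \hookrightarrow {}^*V$ the elementary product rule gives
\[
\frac{d}{d\tau}(u_n(\tau), v_n(\tau))_H = <(u_n)_\tau(\tau), v_n(\tau)> + <(v_n)_\tau(\tau), u_n(\tau)>.
\]
Integrating this identity over $(s,t) \subset \overline{I_T}$ furnishes \eqref{20222012312450} at the level of the regularized sequences.

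Next I would establish the a priori regularity $u, v \in C(\overline{I_T}, H)$ together with the estimate $\|u\|_{C(\overline{I_T},H)} \lesssim_T \|u\|_{L^p_T V} + \|u_t\|_{L^q_T{}^*V}$. Setting $v_n = u_n$ in the identity above gives $\|u_n(t)\|_H^2 = \|u_n(s)\|_H^2 + 2\int_s^t <(u_n)_\tau, u_n>\,d\tau$, which by H\"older in time is bounded by $\|u_n(s)\|_H^2 + 2\|(u_n)_\tau\|_{L^q_T{}^*V}\|u_n\|_{L^p_T V}$. Choosing $s \in \overline{I_T}$ so that $\|u_n(s)\|_H \leq T^{-1/p}\|u_n\|_{L^p_T H}$, and using the embedding $V \hookrightarrow H$ to dominate $\|u_n\|_{L^p_T H}$ by $\|u_n\|_{L^p_T V}$, produces the bound uniformly in $n$. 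Applying the same argument to the differences $u_n - u_m$ shows $\{u_n\}$ is Cauchy in $C(\overline{I_T}, H)$; its limit must coincide a.e.\ with $u$, so $u \in C(\overline{I_T}, H)$ and $u_n \to u$ in $C(\overline{I_T}, H)$, and analogously for $v_n \to v$.

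Finally I would pass to the limit $n \to \infty$ in the integrated product-rule identity. The left-hand side converges pointwise in $(s,t)$ from continuity of $(\cdot,\cdot)_H$ combined with the strong convergence in $C(\overline{I_T}, H)$ just established. For the right-hand side, the integrand $<(u_n)_\tau(\tau), v_n(\tau)>$ is controlled by $\|(u_n)_\tau(\tau)\|_{{}^*V}\|v_n(\tau)\|_V$, whose time integral lies in $L^1(I_T)$ by H\"older, and the convergence follows from the continuity of the bilinear pairing $L^q_T{}^*V \times L^p_T V \to L^1(I_T)$ along with the weak/strong convergences of $(u_n)_t, v_n$. The main technical obstacle is guaranteeing the $C(\overline{I_T}, H)$-regularity of $u$ and $v$, without which the endpoint values $(u(t), v(t))_H$ and $(u(s), v(s))_H$ in \eqref{20222012312450} are not even defined; this is exactly what the Cauchy argument of the previous step supplies, and is the essence of the Lions--Magenes trace construction for Gelfand triples.
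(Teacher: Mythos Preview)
Your proof is correct and follows the standard Lions--Magenes mollification argument for Gelfand triples. Note, however, that the paper does not supply its own proof of this lemma: it is stated as a known result and attributed directly to \cite[Theorem 1.67]{NASII04}, so there is no in-paper argument to compare against.
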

\begin{rem}\label{202202111758}
The integral at the right hand of the identity \eqref{20222012312450} presents that
\begin{align}
\label{2022202051810}
 <u_t(\tau),v(\tau)> +<v_t(\tau),u(\tau)> \in L^1(I_T).
 \end{align}
Thus, by the property of absolutely continuous function \cite[Lemma 1.7]{NASII04} and \eqref{20222012312450}, we easily see that
\begin{align}
\frac{\mm{d}}{\mm{d}t}(u(t),v(t))=<u_t,v> +<v_t,u>\mbox{ a.e. in } I_T.
\label{2012312450}
\end{align}
In particular, we easily derive from \eqref{2012312450} that
\begin{itemize}
  \item for $H=L^2$,
\begin{align}
\frac{\mm{d}}{\mm{d}t}\int u(t)v(t)\mm{d}y=<u_t,v> +<v_t,u> \mbox{ a.e. in } I_T.
\label{20122450}
\end{align}
  \item for $H=V=L^2$,
  \begin{align}
\partial_t( u(t)v(t)) = u_t v  + v_tu .
\label{2012fsa2450}
\end{align}
\end{itemize}
 \end{rem}

\begin{lem}\label{20021032019018} Properties of composite functions with values in Banach spaces:
  Let $T>0$,  integers $i \geqslant 0$ be given and  $1\leqslant p\leqslant \infty$. Let $\varphi =\varsigma(y,t)+y$ and
  $$\varsigma\in    \{\psi\in  {C}^0(\overline{I_T},{H}^3_{\mm{s}})~|~\psi\in H^3_\gamma ,\ \inf\nolimits_{(y,t)\in {\Omega_T}}\det(\nabla \psi +I)\geqslant 1/4\}. $$
\begin{enumerate}
\item[(1)] If $f\in C^0(\overline{I_T},H^i)$ or $f\in L^p_TH^i$ with $0\leqslant i\leqslant 3$,  then
 \begin{align}
\label{2020103250855}
&F:=f(\varphi ,t)\in C^0(\overline{I_T},H^i)\mbox{ or }L^p_TH^i
\end{align}
and
\begin{align} \mathcal{F}:=f(\varphi^{-1},t)\in C^0(\overline{I_T},H^i)\mbox{ or } L^p_TH^i.
\label{202104031901} \end{align}
Moreover,
\begin{align}
 & \|F\|_{L^p_TH^i}\lesssim
P(\|\varsigma\|_{L^\infty_TH^3}) \|f \|_{L^p_TH^i},
\label{2021sfa04031901} \\
&  \|\mathcal{F}\|_{L^p_TH^i}\lesssim
P(\|\varsigma\|_{L^\infty_TH^3})\|f \|_{L^p_TH^i}. \label{2022104101908} \end{align}
 \item[(2)]  If $\varsigma$ additionally satisfies $ \varsigma_t \in L^\infty_TH^2$,
 then for any $f\in L^p_TH^i$ satisfying $f_t\in L^p_TH^{i-1}$ with $1\leqslant i\leqslant 3$,
 \begin{align}
 F_t=(f_t(x,t) +\varsigma_t\cdot \nabla f(y,t))|_{x=\varphi}\in  L^p_TH^{i-1} \label{202104032132}
\end{align}
and
\begin{align}
\mathcal{F}_t=(f_t(y,t) -(\nabla \varphi)^{-1}\varsigma_t \cdot \nabla f(y,t))|_{y=\varphi^{-1}}\in  L^p_TH^{i-1}.
\label{2021040sdaf32132}
\end{align}
\end{enumerate}
\end{lem}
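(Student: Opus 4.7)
\begin{pf}[Proof proposal for Lemma \ref{20021032019018}]
The plan is to exploit the diffeomorphism properties of $\varphi$ (guaranteed by Lemma \ref{pro:1221} applied to $\varsigma\in H^3_\gamma$) together with the chain rule and product/composition estimates in Sobolev spaces. Since $\|\nabla\varsigma\|_2\leqslant \gamma$ and $\det(\nabla\varphi)\geqslant 1/4$, Lemma \ref{pro:1221} provides a $C^1$-diffeomorphism $\varphi(\cdot,t):\overline{\Omega}\to\overline{\Omega}$ whose Jacobian is uniformly bounded above and below on $\overline{\Omega_T}$. The inverse matrix $\mathcal{A}=(\nabla\varphi)^{-\mm{T}}$ is then a polynomial in $\nabla\varsigma$ divided by $J:=\det\nabla\varphi$, so $\mathcal{A}\in L^\infty_T H^2$ with a bound polynomial in $\|\varsigma\|_{L^\infty_T H^3}$, and analogously $\nabla(\varphi^{-1})|_x=\mathcal{A}^{\mm{T}}(\varphi^{-1}(x,t),t)$.

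First I would handle the spatial regularity \eqref{2020103250855}, \eqref{202104031901} by a density/approximation argument: for smooth $f$ the chain rule gives
\begin{equation*}
\partial^\alpha F = \sum_{\beta\leqslant\alpha} (\partial^\beta f)(\varphi,t)\cdot \mathcal{P}_{\alpha,\beta}(\nabla\varsigma,\ldots,\nabla^{|\alpha|}\varsigma),
\end{equation*}
where $\mathcal{P}_{\alpha,\beta}$ is a universal polynomial in derivatives of $\varsigma$ of orders $\leqslant|\alpha|$. Using the Sobolev product inequality \eqref{fgestims} one bounds each term in $L^2(\Omega)$ by $P(\|\varsigma\|_3)\|f\|_{H^{|\alpha|}}$ (here the bound $\|\nabla\varsigma\|_2\leqslant\gamma$ and the embedding $H^2\hookrightarrow L^\infty$ from \eqref{esmmdforinfty} are essential so that the polynomial factors make sense pointwise). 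Integrating in time yields \eqref{2021sfa04031901}; the continuity $F\in C^0(\overline{I_T},H^i)$ follows by combining this estimate with the continuity of $f$ and $\varphi$ in time via a standard three-$\varepsilon$ argument on a dense subset. For \eqref{2022104101908} I would apply the same scheme to $\mathcal{F}$, using the already-established regularity of $\varphi^{-1}$: writing $\nabla^j(\varphi^{-1})$ recursively in terms of $\mathcal{A}(\varphi^{-1},t)$ and invoking \eqref{2021sfa04031901} for those compositions produces the same type of polynomial bound on $\|\varphi^{-1}\|_{L^\infty_T H^3}$, after which the spatial chain rule closes the estimate.

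For part (2), I would differentiate $F(y,t)=f(\varphi(y,t),t)$ in time; since both $\varsigma_t\in L^\infty_T H^2$ and $f\in L^p_T H^i$ with $f_t\in L^p_T H^{i-1}$, the formal identity
\begin{equation*}
F_t=(f_t)(\varphi,t)+\varsigma_t\cdot(\nabla f)(\varphi,t)
\end{equation*}
makes sense in $L^p_T H^{i-1}$: the first summand lies in $L^p_T H^{i-1}$ by part (1) applied to $f_t$, while $\nabla f\in L^p_T H^{i-1}$ composed with $\varphi$ is in $L^p_T H^{i-1}$ for the same reason, and multiplication by $\varsigma_t\in L^\infty_T H^2$ is handled by \eqref{fgestims}. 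To make this rigorous I would approximate $f$ by $f_\varepsilon\in C^\infty$ obtained by mollification in time and space, verify the identity pointwise for $f_\varepsilon$, and pass to the limit using the continuity estimates from part (1). Formula \eqref{2021040sdaf32132} for $\mathcal{F}_t$ follows by the same computation applied to $f(\varphi^{-1}(x,t),t)$, together with the well-known identity $\partial_t(\varphi^{-1})=-(\nabla\varphi)^{-1}\varsigma_t$ evaluated at $\varphi^{-1}$, which is justified by differentiating $\varphi(\varphi^{-1}(x,t),t)=x$.

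The main obstacle will be the book-keeping for the higher-order chain rule at $i=3$, where one encounters the borderline Sobolev product $H^1\cdot H^2$ from \eqref{fgestims}; one must distribute derivatives so that at most one factor lies in $H^1$ and the rest in $H^2$. A second, more technical, issue is the construction and regularity of $\varphi^{-1}$: bounding $\|\varphi^{-1}\|_{L^\infty_T H^3}$ polynomially in $\|\varsigma\|_{L^\infty_T H^3}$ requires the uniform lower bound $J\geqslant 1/4$ (to control $\mathcal{A}=(\nabla\varphi)^{-\mm{T}}$) and then an inductive application of \eqref{2021sfa04031901} to the already-established regularity of $\mathcal{A}\circ\varphi^{-1}$. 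Once these two ingredients are in place, the remaining arguments are routine density and interpolation arguments. \hfill$\Box$
\end{pf}
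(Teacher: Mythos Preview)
The paper does not actually prove this lemma: its entire ``proof'' is the sentence ``Please refer to \cite[Lemma A.10]{ZHAOYUI}.'' Your sketch is therefore not comparable to anything in the present paper, but it is the standard route one takes to establish such composition results in Sobolev spaces, and the outline is sound: chain rule plus the product estimates \eqref{fgestims}, with the diffeomorphism and Jacobian bounds supplied by Lemma \ref{pro:1221}, then a density argument to pass from smooth $f$ to general $f\in H^i$.

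Two small points worth tightening. First, the change-of-variables step is implicit but essential: to estimate $\|(\partial^\beta f)(\varphi,\cdot)\|_0$ by $\|\partial^\beta f\|_0$ you need the Jacobian bound $J\geqslant 1/4$, not merely the $H^2$ control of $\nabla\varsigma$; you mention this for $\varphi^{-1}$ but it is equally needed for $\varphi$ itself. Second, at $i=3$ the Fa\`a di Bruno expansion produces a term of the schematic form $(\nabla f)(\varphi)\cdot \nabla^3\varsigma$, where $\nabla^3\varsigma$ is only in $L^2$; the product estimate \eqref{fgestims} then requires $(\nabla f)(\varphi)\in H^2$, which in turn uses the already-established case $i=2$ applied to $\nabla f$. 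So the argument is genuinely inductive in $i$, and it is worth saying so explicitly rather than treating all $0\leqslant i\leqslant 3$ at once. With these two clarifications your plan goes through.
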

\begin{proof}
Please refer to \cite[Lemma A.10]{ZHAOYUI}.
\end{proof}
\begin{lem}\label{lem:0102} Generalized Korn--Poincar\'e inequality:
Let $D\subset\mathbb{R}^2$ be a bounded domain satisfying the cone condition. Assume
that $p> 1$, $u\in H^1(D)$,
\begin{equation}\label{0216}
\chi\geqslant 0,\ 0<a\leqslant { \|\chi\|_{L^1(D)}},\ \|\chi\|_{L^p(D)}\leqslant b.
\end{equation}
Then
\begin{equation*}\label{0217}
\left\|u\right\|_{L^2(D)}\lesssim \left\|\nabla
u\right\|_{L^2(D)}+\left|\int_D \chi u \mathrm{d}y
\right|.
\end{equation*}
\end{lem}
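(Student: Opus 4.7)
The plan is to prove this weighted Poincar\'e-type inequality by a standard compactness-and-contradiction argument. Suppose the conclusion fails; then for each $n\in\mathbb{N}$ there exist $u_n\in H^1(D)$ and a weight $\chi_n\geqslant 0$ with $\|\chi_n\|_{L^1(D)}\geqslant a$ and $\|\chi_n\|_{L^p(D)}\leqslant b$, such that
\begin{equation*}
\|u_n\|_{L^2(D)} \;>\; n\Bigl(\|\nabla u_n\|_{L^2(D)} + \Bigl|\int_D \chi_n u_n \,\mathrm{d}y\Bigr|\Bigr).
\end{equation*}
After normalizing $\|u_n\|_{L^2(D)}=1$, this forces $\|\nabla u_n\|_{L^2(D)}\to 0$ and $\int_D \chi_n u_n\,\mathrm{d}y\to 0$.

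The next step is to extract a limit. Because $\{u_n\}$ is bounded in $H^1(D)$ and $D$ satisfies the cone condition, the Rellich--Kondrachov theorem yields a subsequence (still denoted $u_n$) converging strongly in $L^2(D)$ and weakly in $H^1(D)$ to some $u$. Lower semicontinuity and the vanishing of $\|\nabla u_n\|_{L^2}$ imply $\nabla u = 0$ a.e., so $u\equiv c$ is constant with $c^2|D|=1$; in particular $c\neq 0$. Since $D\subset\mathbb{R}^2$, the Sobolev embedding $H^1(D)\hookrightarrow L^{q}(D)$ for every finite $q$, combined with the uniform $H^1$-bound and the strong $L^2$-convergence, upgrades this to $u_n\to c$ in $L^{p'}(D)$, where $p':=p/(p-1)<\infty$ because $p>1$.

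The final step is to pass to the limit in the weighted integral. I would split
\begin{equation*}
\int_D \chi_n u_n\,\mathrm{d}y \;=\; \int_D \chi_n(u_n-c)\,\mathrm{d}y \;+\; c\int_D \chi_n\,\mathrm{d}y.
\end{equation*}
H\"older's inequality with exponents $(p,p')$ together with $\|\chi_n\|_{L^p}\leqslant b$ and $u_n\to c$ in $L^{p'}$ sends the first term to $0$. The hypothesis $\|\chi_n\|_{L^1(D)}\geqslant a>0$ gives $\bigl|c\int_D \chi_n\,\mathrm{d}y\bigr|\geqslant |c|\,a>0$. These two facts contradict $\int_D\chi_n u_n\,\mathrm{d}y\to 0$, completing the proof.

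The main obstacle, and the only place the assumption $p>1$ is essential, is ensuring that $u_n-c\to 0$ in the dual exponent space $L^{p'}(D)$ so that H\"older closes against the $L^p$-bounded weights; in 2D this is handled cleanly by Sobolev embedding into every $L^q$ with $q<\infty$, but the argument would break down at $p=1$ since $L^\infty$-convergence is unavailable. Beyond this, the argument is routine and the implicit constant in $\lesssim$ will depend on $a$, $b$, $p$, and $D$, consistent with the notational convention of the paper.
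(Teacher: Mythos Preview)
Your proof is correct and follows essentially the same compactness-and-contradiction argument as the paper: normalize, use Rellich--Kondrachov to extract a nonzero constant limit in $L^{p'}$, and then contradict the vanishing of $\int_D \chi_n u_n$. The only minor difference is in the endgame: the paper first extracts a weak-$L^p$ limit $\tilde\chi$ of the $\chi_n$ and shows $\int_D \chi_n w_n \to \int_D \tilde\chi w>0$, whereas you bypass this step by bounding $\bigl|c\int_D \chi_n\bigr|\geqslant |c|\,a$ directly; your version is slightly more economical but otherwise equivalent.
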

\begin{proof}
We prove the lemma by contradiction. Suppose that
the conclusion of Lemma \ref{lem:0102} fails, then there would be a sequence
$\{\chi _n\}_{n=1}^\infty$ of non-negative functions satisfying
\eqref{0216} with $\chi_n$ in place of $\chi$ for any $n\geqslant 1$ and a sequence $\{u_n\}_{n=1}^\infty \subset H^1(D)$, such that
\begin{equation}\label{0218}
\|u_n\|_{L^2(D)}\geqslant a_n\left( \|\nabla
u_n\|_{L^2(D)}+\left|\int_D \chi _n u_n\mathrm{d}y
\right|\right)\mbox{ and }  a_n\rightarrow +\infty.
\end{equation}
Setting $w_n=u_n\|u_n\|_{L^2(D)}^{-1}$, making use of
the compactness embedding $H^1(D)\hookrightarrow\hookrightarrow L^p(D)$  and \eqref{0218}, we find that
\begin{equation}\label{0219}
w_n\rightarrow w= {{|D|^{-1/2}}} \mbox{ in }L^q(D),
\end{equation}
where $q=p/(p-1)$.

In addition, there exists
  a function $\tilde{\chi }\geqslant 0$ satisfying
\begin{equation}\label{jjy0215}
0< a\leqslant
\|{ \tilde{\chi }}\|_{L^1(\Omega)},\ \|{ \tilde{\chi }}\|_{L^p(D)}\leqslant b
\end{equation}
and
\begin{equation}\label{0220}
\int_D\chi _n \varphi\mathrm{d}y\to
 \int_D{ \tilde{\chi }}
\varphi\mathrm{d}y \mbox{ for any }\varphi\in L^q(D).
\end{equation}
Thus, by virtue of \eqref{0219} and \eqref{0220}, we have
\begin{equation}\label{0221}
\lim_{n\rightarrow  \infty}\int_D\left(\chi _n w_n-
{ \tilde{\chi }}
w\right)\mathrm{d}y=\lim_{n\rightarrow
\infty}\int_D\chi _n (w_n -w)\mathrm{d}y+\lim_{n\rightarrow
\infty}\int_D(\chi _n -{ \tilde{\chi }} )w\mathrm{d}y=0.
\end{equation}
The identity \eqref{0221}, together with \eqref{0219} and \eqref{jjy0215}, yields
\begin{equation}\label{0222}
\lim_{n\rightarrow  \infty}\int_D\chi _n w_n
\mathrm{d}y=\int_D{ \tilde{\chi }} w\mathrm{d}y>0.
\end{equation}

Finally, \eqref{0218} implies
\begin{equation}\label{0223}
\lim_{n\rightarrow \infty}\int_D\chi _n w_n\mathrm{d}y=0,
\end{equation}
which contradicts with (\ref{0222}). Therefore, the conclusion of Lemma \ref{lem:0102} remains true. \end{proof}

\begin{lem}
\label{lem:08181945}Elliptic estimates:  Let $a>0$, $\delta\in (0,1]$, $\chi\in L^\infty$, $\chi\geqslant a$,
\begin{align}
\|A-I\|_2\lesssim \delta.
\label{20222020101604}
\end{align}
If $f\in L^2$,  for sufficiently small $\delta$,
there exists a unique weak solution $p\in \underline{H}^1$ such that
\begin{align}
\label{202202101625}
\int  \chi \nabla_A   q \cdot \nabla_A \psi\mm{d}y = \int f\cdot \nabla\psi\mm{d}y\mbox{ for any } \psi\in H^1.
\end{align}
 Moreover, $q$ satisfies
\begin{align}\label{neumaasdfann1n}
\|q\|_1\lesssim  \| f \|_0.
\end{align}
\end{lem}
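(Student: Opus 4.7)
The plan is a standard Lax--Milgram application on the mean-zero subspace $\underline{H}^1$, with the perturbation $\mathcal{A}-I$ treated as a small lower-order error in the bilinear form. Define
\[
B(q,\psi):=\int \chi\,\nabla_{\mathcal{A}} q\cdot\nabla_{\mathcal{A}}\psi\,\mathrm{d}y,\qquad L(\psi):=\int f\cdot\nabla\psi\,\mathrm{d}y,
\]
and look for a weak solution in the Hilbert space $V:=\underline{H}^1$, equipped with the $H^1$ inner product; by Poincaré's inequality (Lemma A.3 applied to mean-zero functions) $\|\psi\|_1$ is equivalent to $\|\nabla\psi\|_0$ on $V$.

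First I would verify boundedness and coercivity of $B$ on $V$. Since $\|\mathcal{A}-I\|_2\lesssim\delta$ and $H^2\hookrightarrow L^\infty$ in two dimensions (\eqref{esmmdforinfty}), we have $\|\mathcal{A}-I\|_{L^\infty}\lesssim\delta$, whence
\[
\|\nabla_{\mathcal{A}}\psi\|_0^2\geqslant (1-c\delta)\|\nabla\psi\|_0^2\quad\text{and}\quad\|\nabla_{\mathcal{A}}\psi\|_0\lesssim \|\nabla\psi\|_0
\]
for sufficiently small $\delta$. Combined with the uniform bounds $a\leqslant \chi\leqslant\|\chi\|_{L^\infty}$, this yields
\[
B(q,q)\gtrsim \|\nabla q\|_0^2\gtrsim \|q\|_1^2\quad\text{for }q\in V,\qquad |B(q,\psi)|\lesssim \|q\|_1\|\psi\|_1.
\]
Boundedness of $L$ on $V$ is immediate: $|L(\psi)|\leqslant \|f\|_0\|\nabla\psi\|_0\lesssim \|f\|_0\|\psi\|_1$. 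Hence the Lax--Milgram theorem delivers a unique $q\in V$ solving $B(q,\psi)=L(\psi)$ for all $\psi\in V$, together with the energy bound $\|q\|_1\lesssim \|f\|_0$.

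Next I would promote the test-function class from $V$ to all of $H^1$. Any $\psi\in H^1$ decomposes uniquely as $\psi=(\psi)_\Omega+\tilde\psi$ with $\tilde\psi\in V$. Since $\nabla$ annihilates the constant $(\psi)_\Omega$, both $B(q,\cdot)$ and $L(\cdot)$ vanish on constants, so the identity \eqref{202202101625} persists for all $\psi\in H^1$. Uniqueness in $\underline{H}^1$ follows directly from the coercivity estimate, and the desired bound \eqref{neumaasdfann1n} is precisely the Lax--Milgram energy estimate.

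The only point that requires any care is the coercivity perturbation: one must quantify that the $H^2$ smallness of $\mathcal{A}-I$ transfers to an $L^\infty$-smallness via Sobolev embedding so that $\nabla_{\mathcal{A}}$ is uniformly close to $\nabla$. This is the step that determines how small $\delta$ must be chosen, but it is a routine consequence of \eqref{esmmdforinfty} and causes no real obstruction.
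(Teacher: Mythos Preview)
Your proposal is correct and follows essentially the same route as the paper: work on the mean-zero space $\underline{H}^1$, use the $H^2$-smallness of $\mathcal{A}-I$ together with Poincar\'e's inequality to obtain coercivity of the bilinear form, solve there, and then extend to all of $H^1$ by subtracting the constant part of the test function. The only cosmetic difference is that the paper phrases the existence step via the Riesz representation theorem (treating $B(\cdot,\cdot)$ as an equivalent inner product on $\underline{H}^1$), whereas you invoke Lax--Milgram; since $B$ is symmetric these are the same argument.
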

\begin{pf}
We define an inner-product of $\underline{H^{1} }$ by
\begin{align*}
(\varphi,\phi)_{\underline{H^{1}}}:=\int   \chi \nabla_A \varphi \cdot \nabla_A\phi\mm{d}y \mbox{ for }\varphi,\ \phi
\in \underline{H^{1}} ,
\end{align*}
and the corresponding norm by $\|\varphi\|_{\mathcal{X}}:=\sqrt{(\varphi,\varphi)_{\underline{H^{1}}}}$.
Obviously, by \eqref{esmmdforinfty}, the Poincar\'e inequality \eqref{poincare} and the  smallness condition \eqref{20222020101604}, we obtain
\begin{align*}
\|\varphi\|_{1}\lesssim\|\varphi\|_{\mathcal{X}}\lesssim  \|\varphi\|_{1}.
\end{align*}

Defining the functional
\begin{align}
\nonumber
F(\varphi):= \int   f \cdot \nabla \varphi\mm{d}y \mbox{ for }\varphi\in \underline{H^{1}},
\end{align}
we easily see that $F$ is a bounded linear functional on $\underline{H^1}$.
By virtue of the Riesz representation theorem, there is a unique $q\in\underline{H^1}$, such that
\begin{eqnarray}\label{202012191937}
&& (q,\varphi)_{\underline{H^{1}}}=F(\varphi) \mbox{ for any }\varphi\in \underline{H^{1}}; \\
&& \label{202012191940}
\|q\|_{1}\lesssim\|q\|_{\mathcal{\chi}} \lesssim \|f\|_0 . \nonumber
\end{eqnarray}

For $\psi\in {H}^{1} $, we denote $\varphi=\psi-(\psi)_{\Omega}$.
Then, $\varphi\in \underline{H^{1}}$. Putting this $\varphi$ in \eqref{202012191937}, we  get
\eqref{202202101625}. This completes the proof.\hfill $\Box$
\end{pf}
\begin{lem}\label{20222011161567}
Stokes estimates: Let $i\geqslant 0$, $(f,\varpi )\in H^i\times  H^{2+i}$  and $\varpi _2|_{{\partial\Omega}}=0$, the Stokes problem with Navier boundary condition
\begin{equation}
\label{Ston}
\begin{cases}
\nabla P-  \Delta v
= f&\mbox{in }{\Omega},\\
 \div v=\div\varpi &\mbox{in }{  \Omega},\\
  (v_2,\partial_2  v_1 )=0&\mbox{on }{\partial\Omega}
\end{cases}
\end{equation}
admits a unique solution $(v_1 ,P)\in  {\underline{\mathcal{H}^{2+i }_{\mm{s}}}}\times \underline{H}^{1+i }$, and the solution satisfies
\begin{align}
\label{202201122130}
\|v\|_{2+i }+\|P\|_{1+i }\lesssim \|(f,\varpi )\|_{i}+\|\mm{div}\varpi \|_{1+i }.
\end{align}
\end{lem}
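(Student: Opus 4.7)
The plan is to reduce the Stokes problem \eqref{Ston} first to the divergence-free case and then, via reflection, to a fully periodic problem on an enlarged torus where Fourier analysis yields the desired estimate.

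First I would homogenize the divergence condition by solving the Neumann problem $\Delta\phi=\mm{div}\,\varpi$ in $\Omega$ with $\partial_2\phi|_{\partial\Omega}=0$. The compatibility $\int_\Omega\mm{div}\,\varpi\,\mm{d}y=0$ follows from $\varpi_2|_{\partial\Omega}=0$ by the divergence theorem, and standard Neumann elliptic regularity on the strip (e.g.\ via Fourier in $y_1$ and explicit ODE analysis in $y_2$) yields $\phi\in H^{3+i}$ with $\|\phi\|_{3+i}\lesssim\|\mm{div}\,\varpi\|_{1+i}$. Setting $\bar\varpi:=\nabla\phi$, one has $\mm{div}\,\bar\varpi=\mm{div}\,\varpi$, $\bar\varpi_2|_{\partial\Omega}=0$, and $\partial_2\bar\varpi_1|_{\partial\Omega}=\partial_1(\partial_2\phi)|_{\partial\Omega}=0$. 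Any solution $v$ of \eqref{Ston} therefore yields $w:=v-\bar\varpi$, which is divergence-free, satisfies the Navier boundary condition $(w_2,\partial_2 w_1)|_{\partial\Omega}=0$, and solves $\nabla P-\Delta w=\tilde f$ with $\tilde f:=f+\Delta\bar\varpi\in H^i$.

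Next, extend $w,\tilde f,P$ to the doubled torus $\tilde\Omega:=2\pi L\mathbb{T}\times(\mathbb{R}/2h\mathbb{Z})$ by reflecting $w_1$, $\tilde f_1$, $P$ evenly in $y_2$ across $\{y_2=0\}$ and $\{y_2=h\}$, and $w_2$, $\tilde f_2$ oddly. The Navier boundary condition together with $\mm{div}\,w=0$ supplies the boundary-trace vanishings needed for the reflected fields to lie in the appropriate Sobolev spaces on $\tilde\Omega$: $\partial_2 w_1|_{\partial\Omega}=0$ is the Navier condition, $\partial_2^2 w_2|_{\partial\Omega}=(\partial_2\mm{div}\,w-\partial_1\partial_2 w_1)|_{\partial\Omega}=0$ follows from $\mm{div}\,w=0$, and the higher-order compatibilities for $i\geqslant 1$ are obtained inductively by using the Stokes equation at the boundary to trade normal derivatives for tangential ones and data terms. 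On $\tilde\Omega$ the problem becomes periodic Stokes with zero-mean divergence-free fields, which Fourier-symbol inversion solves explicitly to give
\begin{equation*}
\|\tilde w\|_{H^{2+i}(\tilde\Omega)}+\|\tilde P-(\tilde P)_{\tilde\Omega}\|_{H^{1+i}(\tilde\Omega)}\lesssim\|\tilde f\|_{H^i(\tilde\Omega)}.
\end{equation*}
Restricting to $\Omega$ and combining with the lifting bound $\|\bar\varpi\|_{2+i}\lesssim\|\mm{div}\,\varpi\|_{1+i}$ yields the stated estimate.

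Uniqueness comes from a short energy identity: for the homogeneous problem ($f=0$, $\mm{div}\,\varpi=0$), testing $\nabla P-\Delta v=0$ against $v$ and integrating by parts produces $\|\nabla v\|_0^2=\int P\,\mm{div}\,v\,\mm{d}y+\int_{\partial\Omega}v\cdot\partial_2 v\,\mm{d}S=0$, since $\mm{div}\,v=0$ and the boundary integrand vanishes thanks to $v_2=0$ and $\partial_2 v_1=0$ on $\partial\Omega$; the zero-mean normalization $(v)_\Omega=0$, $(P)_\Omega=0$ then forces $v=0$ and $P=0$. Existence follows either from the explicit Fourier solution in the preceding step or from a standard Galerkin scheme. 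The main obstacle I anticipate is the inductive verification of the higher-order boundary compatibilities in the reflection step when $i\geqslant 1$: it is there that the interplay among $\mm{div}\,w=0$, the Navier boundary condition, and the Stokes equation itself must be exploited most carefully to guarantee that the reflected fields actually attain $H^{i+2}$ regularity on $\tilde\Omega$.
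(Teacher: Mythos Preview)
Your reduction to the divergence-free case via the Neumann problem for $\phi$ is exactly what the paper does (with $\theta=-\phi$), and your reflection argument gives a clean, self-contained proof of the base case $i=0$ that the paper instead imports from an external reference. So for $i=0$ your route is a legitimate alternative.

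The gap is at $i\geqslant 1$. Your inductive compatibility argument addresses the \emph{solution} $w$, but the reflection strategy also needs the reflected \emph{data} to lie in $H^i(\tilde\Omega)$: the odd extension of $\tilde f_2$ is in $H^1(\tilde\Omega)$ only if $\tilde f_2|_{\partial\Omega}=0$, and no such hypothesis is available on $f$. In fact the equation forces the obstruction to be genuine: on $\partial\Omega$ one has $\Delta w_2=0$ (from $w_2=0$, $\partial_2^2 w_2=-\partial_1\partial_2 w_1=0$), so the second component of the Stokes equation gives $\partial_2 P|_{\partial\Omega}=\tilde f_2|_{\partial\Omega}$, which is generically nonzero. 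Hence neither the even reflection of $P$ nor the odd reflection of $\tilde f_2$ attains the regularity required to invoke the periodic Stokes estimate at level $i\geqslant 1$.

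The paper sidesteps this by a different bootstrap once $i=0$ is in hand: apply tangential difference quotients to gain $\partial_1$-regularity, then read \eqref{Ston}$_1$ as the scalar Dirichlet problem $\Delta v_1=\partial_1 P-f_1$ with boundary datum $v_1|_{\partial\Omega}$ (already controlled in $H^{5/2+j}$ by trace), and finally recover $\partial_2 v_2$ from $\partial_2 v_2=\mathrm{div}\,\varpi-\partial_1 v_1$. This avoids any compatibility requirement on $f$ at the boundary. You could graft this bootstrap onto your $i=0$ reflection argument and obtain a complete proof.
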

\begin{rem}
\label{202202202212011}
Obviously, the above lemma with  ${^0\mathcal{H}^{2+i }_{\mm{s}}}$ in place of
$\underline{\mathcal{H}^{2+i }_{\mm{s}}}$  also holds.
\end{rem}
\begin{pf}
(1) We first consider the case $i=0$.
Noting that $\varpi _2|_{{\partial\Omega}}=0$, the boundary value problem
 \begin{equation}
\label{Stadfon}
\begin{cases}
  \Delta \theta
= -\mm{div}\varpi  ,\\
  \partial_2 \theta|_{\partial\Omega} =0
\end{cases}
\end{equation}
admits  a unique solution $ \theta \in \underline{H}^3$, which satisfies
\begin{align}\label{202061755}
\|\theta\|_3 \lesssim \| \varpi \|_0+\|\mm{div}\varpi \|_1,
\end{align}
please refer to \cite[Lemma 4.27]{NASII04} for the proof.

Let $\psi=f+\nabla \mm{div}\varpi  \in L^2$, then the Stokes problem with Navier boundary condition
 \begin{equation}
\label{Stsdfadfon}
\begin{cases}
\nabla P-  \Delta w
=  \psi ,\\
 \div w=0 ,\\
  (w_2, \partial_2   w_1 )|_{\partial\Omega}=0
\end{cases}
\end{equation}
also admits  a unique solution $(w,P)\in  \underline{\mathcal{H}^{2}_{\mm{s}}}\times \underline{H}^{1}$, which satisfies
\begin{align}\label{2022202061755}
\|w\|_2+\|P\|_1\lesssim \|\psi\|_0,
\end{align}
please refer to \cite[Theorem 5.10]{tapia2021stokes} for the proof. Thus let
$v=w-\nabla \theta$, we easily see that $v$ satisfies \eqref{Ston} and \eqref{202201122130} from \eqref{Stadfon}--\eqref{2022202061755}.

(2) Now we turn to the proof of the case $i>1$ by induction. We assume that the problem \eqref{Ston}  admits a unique solution $(v_1 ,P)\in  \underline{\mathcal{H}^{2+j }_{\mm{s}}}\times \underline{H}^{1+j}$, and the solution satisfies
\begin{align}
\label{2022asda2130}
\|v\|_{2+j }+\|P\|_{1+j}\lesssim \|(f,\varpi )\|_{j}+\|\mm{div}\varpi \|_{1+j},
\end{align}
where $0\leqslant j<i$. Obviously, to get the desired conclusion, next it suffices to prove that
\begin{align}
\label{2022a2130}
\|v\|_{3+j}+\|P\|_{2+j }\lesssim \|(f,\varpi )\|_{1+j}+\|\mm{div}\varpi \|_{2+j }.
\end{align}

By the standard method of difference quotient in \cite[Lemma A.7]{ZHAOYUI}, it is easy to see that
\begin{align}
\label{20222130}
\|v\|_{1,2+j }+\|P\|_{1,1+j}\lesssim \|(f,\varpi )\|_{1,j}+\|\mm{div}\varpi \|_{1,1+j}.
\end{align}

We can rewrite \eqref{Ston}$_1$ as the following boundary value problem:
$$
\begin{cases}
  \Delta v_1
=  \partial_1 P -f  ,\\
  v_1 = v_1 |_{\partial\Omega } .
\end{cases}
$$
Applying the classical regularity of elliptic equation to the above  boundary value problem, and then using the trace theorem, we further get
\begin{align}
\|v_1 \|_{3+j}\lesssim  \|f\|_{1+j}+\| P \|_{1,1+j}+ \|v_1 \|_{H^{5/2+j}}\lesssim \|f\|_{1+j}+\| P \|_{1,1+j}+ \|v_1 \|_{\underline{2},1+j}.
\end{align}

In addition, thanks to \eqref{Ston}$_2$, we further have
\begin{align}
\| \partial_2v_2\|_{2+j }\lesssim \|  (\partial_1v_1,\mm{div}\varpi)\|_{2+j }. \label{2022010161911}
\end{align}
Putting \eqref{2022asda2130} and \eqref{20222130}--\eqref{2022010161911} together yields \eqref{2022a2130}.
 This completes the proof. \hfill $\Box$\end{pf}
 \begin{lem}\label{20222011231648}
 Existence of orthogonal basis in $H^1_\sigma$:
  There exists  a countable orthogonal basis
  $\{\varphi^i\}_{i=1}^\infty\subset   {\mathcal{H}}^\infty_{\sigma } $ to $ {H}^1_{\sigma } $. Moreover $\{\varphi^i\}_{i=1}^\infty$ is an orthonormal basis to $ {L}^2_{\sigma }:=\{w\in L^2~|~ \mm{div}w=0\}$.
\end{lem}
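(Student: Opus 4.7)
The plan is to build the basis as eigenfunctions of a Stokes-type operator on $L^2_\sigma$, obtained via the spectral theorem for compact self-adjoint operators, and then to upgrade their regularity to $\mathcal{H}^\infty_\sigma$ by iterating the Stokes estimate in Lemma \ref{20222011161567}.

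First I would introduce the continuous, symmetric, coercive bilinear form $a(u,v):=\int(\nabla u:\nabla v+u\cdot v)\,\mm{d}y$ on $H^1_\sigma\times H^1_\sigma$; the lower-order term is included because $H^1_\sigma$ contains constant vectors of the form $(c,0)^{\mm{T}}$, so the pure Dirichlet form fails to be coercive. For each $f\in L^2_\sigma$ the Riesz representation theorem furnishes a unique $Tf\in H^1_\sigma$ with $a(Tf,v)=(f,v)_{L^2}$ for every $v\in H^1_\sigma$ and $\|Tf\|_1\lesssim\|f\|_0$. Composing with the compact embedding $H^1_\sigma\hookrightarrow L^2_\sigma$ (Rellich--Kondrachov), $T:L^2_\sigma\to L^2_\sigma$ is compact, symmetric and positive. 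The spectral theorem then produces an $L^2$-orthonormal basis $\{\varphi^i\}_{i=1}^\infty$ of eigenfunctions $T\varphi^i=\mu_i\varphi^i$ with $\mu_i>0$ tending to zero.

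Second, each $\varphi^i$ satisfies the weak identity
\begin{equation*}
\int\nabla\varphi^i:\nabla v\,\mm{d}y=c_i\int\varphi^i\cdot v\,\mm{d}y\quad\text{for all }v\in H^1_\sigma,\qquad c_i:=\mu_i^{-1}-1.
\end{equation*}
A de Rham-type argument on the horizontally periodic slab $\Omega$ then produces a pressure $p_i\in\underline{H}^1$ such that $(\varphi^i,p_i)$ solves the stationary Stokes problem $\nabla p_i-\Delta\varphi^i=c_i\varphi^i$, $\div\varphi^i=0$, with Navier boundary condition $(\varphi^i_2,\partial_2\varphi^i_1)|_{\partial\Omega}=0$ (the second condition will be recovered a posteriori from the $H^2$-regularity below). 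Applying Lemma \ref{20222011161567} with $f=c_i\varphi^i$ and $\varpi=0$ at level $i=0$ yields $\varphi^i\in\mathcal{H}^2_\sigma$ and $\|\varphi^i\|_2+\|p_i\|_1\lesssim\|\varphi^i\|_0$; iterating the same lemma at levels $k=1,2,\ldots$ gives $\|\varphi^i\|_{2+k}+\|p_i\|_{1+k}\lesssim\|\varphi^i\|_k$ for every $k\geqslant 0$, so $\varphi^i\in\cap_{k\geqslant 2}\mathcal{H}^k_\sigma=\mathcal{H}^\infty_\sigma$.

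Finally, the $a$-orthogonality of $\{\varphi^i\}$ is inherited from $L^2$-orthonormality since $a(\varphi^i,\varphi^j)=\mu_i^{-1}(\varphi^i,\varphi^j)_{L^2}=0$ for $i\neq j$, and completeness in $H^1_\sigma$ follows by duality: if $v\in H^1_\sigma$ satisfies $a(v,\varphi^i)=0$ for every $i$, then $\mu_i^{-1}(v,\varphi^i)_{L^2}=0$, so $v$ is $L^2$-orthogonal to the complete family $\{\varphi^i\}\subset L^2_\sigma$ and hence zero. The main obstacle I anticipate is the pressure recovery step: on the horizontally periodic slab $\Omega$ with mixed Navier boundary conditions one must verify that the functional $v\mapsto\int(\nabla\varphi^i:\nabla v-c_i\varphi^i\cdot v)\,\mm{d}y$ on $H^1_\sigma$ extends to a gradient in the appropriate weak sense without producing spurious boundary terms, so that Lemma \ref{20222011161567} can be invoked verbatim in the bootstrap.
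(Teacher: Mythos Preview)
The paper does not actually prove this lemma; it simply refers the reader to \cite[Lemma 2.2]{clopeau1998vanishing}, \cite[Lemma 3.2]{li2019global}, and \cite[Theorem 1 in Section 6.5]{ELGP}. Your spectral-theoretic construction---Riesz map on $H^1_\sigma$, compactness via Rellich, spectral theorem, then elliptic bootstrap---is precisely the classical argument those references carry out, so your approach is the one the paper is implicitly invoking.

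One technical point worth tightening: to start the bootstrap you cannot directly ``apply Lemma~\ref{20222011161567} at level $i=0$'' to $\varphi^i$, because that lemma is an existence/uniqueness statement for strong solutions, and a priori $\varphi^i$ is only a weak solution in $H^1_\sigma$. The clean route is to let $(\hat\varphi,\hat p)\in\mathcal{H}^2_\sigma\times\underline{H}^1$ be the strong solution furnished by Lemma~\ref{20222011161567} with data $f=c_i\varphi^i$, $\varpi=0$, observe that $\hat\varphi$ is also a weak solution in your variational sense, and then invoke uniqueness of weak solutions (immediate from coercivity of $a$) to conclude $\varphi^i=\hat\varphi\in\mathcal{H}^2_\sigma$. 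Once $\varphi^i\in H^2$, integration by parts against $v\in H^1_\sigma$ with arbitrary tangential trace $v_1|_{\partial\Omega}$ indeed forces $\partial_2\varphi^i_1|_{\partial\Omega}=0$, confirming the Navier condition a posteriori as you anticipated; the iteration to $\mathcal{H}^\infty_\sigma$ then proceeds without further obstruction.
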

\begin{pf}Please refer to \cite[Lemma 2.2]{clopeau1998vanishing}, \cite[Lemma 3.2]{li2019global} or \cite[Theorem 1 in Secction 6.5]{ELGP}.\hfill $\Box$
\end{pf}

\vspace{4mm}
\noindent\textbf{Acknowledgements.}
%The authors would like to thank the anonymous referees for invaluable suggestions, which improved the presentation of this paper.
The research of Fei Jiang was supported by NSFC (Grant Nos.  12022102) and the Natural Science Foundation of Fujian Province of China (2020J02013), and the research of Song Jiang by National Key R\&D Program (2020YFA0712200), National Key Project (GJXM92579), and
NSFC (Grant No. 11631008), the Sino-German Science Center (Grant No. GZ 1465) and the ISF--NSFC joint research program (Grant No. 11761141008).

\renewcommand\refname{References}
\renewenvironment{thebibliography}[1]{%
\section*{\refname}
\list{{\arabic{enumi}}}{\def\makelabel##1{\hss{##1}}\topsep=0mm
\parsep=0mm
\partopsep=0mm\itemsep=0mm
\labelsep=1ex\itemindent=0mm
\settowidth\labelwidth{\small[#1]}%x
\leftmargin\labelwidth \advance\leftmargin\labelsep
\advance\leftmargin -\itemindent
\usecounter{enumi}}\small
\def\newblock{\ }
\sloppy\clubpenalty4000\widowpenalty4000
\sfcode`\.=1000\relax}{\endlist}
\bibliographystyle{model1b-num-names}
%\bibliography{refs}

\end{document}